\documentclass[a4paper]{pp-art}

\title[On a theorem of M.~Jodeit~Jr.]{On a theorem of M.~Jodeit~Jr.\break on pushforwards of Fourier multipliers}
\author{Patrick \textsc{Poissel}}
\address{Laboratoire de mathématiques de Besançon (UMR 6623), Université Marie \& Louis Pasteur, UFR Sciences et techniques, 16 route de Gray, 25030 Besançon CEDEX, France}
\email{patrick.poissel@math.cnrs.fr}
\date{Spring 2025\textendash Winter 2026}
\subjclass{43A22, 43A30 (primary), 43A35, 46L51, 46L52 (secondary)}
\keywords{non-commutative $L^p$ spaces, Fourier multipliers, positive definite distributions}

\usepackage{pp-thm}
\usepackage{tikz-cd}
\usepackage[french,english]{babel}

\usepackage{csquotes}
\usepackage[autolang=other, sorting=nyt, maxnames=9]{biblatex}
\AtEveryBibitem{\clearlist{language}}
\usepackage[colorlinks=true, linkcolor=blue, citecolor=red]{hyperref}
\AddToHook{begindocument/end}{
    \def\Re{\operatorname{Re}}
    
    \def\supp{\operatorname{supp}}
}

\begin{document}
\maketitle
\begin{abstract}
	A classical theorem of M.~\textsc{Jodeit}~Jr.\ implies that if a compactly supported distribution on $\mathbf{R}^d$ is the symbol of an $L^p(\mathbf{R}^d)$-$L^q(\mathbf{R}^d)$ Fourier multiplier, then its pushforward by the canonical homomorphism from $\mathbf{R}^d$ to $\mathbf{T}^d$ is the symbol of an $\ell^p(\mathbf{Z}^d)$-$\ell^q(\mathbf{Z}^d)$ Fourier multiplier. We generalise this result to the setting of locally compact groups, including those non-abelian, by characterising the continuous homomorphisms of locally compact groups by which, for every $p,q\in[1,\infty]$, the pushforward of a compactly supported symbol of an $L^p$-$L^q$ Fourier multiplier is a symbol of the same type as those which are \emph{open}. Motivated by a simple proof in the abelian case, we also investigate pushforwards of positive definite distributions.
\end{abstract}
\tableofcontents
\mainmatter
\section{Introduction}
The study of functorial properties of spaces of symbols of Fourier multipliers operators on $L^p$ spaces was initiated by K.~de~Leeuw \cite{deLeeuw} who obtained several pullback theorems by certain homomorphisms between the groups $\mathbf{Z}^d$, $\mathbf{R}^d$ and $\mathbf{T}^d$. His results were soon generalised to the setting of continuous homomorphisms of locally compact abelian groups by S.~Saeki \cite{Saeki} and N.~Lohoué \cite{LohoueEnssynthese, LohoueLasynthese, LohoueApprox}, who obtained the following general statement (see \autoref{section:fouriermultipliers} for details and general notations concerning Fourier multipliers).
\begin{theorem}[N.~Lohoué {\normalfont\cite[th.~I]{LohoueEnssynthese}\cite[th.~2]{LohoueApprox}}]\label{theorem:lohoué}
    Let $G$ and $H$ be two abelian locally compact groups and let $\pi$ be a continuous homomorphism from $G$ to $H$. Let $m$ be a sufficiently regular complex function on $H$ and let $p\in[1,\infty]$. For $m\circ\pi$ to be the symbol of an $L^p(\widehat{G{}})$-$L^p(\widehat{G{}})$ Fourier multiplier, it is sufficient that $m$ is the symbol of an $L^p(\widehat{H{}})$-$L^p(\widehat{H{}})$ Fourier multiplier, in which case
    \[
        \lVert m\circ\pi\rVert_{\operatorname{M}^{p,p}(G)}\leq\lVert m\rVert_{\operatorname{M}^{p,p}(H)}.
    \]
    If additionally $m$ is supported in the closure of $\pi(G)$ in $H$, then for $m\circ\pi$ to be the symbol of an $L^p(\widehat{G{}})$-$L^p(\widehat{G{}})$ Fourier multiplier, it is necessary and sufficient that $m$ is the symbol of an $L^p(\widehat{H{}})$-$L^p(\widehat{H{}})$ Fourier multiplier, in which case
    \[
        \lVert m\circ\pi\rVert_{\operatorname{M}^{p,p}(G)}=\lVert m\rVert_{\operatorname{M}^{p,p}(H)}.
    \]
\end{theorem}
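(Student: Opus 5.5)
The plan is to prove the sufficiency part by transference and the equality case by a restriction argument, reducing both to the case where $\pi$ is either injective with dense image or a quotient. Write $\pi$ as the composition $G\to G/\ker\pi\to\overline{\pi(G)}\hookrightarrow H$. Pulling back by the quotient map $G\to G/K$ (with $K=\ker\pi$) is harmless: dually, $\widehat{G/K}$ is the closed subgroup $K^\perp$ of $\widehat{G}$, and a multiplier on $L^p(\widehat{G/K})$ extends to $L^p(\widehat{G})$ with the same norm. To see this, disintegrate $\widehat G$ along cosets of $K^\perp$ (Weil's formula) and apply the $K^\perp$-multiplier fibrewise. In the vector-valued form this is the classical de Leeuw/Saeki argument, and the norm is preserved because the fibrewise operator is the same scalar multiplier tensored with the identity on an $L^p$ space. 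The inclusion of the closed subgroup $\overline{\pi(G)}$ into $H$ is handled by the classical restriction theorem. Dually, $\widehat{H}\to\widehat{\overline{\pi(G)}}$ is a quotient. Restricting a continuous (or sufficiently regular) multiplier $m$ to a closed subgroup does not increase the $\operatorname{M}^{p,p}$ norm; this is de Leeuw's restriction theorem, proved by testing against trigonometric polynomials and Gaussian-type approximate units, and ``sufficiently regular'' is used precisely here. It remains to treat the continuous injective homomorphism $G/K\to\overline{\pi(G)}$ with dense image.

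For that map I use the Bohr-compactification-style transference. Given trigonometric polynomials $f,g$ on $\widehat{G}$ (finite sums of characters evaluated at points of $G$), I express $\langle T_{m\circ\pi}f,g\rangle$ as a limit of the corresponding pairings for $T_m$ on $\widehat{H}$. The homomorphism $\pi$ induces $\widehat{\pi}\colon\widehat H\to\widehat G$, and the points of $G$ map to points of $H$, so I push the polynomials forward. The $L^p$ norms are computed by averaging over growing compact sets (for discrete duals) or against a Gaussian/Fejér approximate identity whose Fourier transforms concentrate near $0$. This comparison of $L^p$ norms of trigonometric polynomials on $\widehat G$ and $\widehat H$ is the main obstacle. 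When $\widehat{\pi}$ has dense image, $\lVert f\rVert_{L^p(\widehat G)}$ is approximated by $\lVert f\circ\widehat\pi\cdot k_\varepsilon\rVert$ for a suitable approximate unit $k_\varepsilon$ on $\widehat H$. I would first try the standard trick: replace $m$ by $m\ast\varphi_\varepsilon$ for an $L^1$ approximate identity, which does not increase $\lVert m\rVert_{\operatorname{M}^{p,p}}$, then pass to the limit using the regularity of $m$ (continuity suffices for pointwise convergence at the finitely many frequencies involved). Duality between $p$ and $p'$ and the density of trigonometric polynomials then give $\lVert m\circ\pi\rVert_{\operatorname{M}^{p,p}(G)}\le\lVert m\rVert_{\operatorname{M}^{p,p}(H)}$. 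The case $p=\infty$ reduces to $p=1$, where the multipliers are Fourier–Stieltjes transforms and the statement follows from the pushforward of measures.

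For the converse under $\supp m\subset\overline{\pi(G)}$, the restriction step above is reversible. A multiplier on the quotient $\widehat H\to\widehat{\overline{\pi(G)}}$ lifts isometrically, by the pullback argument of the first paragraph applied to the inclusion of the closed subgroup $\overline{\pi(G)}$ in $H$, i.e.\ extension by zero dually. So it suffices to prove the reverse inequality when $\pi$ is continuous, injective and dense. There I rerun the transference in the other direction. Since $\pi(G)$ is dense and $m$ is regular, $m$ is determined by $m\circ\pi$. Testing $T_m$ on trigonometric polynomials of $\widehat H$ whose frequencies lie in $\pi(G)$, which are dense in $L^p$ by density of $\pi(G)$ combined with continuity of $m$, gives $\lVert m\rVert_{\operatorname{M}^{p,p}(H)}\le\lVert m\circ\pi\rVert_{\operatorname{M}^{p,p}(G)}$. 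Combined with the first part, this yields equality.
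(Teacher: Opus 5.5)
Your first-paragraph reductions are acceptable: fibrewise extension for the quotient $G\to G/\ker\pi$, and the de~Leeuw--Saeki restriction theorem for $\overline{\pi(G)}\subseteq H$. The step you single out, a continuous injective homomorphism with dense image, is not established in either direction.

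\textbf{Forward direction.} Both of your arguments conclude by density of trigonometric polynomials in $L^p$. A nonzero trigonometric polynomial on $\widehat{G{}}$ lies in $L^p(\widehat{G{}})$, $p<\infty$, only if $\widehat{G{}}$ is compact, i.e.\ only if $G$ is discrete. Take the irrational winding $\pi\colon\mathbf{R}\to\mathbf{T}^2$, $t\mapsto(t\alpha,t\beta)$. Your averaging on $\widehat{H{}}=\mathbf{Z}^2$, together with continuity of $m$, only tests $m\circ\pi$ against Bohr means. What it yields is $\lVert m\circ\pi\rVert_{\operatorname{M}^{p,p}(G_d)}\leq\lVert m\rVert_{\operatorname{M}^{p,p}(H)}$, where $G_d$ is $G$ with the discrete topology. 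Getting back to $\operatorname{M}^{p,p}(G)$ is exactly the converse half of the de~Leeuw--Saeki theorem, namely $\lVert\mu\rVert_{\operatorname{M}^{p,p}(G)}\leq\lVert\mu\rVert_{\operatorname{M}^{p,p}(G_d)}$ for continuous $\mu$. That is a genuine periodisation and Riemann-sum argument, which you neither prove nor cite. The smoothing $m\mapsto m*\varphi_\varepsilon$ does not address it.

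\textbf{Reverse direction.} Here the argument is weaker still. If the target $\overline{\pi(G)}$ were discrete, a dense injective continuous $\pi$ would be an isomorphism. So in every non-trivial case the dual of the target is non-compact and carries no nonzero trigonometric polynomial in $L^p$. Already for the identity $\mathbf{R}_d\to\mathbf{R}$, your claimed inequality $\lVert m\rVert_{\operatorname{M}^{p,p}(\mathbf{R})}\leq\lVert m\rVert_{\operatorname{M}^{p,p}(\mathbf{R}_d)}$ \emph{is} de~Leeuw's converse theorem. Moreover, approximating frequencies in $H$ by frequencies in $\pi(G)$ does not preserve Bohr $L^p$ norms, because these depend on the relations among the frequencies. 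For $n\mapsto n\theta$ from $\mathbf{Z}$ to $\mathbf{T}$, the image is torsion-free, while $\lVert m\rVert_{\operatorname{M}^{p,p}(\mathbf{T}_d)}$ involves the restrictions of $m$ to finite subgroups.

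\textbf{A route that works for continuous $m$.} Use the de~Leeuw--Saeki equality $\lVert\mu\rVert_{\operatorname{M}^{p,p}(X)}=\lVert\mu\rVert_{\operatorname{M}^{p,p}(X_d)}$ on both $X=G$ and $X=H$. Between discrete groups, pullback along any homomorphism (a quotient followed by an inclusion) is norm-nonincreasing; this gives the inequality without your factorisation. For the reverse inequality when $\pi(G)$ is dense, proceed as follows.
\begin{enumerate}
\item Write $\lVert m\rVert_{\operatorname{M}^{p,p}(H_d)}$ as a supremum over finitely generated subgroups $F\subseteq H$.
\item Present each $F$ as a quotient of some $\mathbf{Z}^r$; pullback along this quotient is isometric.
\item Approximate the images of the generators by elements of $\pi(G)$.
\item Conclude using continuity of $m$ and lower semicontinuity of $\lVert\cdot\rVert_{\operatorname{M}^{p,p}(\mathbf{Z}^r)}$ under pointwise convergence.
\end{enumerate}

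\textbf{Extension by zero.} Your reduction of the converse by extension by zero misapplies your first paragraph. That argument treats a quotient of the symbol group, which is a closed subgroup on the dual side. By contrast, $S=\overline{\pi(G)}\subseteq H$ corresponds to the quotient $\widehat{H{}}\to\widehat{H{}}/S^\perp$. If $S$ is not open, it is locally null with empty interior, so a continuous $m$ supported in $S$ vanishes. If $S$ is open, $S^\perp$ is compact and $m(D)$ is $(m|_S)(D)$ composed with averaging over $S^\perp$; from this $\lVert m\rVert_{\operatorname{M}^{p,p}(H)}=\lVert m|_S\rVert_{\operatorname{M}^{p,p}(S)}$ follows. (The paper only quotes this theorem from Lohou\'e, so there is no internal proof to compare with.)
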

See also the work of M.~Cowling \cite{CowlingExtensionperiodicity} on pullback theorems for symbols of Fourier multipliers of type $(p,q)$ with $p\neq q$.
\vskip\baselineskip
Following the recent trend of extending the study of Fourier multipliers to the setting of $L^p$ spaces of duals of possibly non-abelian locally compact groups (see the introduction of \cite{CaspersJanssensKrishnaswayUshaMiaskiwskyi} for a brief survey of the subject), pullback theorems for several classes of homomorphisms of non-abelian groups as well as the impossibility of a general pullback theorem for all homomorphisms were obtained by M.~Caspers, J.~Parcet, M.~Perrin and \'E.~Ricard in \cite{CaspersParcetPerrinRicard}. Subsequent improvements were then obtained by M.~Caspers, B.~Janssens, A.~Krishnaswamy-Usha and L.~Miaskiwskyi in \cite{CaspersJanssensKrishnaswayUshaMiaskiwskyi} and by B.~Janssens and B.~Oudejans in \cite{JanssensOudejans}. Contrary to pullback theorems, very few pushforward theorems exist in the literature, the main result being due to M.~Jodeit~Jr. \cite{Jodeit}.
\begin{theorem}[M.~Jodeit~Jr. {\normalfont\cite[th.~2.3]{Jodeit}}]
    Let $d\in\mathbf{N}$ and denote by $\pi$ the canonical homomorphism from $\mathbf{R}^d$ to $\mathbf{T}^d$. There exists a constant $C_d\geq 0$ such that for every $p,q\in[1,\infty]$, if $m$ is a sufficiently regular complex function on $\mathbf{R}^d$ supported in a fundamental cube and is the symbol of an $L^p(\mathbf{R}^d)$-$L^q(\mathbf{R}^d)$ Fourier multiplier, then its pushforward $\pi_*m$ by $\pi$ is the symbol of an $\ell^p(\mathbf{Z}^d)$-$\ell^q(\mathbf{Z}^d)$ Fourier multiplier and
    \[
        \lVert\pi_*m\rVert_{\operatorname{M}^{p,q}(\mathbf{T})}\leq C_d\lVert m\rVert_{\operatorname{M}^{p,q}(\mathbf{R}^d)}.
    \]%
\end{theorem}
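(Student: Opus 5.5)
Here $m$ is supported in a fundamental cube, so $\pi_*m$ is the periodisation $\sum_{k\in\mathbf{Z}^d}m(\cdot+k)$ restricted to $\mathbf{T}^d=\mathbf{R}^d/\mathbf{Z}^d$, and the multiplier $T_{\pi_*m}$ acts on sequences $a\in\ell^p(\mathbf{Z}^d)$. The plan is to transfer the problem to $\mathbf{R}^d$: embed $\ell^p(\mathbf{Z}^d)$ into $L^p(\mathbf{R}^d)$, apply $T_m$, and restrict back to the lattice. Since $m$ has compact support, $T_m$ maps into band-limited functions, and sampling these on $\mathbf{Z}^d$ is bounded by Plancherel--Pólya type inequalities.

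\emph{Step 1 (embedding).} Fix a Schwartz function $\psi$ on $\mathbf{R}^d$ with $\widehat\psi=1$ on a neighbourhood of the fundamental cube $Q\supset\supp m$ and $\widehat\psi$ compactly supported in a slightly larger cube $Q'$. For $a\in\ell^p(\mathbf{Z}^d)$ set $F_a(x)=\sum_{n}a_n\psi(x-n)$. Because $\psi$ decays rapidly, $\lVert F_a\rVert_{L^p(\mathbf{R}^d)}\leq C\lVert a\rVert_{\ell^p}$ for every $p\in[1,\infty]$, with $C$ depending only on $\psi$, hence only on $d$. Moreover $\widehat{F_a}(\xi)=\widehat\psi(\xi)\,\widehat a(\xi)$, where $\widehat a=\sum a_ne^{-2\pi i n\cdot\xi}$ is $\mathbf{Z}^d$-periodic.

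\emph{Step 2 (applying $T_m$ and sampling).} Set $G=T_mF_a$, so $\widehat G=m\widehat\psi\,\widehat a=m\,\widehat a$ since $\widehat\psi=1$ on $\supp m$. Then $\lVert G\rVert_{L^q}\leq C\lVert m\rVert_{\operatorname{M}^{p,q}(\mathbf{R}^d)}\lVert a\rVert_{\ell^p}$. Sampling gives $G(n)=\int_{\mathbf{R}^d}m(\xi)\widehat a(\xi)e^{2\pi i n\cdot\xi}\,d\xi=\int_{\mathbf{T}^d}(\pi_*m)\widehat a\,e^{2\pi i n\cdot\xi}\,d\xi$ by periodicity of $\widehat a$ and the exponential. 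This is exactly $(T_{\pi_*m}a)_n$. It therefore remains to bound $\lVert (G(n))_n\rVert_{\ell^q}\leq C_d\lVert G\rVert_{L^q}$.

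\emph{Step 3 (sampling inequality; main obstacle).} $G$ has Fourier support in $Q$, which has side length $1$. This is the critical Nyquist density for the lattice $\mathbf{Z}^d$, so the naive Plancherel--Pólya argument must be checked carefully. The trick is to use another Schwartz $\phi$ with $\widehat\phi=1$ on $Q$, $\widehat\phi$ compactly supported. Then $G=G*\phi$, so $G(n)=\int G(y)\phi(n-y)\,dy$. Hölder's inequality together with $\sup_y\sum_n|\phi(n-y)|\leq C$ and $\int|\phi|\leq C$ gives $\sum_n|G(n)|^q\leq C^q\lVert G\rVert_{L^q}^q$ by Schur's test, and the case $q=\infty$ is trivial. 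This step is where the compact support of $m$ is essential, and no aliasing issue arises because we never reconstruct $G$ from its samples. Combining the three steps gives $\lVert\pi_*m\rVert_{\operatorname{M}^{p,q}(\mathbf{T}^d)}\leq C_d\lVert m\rVert_{\operatorname{M}^{p,q}(\mathbf{R}^d)}$, with $C_d$ the product of the constants from $\psi$ and $\phi$, independent of $p$ and $q$.

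Regularity issues, namely making sense of $m$ as a distribution and working first with finitely supported $a$ and then extending by density (or by weak-$*$ arguments when $p=\infty$), are handled in the standard way.
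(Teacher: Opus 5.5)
Your proof is correct, but its key step follows a different route from the paper's. The paper does not reprove this cited theorem directly. It recovers it as the special case $G=\mathbf{R}^d$, $H=\mathbf{T}^d$ of \autoref{proposition:openimpliesjodeit}, with $K$ a closed fundamental cube; the constant there is invariant under translating $K$, so it is uniform over fundamental cubes.

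\emph{Shared skeleton.} Your argument has the same skeleton as the proof of \autoref{lemma:pq11}. Up to the normalisation of the Fourier transform, your $F_a$ is the lift $\mathscr{F}_G(\varphi\,\pi^*f)$ with $\varphi=\widehat\psi$ and $\widehat f=a$. Sampling against the reproducing kernel $\phi$ is the adjoint of the analogous lift $b\mapsto\sum_nb_n\tilde\phi(\cdot-n)$, with $\tilde\phi(x)=\overline{\phi(-x)}$. So lifting the input and sampling the output is the operator form of the paper's identity $\langle(\pi_*m)f,g\rangle=\langle m,\varphi\pi^*f\cdot\varphi\pi^*g\rangle$.

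\emph{Where you differ.} The difference is how the lifting bound $\lVert\mathscr{F}_G(\varphi\,\pi^*f)\rVert_{L^r(\widehat{G{}})}\leq C\lVert\mathscr{F}_Hf\rVert_{L^r(\widehat{H{}})}$ is proved. You get it for all $r$ at once on the physical side, from the decay of $\psi$ and a H\"older/Schur-test argument over the lattice. The paper proves only $r=1$ (via pullbacks in $B(G)$) and $r=\infty$, and then interpolates. Its $r=\infty$ case comes from transposing the boundedness of $g\mapsto\pi_*(\varphi g)$ from $A(G)$ to $A(H)$. That boundedness is proved for arbitrary open $\pi$ between possibly non-abelian groups, using Schur-multiplier transference and the Schur pullback theorem (\autoref{lemma:pushAcinAc}). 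In the abelian case it can also be seen via positive definite functions (\autoref{remark:abelianpushAcinAc}). In your setting it reduces, by Poisson summation, to $\sum_n\lvert\widehat h(n)\rvert\leq B\lVert\widehat h\rVert_{L^1(\mathbf{R}^d)}$ for $h$ supported in $\supp\varphi$. That is exactly your Step~3 with $q=1$.

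\emph{What each route buys.} Yours is more elementary, needs no interpolation, and makes the independence of the constant from $p,q$ transparent. But it relies on the integral-operator viewpoint (points of $\widehat{G{}}=\mathbf{R}^d$, lattice sums, kernel decay), which the paper deliberately avoids because it has no meaning when $L^p(\widehat{G{}})$ is a non-commutative $L^p$ space. The paper's route yields the result for all open homomorphisms of locally compact groups, with explicit Haar-measure constants. It also gives the converse, that openness is necessary.

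Your Nyquist concern is unnecessary: the upper sampling inequality holds for any compact spectrum, and lattice density only matters for reconstruction.
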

Via the use of regularisation and partitions of unity with compactly supported smooth functions, one can then obtain the following pushforward theorem by the canonical homomorphism from $\mathbf{R}^d$ to $\mathbf{T}^d$ for general compactly supported symbols on $\mathbf{R}^d$.
\begin{corollary}\label{corollary:pushforwardRdTd}
    For every compact subset $K$ of $\mathbf{R}^d$, there exists a constant $C_K\geq 0$ such that for every $p,q\in[1,\infty]$, if $m$ is a distribution on $\mathbf{R}^d$ supported in $K$ and is the symbol of an $L^p(\mathbf{R}^d)$-$L^q(\mathbf{R}^d)$ Fourier multiplier, then $\pi_*m$ is the symbol of an $\ell^p(\mathbf{Z}^d)$-$\ell^q(\mathbf{Z}^d)$ Fourier multiplier and
    \[
        \lVert\pi_*m\rVert_{\operatorname{M}^{p,q}(\mathbf{T})}\leq C_K\lVert m\rVert_{\operatorname{M}^{p,q}(\mathbf{R}^d)}.
    \]
\end{corollary}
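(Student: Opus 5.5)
Fix a compact $K\subset\mathbf{R}^d$ and cover it by finitely many open cubes $Q_1,\dots,Q_N$ of side length $1/2$ (assuming $\mathbf{T}^d=\mathbf{R}^d/\mathbf{Z}^d$), so each $Q_j$ lies in the interior of some fundamental cube. The number $N$ depends only on $K$. Choose once and for all functions $\chi_j\in C_c^\infty(\mathbf{R}^d)$ with $\supp\chi_j\subset Q_j'$, where $Q_j'$ is a slightly larger cube still inside a fundamental cube, and with $\sum_j\chi_j=1$ on a neighbourhood of $K$. Then $m=\sum_j\chi_j m$, and since pushforward is linear, $\pi_*m=\sum_j\pi_*(\chi_j m)$. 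It therefore suffices to bound each $\lVert\pi_*(\chi_jm)\rVert_{\operatorname{M}^{p,q}(\mathbf{T})}$ by a constant times $\lVert m\rVert_{\operatorname{M}^{p,q}(\mathbf{R}^d)}$.

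The first step is to show that multiplication by $\chi_j$ preserves symbols of $L^p$-$L^q$ multipliers, with constant $\lVert\check\chi_j\rVert_{L^1}$ independent of $p,q$. Indeed, $\chi_j$ is the Fourier transform of an integrable function, so its multiplier operator is convolution with an $L^1$ function. This operator is bounded on every $L^r$ with norm at most $\lVert\check\chi_j\rVert_{L^1}$, and $T_{\chi_jm}=T_{\chi_j}T_m$. This gives $\lVert\chi_jm\rVert_{\operatorname{M}^{p,q}}\le\lVert\check\chi_j\rVert_1\lVert m\rVert_{\operatorname{M}^{p,q}}$.

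The second step removes the regularity hypothesis of Jodeit's theorem, since $\chi_jm$ is only a compactly supported distribution. Let $\varphi_\varepsilon$ be a smooth approximate identity with $\int\varphi_\varepsilon=1$ and support shrinking to $0$. Then $m_\varepsilon=\varphi_\varepsilon*(\chi_jm)$ is smooth and, for $\varepsilon$ small, still supported in a fundamental cube. Convolution of the symbol by $\varphi_\varepsilon$ corresponds on the group side to multiplication by the bounded character average $\check\varphi_\varepsilon$, viewed as an average of modulations $e^{i\langle\xi,\cdot\rangle}$. Modulations are isometries of every $L^r$, so $\lVert m_\varepsilon\rVert_{\operatorname{M}^{p,q}}\le\lVert\chi_jm\rVert_{\operatorname{M}^{p,q}}$. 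Jodeit's theorem then gives
\[
\lVert\pi_*m_\varepsilon\rVert_{\operatorname{M}^{p,q}(\mathbf{T})}\le C_d\lVert\check\chi_j\rVert_1\lVert m\rVert_{\operatorname{M}^{p,q}(\mathbf{R}^d)}.
\]

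The third step, which I expect to be the main obstacle, is passing to the limit $\varepsilon\to0$. On the torus side, $\pi_*m_\varepsilon=\pi_*\varphi_\varepsilon*\pi_*(\chi_jm)$ converges to $\pi_*(\chi_jm)$ as distributions on $\mathbf{T}^d$. Consequently the Fourier coefficients, which form the multiplier kernel on $\mathbf{Z}^d$, converge pointwise: each one is multiplied by $\widehat{\pi_*\varphi_\varepsilon}(n)\to1$. For finitely supported $f$ and $g$ on $\mathbf{Z}^d$, the pairing $\langle T_{\pi_*m_\varepsilon}f,g\rangle$ involves only finitely many coefficients, so it converges to $\langle T_{\pi_*(\chi_jm)}f,g\rangle$. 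The uniform bound therefore passes to the limit by density and duality. Some care is needed at the endpoint $p=\infty$ or $q=\infty$, where finitely supported sequences are not dense; there I would use the paper's convention for $\operatorname{M}^{p,q}$ (weak-$*$ continuity, or $c_0$). Summing over $j$ gives the result with $C_K=C_d\sum_j\lVert\check\chi_j\rVert_{L^1}$, which depends only on $K$ (through the fixed cover and partition of unity) and not on $p$ or $q$.
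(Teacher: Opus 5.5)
Your proposal is correct and is essentially the argument the paper intends. The paper only indicates that this corollary follows from Jodeit's theorem by regularisation and smooth partitions of unity, which is your cutoff, regularise (take $\varphi_\varepsilon\geq 0$ so that translation invariance of $\lVert\cdot\rVert_{\operatorname{M}^{p,q}}$ gives the contraction) and pass-to-the-limit scheme; it is also a special case of the paper's main theorem, since $\pi$ is open. Your endpoint worry in the last step is moot: the paper defines $\lVert\cdot\rVert_{\operatorname{M}^{p,q}}$ as a supremum of $\lvert\langle T,fg\rangle\rvert$ over test functions $f,g$, so it is lower semi-continuous on $\mathscr{D}'(\mathbf{T}^d)$, and $\pi_*m_\varepsilon\to\pi_*(\chi_jm)$ in $\mathscr{D}'(\mathbf{T}^d)$ suffices for every $p,q\in[1,\infty]$.
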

Several interesting generalisations of this pushforward theorem for linear operators and maximal functions of weak type $(p,q)$ were later obtained by P.~Auscher and M.~J.~Carro in \cite{AuscherCarro}, where however it is still only question of the canonical homomorphism from $\mathbf{R}^d$ to $\mathbf{T}^d$. It is therefore desirable to obtain pushforward theorems for large classes of continuous homomorphisms of locally compact groups.
\vskip\baselineskip

If $\pi$ is a continuous group homomorphism from a locally compact group $G$ to another locally compact group $H$, we shall say that \emph{the conclusion of Jodeit's theorem holds for $\pi$} if for every compact subset $K$ of $G$, there exists a constant $C_K\geq 0$ such that for every $p,q\in[1,\infty]$, if a distribution $m$ on $G$ supported $K$ is the symbol of an $L^p(\widehat{G{}})$-$L^q(\widehat{G{}})$ Fourier multiplier, then $\pi_*m$ is the symbol of an $L^p(\widehat{H{}})$-$L^q(\widehat{H{}})$ Fourier multiplier and
    \[
        \lVert\pi_*m\rVert_{\operatorname{M}^{p,q}(H)}\leq C_K\lVert m\rVert_{\operatorname{M}^{p,q}(G)}.
    \]
The main result of the present work is the following simple characterisation of continuous homomorphisms of locally compact groups for which the conclusion of Jodeit's theorem holds.
\begin{theorem}\label{theorem:ncjodeit}
    Let $G$ and $H$ be two locally compact groups and let $\pi$ be a continuous homomorphism from $G$ to $H$. For the conclusion of Jodeit's theorem to hold for $\pi$, it is necessary and sufficient that $\pi$ is open.
\end{theorem}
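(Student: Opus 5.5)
The proof splits into the two directions.

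\emph{Sufficiency.} Assume $\pi$ is open. I would factor $\pi$ as the quotient map $G\to G/N$ with $N=\ker\pi$ a closed normal subgroup, followed by $G/N\to\pi(G)$ and the inclusion $\pi(G)\hookrightarrow H$. Since $\pi$ is open and continuous, $\pi(G)$ is an open subgroup of $H$, and $G/N\to\pi(G)$ is a topological isomorphism. The inclusion of an open subgroup should preserve multiplier norms with constant $1$. Indeed, $L(\pi(G))$ sits in $L(H)$ as the range of a normal conditional expectation, and $L^p(\widehat{\pi(G)})$ is complemented in $L^p(\widehat H)$. A symbol supported in $\pi(G)$, extended by zero, therefore acts as the compression composed with the multiplier, and the norm is controlled. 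So the heart of the matter is the quotient map $q\colon G\to G/N$. For $m$ supported in a compact $K$, the pushforward $q_*m$ is obtained by integrating along $N$. I would pick $\varphi\in C_c(G)$, positive definite or of the form $\psi*\tilde\psi$, equal to $1$ near $K$, so that $m=\varphi m$. The aim is then to write $q_*m$ as a product of a multiplier that transfers through a Jodeit-type averaging over $N$ with a completely bounded $L^p$-$L^q$ map. Concretely, I would use the identity $\langle q_*m, f\rangle=\langle m, f\circ q\rangle$ together with $f\circ q\cdot\varphi$. On the operator side, the map $\lambda_{G/N}(f)\mapsto \lambda_G((f\circ q)\varphi)$ should be bounded from $L^p(\widehat{G/N})$ to $L^p(\widehat G)$, with norm depending only on $\varphi$, hence on $K$. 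Its adjoint-type counterpart from $L^q(\widehat G)$ to $L^q(\widehat{G/N})$ is given by restriction to cosets and integration over $N$, that is, by the pushforward of functions $q_*$. For $p=1$ and $p=\infty$ both maps are clearly bounded: $L^\infty$ is the von Neumann algebra and $L^1$ is the Fourier algebra, where multiplication by $\varphi\in A(G)$ and pullback by $q$ are contractive. Interpolation would then give all $p$, and the composition factors $T_{q_*m}$ as $q_*\circ T_m\circ(\cdot\,\varphi)\circ q^*$.

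\emph{Main obstacle.} The main obstacle is establishing the boundedness of $f\mapsto (f\circ q)\varphi$ and its dual uniformly in $p$ on noncommutative $L^p$. I would first try the standard transference via the Herz restriction theorem $A(G)\to A(G/N)$, $u\mapsto \int_N u(\cdot n)\,dn$, which is a contraction. I would combine it with the fact that $L(G/N)$ embeds in $L(G)$ by a trace-scaling map after cutting with the compactly supported $\varphi$, and interpolate between the endpoints.

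\emph{Necessity.} Suppose $\pi$ is not open. I would test with $p=1$ and $q=\infty$, or rather $p=q=2$, where $\operatorname{M}^{2,2}$ is $L^\infty$ of the Plancherel side, and with $p=1$, $q=2$. The cleanest choice seems to be $p=1$, $q=\infty$, where multipliers correspond to bounded functions and measures, or $p=q=1$, where they are the measures $M(G)$. Taking $m=\mathbf 1_U$ or $m=\psi$ continuous with compact support, the pushforward $\pi_*m$ is always a measure, so a different pair is needed. I would use $p=1$, $q=2$, for which $\operatorname{M}^{1,2}(G)=L^2(G)$. Let $m$ be the indicator of a compact neighbourhood $V$ of $e$. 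If $\pi$ is not open, then $\pi(V)$ has Haar measure zero in $H$, or at least $\pi_*$ of Haar measure restricted to $V$ is singular with respect to Haar measure on $H$. This follows from the open mapping theorem in its $\sigma$-compact form, applied after passing to an open $\sigma$-compact subgroup: were $\pi(V)$ of positive measure, $\pi(V)\pi(V)^{-1}$ would be a neighbourhood of $e$ and $\pi$ would be open. Hence $\pi_*m\notin L^2(H)$, contradicting the conclusion of Jodeit's theorem for $(p,q)=(1,2)$.
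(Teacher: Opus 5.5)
\textbf{The sufficiency half has a genuine gap, at exactly the point where the theorem is hard.} For the quotient map $q\colon G\to G/N$ you interpolate $\mathscr{F}_{G/N}f\mapsto\mathscr{F}_G(\varphi\,q^*f)$ between $r=1$ and $r=\infty$; this is the paper's reduction (\autoref{lemma:pq11}). The endpoint $r=1$ is indeed easy: $q^*$ is contractive from $B(G/N)$ to $B(G)$ and $\varphi\in A(G)$. The endpoint $r=\infty$ is not. First, $q^*$ does not in general map $L^\infty(\widehat{G/N})$ into $L^\infty(\widehat{G})$: for $\mathbf{R}\to\mathbf{T}$ the constant $1$ pulls back to $1$, whose Fourier transform is a Dirac mass. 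Second, once the cut-off is inserted, the bound $\lVert\mathscr{F}_G(\varphi\,q^*f)\rVert_{L^\infty(\widehat{G})}\leq C\lVert\mathscr{F}_{G/N}f\rVert_{L^\infty(\widehat{G/N})}$ is, by transposition, equivalent to $\lVert q_*(\varphi g)\rVert_{A(G/N)}\leq C\lVert g\rVert_{A(G)}$, i.e.\ to $q_*$ mapping $A_c(G)$ continuously into $A(G/N)$. That same statement is also the $r=1$ endpoint of your ``adjoint-type'' map $q_*$. That map moreover needs its own cut-off before it is bounded at all; the paper avoids composing with $q_*$ by estimating $\langle\pi_*m,fg\rangle=\langle m,(\varphi\,\pi^*f)(\varphi\,\pi^*g)\rangle$. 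So calling both endpoints ``clearly bounded'' assumes the whole content of the sufficiency direction.

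The tool you name for this step is not available. The map $u\mapsto\int_Nu(\cdot\,n)\,dn$ is not a contraction from $A(G)$ to $A(G/N)$; Herz's theorem concerns restriction $u\mapsto u|_N$. For $\mathbf{R}\to\mathbf{T}$ and an integer $R$, $u(x)=(1-\lvert x\rvert/2R)_+$ is positive definite with $\lVert u\rVert_{A(\mathbf{R})}=1$, whereas $\lVert q_*u\rVert_{A(\mathbf{T})}=\sum_k u(k)=2R$. The constant must therefore depend on the support, as in \autoref{lemma:pushAcinAc}. In the abelian case the bound follows from Bochner's theorem (\autoref{remark:abelianpushAcinAc}). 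For non-abelian $G$, however, pushforwards of positive definite functions need not be positive definite (\autoref{corollary:pushcspdc}), and no ``trace-scaling'' embedding of $L(G/N)$ into $L(G)$ is available when $N$ is noncompact. What is missing is an actual proof that $q_*(\varphi\,\cdot)$ is bounded from $A(G)$ to $A(G/N)$. The paper obtains it from:
\begin{itemize}
\item the identity $A_K(H)=\operatorname{cbM}^{1,1}_K(H)$;
\item Schur--Fourier transference onto $\operatorname{cbMS}^1(\pi(M),\pi(SM))$;
\item the pullback theorem for Schur multipliers along $\pi\times\pi$;
\item the identity $\pi^*\pi_*g=g*(\mu_N)^\flat$, with $\mu_N$ truncated to $MM^{-1}S^{-1}S$;
\item Bo\.zejko--Fendler to transfer back to $A(G)$.
\end{itemize}

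\textbf{Your necessity argument is correct and differs from the paper's.} You use $(p,q)=(1,2)$, and each step holds:
\begin{itemize}
\item $\mathbf{1}_V\in\operatorname{M}^{1,2}(G)$, since $\lVert\mathbf{1}_Vf\rVert_{L^2(G)}\leq\mu_G(V)^{1/2}\lVert f\rVert_{A(G)}$;
\item a compactly supported element of $\operatorname{M}^{1,2}(H)$ lies in $L^2(H)$ (multiply by some $g\in\mathscr{D}(H)$ equal to $1$ near its support and apply \autoref{proposition:characLp} with $p=2$);
\item Steinhaus--Weil shows that for non-open $\pi$ every $\pi(V)$ with $V$ compact is Haar-null, so $\pi_*\mathbf{1}_V$ is a nonzero singular measure.
\end{itemize}
The paper instead uses $(p,q)=(1,1)$ and shows that $\pi_*f$ is a continuous function positive at $e_H$. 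That pair works because its compactly supported symbols are Fourier algebra elements (\autoref{lemma:M11cisAc}), not measures on $G$ as you assert when you discard it. Your route is self-contained. It also does not need the formula $\pi^*\pi_*g=g*(\mu_N)^\flat$, which \autoref{proposition:pushopenD} provides only for open $\pi$.
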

A large class of open homomorphisms is the class of quotient homomorphisms. In the abelian setting, the conclusion of Jodeit's theorem for quotient homomorphisms can be reformulated as follows: if $G$ is an abelian locally compact group, $N$ is a closed subgroup of $G$ and $k$ is a tempered continuous function on $\widehat{G{}}$ with compactly supported Fourier transform such that the integral operator $f\mapsto k*f$ on $\widehat{G{}}$ is of type $(p,q)$, then the integral operator $f\mapsto k|_{N^\perp}*f$ on $N^\perp$ is also of type $(p,q)$. The proofs of the original theorem of M.~Jodeit and of its variants given by P.~Auscher and M.~Carro rely heavily on the integral operator point of view. Our proof of \autoref{theorem:ncjodeit} (\autoref{section:generalpush}) shall take another approach and have all the computations be performed on the multiplication side of the duality. We shall also present evidence suggesting that, given a locally compact group $G$ and a closed normal subgroup $N$ of $G$, it may not even  be possible to come up with a general definition for <<~the quantum subgroup $N^\perp$ of $\widehat{G{}}$~>> (\autoref{section:functorialprop}).
\section{Terminology and notations}
\subsection{Integration}
Throughout this work, if $G$ is a locally compact group, we denote by $e_G$ its identity element, by $\mu_G$ a left Haar measure on $G$ and by $\Delta_G$ its modular function. Convolutions of functions on $G$ and the Lebesgue spaces $L^p(G)$ should be understood to be taken with respect to $\mu_G$. If $p\in[1,\infty]$,  its conjugate exponent will be denoted by $p'$. The inner product on $L^2(G)$ will be denoted by $\langle\cdot|\cdot\rangle$.

If $N$ is a closed \emph{normal} subgroup of $G$, we assume that $\mu_G$, $\mu_{G/N}$, $\mu_N$ are chosen so that
\[
	\int_Gf(x)\,d\mu_G(x)=\int_{G/N}\left(\int_Nf(xy)\,d\mu_N(y)\right)d\mu_{G/N}(xN)
\]
for every $f\in L^1(G)$. If $G'$ is an \emph{open} subgroup of $G$, we assume that $\mu_{G'}$ and $\mu_G$ are chosen so that $\mu_G$ induces $\mu_{G'}$ on $G'$.

And finally, if $f$ is a complex function on $G$, it will be convenient to denote by $f^\sharp$ and $f^\flat$ the functions given by
\[
	f^\sharp(s)=\Delta_G(s^{-1})\overline{f(s^{-1})}\qquad\text{and}\qquad f^\flat(s)=\overline{f(s^{-1})}
\]
respectively for every $s\in G$ as is commonplace when dealing with the Tomita-Takesaki theory, so that
\[
	\langle f*g|h\rangle=\langle g|f^\sharp*h\rangle=\langle f|h*g^\flat\rangle\qquad\text{and}\qquad\langle f|g\rangle=\langle g^\flat|f^\sharp\rangle=\langle g^\sharp|f^\flat\rangle
\]
for every compactly supported continuous $f,g,h$.
\subsection{Distributions}
Let $G$ be a locally compact group. We will make constant use of the space of compactly supported smooth functions $\mathscr{D}(G)$, the space of smooth functions $\mathscr{E}(G)$, the space of distributions $\mathscr{D}'(G)$, the space of compactly supported distributions $\mathscr{E}'(G)$ and the Schwartz space $\mathscr{S}(G)$ and the space of tempered distributions $\mathscr{S}'(G)$ if $G$ is abelian, described by F.~Bruhat in \cite{Bruhat} (see also \cite{Wawrzynczyk}).

If $S$ and $T$ are distributions on $G$ and it makes sense to evaluate $S$ at $T$, we will denote by $\langle S,T\rangle$ the value of $S$ at $T$. Similarly, if it makes sense to multiply $S$ and $T$, we will denote by $S\cdot T$ or more simply by $ST$ their product. The operation $f\mapsto\bar f$ on functions can be extended continuously to distributions with as usual
\[
	\langle\overline T,f\rangle=\overline{\langle T,\bar f\rangle}
\]
for every $T\in\mathscr{D}'(G)$ and every $f\in\mathscr{D}(G)$. Whenever it makes sense, we shall write $\langle S|T\rangle$ for $\langle S,\overline T\rangle$. The operations $f\mapsto f^\sharp$ and $f\mapsto f^\flat$ can also be continuously extended to distributions with
\[
	\langle T^\sharp|f\rangle=\langle f^\flat|T\rangle\qquad\text{and}\qquad\langle T^\flat|f\rangle=\langle f^\sharp|T\rangle
\]
for every $T\in\mathscr{D}'(G)$ and every $f\in\mathscr{D}(G)$.

Any complex Radon measure $\mu$ on $G$ will be identified with the distribution $f\mapsto\int f\,d\mu$. Any (equivalence class modulo equality locally $\mu_G$-almost everywhere of a) complex locally $\mu_G$-integrable function $f$ will be identified with the complex Radon measure on $G$ with density $f$ with respect to $\mu_G$.
\vskip\baselineskip
We will denote by $A(G)$ the Fourier algebra of $G$ as defined by P.~Eymard in \cite{Eymard}. We recall that $\mathscr{D}(G)$ is a dense subspace of $A(G)$ into which its canonical injection is continuous \cite[prop.~3.26]{Eymard}. Hence, $A(G)$ is the closure of $\mathscr{D}(G)$ in the Fourier-Stieltjes algebra $B(G)$ and the dual $A'(G)$ of $A(G)$ can be and will be identified with the subspace of $\mathscr{D}'(G)$ consisting of all the distributions which are continuous when $\mathscr{D}(G)$ is endowed with the topology induced by $A(G)$. Since $A(G)$ consists of locally $\mu_G$-integrable functions, it can also be identified with a space of distributions on $G$. Identifying both $A'(G)$ and $A(G)$ with subspaces of $\mathscr{D}'(G)$ makes $(A'(G),A(G))$ into a compatible pair of Banach spaces \cite[n\textsuperscript{o}\nobreak\,2.3]{BerghLofstrom}.
\subsection{Pushforward and pullback operators}
Let $G$ and $H$ be two locally compact groups and let $\pi$ be a \emph{continuous} homomorphism from $G$ to $H$. If $h$ is a smooth function on $H$, then $h\circ\pi$ is a smooth function on $G$ and the linear map $h\mapsto h\circ\pi$ from $\mathscr{E}(H)$ to $\mathscr{E}(G)$ is continuous \cite[prop.~8]{Bruhat}. We will refer to this map as the \emph{pullback operator by $\pi$} and denote it by $\pi^*$ and refer to its transpose as the \emph{pushforward operator by $\pi$} and denote it by $\pi_*$.

Depending on the quality of $\pi$, the operators $\pi^*$ and $\pi_*$ may be extended to spaces of functions and distributions larger than just $\mathscr{E}(H)$ or $\mathscr{E}'(G)$. For example, the various functorial properties of $\mathscr{D}(\cdot)$ established by F.~Bruhat in \cite[\textsection\nobreak\,7 and \textsection\nobreak\,8]{Bruhat} can be be summarised as follows.
\begin{proposition}
	\label{proposition:pushopenD} Suppose that $\pi$ is open and denote by $N$ its kernel. The pushforward operator $\pi_*$ realises a strict homomorphism of topological vector spaces from $\mathscr{D}(G)$ onto the closed subspace of $\mathscr{D}(H)$ consisting of all the compactly supported smooth functions on $H$ supported in the image of $\pi$. Moreover, the Haar measures $\mu_G$, $\mu_N$ and $\mu_H$ can be chosen so that $\pi^*(\pi_*g)=g*(\mu_N)^\flat$ for every $g\in\mathscr{D}(G)$.	
\end{proposition}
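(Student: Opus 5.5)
The plan is to factor $\pi$ through the quotient by its kernel and treat each factor separately. Put $N=\ker\pi$, a closed normal subgroup of $G$. Since $\pi$ is continuous and open, $\pi(G)$ is an open, hence closed, subgroup of $H$. The induced bijection $\tilde\pi\colon G/N\to\pi(G)$ is then a continuous open homomorphism, so it is an isomorphism of topological groups. This gives the factorisation
\[
G\xrightarrow{\ q\ }G/N\xrightarrow{\ \tilde\pi\ }\pi(G)\xrightarrow{\ \iota\ }H,
\]
with $q$ the quotient map and $\iota$ the inclusion of an open subgroup. By functoriality of pullbacks, and hence of their transposes, we have $\pi_*=\iota_*\circ\tilde\pi_*\circ q_*$, so it suffices to analyse each factor on $\mathscr{D}$.

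First I will identify $\pi_*g$ as a function for $g\in\mathscr{D}(G)$, viewed as the measure $g\,\mu_G$. For $h\in\mathscr{E}(H)$, the normalisation of Haar measures fixed in the paper gives
\[
\langle\pi_*g,h\rangle=\int_G g(x)\,h(\pi(x))\,d\mu_G(x)=\int_{G/N}\Bigl(\int_N g(xy)\,d\mu_N(y)\Bigr)h(\pi(x))\,d\mu_{G/N}(xN).
\]
Now choose $\mu_{\pi(G)}=\tilde\pi_*\mu_{G/N}$ and take $\mu_H$ to be the Haar measure inducing $\mu_{\pi(G)}$ on the open subgroup $\pi(G)$. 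With this choice, $\pi_*g$ is the function on $H$ equal to $(T_Ng)\circ\tilde\pi^{-1}$ on $\pi(G)$ and to $0$ elsewhere, where $T_Ng(xN)=\int_N g(xy)\,d\mu_N(y)$. The formula for $\pi^*\pi_*g$ follows at once:
\[
\pi^*(\pi_*g)(x)=\int_N g(xy)\,d\mu_N(y)=\int g(xz^{-1})\,d(\mu_N)^\flat(z)=\bigl(g*(\mu_N)^\flat\bigr)(x),
\]
since $(\mu_N)^\flat$ is the image of the positive measure $\mu_N$ under inversion.

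For the topological assertions I will treat the three factors in turn.

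\emph{The isomorphism $\tilde\pi$.} Transport by $\tilde\pi$ is a topological isomorphism $\mathscr{D}(G/N)\to\mathscr{D}(\pi(G))$, because Bruhat's smooth structures are intrinsic to the topological group.

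\emph{The inclusion $\iota$.} Extension by zero from the open and closed subgroup $\pi(G)$ is a topological isomorphism of $\mathscr{D}(\pi(G))$ onto the subspace of $\mathscr{D}(H)$ of functions supported in $\pi(G)$. This subspace is closed, since it is the intersection of the kernels of the evaluation maps at points of the open set $H\setminus\pi(G)$. Smoothness and the inductive-limit topology are local, and $\pi(G)$ is a union of connected components of cosets, so nothing is lost by extending by zero.

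\emph{The averaging map $T_N$.} It remains to show that $T_N\colon\mathscr{D}(G)\to\mathscr{D}(G/N)$ is a continuous, surjective, strict homomorphism. This is the main obstacle. Continuity and the fact that $T_Ng$ is smooth are where I would invoke Bruhat's \textsection\nobreak\,7--8. Locally, one reduces to a quotient $G_1/K$ that is a Lie group, with $K$ compact, on which smooth functions are pulled back. There $T_N$ becomes fibre integration along the closed subgroup $NK/K$ of a Lie group, and this preserves smoothness and compact support.

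For surjectivity, I would use the standard Bruhat function trick. Pick $\beta\in\mathscr{E}(G)$, nonnegative, such that $\supp\beta$ meets $q^{-1}(C)$ in a compact set for every compact $C\subseteq G/N$, and such that $T_N\beta=1$. Then any $\varphi\in\mathscr{D}(G/N)$ is the image of $g=\beta\cdot(\varphi\circ q)$, which lies in $\mathscr{D}(G)$. The assignment $\varphi\mapsto\beta\cdot(\varphi\circ q)$ is a continuous linear right inverse of $T_N$ on each $\mathscr{D}_C(G/N)$. A continuous right inverse makes $T_N$ open onto its image on each step of the inductive limit, and this yields strictness.

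Composing the three factors gives that $\pi_*$ is a strict homomorphism from $\mathscr{D}(G)$ onto the closed subspace of $\mathscr{D}(H)$ of functions supported in $\pi(G)$, as claimed.
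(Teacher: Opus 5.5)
Your proposal is correct and follows essentially the route the paper intends. The paper gives no argument beyond summarising Bruhat's results from \textsection\textsection\,7--8, and your factorisation $G\to G/N\cong\pi(G)\hookrightarrow H$ is exactly how those functorial facts combine into the statement: Weil's formula fixes the Haar measures, Bruhat's quotient and open-subgroup results handle the topology, and a smooth Bruhat function supplies continuous right inverses for surjectivity and strictness.

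One cosmetic slip concerns conventions. In the paper's conventions, $(\mu_N)^\flat$ is the inversion image of $\Delta_G^{-1}\mu_N$, which equals $\mu_N$ itself since $\Delta_G|_N=\Delta_N$. Also, $g*\nu$ has density $x\mapsto\int g(xz^{-1})\Delta_G(z^{-1})\,d\nu(z)$. So both of your intermediate equalities are off by modular factors when $N$ is non-unimodular, but these factors cancel and the final identity $\pi^*(\pi_*g)=g*(\mu_N)^\flat$ is correct.
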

Hence, when $\pi$ is open, the pullback operator $\pi^*$ may be extended by transposition into a continuous linear operator from $\mathscr{D}'(H)$ to $\mathscr{D}'(G)$.
\section{Generalities on Fourier multipliers}\label{section:fouriermultipliers}
Here, we collect a few classical results from commutative and non-commuta\-tive Fourier analysis in order to properly motivate the introduction of our spaces of symbols of Fourier multipliers of type $(p,q)$ with $p\neq q$.
\subsection{Classical Fourier multipliers}
Let $G$ be an \emph{abelian} locally compact group, denote by $\widehat{G{}}$ its dual and by $\mathscr{F}_G$ the Fourier transform. If $m\in\mathscr{S}'(G)$, the \emph{Fourier multiplier with symbol $m$} is the continuous linear operator $m(D)$ from $\mathscr{S}(\widehat{G{}})$ to $\mathscr{S}'(\widehat{G{}})$ such that
\[
	(m(D)\circ\mathscr{F}_G)(f)=\mathscr{F}_G(m\cdot f)
\]
for every $f\in\mathscr{S}(G)$. If $p,q\in[1,\infty]$, we will denote by $\operatorname{M}^{p,q}(G)$ the set of all $m\in\mathscr{S}'(G)$ for which $m(D)$ is of type $(p,q)$, i.e.\ for which there exists a constant $C\geq 0$ such that
\[
	\lVert m(D)\hat f\rVert_{L^q(\widehat{G{}})}\leq C\lVert\hat f\rVert_{L^p(\widehat{G{}})}
\]
for every $f\in\mathscr{S}(G)$. The group $G$ being abelian, the space $\operatorname{M}^{1,1}(G)$ is none other than the Fourier-Stieltjes algebra $B(G)$. If we let
\[
	\lVert T\rVert_{\operatorname{M}^{p,q}(G)}=\sup\left\{\lvert\langle T,fg\rangle\rvert~\middle\vert~f,g\in\mathscr{S}(G)\text{ and }\lVert\hat f\rVert_{L^p(\widehat{G{}})},\lVert\hat g\rVert_{L^{q\mathrlap{'}}(\widehat{G{}})}\leq 1\right\},
\]
for every $T\in\mathscr{S}'(G)$, then $\lVert\cdot\rVert_{\operatorname{M}^{p,q}(G)}$ is an absolutely homogeneous subadditive lower semi-continuous function from $\mathscr{S}'(G)$ to $[0,+\infty]$ and
\[
	\operatorname{M}^{p,q}(G)=\left\{T\in\mathscr{S}'(G)~\middle\vert~\lVert T\rVert_{\operatorname{M}^{p,q}(G)}<+\infty\right\},
\]
showing that $\operatorname{M}^{p,q}(G)$ is a linear subspace of $\mathscr{S}'(G)$, countable union of closed convex balanced subsets of $\mathscr{S}'(G)$ and on which $\lVert\cdot\rVert_{\operatorname{M}^{p,q}(G)}$ is a norm.
\vskip\baselineskip
Since $\mathscr{D}(G)$ is dense in $\mathscr{S}(G)$ and since the topology of $\mathscr{S}(G)$ is finer than those given by the norms $f\mapsto\lVert\hat f\rVert_{L^p(\widehat{G{}})}$ and $g\mapsto\lVert\hat g\rVert_{L^{q\mathrlap{'}}(\widehat{G{}})}$, $\mathscr{D}(G)$ could replace $\mathscr{S}(G)$ in the formula defining $\lVert\cdot\rVert_{\operatorname{M}^{p,q}(G)}$ and we can define
\[
	\lVert T\rVert_{\operatorname{M}^{p,q}(G)}=\sup\left\{\lvert\langle T,fg\rangle\rvert~\middle\vert~f,g\in\mathscr{D}(G)\text{ and }\lVert\hat f\rVert_{L^p(\widehat{G{}})},\lVert\hat g\rVert_{L^{q\mathrlap{'}}(\widehat{G{}})}\leq 1\right\}
\]
when $T$ is merely in $\mathscr{D}'(G)$ rather than $\mathscr{S}'(G)$. This extension does not however produce any new Fourier multipliers of type $(p,q)$.
\begin{proposition}\label{proposition:Mpqtempered}
	Let $p,q\in[1,\infty]$ and let $m\in\mathscr{D}'(G)$ be such that $\lVert m\rVert_{\operatorname{M}^{p,q}(G)}$ is finite. Then, the distribution $m$ is tempered.
\end{proposition}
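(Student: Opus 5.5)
To show that $m$ is tempered, I will write $m$ as a sum of pieces that are each tempered, using the finiteness of $\lVert m\rVert_{\operatorname{M}^{p,q}(G)}$.

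\emph{Local control by a single test function.} Fix $g\in\mathscr{D}(G)$ and consider the distribution $gm$. For $f\in\mathscr{D}(G)$,
\[
\lvert\langle gm,f\rangle\rvert=\lvert\langle m,fg\rangle\rvert\leq\lVert m\rVert_{\operatorname{M}^{p,q}(G)}\lVert\hat f\rVert_{L^p(\widehat{G{}})}\lVert\hat g\rVert_{L^{q'}(\widehat{G{}})}.
\]
The topology of $\mathscr{S}(G)$ is finer than that of $f\mapsto\lVert\hat f\rVert_{L^p}$, and $\mathscr{D}(G)$ is dense in $\mathscr{S}(G)$. Hence $gm$ extends to a continuous linear form on $\mathscr{S}(G)$, that is, $gm\in\mathscr{S}'(G)$. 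This alone only recovers that $gm$ is compactly supported, so the real content is uniformity in translations.

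\emph{Uniformity under translations.} The quantity $\lVert\hat g\rVert_{L^{q'}}$ is invariant under translating $g$ by elements of $G$, since translation becomes multiplication by a character. So for a fixed $g$ and its translates $g_x=g(\cdot\,x^{-1})$, the bound
\[
\lvert\langle m,fg_x\rangle\rvert\leq C\lVert\hat f\rVert_{L^p}
\]
holds with $C$ independent of $x$. By the structure theory of abelian locally compact groups, I reduce (via an open subgroup of the form $\mathbf{R}^n\times K$, modulo compact $K$, together with a discrete quotient) to the Bruhat setting of $\mathscr{S}$. I then choose a bounded smooth partition of unity $1=\sum_j g_{x_j}$ with uniformly locally finite translates over a lattice-like net $(x_j)$. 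For $f\in\mathscr{S}(G)$, I write
\[
\langle m,f\rangle=\sum_j\langle m,fg_{x_j}\rangle .
\]
I then bound each term by replacing $f$ with $f\psi_{x_j}$, where $\psi$ is a fixed test function equal to $1$ on the support of $g$. This gives
\[
\lvert\langle m,fg_{x_j}\rangle\rvert\leq C\lVert\widehat{f\psi_{x_j}}\rVert_{L^p}.
\]

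\emph{Summing over the net.} Next I estimate $\lVert\widehat{f\psi_{x_j}}\rVert_{L^p}\leq\lVert\widehat{f\psi_{x_j}}\rVert_{L^1}\vee\lVert\widehat{f\psi_{x_j}}\rVert_{L^\infty}$ in terms of a Schwartz seminorm of $f$ localised near $x_j$. On the $\mathbf{R}^n$ part this comes from integration by parts, which gives $\sum_{\lvert\alpha\rvert\leq n+1}\sup_{\operatorname{supp}\psi_{x_j}}\lvert\partial^\alpha f\rvert$. The result is a bound of the form $\lesssim(1+\lvert x_j\rvert)^{-N}\cdot(\text{Schwartz seminorm of }f)$. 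Summing over $j$ with $N$ large then shows that $\lvert\langle m,f\rangle\rvert$ is controlled by a Schwartz seminorm, so $m$ is tempered.

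The main obstacle I expect is making this localisation uniform in the Bruhat framework for general abelian $G$, in particular on the discrete and compact parts, and ensuring that the series converges in $\mathscr{S}(G)$. My first attempt will be to reduce, by restriction to an open compactly generated subgroup and passage to quotients by compact subgroups, to $\mathbf{R}^n\times\mathbf{Z}^k\times\mathbf{T}^l$, where the estimates above are explicit.
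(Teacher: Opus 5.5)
Your argument is correct, and it follows a genuinely different route from the paper's. Both proofs start by reducing to a compactly generated open subgroup $U$. The paper then works entirely on the dual side. It realises the multiplier as a translation-invariant operator $\Phi$ from $L^p(\widehat U)$ to $L^q(\widehat U)$ and applies it to an approximate identity $(\varepsilon_n)$. Using a local parametrix $X*\hat g+\hat h=\delta_{e_{\widehat U}}$ on $\widehat U$ (which is locally $\mathbf{R}^d$), with $X$ supported at the identity, it shows that $(\Phi(\varepsilon_n))$ is bounded in $\mathscr{S}'(\widehat U)$. A limit point $k$ then satisfies $\Phi(\hat f)=k*\hat f$, so $m|_U$ is the inverse Fourier transform of $k$, following a classical scheme of Schwartz.

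You instead stay on the group side. Translation invariance of $\lVert\widehat{g_x}\rVert_{L^{q'}}$, together with a uniform partition of unity, gives a bound on $m$ against $\lVert\hat f\rVert_{L^p}$ that is uniform in $x$, i.e.\ a Wiener-amalgam type bound. The estimate $\lVert\hat h\rVert_{L^p}\leq\max(\lVert\hat h\rVert_{L^1},\lVert\hat h\rVert_{L^\infty})$, controlled by local Sobolev norms, then turns this into a Schwartz seminorm bound.

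What each route buys:
\begin{itemize}
\item Yours is more elementary: no parametrix, no weak limit point, and no appeal to the Fourier isomorphism between $\mathscr{S}'(\widehat U)$ and $\mathscr{S}'(U)$.
\item Yours also proves more. It only uses the bilinear estimate with one fixed $g$, and uses $q$ only through $\lVert\hat g\rVert_{L^{q'}}<\infty$.
\item The paper's route hides all explicit seminorm bookkeeping on elementary groups $\mathbf{R}^a\times\mathbf{Z}^b\times\mathbf{T}^c\times F$ inside the Fourier isomorphism.
\item The paper's route additionally exhibits the multiplier as convolution with a tempered kernel.
\end{itemize}

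There are two points to tighten.

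First, write $\langle m,f\rangle=\sum_j\langle m,fg_{x_j}\rangle$ only for test functions $f$, where the sum is finite, and then extend by density of $\mathscr{D}$ in $\mathscr{S}$. This removes the worry about convergence in $\mathscr{S}(G)$. It also removes the circularity of evaluating $m$ on Schwartz functions before knowing that it extends to them.

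Second, the reduction that matches Bruhat's definition is the one in your last sentence, not the one through an open subgroup $\mathbf{R}^n\times K$ with a discrete quotient. Indeed, $\mathscr{S}(G)$ is the inductive limit of the spaces $\mathscr{S}(U)$ over compactly generated open subgroups $U$. In turn, $\mathscr{S}(U)$ is the inductive limit of the spaces $\mathscr{S}(U/K)$ over compact subgroups $K$ with $U/K$ elementary. So what must be checked is that the pushforward of $m|_U$ by each $\pi_K\colon U\to U/K$ is tempered. The hypothesis does descend. Normalise $\mu_K(K)=1$; then the Fourier transform of $\phi\circ\pi_K$ is $\hat\phi$ extended by zero from the open subgroup $K^\perp\simeq\widehat{U/K}$ of $\widehat U$. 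Hence the pushforward has finite $\operatorname{M}^{p,q}(U/K)$ norm, and your explicit estimates apply on $U/K$.
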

    The proof below is quite standard and follows the same scheme as that of \cite[chap.~VI, \textsection\nobreak\,8, th.~XXV]{SchwartzTD}.
\begin{proof}
    It is sufficient to prove that the restriction of $m$ to any compactly generated open subgroup of $G$ is tempered. Let $U$ be a compactly generated open subgroup of $G$. It is clear that the restriction $m|_U$ of $m$ to $U$ satisfies $\lVert m|_U\rVert_{\operatorname{M}^{p,q}(U)}<+\infty$. Denote by $\Phi$ the continuous linear operator from $L^p(\widehat{U})$ ($\mathscr{C}_0(\widehat{U})$ if $p=\infty$) to $L^q(\widehat{U})$ such that
    \[
        \Phi(\mathscr{F}_Uf)=\mathscr{F}_U(m|_U\cdot f)
    \]
    for every $f\in\mathscr{D}(U)$. Let $(\varepsilon_n)_{n\in\mathbf{N}}$ be a sequence of elements of $\mathscr{D}(\widehat{U})$, bounded in $L^1(\widehat{U})$ and which converges to $\delta_{e_{\widehat{U\null}}}$ in $\mathscr{E}'(e_{\widehat{U}})$. It is clear that
    \[
        \Phi(\varepsilon_n*\hat f)=\Phi(\varepsilon_n)*\hat f
    \]
    for every $n\in\mathbf{N}$ and every $f\in\mathscr{D}(U)$ and this relation can be extended by continuity to every $f\in\mathscr{S}'(U)$ such that $\hat f\in L^p(\widehat{U})\cap L^{q\mathrlap{'}}(\widehat{U})$ ($\hat f\in \mathscr{C}_0(\widehat{U})\cap L^{q\mathrlap{'}}(\widehat{U})$ if $p=\infty$) and in particular to every $f\in\mathscr{S}'(U)$ such that $\hat f$ is a compactly supported continuous function. Since $U$ is compactly generated, there exists a $d\in\mathbf{N}$ such that its dual $\widehat{U}$ is locally isomorphic to $\mathbf{R}^d$. There is therefore a distribution $X$ on $\widehat{U}$ supported in $\{\cramped{e_{\widehat{U\null}}}\}$ as well as two compactly supported continuous complex functions $\hat g$ and $\hat h$ on $\widehat{U}$ such that
    \[
        X*\hat g+\hat h=\delta_{e_{\widehat{U\null}}}
    \]
    (if $d=0$, simply take $X=\hat g=0$ and $\hat h=\delta_{e_{\widehat{U\null}}}$, and if $d\neq 0$, see \cite[formula~(VI,\nobreak\,6;\nobreak\,22)]{SchwartzTD}). The relations
    \[
        \Phi(\varepsilon_n)=\Phi(\varepsilon_n)*X*\hat g+\Phi(\varepsilon_n)*\hat h=X*\Phi(\varepsilon_n*\hat g)+\Phi(\varepsilon_n*\hat h)
    \]
    for every $n\in\mathbf{N}$ then show that the sequence $(\Phi(\varepsilon_n))_{n\in\mathbf{N}}$ of elements of $L^q(\widehat{U})$ is bounded in $\mathscr{S}'(\widehat{U})$ and therefore has a limit point $k$ in $\mathscr{S}'(\widehat{U})$. It is then clear that $\Phi(\hat f)=k*\hat f$ for every $f\in\mathscr{D}(U)$ and then that $m|_U$ is the inverse Fourier transform of $k$ and is therefore tempered. 
\end{proof}
\subsection{Lebesgue spaces on the dual of a locally compact group}
Here, we recall a few facts about non-commutative $L^p$ spaces on von~Neumann algebras, including those non-tracial, and Fourier analysis on locally compact groups, including those non-abelian. The reader interested in the fine details shall consult \cite{IzumiConstruction, IzumiBilin, Caspers}.
\vskip\baselineskip
Let $\mathscr{M}$ be a von~Neumann algebra and denote by $\mathscr{M}_\star$ its predual. Let $\nu$ be a faithful normal semi-finite \emph{weight} on $\mathscr{M}$ and as usual let
\[
	\mathfrak{n}=\{a\in\mathscr{M}~\vert~\nu(a^*a)<\infty\}.
\]
There exists a unique way to make $(\mathscr{M},\mathscr{M}_\star)$ into a compatible pair of Banach spaces for which
\[
	\mathscr{M}\cap\mathscr{M}_\star=\{a\in\mathfrak{n}~\vert~\exists a_\star\in\mathscr{M}_\star,~\forall b\in\mathfrak{n},~\langle b^*,a_\star\rangle=\nu(ab^*)\}.
\]
The compatible pair $(\mathscr{M},\mathscr{M}_\star)$ obtained this way is the one considered by H.~Izumi with parameter $-1/2$ (see \cite[n\textsuperscript{o}\nobreak\,2]{IzumiConstruction} and \cite[prop.~2.14]{Caspers}). Let us endow $\mathscr{M}$ with its natural operator space structure and $\mathscr{M}_\star$ with the operator space structure \emph{opposite} of that induced by the dual of $\mathscr{M}$ \cite[n\textsuperscript{o}\nobreak\,2.10]{PisierIntrotoOS}. For every $p\in\mathopen]1,\infty\mathclose[$, we then define $L^p(\nu)$ to be the operator space $(\mathscr{M},\mathscr{M}_\star)_{1/p}$ obtained by using the lower complex interpolation method \cite[n\textsuperscript{o}\nobreak\,2.7]{PisierIntrotoOS} on the pair $(\mathscr{M},\mathscr{M}_\star)$ with index $1/p$. We also let $L^\infty(\nu)=\mathscr{M}$ and $L^1(\nu)=\mathscr{M}_\star$.

It can be checked that $L^\infty(\nu)\cap L^2(\nu)=\mathfrak{n}$ and that $\lVert a\rVert_{L^2(\nu)}=\nu(a^*a)^{1/2}$ for every $a\in\mathfrak{n}$ (see \cite[lemma~5.4]{IzumiBilin} or \cite[th.~3.5]{Caspers}). $L^2(\nu)$ is therefore a Hilbert space isometrically isomorphic to the Hilbert space of the Gelfand-Naimark-Segal construction of $\nu$.

If $p$ and $p'$ are conjugate exponents in $[1,\infty]$, the sesquilinear functional $\langle\cdot|\cdot\rangle$ on $\mathscr{M}\cap\mathscr{M}_\star\times\mathscr{M}\cap\mathscr{M}_\star$ given by
\[
	\langle a|b\rangle=\nu(b^*a)
\]
can be continuously extended to $L^p(\mu)\times L^{p\mathrlap{'}}(\nu)$ in a way such that $b\mapsto\langle\cdot|b\rangle$ realises an antilinear isometry from $L^{p\mathrlap{'}}(\nu)$ to the dual of $L^p(\nu)$ and is surjective whenever $p<\infty$ \cite[th.~2.5 and th.~6.1]{IzumiBilin}. In particular, if $1<p<\infty$, then $L^p(\nu)$ is reflexive.
\vskip\baselineskip
Suppose now that $G$ is a locally compact group, that $\mathscr{M}$ is the von~Neumann algebra generated by the left regular representation $\lambda_G$ of $G$ on $L^2(G)$ and that $\nu$ is the Plancherel weight associated to $\mu_G$, i.e.\ for every $a\in\mathscr{M}_+$, if there is a $f\in L^2(G)$ and a constant $C\geq 0$ such that
\[
	\lVert f*g\rVert_{L^2(G)}\leq C\lVert g\rVert_{L^2(G)}\qquad\text{and}\qquad a(g)=f^\sharp*f*g
\]
for every $g\in\mathscr{D}(G)$, then $\nu(a)=\lVert f\rVert_{L^2(G)}^2=(f^\sharp*f)(e_G)$, and if there is not, then $\nu(a)=+\infty$. We will prefer to denote by $L^p(\widehat{G{}})$ the space $L^p(\nu)$.

We now describe the Fourier transform. There is a unique isometric linear isomorphism from $A'(G)$ to $\mathscr{M}$ which is continuous for the topologies $\sigma(A'(G),A(G))$ and $\sigma(\mathscr{M},\mathscr{M}_\star)$ and which maps $T\in A'(G)$ to the operator $\widehat{T}\in\mathscr{M}$ given by
\[
	\widehat{T}(f)=T*f
\]
for every $f\in\mathscr{D}(G)$ \cite[th.~3.10 and prop.~3.27]{Eymard}. This isometry coincides on the algebra of bounded Radon measures with the homomorphism $\mu\mapsto\lambda_G(\mu)$ and in particular with the Fourier transform on $L^1(G)$ as described by M.~Caspers in \cite[th.~5.2 (ii)]{Caspers}. From the definition of the Plancherel weight, it can be seen that
\[
	L^\infty(\widehat{G{}})\cap L^2(\widehat{G{}})=\{\hat f~\vert~f\in A'(G)\cap L^2(G)\}
\]
and that $\lVert\hat f\rVert_{L^2(\widehat{G{}})}=\lVert f\rVert_{L^2(G)}$ for every $f\in A'(G)\cap L^2(G)$, hence the existence of an isometric isomorphism from $L^2(G)$ to $L^2(\widehat{G{}})$ which coincides with $f\mapsto\hat f$ on $A'(G)\cap L^2(G)$ and therefore is none other than the Fourier transform on $L^2(G)$ described by M.~Caspers in \cite[th.~5.2 (i)]{Caspers}. Similarly, from the above description of $L^\infty(\widehat{G{}})\cap L^2(\widehat{G{}})$, it can be seen that
\[
	L^\infty(\widehat{G{}})\cap L^1(\widehat{G{}})=\{\hat f~\vert~f\in A'(G)\cap A(G)\}
\]
and that $\lVert\hat f\rVert_{L^1(\widehat{G{}})}=\lVert f\rVert_{A(G)}$, hence the existence of an isometric isomorphism from $A(G)$ to $L^1(\widehat{G{}})$ which coincides with $f\mapsto\hat f$ on $A'(G)\cap A(G)$. We call \emph{Fourier transform} of $G$ and denote by $\mathscr{F}_G$ the isometric isomorphism from $A'(G)+A(G)$ to $L^\infty(\widehat{G{}})+L^1(\widehat{G{}})$ such $\mathscr{F}_G(T)=\widehat{T}$ for every $T\in A'(G)$. The operator space closure of $\mathscr{F}_G(\mathscr{D}(G))$ in $L^\infty(\widehat{G{}})$ will be denoted by $\mathscr{C}_0(\widehat{G{}})$. If no confusion is possible, we will sometimes denote by $\widehat{T}$ the image by $\mathscr{F}_G$ of an arbitrary $T\in A'(G)+A(G)$.
\vskip\baselineskip
The sesquilinear form $\langle\cdot|\cdot\rangle$ on $L^p(\widehat{G{}})\times L^{p\mathrlap{'}}(\widehat{G{}})$ has a simple interpretation. It is easy to see that if $T\in\mathscr{F}_G^{-1}(L^p(\widehat{G{}}))$ and $f\in\mathscr{D}(G)$, then $\langle\widehat{T}|\hat f\rangle=\langle T|f\rangle$. Hence, we have the
\begin{proposition}\label{proposition:characLp}
	Let $p\in[1,\infty\mathclose[$. The space $\mathscr{F}_G(\mathscr{D}(G))$ is a dense subspace of $L^p(\widehat{G{}})$ and for a distribution $T$ on $G$ to be the inverse Fourier transform of an element of $L^{p\mathrlap{'}}(\widehat{G{}})$, it is necessary and sufficient that the quantity
	\[
		\sup\left\{\lvert\langle T,f\rangle\rvert~\middle\vert~f\in\mathscr{D}(G)\text{ and }\lVert\hat f\rVert_{L^p(\widehat{G{}})}\leq 1\right\}
	\]
	is finite, in which case it is equal to the norm in $L^{p\mathrlap{'}}(G)$ of $\widehat{T}$.
\end{proposition}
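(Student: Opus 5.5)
The plan is to prove Proposition~\ref{proposition:characLp} by duality, using the antilinear isometry $b\mapsto\langle\cdot|b\rangle$ from $L^{p'}(\widehat{G{}})$ onto the dual of $L^p(\widehat{G{}})$, which is surjective because $p<\infty$.

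The first step is density of $\mathscr{F}_G(\mathscr{D}(G))$ in $L^p(\widehat{G{}})$.
\begin{itemize}
\item For $p=1$ this follows from three facts: $\mathscr{F}_G$ is an isometric isomorphism from $A(G)$ onto $L^1(\widehat{G{}})$, $\mathscr{D}(G)$ is dense in $A(G)$, and $\mathscr{F}_G$ maps $\mathscr{D}(G)\subset A'(G)\cap A(G)$ into $L^\infty(\widehat{G{}})\cap L^1(\widehat{G{}})$.
\item For $1<p<\infty$, I would argue by duality. Suppose $b\in L^{p'}(\widehat{G{}})$ satisfies $\langle\hat f|b\rangle=0$ for all $f\in\mathscr{D}(G)$. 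Write $b=\widehat{S}$ with $S=\mathscr{F}_G^{-1}(b)$, which makes sense because $L^{p'}\subset L^\infty+L^1$. By the remark preceding the statement, $\langle\widehat{S}|\hat f\rangle=\langle S|f\rangle$, so $S$ vanishes as a distribution. Injectivity of $\mathscr{F}_G$ on $A'(G)+A(G)\subset\mathscr{D}'(G)$ then gives $b=0$, and Hahn--Banach yields density.
\end{itemize}

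For necessity in the characterisation, suppose $T=\mathscr{F}_G^{-1}(b)$ with $b\in L^{p'}(\widehat{G{}})$. For $f\in\mathscr{D}(G)$ we have $\langle T,f\rangle=\langle T|\bar f\rangle=\langle \widehat T|\widehat{\bar f}\rangle$, so the pairing inequality gives
\[
\lvert\langle T,f\rangle\rvert\leq\lVert b\rVert_{L^{p'}}\lVert\widehat{\bar f}\rVert_{L^p}.
\]
Conjugation does not preserve $\lVert\hat f\rVert_{L^p}$ in the non-commutative case. I would therefore rewrite the supremum as the supremum of $\lvert\langle T|g\rangle\rvert$ over $\lVert\hat g\rVert_{L^p}\le 1$, replacing $f$ by $\bar g$; this is a bijection of $\mathscr{D}(G)$ and only changes which set is indexed. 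Density of $\mathscr{F}_G(\mathscr{D}(G))$ together with the isometric duality then shows that this supremum equals $\lVert b\rVert_{L^{p'}}$.

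For sufficiency, assume the supremum $C$ is finite. Then the antilinear functional $\hat g\mapsto\overline{\langle T|g\rangle}$ is well defined on $\mathscr{F}_G(\mathscr{D}(G))$, since $\mathscr{F}_G$ is injective there. It has norm at most $C$, so by density it extends to a continuous functional on $L^p(\widehat{G{}})$. Surjectivity of the duality map produces $b\in L^{p'}(\widehat{G{}})$ with $\lVert b\rVert=C$ and $\langle\hat g|b\rangle=\overline{\langle T|g\rangle}$ for all $g\in\mathscr{D}(G)$. Setting $S=\mathscr{F}_G^{-1}(b)$, the identity $\langle\widehat S|\hat g\rangle=\langle S|g\rangle$ gives $\langle S|g\rangle=\langle T|g\rangle$ for every $g$, hence $S=T$ in $\mathscr{D}'(G)$.

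I expect the main obstacle to be the identity $\langle\widehat{T}|\hat f\rangle=\langle T|f\rangle$ and the injectivity of $\mathscr{F}_G$ on $A'(G)+A(G)$ viewed inside $\mathscr{D}'(G)$. Both should follow from the compatibility of the pairs $(A'(G),A(G))$ and $(L^\infty,L^1)$: the pairing is compatible on the intersection, and one extends by interpolation, first checking the identity on $A'(G)$ and on $A(G)$ separately.
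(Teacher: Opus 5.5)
Your overall strategy is the paper's: bipolarity for the density of $\mathscr{F}_G(\mathscr{D}(G))$, then the surjective isometric antiduality $b\mapsto\langle\cdot|b\rangle$ for the characterisation. The density step and the identification $S=T$ are fine. The identity $\langle\widehat{T}|\hat f\rangle=\langle T|f\rangle$ that you single out as the main obstacle is already part of the setup preceding the statement.

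The real problem is the conjugation step. Substituting $f=\bar g$ gives $\langle T,f\rangle=\langle T|g\rangle$, but the constraint $\lVert\hat f\rVert_{L^p(\widehat{G})}\leq 1$ becomes $\lVert\widehat{\bar g}\rVert_{L^p(\widehat{G})}\leq 1$, not $\lVert\hat g\rVert_{L^p(\widehat{G})}\leq 1$. A bijection of $\mathscr{D}(G)$ carries the constraint along with it. Saying that the substitution ``only changes which set is indexed'' is therefore exactly the assertion that $g\mapsto\bar g$ preserves $\lVert\hat g\rVert_{L^p(\widehat{G})}$, which you have just declared false. If it were false, the argument would fail in both directions:
\begin{itemize}
\item necessity would only give $\lvert\langle T,f\rangle\rvert\leq\lVert b\rVert_{L^{p'}}\lVert\widehat{\bar f}\rVert_{L^p}$;
\item in sufficiency, the functional $\hat g\mapsto\overline{\langle T|g\rangle}$ (which is linear, not antilinear) would only be bounded by $C\lVert\widehat{\bar g}\rVert_{L^p}$, so ``norm at most $C$'' would not follow.
\end{itemize}
You would have proved the variant of the statement with the sesquilinear pairing $\langle T|f\rangle$, not the stated one.

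The missing ingredient is that conjugation \emph{is} isometric: $\lVert\widehat{\bar g}\rVert_{L^r(\widehat{G})}=\lVert\hat g\rVert_{L^r(\widehat{G})}$ for every $g\in\mathscr{D}(G)$ and every $r\in[1,\infty]$.
\begin{itemize}
\item For $r=\infty$, $\lambda_G(\bar g)=C\lambda_G(g)C$, where $C$ is the antiunitary complex conjugation of $L^2(G)$, which commutes with left translations.
\item For $r=1$, $A(G)$ is isometrically stable under conjugation, since $\overline{\langle\lambda_G(s)\xi|\eta\rangle}=\langle\lambda_G(s)\bar\xi|\bar\eta\rangle$.
\item The intermediate exponents follow by complex interpolation of the antilinear involution $\Phi=\mathscr{F}_G\circ\overline{(\cdot)}\circ\mathscr{F}_G^{-1}$ of $L^\infty(\widehat{G})+L^1(\widehat{G})$. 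If $F$ is admissible in the interpolation method, then so is $z\mapsto\Phi(F(\bar z))$: the strip is invariant under $z\mapsto\bar z$, $\Phi$ turns antiholomorphic functions into holomorphic ones, and the value at the real point $\theta$ is $\Phi(F(\theta))$.
\end{itemize}
With this lemma the two constraint sets coincide, your reindexing becomes legitimate, and the rest of your argument goes through unchanged. The paper's one-line bipolarity proof tacitly relies on the same fact.
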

\begin{proof}
	Since $T\mapsto\langle\mathop{\cdot}|T\rangle$ realises an isometric antiisomorphism between $L^{p\mathrlap{'}}(\widehat{G{}})$ and the dual of $L^p(\widehat{G{}})$, the first part of the proposition can be obtained by a simple bipolarity argument while the second part is an easy consequence of the first.
\end{proof}
\subsection{Non-commutative Fourier multipliers}
Let $G$ be a \emph{possibly non-abelian} locally compact group. While no theory of tempered distributions on locally compact quantum groups seems to have yet entered the canons, the temperence theorem given by \autoref{proposition:Mpqtempered} motivates a very naïve definition of symbols of Fourier multipliers of type $(p,q)$ on $G$. We shall denote by $\operatorname{M}^{p,q}(G)$ (\emph{resp.}\ $\operatorname{cbM}^{p,q}(G)$) the space of all $m\in\mathscr{D}'(G)$ such that there exists a bounded (\emph{resp.}\ completely bounded) operator $m(D)$ from $L^p(\widehat{G{}})$ ($\mathscr{C}_0(\widehat{G{}})$ if $p=\infty$) to $L^q(\widehat{G{}})$ such that
\[
    m(D)(\mathscr{F}_Gf)=\mathscr{F}_G(mf)
\]
for every $f\in\mathscr{D}(G)$ and endow it with the norm deduced from the operator norm (\emph{resp.}\ completely bounded operator norm). Hence, by \autoref{proposition:characLp}, if we let
\[
	\lVert T\rVert_{\operatorname{M}^{p,q}(G)}=\sup\left\{\lvert\langle T,fg\rangle\rvert~\middle\vert~f,g\in\mathscr{D}(G)\text{ and }\lVert\hat f\rVert_{L^p(\widehat{G{}})},\lVert\hat g\rVert_{L^{q\mathrlap{'}}(\widehat{G{}})}\leq 1\right\}
\]
for every $T\in\mathscr{D}'(G)$, then $\operatorname{M}^{p,q}(G)$ is the subspace of $\mathscr{D}'(G)$ on which $\lVert\cdot\rVert_{\operatorname{M}^{p,q}(G)}$ is finite and the map $m\mapsto\lVert m\rVert_{\operatorname{M}^{p,q}(G)}$ on $\operatorname{M}^{p,q}(G)$ is the norm of $\operatorname{M}^{p,q}(G)$. Similarly, if we let
\[
    \lVert T\rVert_{\operatorname{cbM}^{p,p}(G)}=\lVert T\otimes 1\rVert_{\operatorname{M}^{p,p}(G\times\operatorname{SU}(2))}
\]
for every $T\in\mathscr{D}'(G)$, then $\operatorname{cbM}^{p,p}(G)$ is the subspace of $\mathscr{D}'(G)$ on which $\lVert\cdot\rVert_{\operatorname{cbM}^{p,p}(G)}$ is finite and the map $m\mapsto\lVert m\rVert_{\operatorname{cbM}^{p,p}(G)}$ on $\operatorname{cbM}^{p,p}(G)$ is the norm of $\operatorname{cbM}^{p,p}(G)$.
\begin{proposition}
	The topology of $\operatorname{M}^{p,q}(G)$ (\emph{resp.}\ $\operatorname{cbM}^{p,q}(G)$) is finer than that induced by $\mathscr{D}'(G)$ and $\operatorname{M}^{p,q}(G)$ (\emph{resp.}\ $\operatorname{cbM}^{p,q}(G)$) is a Banach space.
\end{proposition}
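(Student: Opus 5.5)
The plan is to prove two things: that $\lVert\cdot\rVert_{\operatorname{M}^{p,q}(G)}$ dominates a continuous seminorm of $\mathscr{D}'(G)$, uniformly on bounded sets, and then that $\operatorname{M}^{p,q}(G)$ is complete. I treat $\operatorname{M}^{p,q}(G)$ first. The cb case follows by applying the same argument to $G\times\operatorname{SU}(2)$ and using the identity $\lVert T\rVert_{\operatorname{cbM}^{p,p}(G)}=\lVert T\otimes 1\rVert_{\operatorname{M}^{p,p}(G\times\operatorname{SU}(2))}$. For this I use that $T\mapsto T\otimes 1$ is continuous and injective from $\mathscr{D}'(G)$ into $\mathscr{D}'(G\times\operatorname{SU}(2))$. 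Its image is closed: it consists of the distributions annihilated by the derivations coming from $\operatorname{SU}(2)$, which are invariant under right translation by $\operatorname{SU}(2)$. Moreover, $T$ is recovered by pairing against test functions of the form $f\otimes 1$, which are allowed since $\operatorname{SU}(2)$ is compact.

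\emph{Step 1: comparison of topologies.} The topology of $\mathscr{D}'(G)$ is the weak-$*$ (or strong) dual topology on $\mathscr{D}(G)$. It therefore suffices to show that for each $\varphi\in\mathscr{D}(G)$ there is a constant $C_\varphi$ with $\lvert\langle T,\varphi\rangle\rvert\leq C_\varphi\lVert T\rVert_{\operatorname{M}^{p,q}(G)}$. For strong boundedness, I also need uniformity of $C_\varphi$ over bounded subsets of $\mathscr{D}(G)$.

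To get this I factor $\varphi=\sum_{i=1}^n f_ig_i$ with $f_i,g_i\in\mathscr{D}(G)$. Take $g\in\mathscr{D}(G)$ equal to $1$ on the support of $\varphi$, which gives $\varphi=\varphi\cdot g$, so a single product suffices. Then
\[
\lvert\langle T,\varphi g\rangle\rvert\leq\lVert\hat\varphi\rVert_{L^p(\widehat{G{}})}\lVert\hat g\rVert_{L^{q'}(\widehat{G{}})}\lVert T\rVert_{\operatorname{M}^{p,q}(G)}.
\]
These norms are finite because $\mathscr{F}_G(\mathscr{D}(G))\subset L^1(\widehat{G{}})\cap L^\infty(\widehat{G{}})$ ($\mathscr{D}(G)\subset A(G)\cap A'(G)$), hence lies in every $L^r(\widehat{G{}})$ by interpolation. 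The norms are also controlled by $\lVert\varphi\rVert_{A(G)}+\lVert\varphi\rVert_{A'(G)}$, which is a continuous seminorm on $\mathscr{D}(G)$ thanks to the continuity of $\mathscr{D}(G)\hookrightarrow A(G)$. On a bounded subset of $\mathscr{D}(G)$ the supports lie in one compact set, so one $g$ works for all of them. This gives the continuity of the injection $\operatorname{M}^{p,q}(G)\to\mathscr{D}'(G)$.

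\emph{Step 2: completeness.} Let $(m_n)$ be Cauchy in $\operatorname{M}^{p,q}(G)$. By Step 1 it is Cauchy in $\mathscr{D}'(G)$, which is complete, so it converges there to some $m$.

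Next I use the lower semicontinuity of $\lVert\cdot\rVert_{\operatorname{M}^{p,q}(G)}$ on $\mathscr{D}'(G)$. This holds because it is a supremum of the functions $T\mapsto\lvert\langle T,fg\rangle\rvert$, each continuous for the weak topology. Fix $\varepsilon>0$ and $N$ such that $\lVert m_n-m_k\rVert\leq\varepsilon$ for $n,k\geq N$. Letting $k\to\infty$ in $\mathscr{D}'(G)$ gives $\lVert m_n-m\rVert_{\operatorname{M}^{p,q}(G)}\leq\varepsilon$. Hence $m\in\operatorname{M}^{p,q}(G)$ and $m_n\to m$ in norm. The same argument applies verbatim to the cb norm through $T\otimes 1$.

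\emph{Main obstacle.} I expect the delicate point to be the claim that $\hat f\in L^r(\widehat{G{}})$ with norm controlled continuously on $\mathscr{D}(G)$ in the non-abelian, non-tracial setting. I would handle it with the interpolation inequality $\lVert x\rVert_{(\mathscr{M},\mathscr{M}_\star)_{\theta}}\leq\lVert x\rVert_{\mathscr{M}}^{1-\theta}\lVert x\rVert_{\mathscr{M}_\star}^{\theta}$, applied with $\lVert\hat f\rVert_{\mathscr{M}}=\lVert f\rVert_{A'(G)}$ and $\lVert\hat f\rVert_{\mathscr{M}_\star}=\lVert f\rVert_{A(G)}$. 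The $p=\infty$ case ($\mathscr{C}_0$) needs no change.
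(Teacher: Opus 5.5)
Your proof is correct and is essentially the paper's argument. The factorisation $\varphi=\varphi g$ bounds $\lvert\langle T,\varphi\rangle\rvert$ by $\lVert\hat\varphi\rVert_{L^p(\widehat{G{}})}\lVert\hat g\rVert_{L^{q'}(\widehat{G{}})}\lVert T\rVert_{\operatorname{M}^{p,q}(G)}$, and lower semicontinuity of the norm on the complete space $\mathscr{D}'(G)$ then gives completeness. The only difference is how you pass from this pointwise bound to boundedness of the unit ball in $\mathscr{D}'(G)$: the paper invokes barrelledness of $\mathscr{D}(G)$, whereas you prove the uniform bound on bounded subsets of $\mathscr{D}(G)$ directly. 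Your route silently uses that such subsets have supports in a common compact set, and that $\varphi\mapsto\lVert\varphi\rVert_{A'(G)}$ is continuous on $\mathscr{D}(G)$ (e.g.\ from $\lVert\varphi\rVert_{A'(G)}\leq\lVert\varphi\rVert_{L^1(G)}$); neither follows from continuity of $\mathscr{D}(G)\hookrightarrow A(G)$ alone.
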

\begin{proof}
	Since the function $T\mapsto\lVert T\rVert_{\operatorname{M}^{p,q}(G)}$ from $\mathscr{D}'(G)$ to $[0,+\infty]$ is lower semi-continuous, the closed unit ball of $\operatorname{M}^{p,q}(G)$ is complete for the uniformity induced by $\mathscr{D}'(G)$ and to prove the proposition, it is therefore sufficient to prove it is \emph{bounded} in $\mathscr{D}'(G)$. This is elementary however because $\mathscr{D}(G)$ is barrelled and because if $f\in\mathscr{D}(G)$, then there exists a $g\in\mathscr{D}(G)$ such that $f=fg$ and therefore such that
	\[
		\lvert\langle m,f\rangle\rvert=\lvert\langle m,fg\rangle\rvert\leq\lVert\hat f\rVert_{L^p(\widehat{G{}})}\lVert\hat g\rVert_{L^{q\mathrlap{'}}(\widehat{G{}})}
	\]
	for every $m\in\mathscr{D}'(G)$ such that $\lVert m\rVert_{\operatorname{M}^{p,q}(G)}\leq 1$. The case of $\operatorname{cbM}^{p,q}_c(G)$ can be dealt with similarly.
\end{proof}
If $K$ is a compact subset of $G$, we shall denote by $\operatorname{M}^{p,q}_K(G)$ the set of all $m\in \operatorname{M}^{p,q}(G)$ supported in $K$. $\operatorname{M}^{p,q}_K(G)$ is a closed linear subspace of $\operatorname{M}^{p,q}(G)$; we therefore endow it with the induced topology. We shall denote by $\operatorname{M}^{p,q}_c(G)$ and by $A_c(G)$ the set of all compactly supported $m\in \operatorname{M}^{p,q}(G)$. The set $\operatorname{M}^{p,q}_c(G)$ is a linear subspace of $\operatorname{M}^{p,q}(G)$ and is the union of the directed family of the spaces $\operatorname{M}^{p,q}_K(G)$; we shall therefore endow it with the final locally convex topology with respect to the canonical injections of the spaces $\operatorname{M}^{p,q}_K(G)$ into it. We define similarly the space $\operatorname{cbM}^{p,q}_K(G)$, $\operatorname{cbM}^{p,q}_c(G)$, $A_K(G)$ and $A_c(G)$.

\begin{proposition}
	The locally convex space $\operatorname{M}^{p,q}_c(G)$ (\emph{resp.}\ $\operatorname{cbM}^{p,q}_c(G)$) is Hausdorff, complete and bornological.
\end{proposition}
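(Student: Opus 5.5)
We need to show that $\operatorname{M}^{p,q}_c(G)$, endowed with the final locally convex topology of the inclusions $\operatorname{M}^{p,q}_K(G)\hookrightarrow\operatorname{M}^{p,q}_c(G)$, is Hausdorff, complete and bornological; the cb case is identical. The plan is to recognise $\operatorname{M}^{p,q}_c(G)$ as a strict inductive limit of Banach spaces and then invoke the classical results on such limits.

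\textbf{Step 1: a cofinal sequence of Banach spaces.} First, each $\operatorname{M}^{p,q}_K(G)$ is a Banach space. Indeed, it is closed in the Banach space $\operatorname{M}^{p,q}(G)$, because the topology of $\operatorname{M}^{p,q}(G)$ is finer than that of $\mathscr{D}'(G)$ and the set of distributions supported in $K$ is closed in $\mathscr{D}'(G)$. Next, when $K\subset K'$, the topology of $\operatorname{M}^{p,q}_K(G)$ is the one induced by $\operatorname{M}^{p,q}_{K'}(G)$, since both carry the norm of $\operatorname{M}^{p,q}(G)$. The family is therefore a directed system of Banach spaces with isometric closed embeddings.

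\textbf{Step 2: reduction to the $\sigma$-compact case.} The obstacle is that a general locally compact group need not be $\sigma$-compact, so the directed family need not be countable. I would fix a compact neighbourhood $V$ of $e_G$ and let $G_0$ be the open subgroup generated by $V$, which is $\sigma$-compact. Every compact $K\subset G$ is covered by finitely many cosets $sG_0$, and a distribution supported in $K$ splits accordingly as a finite sum of its restrictions to these open cosets. Translation by $s$ acts isometrically on $\operatorname{M}^{p,q}(G)$, because it corresponds to multiplication by the unitary $\lambda_G(s)$ on the Fourier side. Restricting to an open and closed set is given by multiplying by its indicator function, which is smooth.

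I expect the most delicate point to be checking that this multiplication preserves $\operatorname{M}^{p,q}$ boundedly. The key observation is that $\mathbf{1}_{sG_0}m=(m\circ\text{translation})\cdot\mathbf{1}_{G_0}$ up to translation. Multiplication by $\mathbf{1}_{G_0}$ is controlled through the inclusion $\widehat{G_0}\subset\widehat{G}$: the indicator of an open subgroup is positive definite and idempotent, so multiplication by it is a conditional expectation on the von Neumann algebra side, hence contractive on every $L^p$.

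With this in hand, $\operatorname{M}^{p,q}_c(G)$ is the locally convex direct sum $\bigoplus_{sG_0\in G/G_0}$ of topologically isomorphic copies of $\operatorname{M}^{p,q}_c(G_0)$, the isomorphism being given by the same coset decomposition. In turn, $\operatorname{M}^{p,q}_c(G_0)$ is the strict inductive limit of the countable sequence $\operatorname{M}^{p,q}_{K_n}(G_0)$ for an exhaustion $(K_n)$ of $G_0$ by compact sets with $K_n\subset\mathring K_{n+1}$.

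\textbf{Step 3: conclusion.} A strict countable inductive limit of Banach spaces, i.e.\ an LB-space of strict type, is Hausdorff and complete by the Dieudonné–Schwartz theorem. It is also bornological, because any inductive limit of bornological spaces is bornological. Finally, a locally convex direct sum of Hausdorff, complete, bornological spaces again has these three properties. Hausdorffness can alternatively be seen directly: the canonical injection $\operatorname{M}^{p,q}_c(G)\to\mathscr{D}'(G)$ is continuous and injective.
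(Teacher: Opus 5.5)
Your proof is correct and follows essentially the paper's route: a $\sigma$-compact open subgroup, a decomposition over its left cosets justified by the boundedness of multiplication by the smooth positive definite indicator of a coset, and a locally convex direct sum of strict countable inductive limits of Banach spaces, with bornologicity coming for free. The only cosmetic difference is that you translate each coset back to $G_0$, whereas the paper works directly with $\operatorname{M}^{p,q}_U(G)$ for each coset $U$ (which is itself $\sigma$-compact); this spares the paper both the translation step and the identification of symbols on $G$ supported in $G_0$ with intrinsic symbols on $G_0$.
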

\begin{proof}
	The bornologicity of $\operatorname{M}^{p,q}_c(G)$ is obvious from its definition so we shall only prove that $\operatorname{M}^{p,q}_c(G)$ is Hausdorff and complete. Let $G'$ be an open, \emph{countable at infinity} subgroup of $G$ and if $U$ is a left translate of $G'$, let
	\[
		\operatorname{M}^{p,q}_U(G)=\{m\in\operatorname{M}^{p,q}_c(G)~\vert~\operatorname{supp}(m)\subseteq U\}=\bigcup_{\substack{K\text{ compact}\\ \text{subset of }U}}\operatorname{M}^{p,q}_K(G)
	\]
	and endow it with the final locally convex topology with respect to the canonical injection of the spaces $\operatorname{M}^{p,q}_K(G)$ into it as $K$ ranges over the set of all compact subsets of $U$. The spaces $\operatorname{M}^{p,q}_U(G)$ are either all Banach spaces or all strict inductive limits of sequences of spaces, which are complete. Since each ${\boldsymbol 1}_U$ lies in the Fourier-Stieltjes algebra $B(G)$, the space $\operatorname{M}^{p,q}_c(G)$ can be checked to be the locally convex direct sum of the complete spaces $\operatorname{M}^{p,q}_U(G)$ and is therefore complete.
	
	The case of $\operatorname{cbM}^{p,q}_c(G)$ can be dealt with similarly.
\end{proof}

While it may not be true that $\operatorname{M}^{1,1}(G)=B(G)$ when $G$ is not abelian, we still have the following result.
\begin{lemma}\label{lemma:M11cisAc}
	Let $K$ be a compact subset of $G$. The topological vector spaces $\operatorname{M}_K^{1,1}(G)$, $\operatorname{cbM}^{1,1}_K(G)$ and $A_K(G)$ are identical and for every $f\in \operatorname{M}^{1,1}_K(G)$ and every non-negligible compact subset $L$ of $G$,
	\[
	    \lVert f\rVert_{\operatorname{M}^{1,1}(G)}\leq\lVert f\rVert_{\operatorname{cbM}^{1,1}(G)}\leq\lVert f\rVert_{A(G)}\leq\biggl(\frac{\mu_G(KL)}{\mu_G(L)}\biggr)^{1/2}\lVert f\rVert_{\operatorname{M}^{1,1}(G)}.
	\]
	Consequently, the topological vector spaces $\operatorname{M}^{1,1}_c(G)$, $\operatorname{cbM}^{1,1}_c(G)$ and $A_c(G)$ are identical.
\end{lemma}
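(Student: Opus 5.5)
The plan is to prove the chain of inequalities from left to right. Together they give the equality of the three spaces and the equivalence of their norms. The statement about $\operatorname{M}^{1,1}_c(G)$, $\operatorname{cbM}^{1,1}_c(G)$ and $A_c(G)$ then follows at once, since all three carry the final locally convex topology with respect to the same directed family of (now identical, isomorphic) Banach spaces indexed by $K$.

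\textbf{First two inequalities.} The inequality $\lVert f\rVert_{\operatorname{M}^{1,1}(G)}\leq\lVert f\rVert_{\operatorname{cbM}^{1,1}(G)}$ is immediate, because a completely bounded norm dominates the operator norm. For the second inequality I use the identification $L^1(\widehat{G{}})\cong A(G)$ through $\mathscr{F}_G$. Under it, $f(D)$ is just pointwise multiplication by $f$ on $A(G)$.
\begin{itemize}
\item The bound $\lVert fg\rVert_{A(G)}\leq\lVert f\rVert_{A(G)}\lVert g\rVert_{A(G)}$ gives $\lVert f\rVert_{\operatorname{M}^{1,1}(G)}\leq\lVert f\rVert_{A(G)}$ for every $f\in A(G)$.
\item For the cb version I use the paper's definition $\lVert f\rVert_{\operatorname{cbM}^{1,1}(G)}=\lVert f\otimes 1\rVert_{\operatorname{M}^{1,1}(G\times\operatorname{SU}(2))}$. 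Since $\operatorname{SU}(2)$ is compact, $f\otimes 1\in A(G\times\operatorname{SU}(2))$ with $\lVert f\otimes 1\rVert_{A}\leq\lVert f\rVert_{A(G)}\lVert 1\rVert_{A(\operatorname{SU}(2))}=\lVert f\rVert_{A(G)}$, taking normalised Haar measure.
\item Applying the previous bullet on $G\times\operatorname{SU}(2)$ then yields the claim.
\end{itemize}

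\textbf{Third inequality: construction of $u$.} This is the substantive step. Let $m\in\operatorname{M}^{1,1}(G)$ be supported in $K$, and let $K'$ be a compact neighbourhood of $K$. Define
\[
u=\mu_G(L)^{-1}\,\mathbf 1_{K'L}*(\mathbf 1_L)^{\flat},\qquad u(x)=\frac{\mu_G(K'L\cap xL)}{\mu_G(L)}.
\]
This $u$ has the following properties.
\begin{itemize}
\item It is a coefficient of $\lambda_G$, so $u\in A(G)$ with $\lVert u\rVert_{A(G)}\leq\mu_G(L)^{-1}\lVert\mathbf 1_{K'L}\rVert_{2}\lVert\mathbf 1_L\rVert_{2}=(\mu_G(K'L)/\mu_G(L))^{1/2}$.
\item If $x\in K'$ then $xL\subseteq K'L$, so $u=1$ on $K'$, which is a neighbourhood of $\operatorname{supp} m$.
\end{itemize}

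\textbf{Third inequality: applying $m(D)$ to $u$.} Approximate $u$ in $A(G)$ by functions $u_n\in\mathscr{D}(G)$, using the density of $\mathscr{D}(G)$ in $A(G)$. Then $\mathscr{F}_G(mu_n)=m(D)\hat u_n$ converges in $L^1(\widehat{G{}})$. I expect the main technical point to be checking that the limit is the distribution $mu=m$.
\begin{itemize}
\item Convergence in $A(G)$ implies convergence in $\mathscr{D}'(G)$, so $mu_n\to$ some $v\in A(G)$ in $\mathscr{D}'(G)$.
\item To identify $v$, take $\chi\in\mathscr{D}(G)$ equal to $1$ near $K$ and supported in the interior of $K'$. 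Then $mu_n=m\chi u_n$.
\item Because $\chi u_n\to\chi u=\chi$ in $\mathscr{D}(G)$, I would first arrange the approximation so that this convergence holds in $\mathscr{D}(G)$, for instance by multiplying by a fixed cutoff and using the continuity of the injection $\mathscr{D}(G)\to A(G)$. This gives $mu_n\to m\chi=m$.
\end{itemize}
Hence $m\in A(G)$ and
\[
\lVert m\rVert_{A(G)}\leq\lVert m\rVert_{\operatorname{M}^{1,1}(G)}\,(\mu_G(K'L)/\mu_G(L))^{1/2}.
\]

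\textbf{Shrinking $K'$ to $K$.} Finally let $K'$ decrease to $K$. The sets $K'L$ are compact and decreasing, with intersection $KL$ by a compactness argument. Outer regularity of $\mu_G$ then gives $\mu_G(K'L)\to\mu_G(KL)$, which yields the stated constant.
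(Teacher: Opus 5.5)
Your proof is correct and is essentially the paper's. It uses the same function $\mu_G(L)^{-1}\mathbf{1}_{KL}*(\mathbf{1}_L)^\flat$ and the same bound $\lVert fg\rVert_{A(G)}\leq\lVert f\rVert_{\operatorname{M}^{1,1}(G)}\lVert g\rVert_{A(G)}$. Your neighbourhood $K'$, the approximation step and the outer-regularity limit supply rigour that the paper skips when it writes $f=fg$ for a $g$ that is merely continuous and equal to $1$ only on $K$.

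The one loosely justified step is the identification of the limit, since continuity of $\mathscr{D}(G)\hookrightarrow A(G)$ cannot turn $A(G)$-convergence into $\mathscr{D}(G)$-convergence. It is fixed by replacing $u_n$ with $u_n+\chi'(1-u_n)$, where $\chi'\in\mathscr{D}(G)$ equals $1$ near $K$ and is supported in $K'$. These functions still tend to $u$ in $A(G)$ because $\chi'(1-u)=0$, and they satisfy $mu_n=m$ exactly.
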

\begin{proof}
	It is clear that $A_K(G)$ is a subspace of $\operatorname{M}^{1,1}_K(G)$ into which its canonical injection is contractive. Conversely, if $g$ is the function $\mu_G(L)^{-1}{\boldsymbol 1}_{KL}*({\boldsymbol 1}_L)^\flat$, then $g=1$ on $K$ and $g\in A(G)$ with $\lVert g\rVert_{A(G)}\leq (\mu_G(KL)/\mu_G(L))^{1/2}$. And of course,
	\[
		f=fg\in A_K(G)\quad\text{and}\quad\lVert f\rVert_{A(G)}=\lVert fg\rVert_{A(G)}\leq\lVert f\rVert_{\operatorname{M}^{1,1}(G)}\lVert g\rVert_{A(G)}
	\]
	for every $f\in \operatorname{M}^{1,1}_K(G)$.
\end{proof}
\begin{remark}
	It can be checked that $A_c(G)$ is a subspace of $A'(G)\cap A(G)$ into which its canonical injection is continuous and that $\mathscr{D}(G)$ is a dense subspace of $A_c(G)$ into which its canonical injection is continuous. Hence, the spaces of symbols $\operatorname{M}^{p,q}(G)$ would have essentially been the same had we used G.~Gaudry's <<~quasi-measures~>> \cite{GaudryQuasimeasures} instead of distributions and $A_c(G)$ instead of $\mathscr{D}(G)$.
\end{remark}
\section{The general pushforward theorem}\unskip\label{section:generalpush}
\subsection{Reduction to the case \texorpdfstring{$p=q=1$}{p=q=1}}
In order to prove \autoref{theorem:ncjodeit}, we first prove that the conclusion of Jodeit's pushforward theorem for arbitrary exponents $p$ and $q$ is equivalent to the same conclusion but merely for the pair of exponents $(1,1)$. We shall actually prove a slightly better characterisation.
\begin{lemma}\label{lemma:pq11}
    Let $G$ and $H$ be two locally compact groups and let $\pi$ be a continuous homomorphism from $G$ to $H$. The following conditions are equivalent:
    \begin{enumerate}
        \item The conclusion of Jodeit's theorem holds for $\pi$.
        \item The pushforward operator $\pi_*$ maps $A_c(G)$ in $A(H)$
        \item The pushforward operator $\pi_*$ continuously maps $A_c(G)$ in $A(H)$.
    \end{enumerate}
\end{lemma}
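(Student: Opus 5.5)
We will prove the cycle (1)$\Rightarrow$(2)$\Rightarrow$(3)$\Rightarrow$(1).

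\emph{(1)$\Rightarrow$(2).} This follows from \autoref{lemma:M11cisAc}. If $m\in A_c(G)$ is supported in a compact $K$, then $m\in\operatorname{M}^{1,1}_K(G)$. Condition (1) with $p=q=1$ gives $\pi_*m\in\operatorname{M}^{1,1}(H)$. Since $\pi_*m$ is supported in the compact set $\pi(K)$, \autoref{lemma:M11cisAc} applied on $H$ shows that $\pi_*m\in A_{\pi(K)}(H)\subseteq A(H)$.

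\emph{(2)$\Rightarrow$(3).} We use a closed graph argument on each $A_K(G)$, which is a Banach space since it is closed in $A(G)$. The map $\pi_*\colon A_K(G)\to A(H)$ has closed graph. Indeed, convergence in $A_K(G)$ implies convergence in $\mathscr{E}'(G)$ (in the dual of $\mathscr{E}(G)$, testing against smooth functions times a cutoff in $A(G)$), so $\pi_*$ is continuous $A_K(G)\to\mathscr{E}'(H)$. Convergence in $A(H)$ implies convergence in $\mathscr{D}'(H)$. Hence $\pi_*$ is bounded on each $A_K(G)$. Since $A_c(G)$ carries the final topology, $\pi_*$ is continuous on $A_c(G)$.

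\emph{(3)$\Rightarrow$(1).} This is the substantive direction. Fix a compact $K\subseteq G$ and choose $\chi\in\mathscr{D}(G)$ with $\chi=1$ on a neighbourhood of $K$. Let $m\in\operatorname{M}^{p,q}_K(G)$, so $m=\chi m$. We must bound $\lvert\langle\pi_*m,fg\rangle\rvert=\lvert\langle m,\pi^*(f)\pi^*(g)\chi\rangle\rvert$ for $f,g\in\mathscr{D}(H)$ with $\lVert\hat f\rVert_{L^p(\widehat H)},\lVert\hat g\rVert_{L^{q'}(\widehat H)}\le1$. Write $\chi=\chi_1\chi_2$ with $\chi_i\in\mathscr{D}(G)$, each equal to $1$ near $K$. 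Then
\[
\lvert\langle m,\pi^*(f)\chi_1\cdot\pi^*(g)\chi_2\rangle\rvert\le\lVert m\rVert_{\operatorname{M}^{p,q}(G)}\,\lVert(\pi^*f\,\chi_1)^\wedge\rVert_{L^p(\widehat G)}\,\lVert(\pi^*g\,\chi_2)^\wedge\rVert_{L^{q'}(\widehat G)}.
\]
It therefore suffices to prove the key estimate: for each $\chi_1\in\mathscr{D}(G)$, the map $f\mapsto\pi^*(f)\chi_1$ is bounded from $\mathscr{F}_H^{-1}L^r(\widehat H)$ to $\mathscr{F}_G^{-1}L^r(\widehat G)$ for every $r\in[1,\infty]$, with a bound depending only on $\chi_1$.

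We prove the key estimate by duality from (3). By \autoref{proposition:characLp}, the $L^r$ norm of $(\pi^*f\,\chi_1)^\wedge$ is the supremum of
\[
\lvert\langle\pi^*f\,\chi_1,h\rangle\rvert=\lvert\langle f,\pi_*(\chi_1h)\rangle\rvert
\]
over $h\in\mathscr{D}(G)$ with $\lVert\hat h\rVert_{L^{r'}}\le1$. So the key estimate reduces to showing that $h\mapsto\pi_*(\chi_1h)$ is bounded $\mathscr{F}_G^{-1}L^{r'}\to\mathscr{F}_H^{-1}L^{r'}$. For $r'=1$ this is exactly (3) combined with $\lVert\chi_1h\rVert_{A(G)}\le\lVert\chi_1\rVert_{A}\lVert h\rVert_{A}$. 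For $r'=\infty$ we need the bound $\lVert\pi_*(\chi_1h)\rVert_{A'(H)}\lesssim\lVert h\rVert_{A'(G)}$, i.e.\ by duality boundedness of $u\mapsto\chi_1\,\pi^*u$ from $A(H)$ to $A(G)$. This holds because $\pi^*$ maps $A(H)$ into $B(G)$ contractively, and multiplying by $\chi_1\in A(G)$ lands in $A(G)$.

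The general $r'$ then follows by complex interpolation between the endpoints $1$ and $\infty$, using that $L^r(\widehat{\cdot})$ is the interpolation space of the compatible pair $(L^\infty,L^1)$. The main obstacle is making this interpolation rigorous. The operator is initially defined only on $\mathscr{D}(G)$, and the pairs must be compatible in the sense of Izumi, so one has to check that $h\mapsto\pi_*(\chi_1h)$ extends consistently to $L^\infty(\widehat G)+L^1(\widehat G)$. We would handle this by defining the map on $A'(G)+A(G)$ at the distribution level, where both endpoint maps agree, before interpolating.
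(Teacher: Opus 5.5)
Your proof is correct and follows the paper's route: (i)$\Rightarrow$(ii) via \autoref{lemma:M11cisAc}, (ii)$\Rightarrow$(iii) by the closed graph theorem, and (iii)$\Rightarrow$(i) by reducing to uniform $L^r$-bounds for $f\mapsto\varphi\,\pi^*f$, whose endpoints rest on the same two facts you use, namely $\lVert\varphi\,\pi^*u\rVert_{A(G)}\leq\lVert\varphi\rVert_{A(G)}\lVert u\rVert_{A(H)}$ and hypothesis (iii) after transposition. The only difference is that you interpolate the transposed operator $h\mapsto\pi_*(\chi_1h)$ and then dualise back, which is a harmless detour; the paper interpolates $f\mapsto\varphi\,\pi^*f$ directly. (At $r=1$ your dualisation falls outside \autoref{proposition:characLp}, but there the bound is just the pullback estimate itself.)
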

\begin{proof}
    The implication (i)~$\Rightarrow$~(ii) is obvious since $A_c(\cdot)=\operatorname{M}^{1,1}_c(\cdot)$ (\autoref{lemma:M11cisAc}) while the implication (ii)~$\Rightarrow$~(iii) is an easy consequence of the closed graph theorem since for every compact subset $K$ of $G$, the topology of $A_K(G)\times A(H)$ is finer than the one induced by $\mathscr{E}'(G)\times\mathscr{D}'(H)$. We shall only detail the proof of (iii)~$\Rightarrow$~(i).

    Suppose that $\pi_*$ continuously maps $A_c(G)$ in $A(H)$ and let $K$ be a compact subset of $G$. Let $L$ be a compact neighbourhood of $K$ in $G$ and let $\varphi\in\mathscr{D}(G)$ be such that $\varphi=1$ on a neighbourhood of $K$ in $L$ and $\varphi=0$ outside of $L$. There exists a constant $B\geq 0$ such that $\lVert\pi_*(\varphi f)\rVert_{A(H)}\leq B\lVert f\rVert_{A(G)}$ for every $f\in A(G)$. For every $m\in\mathscr{E}'(G)$ supported in $K$ and every $f,g\in\mathscr{D}(H)$, it then holds that
	\begin{align*}
		\langle(\pi_*m)f,g\rangle
		&=\langle\pi_*m,fg\rangle\\
		&=\langle m,\pi^*(fg)\rangle
		=\langle m,\pi^*(f)\pi^*(g)\rangle
		=\langle m,\varphi\pi^*(f)\varphi\pi^*(g)\rangle
	\end{align*}
	and therefore that
	\[
		\lvert\langle(\pi_*m)f,g\rangle\rvert\leq\lVert m\rVert_{\operatorname{M}^{p,q}(G)}\lVert\mathscr{F}_G(\varphi\mkern1.5mu\pi^*f)\rVert_{L^p(\widehat{G{}})}\lVert\mathscr{F}_G(\varphi\mkern1.5mu\pi^*g)\rVert_{L^{q\mathrlap{'}}(\widehat{G{}})}
	\]
	for every $p,q\in[1,\infty]$. Hence, to conclude the proof, it is sufficient to prove the existence of a constant $C\geq 0$ such that $\lVert\mathscr{F}_G(\varphi\mkern1.5mu\pi^*f)\rVert_{L^r(\widehat{G{}})}\leq C^{1/2}\lVert\mathscr{F}_H(f)\rVert_{L^r(\widehat{H{}})}$ for every $f\in\mathscr{D}(H)$ and every $r\in[1,\infty]$. By complex interpolation, it is then sufficient to consider the cases $r=\infty$ and $r=1$ only.
	
	From the functorial properties of the Fourier-Stieltjes algebra with respect to pullbacks (see \cite[2.20, 1\textsuperscript{o}]{Eymard}), one can see that
	\begin{align*}
		\lVert\mathscr{F}_G(\varphi\mkern1.5mu\pi^*f)\rVert_{L^1(\widehat{G{}})}
		&=\lVert\varphi\mkern1.5mu\pi^*f\rVert_{B(G)}\\
		&\leq\lVert\varphi\rVert_{A(G)}\lVert\pi^*f\rVert_{B(G)}
		\leq\lVert\varphi\rVert_{A(G)}\lVert f\rVert_{B(H)}
		=\lVert\varphi\rVert_{A(G)}\lVert\mathscr{F}_H(f)\rVert_{L^1(\widehat{H{}})}
	\end{align*}
	for every $f\in\mathscr{D}(G)$. This proves the case $r=1$. We now treat the case $r=\infty$. Since $\lVert\pi_*(\varphi g)\rVert_{A(G)}\leq B\lVert g\rVert_{A(G)}$ for every $g\in\mathscr{D}(G)$, it follows by transposition that
	\begin{align*}
		\lVert\mathscr{F}_G(\varphi\mkern1.5mu\pi^*f)\rVert_{L^\infty(\widehat{G{}})}
		&=\lVert\varphi\mkern1.5mu\pi^*f\rVert_{A'(G)}
		\leq B\lVert f\rVert_{A'(G)}=B\lVert\mathscr{F}_H(f)\rVert_{L^\infty(\widehat{H{}})}.
	\end{align*}
	for every $f\in\mathscr{D}(H)$, which concludes the proof.
\end{proof}
\begin{remark}\label{remark:estimate}
    Suppose that the conclusion of Jodeit's theorem holds for $\pi$. The proof of \autoref{lemma:pq11} provides a somewhat concrete way to estimate the $\operatorname{M}^{p,q}$ norm of a pushed forward symbol from that of the original symbol: if $K$ is a compact subset of $G$ and $\varphi\in\mathscr{D}(G)$ is such that $\varphi=1$ on a neighbourhood of $K$ in $G$, then the usual bounds obtained by complex interpolation can be rewritten into the relation
    \[
        \lVert\pi_*\rVert_{\operatorname{M}^{p,q}_K(G)\to \operatorname{M}^{p,q}(H)}\leq\lVert\varphi\rVert_{A(G)}^{\frac 1{p\mathrlap{'}}+\frac 1q}\lVert f\mapsto\pi_*(\varphi f)\rVert_{A(G)\to A(H)}^{\frac 1p+\frac 1{q\mathrlap{'}}}.
    \]
\end{remark}
\begin{remark}
    When $G$ is abelian, it is somewhat well known that if $\pi$ is open, then $\pi_*(A_c(G))$ is contained in $A(H)$. Some interesting norm estimates were obtained by A.~Fig\'a-Talamanca and G.~Gaudry (see \cite[cor.~4a of th.~4]{CowlingExtensionLpLq}) but involve constants that are not very concrete.
\end{remark}
\begin{remark}\label{remark:abelianpushAcinAc}
    Here is another argument for why $\pi_*(A_c(G))\subseteq A(H)$ when $G$ is abelian and $\pi$ is open. Every function in $A_c(G)$ is a linear combination of four continuous positive definite functions and for every compact subset $K$ of $G$, there is a linear combination of four compactly supported positive definite continuous functions which is equal to $1$ on $K$ \cite[lemma~3.2]{Eymard}. Since the product of two positive definite continuous functions is positive definite \cite[th.~13]{Godement}, every function in $A_c(G)$ is a linear combination of sixteen compactly supported positive definite continuous functions. Since $\pi$ is open, it already pushes forward compactly supported continuous functions into compactly supported continuous functions \cite[chap.~VII, \textsection\nobreak\,2, n\textsuperscript{o}\nobreak\,2]{BourbakiINTd}. Thanks to Bochner's theorem, it is then easy to check that if $f$ is a compactly supported positive definite continuous function on $G$, then the compactly supported continuous function $\pi_*f$ on $H$ is positive definite and therefore in $A(H)$: indeed the Fourier transform of $\pi_*f$ is none other than the pullback $\hat\pi^*(\hat f)$ of $\hat f$ by the homomorphism $\hat\pi$ dual of $\pi$ and is of course $\geq 0$. We will see later that this argument does not work with general non-abelian groups (\autoref{corollary:pushcspdc} of \autoref{theorem:pushrightstrict}).
\end{remark}
We can now easily obtain the necessity in \autoref{theorem:ncjodeit}.
\begin{proposition}\label{proposition:isopen}
	Let $G$ and $H$ be two locally compact groups and let $\pi$ be a continuous homomorphism from $G$ to $H$ such that the conclusion of Jodeit's theorem holds for $\pi$. Then, $\pi$ is open.
\end{proposition}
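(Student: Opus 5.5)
By \autoref{lemma:pq11}, the hypothesis on $\pi$ implies that $\pi_*$ maps $A_c(G)$ into $A(H)$. Since $\mathscr{D}(G)\subseteq A_c(G)$, it follows that for every $f\in\mathscr{D}(G)$ the distribution $\pi_*f$ on $H$ is an element of $A(H)$. We shall only use this for non-negative $f$. The plan is to show that the image under $\pi$ of every compact neighbourhood of $e_G$ has non-empty interior in $H$, and then to conclude with the standard $KK^{-1}$ argument.

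Fix a compact neighbourhood $K$ of $e_G$ and choose $f\in\mathscr{D}(G)$ with $f\geq 0$, $\operatorname{supp} f\subseteq K$ and $\int f\,d\mu_G>0$. Let $h\in A(H)$ be such that $\pi_*f=h$. Here $h$ is identified with the measure $h\,\mu_H$, and $h$ is a continuous function since $A(H)\subseteq\mathscr{C}_0(H)$.

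\begin{enumerate}
\item For every $g\in\mathscr{D}(H)$ with $g\geq 0$ we have $\langle h,g\rangle=\langle f,\pi^*g\rangle=\int_G f\,(g\circ\pi)\,d\mu_G\geq 0$. Hence $h\geq 0$ locally almost everywhere, and therefore everywhere by continuity.
\item Take $g\in\mathscr{D}(H)$ with $g\geq 0$ and $g=1$ on the compact set $\pi(K)$. Then $\int_H gh\,d\mu_H=\int_G f\,d\mu_G>0$, so $h$ is not identically zero.
\item If $g\in\mathscr{D}(H)$ is supported in the open set $H\setminus\pi(K)$, then $\pi^*g$ vanishes on $K$, so $\langle h,g\rangle=0$. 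Thus $h=0$ outside $\pi(K)$.
\end{enumerate}

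Consequently the non-empty open set $\{h>0\}$ is contained in $\pi(K)$, so $\pi(K)$ has non-empty interior.

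Now let $U$ be an arbitrary neighbourhood of $e_G$ and choose a compact neighbourhood $K$ of $e_G$ with $KK^{-1}\subseteq U$. Let $V$ be a non-empty open subset of $\pi(K)$. Then $VV^{-1}\subseteq\pi(K)\pi(K)^{-1}=\pi(KK^{-1})\subseteq\pi(U)$, and $VV^{-1}$ is an open neighbourhood of $e_H$. So $\pi(U)$ is a neighbourhood of $e_H$ for every neighbourhood $U$ of $e_G$. For a homomorphism this gives openness: if $W$ is open in $G$ and $x\in W$, then $\pi(W)\supseteq\pi(x)\pi(x^{-1}W)$, which is a neighbourhood of $\pi(x)$.

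The only delicate point is the identification of the distribution $\pi_*f$ with a genuine continuous, non-negative function on $H$. This rests on $A(H)\subseteq\mathscr{C}_0(H)$, on the way $A(H)$ was embedded into $\mathscr{D}'(H)$ via $\mu_H$, and on testing against non-negative $g\in\mathscr{D}(H)$. The existence of such test functions, including ones equal to $1$ on a given compact set or supported in a given open set, is standard in Bruhat's framework. Everything else is elementary.
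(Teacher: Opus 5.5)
Your proof is correct, but it takes a different route from the paper's at the decisive step. Both arguments start the same way: by \autoref{lemma:pq11}, for non-negative $f\in\mathscr{D}(G)$ supported in a small compact set, $\pi_*f$ lies in $A(H)$, is therefore continuous, and has support in $\pi(\operatorname{supp}f)$.

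The paper then shows in one stroke that $\pi(V)$ is a neighbourhood of $e_H$. It evaluates at the identity,
$(\pi_*f)(e_H)=(\pi^*\pi_*f)(e_G)=(f*(\mu_N)^\flat)(e_G)=\int_Nf\,d\mu_N>0$.
This uses the formula $\pi^*\pi_*f=f*(\mu_N)^\flat$ of \autoref{proposition:pushopenD}, and it tacitly requires choosing $f(e_G)>0$.

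You never evaluate at a specific point. You use only three facts:
\begin{enumerate}
\item $\pi_*f$ is the image of a positive measure, so $\pi_*f\geq 0$;
\item it has positive total mass, so it is not identically zero;
\item it vanishes off $\pi(K)$.
\end{enumerate}
Together these give a non-empty open set $\{\pi_*f>0\}\subseteq\pi(K)$. You then recentre at $e_H$ with the inclusion $VV^{-1}\subseteq\pi(KK^{-1})$.

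The paper's route is shorter and gives an explicit positive value of $\pi_*f$ at $e_H$. Yours needs nothing about how $\mu_G$, $\mu_N$ and $\mu_H$ are related. This matters because \autoref{proposition:pushopenD} is stated only for open $\pi$, so invoking it inside the proof that $\pi$ is open presupposes the conclusion, unless that formula is justified independently for the non-open case. Your argument is therefore the more self-contained of the two. Since it works for every compact neighbourhood of $e_G$, it also needs no $\sigma$-compactness or open mapping theorem.
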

\begin{proof}
	Let $V$ be a neighbourhood of $e_G$ in $G$. We will show that $\pi(V)$ is a neighbourhood of $e_H$ in $H$. Let $f$ be a non-zero non-negative smooth function on $G$ whose support is compact and contained in $V$. The set $\pi(V)$ contains the support of the distribution $\pi_*f$. Since $f\in A_c(G)$, the distribution $\pi_*f$ is actually a continuous function (\autoref{lemma:pq11}) and to conclude the proof, it is therefore sufficient to prove that $(\pi_*f)(e_H)\neq 0$. This however follows from
	\[
		(\pi_*f)(e_H)=(\pi^*\pi_*f)(e_G)=(f*(\mu_N)^\flat)(e_G)=\int_Nf\,d\mu_N>0
	\]
	(\autoref{proposition:pushopenD}) since $f$ has to be $>0$ on a neighbourhood of $e_G$ in $N$.
\end{proof}

\subsection{Schur multipliers lemmas}
Our proof that $\pi_*$ maps $A_c(G)$ in $A(H)$ when $\pi$ is open will rely on two important results about completely bounded Schur multipliers of Schatten classes. The first of these results is a transference theorem from Schur multipliers to Fourier multipliers and the second is a pullback theorem of symbols of Schur multipliers. These will merely be variants of \cite[th.~3.1 and lemma~2.1]{ParcetdelaSalleTablate}.
\vskip\baselineskip
Let $X$ and $X'$ be two abstract measure spaces. If $1\leq p<\infty$ (\emph{resp.}\ $p=\infty$), we shall denote by ${S}^p(X,X')$ the space of all bounded linear operators $A$ from $L^2(X)$ to $L^2(X')$ such that $\operatorname{Tr}((A^*A)^{p/2})$ is finite (\emph{resp.}\ the space of all compact linear operators from $L^2(X)$ to $L^2(X')$). ${S}^\infty(X,X')$ will be endowed with the operator space structure obtained by viewing the elements of ${S}^\infty(X,X')$ as operators on $L^2(X)\oplus L^2(X')$ \cite[p.~22]{PisierIntrotoOS}. ${S}^1(X,X')$ will be endowed with the operator space structure \emph{opposite} of that induced by the dual of ${S}^\infty(X',X)$ \cite[n\textsuperscript{o}\nobreak\,2.10]{PisierIntrotoOS} and ${S}^p(X,X')$, if $1<p<\infty$, will be endowed with the operator space structure obtained by the lower complex interpolation method \cite[n\textsuperscript{o}\nobreak\,2.7]{PisierIntrotoOS}.

If $1\leq p\leq 2$, ${S}^p(X,X')$ can be viewed as a subspace of $L^2(X'\times X)$, with equality if $p=2$. If $p>2$ and $X$ and $X'$ are measurable subsets of locally compact groups, ${S}^p(X,X')$ can be viewed as a space of (germs of) distributions thanks to the Schwartz kernel theorem \cite[th.~3]{Bruhat}. The algebraic tensor product $L^2(X')\otimes L^2(X)$ can always be identified with a subspace of ${S}^p(X,X')$. We shall denote by $\operatorname{MS}^p(X,X')$ (\emph{resp.}\ $\operatorname{cbMS}^p(X,X')$) the space of all $m\in L^\infty(X'\times X)$ such that the map $k\mapsto m\cdot k$ from $L^2(X')\otimes L^2(X)$ to ${S}^2(X\times X')$ can be extended into a bounded (\emph{resp.}\ completely bounded) linear operator on ${S}^p(X,X')$ and norm it by the operator norm (\emph{resp.}\ completely bounded operator norm).
\vskip\baselineskip
We first turn to the transference theorem. If $m\in\mathscr{D}'(G)$, we will denote by $\tilde m$ the image of $m$ by the transpose of the continuous linear map from the completed inductive tensor product $\mathscr{D}(G)\overline\otimes\mathscr{D}(G)\simeq\mathscr{D}(G\times G)$ to $\mathscr{D}(G)$ associated to the separately continuous bilinear map $(g,f)\mapsto g*\bar f^\sharp$, so that if $f\in\mathscr{E}(G)$ and $k\in\mathscr{E}(G\times G)$ is the kernel distribution of the operator $g\mapsto f*g$, then $\tilde m\cdot k$ is the kernel distribution of the operator $g\mapsto (mf)*g$. In particular, $\tilde m(s,t)=m(st^{-1})$ when $m$ is a sufficiently regular function. If $X$ is a measurable subset of $G$ endowed with the restriction of the left Haar measure, we will identify $L^2(X)$ with the subspace of $L^2(G)$ consisting of the equivalence classes of square integrable functions on $G$ having a representative vanishing in the complement of $X$ in $G$. In this case, if $X$ and $X'$ are two measurable subsets of $G$, there is a canonical continuous linear map from the completed projective tensor product $L^2(X')\widehat{\otimes} L^2(X)$ to $A(G)$, which is associated to the continuous bilinear map $(g,f)\mapsto g*\bar f^\flat$.

The following statement synthesises the known local and global transference theorems for Schur and Fourier multipliers. Its main improvement over \cite[th.~3.1]{ParcetdelaSalleTablate} is its more precise quantitative nature.
\begin{theorem}\label{theorem:SFtransference}
    Let $X$ and $X'$ be two measurable subsets of $G$. Let $m\in L^\infty(G)$, let $p\in[1,\infty]$ and let $C\geq 0$. Suppose that for the weak topology $\sigma(L^\infty(G),L^1(\supp m))$, the constant function $1$ on $G$ is a limit point of the canonical image in $A(G)$ of
    \[
        \left\{k\in L^2(X')\widehat{\otimes} L^2(X)~\middle\vert~\lVert k\rVert_{L^2(X')\widehat{\otimes} L^2(X)}\leq C\right\}.
    \]
    Then, for $m$ to belong to $\operatorname{M}^{p,p}(G)$ (\emph{resp.}\ $\operatorname{cbM}^{p,p}(G)$), it is sufficient (\emph{resp.}\ necessary and sufficient) that the restriction of $\tilde m$ to $X'\times X$ belongs to $\operatorname{MS}^p(X,X')$ (\emph{resp.}\ $\operatorname{cbMS}^p(X,X')$), in which case
	\[
		\lVert m\rVert_{\operatorname{M}^{p,p}(G)}\leq C\lVert\tilde m\rVert_{\operatorname{MS}^p(X,X')}
	\]
	\[
		\left(\text{resp.}\quad\lVert\tilde m\rVert_{\operatorname{cbMS}^p(G,G)}\leq\lVert m\rVert_{\operatorname{cbM}^{p,p}(G)}\leq C\lVert\tilde m\rVert_{\operatorname{cbMS}^p(X,X')}\right).
	\]
\end{theorem}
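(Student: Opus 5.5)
The plan is to prove the two inequalities separately. The right-hand inequality is a duality-and-approximation argument. The left-hand inequality (the cb lower bound) is the standard intertwining argument. Throughout, $m\in L^\infty(G)$ is paired with functions in $L^1(\supp m)$. This is what makes the weak hypothesis $\sigma(L^\infty(G),L^1(\supp m))$ usable.

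\emph{Upper bound.} By \autoref{proposition:characLp} and the definition of $\lVert\cdot\rVert_{\operatorname{M}^{p,p}(G)}$, it suffices to bound $\lvert\langle m,fg\rangle\rvert$ for $f,g\in\mathscr{D}(G)$ by $C\lVert\tilde m\rVert_{\operatorname{MS}^p(X,X')}\lVert\hat f\rVert_{L^p(\widehat{G{}})}\lVert\hat g\rVert_{L^{p\mathrlap{'}}(\widehat{G{}})}$. Since $mfg\in L^1(\supp m)$-type pairings are continuous for the stated weak topology, I pick a net $(k_\alpha)$ in the $C$-ball of $L^2(X')\widehat\otimes L^2(X)$ whose images $\Phi(k_\alpha)\in A(G)$ converge to $1$. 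Then
\[
\langle m,fg\rangle=\lim_\alpha\langle m,fg\,\Phi(k_\alpha)\rangle .
\]
As $k\mapsto\langle m,fg\,\Phi(k)\rangle$ is bilinear in the factors $(a,b)\in L^2(X')\times L^2(X)$, it then suffices to prove, for elementary tensors $a\otimes b$,
\[
\lvert\langle m,fg\,(a*\bar b^\flat)\rangle\rvert\leq\lVert a\rVert_2\lVert b\rVert_2\,\lVert\tilde m\rVert_{\operatorname{MS}^p(X,X')}\lVert\hat f\rVert_{L^p}\lVert\hat g\rVert_{L^{p\mathrlap{'}}} .
\]
The projective norm then gives the factor $C$.

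The key computation identifies $\langle m,fg\,(a*\bar b^\flat)\rangle$ with a trace pairing
\[
\operatorname{Tr}\bigl(M_{\tilde m}(A)\,B\bigr),
\]
where $A$ is the compression to $L^2(X)\to L^2(X')$ of $\lambda(f)$ twisted by the vectors $a,b$, and $B$ is a similar operator built from $\lambda(g)$. This uses the fact that the kernel of $h\mapsto(mf)*h$ is $\tilde m$ times the kernel of $h\mapsto f*h$, which is exactly how $\tilde m$ was defined. One then needs the Hölder-type estimates
\[
\lVert A\rVert_{S^p(X,X')}\lesssim\lVert a\rVert_2^{\theta}\lVert b\rVert_2^{1-\theta}\lVert\hat f\rVert_{L^p(\widehat{G{}})},
\]
with complementary exponents for $B$, so that the product of the two bounds yields $\lVert a\rVert_2\lVert b\rVert_2$.

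I would prove these estimates only at the endpoints and interpolate. At $p=\infty$ the bound is the operator norm of $\lambda(f)$. At $p=1$ it follows from the factorisation of $A(G)$-elements through $L^2$. For general $p$, I would interpolate using Izumi's $(\mathscr{M},\mathscr{M}_\star)$ lower complex method, which is compatible with the $S^p$ interpolation scale. I expect this step to be the main obstacle. In the non-unimodular case the Plancherel weight is not a trace, so the modular function must be placed carefully, using $f^\sharp$ versus $f^\flat$. The point is to make the maps $\hat f\mapsto A$ genuinely bilinear-interpolable between $L^\infty(\widehat{G{}})\to S^\infty$ and $L^1(\widehat{G{}})\to S^1$. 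The cb version follows identically after tensoring with $S^p_n$.

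\emph{Lower bound.} For $\lVert\tilde m\rVert_{\operatorname{cbMS}^p(G,G)}\leq\lVert m\rVert_{\operatorname{cbM}^{p,p}(G)}$ I would use the multiplicative unitary $W$ on $L^2(G\times G)$, given by $W\xi(s,t)=\xi(s,s^{-1}t)$ up to modular factors. Conjugation $x\mapsto W^*(1\otimes x)W$ intertwines $1\otimes m(D)$ with the Schur multiplier $M_{\tilde m}$ acting on $B(L^2(G))\bar\otimes L(G)$. This conjugation is weight-preserving for the tensor weight, so it extends isometrically to the $L^p$ levels, following Caspers–de la Salle. Complete boundedness of $m(D)$ then gives complete boundedness of $\mathrm{id}\otimes m(D)$ on $S^p[L^p(\widehat{G{}})]$. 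Finally, compressing by the rank-one slices gives the Schur multiplier on $S^p(G,G)$ with the stated norm.
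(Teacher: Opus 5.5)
Your proposal is correct and follows essentially the paper's route. The upper bound combines the trace-duality identity of \autoref{lemma:magicformula} with the endpoint-plus-interpolation estimate of \autoref{lemma:SpfromLp}. The twist $\lvert b\otimes\Delta_Ga\rvert^{-1/p'}$ must vary with $p$, so the step you flag as the main obstacle is handled there by Cwikel--Janson interpolation of the analytic family $z\mapsto(b\otimes a)(\lvert b\rvert\otimes\Delta_G\lvert a\rvert)^{z-1}\tilde g$. This is done after reducing, by lower semi-continuity, to $a,b$ finite combinations of indicators of compact sets. The cb lower bound is exactly the Caspers--de la Salle intertwining theorem you sketch, which the paper simply cites, and the cb upper bound is obtained by passing to $G\times\operatorname{SU}(2)$ rather than tensoring with $S^p_n$.
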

\noindent In particular, if $m$ is supported in a non-negligible compact subset $K$ of $G$ and $L$ is another non-negligible compact subset of $G$, then one has
\[
    \lVert m\rVert_{\operatorname{M}^{p,p}(G)}\leq\biggl(\frac{\mu_G(KL)}{\mu_G(L)}\biggr)^{\mkern-5mu 1/2}\lVert\tilde m\rVert_{\operatorname{MS}^p(L,KL)}
\]
by considering the function ${\boldsymbol 1}_{KL}*(\mu_G(L)^{-1}{\boldsymbol 1}_L)^\flat$ which is $=1$ on $K$.
\vskip\baselineskip
The proof of \autoref{theorem:SFtransference} we will present closely follows those of M.~Caspers and M.~de~la~Salle from \cite{CaspersdelaSalle} and of S.~Neuwirth and \'E.~Ricard from \cite[\textsection\nobreak\,2]{NeuwirthRicard}. We will first need the following lemma. It is only a variant of M.~Caspers and M.~de~la~Salle's \cite[th.~5.1]{CaspersdelaSalle} but we shall still provide a complete proof for the convenience of the reader.
\begin{lemma}\label{lemma:SpfromLp}
    Let $a\in L^2(X)$, let $b\in L^2(X')$, let $p\in[1,\infty]$ and let $f\in\mathscr{D}(G)$. Then,
    \[
        \Bigl\lVert(b\otimes a)\lvert b\otimes(\Delta_Ga)\rvert^{-1/p'}\tilde f\Bigr\rVert_{{S}^p(X,X')}\leq\Bigl(\lVert b\rVert_{L^2(X')}\lVert a\rVert_{L^2(X)}\Bigr)^{1/p}\lVert\hat f\rVert_{L^p(\widehat{G{}})}.
    \]
\end{lemma}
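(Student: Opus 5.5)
The plan is to prove the two endpoint cases $p=1$ and $p=\infty$ directly and obtain the intermediate exponents by complex interpolation, in the spirit of Caspers--de~la~Salle.

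The interpolation is done in $f$, viewing $\hat f$ as the variable in $L^p(\widehat{G{}})$. Write $\Phi_z(\hat f)=(b\otimes a)\lvert b\otimes(\Delta_Ga)\rvert^{-z}\tilde f$ on the strip $0\leq\Re z\leq 1$, and multiply by the scalar $(\lVert b\rVert\lVert a\rVert)^{z-1}$. Then the case $z=1-1/p=1/p'$ corresponds to $S^p$. At $\Re z=0$ we need $L^\infty\to S^\infty$ with constant $1$; at $\Re z=1$ we need $L^1\to S^1$ with constant $\lVert b\rVert\lVert a\rVert$. Both purely imaginary powers $\lvert b\otimes\Delta_Ga\rvert^{-it}$ have modulus one and can be absorbed into $a,b$ (replace $a$ by $a\lvert\Delta_Ga\rvert^{-it}$ and so on), which leaves the norms unchanged. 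By Stein interpolation applied to the compatible pairs $(L^\infty,L^1)$ and $(S^\infty,S^1)$, this yields the bound.

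\emph{Endpoint $p=\infty$.} Here $\tilde f(s,t)=f(st^{-1})$ is the kernel of the convolution operator $g\mapsto f*g$, that is, of $\lambda_G(f)=\hat f$. Multiplying the kernel by $b(s)a(t)$ gives the operator $M_b\,\hat f\,M_a$ compressed from $L^2(X)$ to $L^2(X')$. Since $\lvert b\otimes\Delta_Ga\rvert^{0}$ is just the indicator of the support, the operator norm is at most $\lVert\hat f\rVert_{L^\infty}$ times $\lVert a\rVert_\infty\lVert b\rVert_\infty$. This is wrong for $L^2$ functions, so the normalisation must instead be the one in which $\lvert b\otimes a\rvert^{1}$ is divided out. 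I therefore reorganise the setup: the quantity in the lemma is really $\operatorname{sgn}(b)\operatorname{sgn}(a)\lvert b\rvert^{1/p}\lvert a\rvert^{1/p}\Delta_G^{-1/p'}\cdots$. The $p=\infty$ endpoint then has the unimodular factor $\operatorname{sgn}(b\otimes a)$ times $\Delta_G(t)^{-1}$-type weights that compensate the modular function, and the claim is that $\operatorname{sgn}(b)\,\hat f\,\operatorname{sgn}(a)$ (suitably weighted) is bounded by $\lVert\hat f\rVert$. I expect this to hold by checking that the weight turns left into right convolution; this is the main obstacle, namely tracking $\Delta_G$ correctly in the non-unimodular case.

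\emph{Endpoint $p=1$.} The factor is $b(s)a(t)\Delta_G(t)^{0}f(st^{-1})$, and we need its trace-class norm to be at most $\lVert a\rVert_2\lVert b\rVert_2\lVert f\rVert_{A(G)}$. Write $f=h*\bar k^\flat$ with $\lVert h\rVert_2\lVert k\rVert_2$ close to $\lVert f\rVert_{A(G)}$. Then $f(st^{-1})=\int h(sr)\overline{k(tr)}\,dr$ (up to modular factors), so the kernel is $\int (b\,\rho_rh)\otimes\overline{(\bar a\,\rho_r k)}\,dr$, an integral of rank-one operators. Its trace norm is at most $\int\lVert b\,\rho_rh\rVert_2\lVert a\,\rho_rk\rVert_2\,dr$. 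Applying Cauchy--Schwarz and Fubini, $\int\lvert b(s)\rvert^2\lvert h(sr)\rvert^2\,ds\,dr=\lVert b\rVert_2^2\lVert h\rVert_2^2$, where the modular function appears exactly when changing variables in $r$. This explains the factor $\Delta_Ga$ in the statement.

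Finally, interpolating between these two endpoints gives the inequality for all $p$, with the density of $\mathscr{F}_G(\mathscr{D}(G))$ from \autoref{proposition:characLp} making the argument rigorous.
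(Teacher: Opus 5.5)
Your architecture is the paper's: prove the endpoints $p=1$ and $p=\infty$, then interpolate along an analytic family whose imaginary powers are absorbed into $a$ and $b$. Your $\Phi_z$ is the paper's $T_z$ reparametrised by $z\mapsto 1-z$.

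\textbf{The $p=1$ endpoint: a valid, more elementary route.} The paper cites Bo\.zejko--Fendler, $\lVert\tilde f\rVert_{\operatorname{cbMS}^1(G,G)}=\lVert f\rVert_{\operatorname{cbMA}(G)}\leq\lVert f\rVert_{A(G)}$. You instead write $f=h*\bar k^\flat$. Then $f(st^{-1})=\int_Gh(sr)k(tr)\,d\mu_G(r)$ \emph{exactly}, and you bound the trace norm of the resulting integral of rank-one kernels by Cauchy--Schwarz. This is correct and self-contained: it is the elementary fact that functions in $A(G)$ give Schur multipliers of norm at most their $A(G)$ norm, which is all the lemma needs. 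Contrary to your remark, however, no modular function enters this computation. The identity $\int_G\lvert h(sr)\rvert^2\,d\mu_G(r)=\lVert h\rVert_{L^2(G)}^2$ is plain left invariance. The factor $\Delta_G$ in the statement is dictated by the other endpoint.

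\textbf{The gap: the $p=\infty$ endpoint.} You leave this endpoint as an expectation and misdiagnose it. Your starting claim, that $\tilde f(s,t)=f(st^{-1})$ is the kernel of $g\mapsto f*g$, is false for non-unimodular $G$. With respect to $\mu_G$ one has
\[
    (f*g)(s)=\int_Gf(st^{-1})\Delta_G(t)^{-1}g(t)\,d\mu_G(t),
\]
so the kernel of $\hat f=\lambda_G(f)$ is $(1\otimes\Delta_G^{-1})\tilde f$. At $p=\infty$ the weight is
\[
    (b\otimes a)\lvert b\otimes(\Delta_Ga)\rvert^{-1}=(\operatorname{sgn}b\otimes\operatorname{sgn}a)(1\otimes\Delta_G^{-1}).
\]
Hence the kernel in the lemma is exactly that of $M_{\operatorname{sgn}b}\,\hat f\,M_{\operatorname{sgn}a}$, compressed to an operator $L^2(X)\to L^2(X')$. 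Its norm is therefore at most $\lVert\hat f\rVert_{L^\infty(\widehat{G{}})}$. Nothing has to be turned from left into right convolution; this one line is the whole endpoint.

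\textbf{Two repairs to the interpolation step.}
\begin{enumerate}
    \item \emph{The endpoints are swapped.} With $\Phi_z$ and the scalar $(\lVert b\rVert\lVert a\rVert)^{z-1}$ as you define them, $\Re z=0$ is the $L^1(\widehat{G{}})\to S^1$ endpoint (constant $\lVert b\rVert\lVert a\rVert$, normalised to $1$). The line $\Re z=1$ is the $\mathscr{C}_0(\widehat{G{}})\to S^\infty$ endpoint. You state it the other way round, which contradicts $z=1/p'$ giving $S^p$.
    \item \emph{A preliminary reduction is missing.} Before invoking an interpolation theorem for analytic families (the paper uses Cwikel--Janson), reduce to nice $a,b$. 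Use continuity of $(b,a)\mapsto(b\otimes a)\lvert b\otimes(\Delta_Ga)\rvert^{-1/p'}\tilde f$ into $\mathscr{D}'(G\times G)$ and lower semicontinuity of the $S^p$ norm there. This lets you take $a,b$ to be finite linear combinations of indicators of compact sets. Only then is $z\mapsto\Phi_z(\hat f)$ a bounded continuous $S^1$-valued function, holomorphic in the open strip, as the theorem requires.
\end{enumerate}

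With these fixes your argument is complete, and its only real departure from the paper is the more elementary $p=1$ endpoint.
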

\begin{proof}
    If $p=\infty$, this is obvious as $(1\otimes\Delta_G^{-1})\tilde f$ is the kernel distribution of the operator $\varphi\mapsto f*\varphi$. If $p=1$, this follows from the result of M.~Bo\.zejko and G.~Fendler according to which $\lVert\tilde f\rVert_{\operatorname{cbMS}^1(G,G)}=\lVert f\rVert_{\operatorname{cbMA}(G)}$ \cite{BozejkoFendler} since
    \[
        \lVert(b\otimes a)\tilde f\rVert_{{S}^1(X,X')}\leq\lVert b\otimes a\rVert_{{S}^1(X,X')}\lVert\tilde f\rVert_{\operatorname{MS}^1(X,X')}
    \]
    and $\lVert b\otimes a\rVert_{{S}^1(X,X')}=\lVert b\rVert_{L^2(X')}\lVert a\rVert_{L^2(X)}$ and
    \[
        \lVert \tilde f\rVert_{\operatorname{MS}^1(X,X')}\leq\lVert\tilde f\rVert_{\operatorname{cbMS}^1(G,G)}=\lVert f\rVert_{\operatorname{cbMA}(G)}\leq\lVert f\rVert_{A(G)}=\lVert\hat f\rVert_{L^1(\widehat{G{}})}.
    \]
    We will obtain the desired result for the other values of $p$ using an interpolation theorem for analytic families of operators.    
    
    The function $(v,u)\mapsto(v\otimes u)\lvert v\otimes(\Delta_Gu)\rvert^{-1/p'}\tilde f$ from $L^2(X')\times L^2(X)$ to $\mathscr{D}'(G\times G)$ is clearly continuous and the function $T\mapsto\lVert T\rVert_{{S}^p(G,G)}$ from $\mathscr{D}'(G\times G)$ to $[0,+\infty]$ is clearly lower semi-continuous. For the rest of this proof, we can therefore assume that $a$ and $b$ are linear combinations of indicator functions of non-negligible compact subsets of $X$ and $X'$ respectively. In this case, it is easy to check that for every $g\in\mathscr{D}(G)$, the function
    \[
        z\mapsto T_z(\hat g)=(b\otimes a)(\lvert b\rvert\otimes(\Delta_G\lvert a\rvert))^{z-1}\tilde g
    \]
    from $[0,1]+i\mathbf{R}$ to $\mathscr{D}'(G\times G)$ takes all its values in ${S}^1(X,X')$ and, as a function from $[0,1]+i\mathbf{R}$ to ${S}^1(X,X')$, is continuous and bounded in $[0,1]+i\mathbf{R}$ and holomorphic in $\mathopen]0,1\mathclose[+i\mathbf{R}$. The same is true with ${S}^\infty(X,X')+{S}^1(X,X')$ instead of ${S}^1(X',X')$, since it continuously contains ${S}^1(X,X')$. It is then clear that
    \[
        \lVert T_{z}(\hat g)\rVert_{{S}^p(X,X')}\leq(\lVert b\rVert_{L^2(X')}\lVert a\rVert_{L^2(X)})^{\Re z}\lVert\hat g\rVert_{L^{1/\Re z}(\widehat{G{}})}
    \]
    if $\Re z=0$ or $1$. Hence, the operators $T_z$ can be extended into an analytic family of operators on $\mathscr{C}_0(\widehat{G{}})+L^1(\widehat{G{}})$ which satisfies the assumptions of \cite[th.~1]{CwikelJanson}, which concludes the proof.
\end{proof}
The raison d'être of \autoref{lemma:SpfromLp} is the following special formula relating the operators $\hat f\mapsto (b\otimes a)_\lvert b\otimes(\Delta_ G)\rvert^{-1/p'}\tilde f$ and their transposes.
\begin{lemma}\label{lemma:magicformula}
    Let $a\in L^2(G)$, let $b\in L^2(G)$, let $m\in L^\infty(G)$ and let $p\in[1,\infty]$. Denote by $A$ and $B$ the continuous linear operators from $L^p(\widehat{G{}})$ ($\mathscr{C}_0(\widehat{G{}})$ if $p=\infty$) to ${S}^p(G,G)$ and from $L^{p\mathrlap{'}}(\widehat{G{}})$ ($\mathscr{C}_0(\widehat{G{}})$ if $p=1$) to ${S}^{p\mathrlap{'}}(G,G)$ such that
    \[
        A(\hat f)=(b\otimes a)\lvert b\otimes(\Delta_ Ga)\rvert^{-1/p'}\tilde f\quad\text{and}\quad B(\hat f)=\lvert b\otimes a\rvert\lvert b\otimes(\Delta_ Ga)\rvert^{-1/p'}\tilde f
    \]
    for every $f\in\mathscr{D}(G)$. Then, for every $f\in\mathscr{D}(G)$,
    \[
        {}^{t}B\bigl(\tilde m\cdot A(\hat f)\bigr)=\mathscr{F}_G\bigl((b*\bar a^\flat)mf\bigr).
    \]
\end{lemma}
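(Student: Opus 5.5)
The plan is to prove the identity weakly, by testing both sides against $\mathscr{F}_G(\mathscr{D}(G))$. By \autoref{proposition:characLp}, $\mathscr{F}_G(\mathscr{D}(G))$ is dense in $L^{p'}(\widehat{G{}})$ for $p'<\infty$, and in $\mathscr{C}_0(\widehat{G{}})$ by definition when $p=1$. So it suffices to show that for every $g\in\mathscr{D}(G)$,
\[
	\bigl\langle{}^{t}B\bigl(\tilde m\cdot A(\hat f)\bigr),\hat g\bigr\rangle=\bigl\langle\mathscr{F}_G\bigl((b*\bar a^\flat)mf\bigr),\hat g\bigr\rangle .
\]
Here the pairings are the bilinear versions of the dualities between $L^p(\widehat{G{}})$ and $L^{p'}(\widehat{G{}})$ and between ${S}^p(G,G)$ and ${S}^{p'}(G,G)$. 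Before testing, I will check that $(b*\bar a^\flat)mf$ lies in $A_c(G)\subseteq A'(G)\cap A(G)$. This holds because $b*\bar a^\flat\in A(G)$ (it is the image of $b\otimes a$ under the canonical map $L^2\widehat{\otimes}L^2\to A(G)$), $m\in L^\infty$ and $f\in\mathscr{D}(G)$, so the right-hand side makes sense.

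By definition of the transpose, the left-hand side equals $\langle\tilde m\cdot A(\hat f),B(\hat g)\rangle_{{S}^p,{S}^{p'}}$. Lemma~\ref{lemma:SpfromLp} guarantees that $A(\hat f)\in{S}^p$ and, applied with $p'$ in place of $p$, that $B(\hat g)\in{S}^{p'}$. Since $\tilde m$ is a bounded Schur symbol on ${S}^2$, I will first reduce to $a,b$ linear combinations of indicators of compact sets, exactly as in the proof of Lemma~\ref{lemma:SpfromLp}. In that case both kernels are bounded, compactly supported functions, and both sides are continuous in $(a,b)\in L^2\times L^2$, the right-hand side through $\lVert b*\bar a^\flat\rVert_{A(G)}\leq\lVert b\rVert_2\lVert a\rVert_2$. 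For such $a,b$, the trace pairing is the integral
\[
	\iint_{G\times G}\tilde m(s,t)\,A(\hat f)(s,t)\,B(\hat g)(s,t)\,d\mu_G(s)\,d\mu_G(t).
\]

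Next I will expand the kernels, using $\tilde f(s,t)=f(st^{-1})$ together with the modular factors coming from the kernel of $\varphi\mapsto f*\varphi$. The weights multiply to $b(s)a(t)$ times powers of $\lvert b(s)\rvert$ and $\Delta_G(t)\lvert a(t)\rvert$. The key bookkeeping point is that the exponents coming from $A$ (with $p$) and from $B$ (read as the operator built with the roles of $p$ and $p'$ exchanged) add up so that these powers cancel, since $1/p+1/p'=1$. What remains is $b(s)a(t)$ times a single modular factor. This factor is exactly the one that turns $\hat g$ paired against a kernel $h(st^{-1})$ into $\int h\,g$-type expressions, via the identity $\langle\widehat T|\hat g\rangle=\langle T|g\rangle$ recalled before \autoref{proposition:characLp}. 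Finally, I will substitute $x=st^{-1}$ for fixed $t$, using left/right invariance with the appropriate $\Delta_G$. Fubini is legitimate because everything is bounded and compactly supported. The inner integral over $t$ collapses to $(b*\bar a^\flat)(x)$, leaving
\[
	\int_G(b*\bar a^\flat)(x)\,m(x)f(x)\,g(x)\,d\mu_G(x),
\]
which is the pairing of $\mathscr{F}_G((b*\bar a^\flat)mf)$ with $\hat g$.

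The main obstacle is the purely computational bookkeeping. One must make the conventions for $\tilde f$, for the ${S}^p$–${S}^{p'}$ duality (bilinear trace versus sesquilinear, which interacts with the conjugations in $\bar a^\flat$ and in $\lvert b\otimes a\rvert$), and for the modular factors in $A$ and $B$ mutually consistent. Only then do the $\lvert\cdot\rvert$-powers cancel exactly and the change of variables produce $b*\bar a^\flat$ rather than some $\Delta_G$-twisted variant. I would first verify the computation in the unimodular case with $p=2$, where no weights appear, and then track $\Delta_G$ separately.
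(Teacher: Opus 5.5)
Your proposal is correct and is essentially the paper's argument. The paper writes ${}^{t}B$ explicitly as an integral over $s$ of the kernel evaluated at $(s,t^{-1}s)$, which is your pairing against $\hat g$ after the change of variables, observes that the weights of $A$ and $B$ cancel, and recognises $\int_G b(s)a(t^{-1}s)\,d\mu_G(s)=(b*\bar a^\flat)(t)$. You were right to read $B$ with exponent $-1/p$ (roles of $p$ and $p'$ exchanged), since with $-1/p'$ as printed the weights cancel only when $p=2$.

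One small correction: $(b*\bar a^\flat)mf$ need not lie in $A_c(G)$, because $m$ is merely in $L^\infty(G)$ and multiplying by it can destroy continuity. This claim is not needed, though, since your weak formulation only uses $(b*\bar a^\flat)mf\in L^1(G)\cap L^2(G)$ and identifies both sides as distributions through their pairings with $\mathscr{D}(G)$.
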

\begin{proof}
    The transpose of $B$ is given by
    \[
        \mathscr{F}_G^{-1}(\,{}^{t}B(k))(t)=\int_{G}\lvert b(s)a(t^{-1}s)\Delta_G(t^{-1}s)\rvert^{-1/p}k(s,t^{-1}s)\,d\mu_G(s)
    \]
    for sufficiently regular kernels $k$. Moreover,
    \[
        \tilde m(s,t^{-1}s)A(\hat f)(s,t^{-1}s)
        =m(t)b(s)a(t^{-1}s)\lvert b(s)a(t^{-1}s)\Delta_G(t^{-1}s)\rvert^{-1/p'}f(t).
    \]
    Hence,
    \[
        \mathscr{F}_G^{-1}(\,{}^{t}B(\tilde m\cdot A(\hat f)))(t)
        =\left(\int_Gb(s)a(t^{-1}s)\,d\mu_G(s)\right)m(t)f(t)
        =((b*\bar a^\flat)mf)(t)
    \]
    as desired.
\end{proof}
\begin{proof}[Proof of \autoref{theorem:SFtransference}]
    The statement for the completely bounded multiplier norm follows from that for ordinary multiplier norms by replacing $G$ with its product with $\operatorname{SU}(2)$ and $m$ with $m\otimes 1$ and from \cite[th.~4.2]{CaspersdelaSalle}; we shall therefore only prove the latter.

    Let $a_0$, $a_1$, …, $a_n\in L^2(X)$ and let $b_0$, $b_1$, …, $b_n\in L^2(X')$. By the estimate given by \autoref{lemma:SpfromLp} and the special formula given by \autoref{lemma:magicformula}, there are linear operators $A_0$, $A_1$, …, $A_n$ from $L^p(\widehat{G{}})$ ($\mathscr{C}_0(\widehat{G{}})$ if $p=\infty$) to ${S}^p(G,G)$ and linear operators $B_0$, $B_1$, …, $B_n$ from $L^{p\mathrlap{'}}(\widehat{G{}})$ ($\mathscr{C}_0(\widehat{G{}})$ if $p=1$) to ${S}^{p\mathrlap{'}}(G,G)$ such that
    \[
        \left\lVert\sum_{0\leq k\leq n}{}^{t}B_k(\tilde m\,A_k(\cdot))\right\rVert_{L^p(\widehat{G{}})\to S^p(G,G)}
        \leq\lVert\tilde m\rVert_{\operatorname{MS}^p(X,X')}\sum_{0\leq k\leq n}\lVert b_k\rVert_{L^2(X')}\lVert a_k\rVert_{L^2(X)}
    \]
    and such that
    \[
        \sum_{0\leq k\leq n}{}^{t}B_k(\tilde m\,A_k(\hat f))=\mathscr{F}_G\Biggl(\Biggl(\sum_{0\leq k\leq n}b_k*\bar a_k^\flat\Biggr)mf\Biggr)
    \]
    for every $f\in\mathscr{D}(G)$.

    Since $mf$ is an integrable function whose support is compact and contained in that of $m$ and since the function $g\mapsto\lVert\hat g\rVert_{L^p(\widehat{G{}})}$ from $L^1(\supp m)$ to $[0,+\infty]$ is clearly lower semi-continuous for the weak topology of $L^1(\supp m)$, one obtains the desired estimate by choosing $a_0$, $a_1$, …, $a_n$ and $b_0$, $b_1$, …, $b_n$ such that $\sum_{0\leq k\leq n}\lVert a_k\rVert_{L^2(X)}\lVert b_k\rVert_{L^2(X')}\leq C$ and such that $\sum_{0\leq k\leq n}b_k*\bar a_k^\flat$ is very close to $1$ for the topology $\sigma(L^\infty(G),L^1(\supp m))$.
\end{proof}
We now turn to the pullback theorem. We recall that if $X$ is a Hausdorff topological space endowed with a Radon measure, $Y$ is a topological space and $\pi$ is a map from $X$ to $Y$, we say that $\pi$ is \emph{Lusin-measurable} if for every compact subset $K$ of $X$ and every $\varepsilon>0$, there exists a compact subset $L$ of $K$ such that $K\setminus L$ has measure $\leq\varepsilon$ and such that the restriction of $\pi$ of $L$ is continuous (\emph{cf.}\ \cite[chap.~I, \textsection\nobreak\,5, def.~9]{SchwartzRadon}). 
\begin{theorem}\label{theorem:schurpullback}
    Let $X$ and $Y$ (\emph{resp.}\ $X'$ and $Y'$) be two Hausdorff topological spaces, each endowed with a Radon measure, and let $\pi$ (\emph{resp.}\ $\pi'$) be a Lusin-measurable map from $X$ to $Y$. Suppose that locally negligible subsets of $Y$ (\emph{resp.}\ $Y'$) are precisely those whose preimages by $\pi$ (\emph{resp.}\ $\pi'$) are locally negligible in $X$ (\emph{resp.}\ $X'$). Let $m\in L^\infty(Y'\times Y)$ and let $p\in[1,\infty]$. For $m\circ(\pi'\times\pi)$ to belong to $\operatorname{cbMS}^p(X,X')$, it is necessary and sufficient that $m$ belongs to $\operatorname{cbMS}^p(Y,Y')$, in which case
    \[
        \lVert m\circ(\pi'\times\pi)\rVert_{\operatorname{cbMS}^p(X',X)}=\lVert m\rVert_{\operatorname{cbMS}^p(Y',Y)}.
    \]
\end{theorem}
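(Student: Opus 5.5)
Our plan is to follow the scheme of \cite[lemma~2.1]{ParcetdelaSalleTablate}: reduce the pullback statement to a \emph{measure-preserving} situation, where the Schur multiplier with symbol $m\circ(\pi'\times\pi)$ can be identified with an amplification of the Schur multiplier with symbol $m$.

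\textbf{Step 1: disintegration.} We first reduce to the case where $X$ and $Y$ (\emph{resp.}\ $X'$ and $Y'$) are $\sigma$-finite. Schur multiplier norms are suprema of norms of compressions to $L^2(K')\times L^2(K)$ with $K,K'$ compact, and $\operatorname{cbMS}^p$ norms are compatible with such compressions. By Lusin-measurability we may further cut compacts into pieces on which $\pi$ and $\pi'$ are continuous, up to negligible remainders. On such a piece the image measure $\pi(\mu_X)$ is a Radon measure on $Y$. The hypothesis on negligible sets says that this image measure is equivalent to $\mu_Y$ on the image, and that $\mu_Y$ is carried by the union of the images. We then use the disintegration theorem to write $\mu_X=\int_Y\beta_y\,d\nu(y)$, where $\nu$ is a measure equivalent to $\mu_Y$. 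Rescaling by the Radon--Nikodym density gives a unitary $L^2(X)\simeq L^2(Y,\nu;\mathscr{H})$, namely a direct integral of the spaces $L^2(\beta_y)$, and likewise for $X'$. Changing $\mu_Y$ to the equivalent measure $\nu$ does not affect Schur multiplier norms: multiplication by square roots of densities is a unitary conjugation, which commutes with Schur multiplication.

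\textbf{Step 2: identification.} Under these unitaries, a kernel on $X'\times X$ becomes an operator-valued kernel on $Y'\times Y$. Multiplication by $m\circ(\pi'\times\pi)$ becomes the Schur multiplier $m$ acting on $S^p(L^2(Y)\otimes\mathscr{H},L^2(Y')\otimes\mathscr{H}')$, with $m$ acting on the scalar variables only. This is exactly $M_m\otimes\operatorname{id}$ on the vector-valued Schatten class. When the fibres are not all the same, we embed each $L^2(\beta_y)$ into a fixed separable $\mathscr{H}$; this is harmless because compressions are complete contractions. By Pisier's description of $S^p[S^p]$ as the $S^p$ space of a tensor product, we get
\[
\lVert m\circ(\pi'\times\pi)\rVert_{\operatorname{cbMS}^p}\le\lVert m\rVert_{\operatorname{cbMS}^p}.
\]
This is the point where the \emph{complete} boundedness is essential.

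\textbf{Step 3: converse inequality.} For the reverse inequality, we embed $S^p(Y,Y')$ completely isometrically into $S^p(X,X')$. We send $L^2(Y)\to L^2(X)$ via $g\mapsto(g\circ\pi)\rho^{1/2}$, with $\rho$ the appropriate density, which is an isometry by the disintegration. Its composition with Schur multiplication by $m\circ(\pi'\times\pi)$ intertwines with Schur multiplication by $m$. Since both maps are isometries $V,V'$, the map $k\mapsto V'kV^*$ is a complete isometry of the Schatten classes. This gives $\lVert m\rVert\le\lVert m\circ(\pi'\times\pi)\rVert$, together with the necessity direction. The hypothesis that negligible sets of $Y$ are exactly those with negligible preimage is used here, to guarantee that $m$ is determined almost everywhere by $m\circ(\pi'\times\pi)$.

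\textbf{Main obstacle.} The delicate part is Step 1 in this generality. We need measurable disintegration for Lusin-measurable maps between arbitrary Hausdorff spaces carrying Radon measures, and the passage from compact pieces back to the global norm. For the latter we plan to use the lower semicontinuity of $\operatorname{cbMS}^p$ norms under increasing compressions and an exhaustion argument, treating $p=\infty$ and $p=1$ and the intermediate $p$ uniformly through the tensor description.
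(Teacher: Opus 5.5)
Your Step~3 and the compact-piece exhaustion in Step~1 match the paper: the isometric pullback of kernels is \autoref{lemma:Sppushpull}, and the reduction is \autoref{lemma:supcompact} and \autoref{lemma:pushconc} plus invariance under equivalent measures. Your Step~2, however, is a genuinely different route to $\lVert m\circ(\pi'\times\pi)\rVert_{\operatorname{cbMS}^p}\le\lVert m\rVert_{\operatorname{cbMS}^p}$.

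The paper never disintegrates. After reducing to measure-preserving maps of probability spaces, it treats countable atomic $Y,Y'$ first. There the graphs $\Gamma\subseteq X\times Y$ and $\Gamma'\subseteq X'\times Y'$ are measurable and carry measures equivalent to the product ones, so $m\circ(\pi'\times\pi)$ is the restriction of $1\otimes m$ to $\Gamma'\times\Gamma$. This is exactly your picture with $\mathscr{H}=L^2(X)$ and the trivial disintegration over atoms. General $Y,Y'$ are then written as projective limits of atomic probability spaces, and weak$^*$ lower semicontinuity of the multiplier norm finishes; the cb statement follows by taking products with atomic spaces. Your approach gives a one-step structural identification (a corner of $M_m\otimes\mathrm{id}$) with no approximation. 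The paper's approach needs only elementary measure theory.

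The price is the step you flag, and as written it does not go through in this generality. For Radon measures on non-metrizable spaces the fibre spaces need not be separable: take the projection $Z\times\{0,1\}^I\to Z$ with $I$ uncountable. So there is no fixed separable $\mathscr{H}$ and no standard direct-integral machinery. Moreover, fibre-concentrated disintegrations are not guaranteed without extra hypotheses such as strong liftings.

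Replace the pointwise disintegration by its operator-algebraic counterpart:
\begin{itemize}
\item The map $g\mapsto(g\circ\pi)\cdot$ is a normal representation of $L^\infty(Y)$ on $L^2(X)$. Hence it is a subrepresentation of an amplification, which gives an isometry $W\colon L^2(X)\to L^2(Y)\otimes\mathscr{H}$ with $W(g\circ\pi)=(g\otimes 1)W$. Construct $W'$ in the same way.
\item Then $WL^\infty(X)W^*\subseteq(L^\infty(Y)\otimes 1)'=L^\infty(Y)\mathbin{\bar{\otimes}}B(\mathscr{H})$. So $k\mapsto W'^*\bigl[(M_m\otimes\mathrm{id})(W'kW^*)\bigr]W$ is a normal $L^\infty(X')$--$L^\infty(X)$-bimodule map.
\item Checking on $m=u\otimes v$ and using weak$^*$ density shows that its symbol is $m\circ(\pi'\times\pi)$. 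Pisier's $S^p[S^p]$ identification then concludes.
\end{itemize}

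One small correction to Step~1. On a piece $K$ where $\pi$ is continuous you only get $\pi(\mu_X|_K)\ll\mu_Y$, not equivalence on $\pi(K)$. The hypothesis concerns full preimages, and $\pi^{-1}(B)$ may carry mass outside $K$. Step~2 only needs this absolute continuity. In Step~3, however, you must restrict $\mu_Y$ to the set where the density of $\pi(\mu_X|_K)$ is positive. These sets still exhaust $Y$ by the argument of \autoref{lemma:pushconc}.
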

When $G$ is a locally compact group, $N$ is a closed normal subgroup of $G$ and $\pi$ is the quotient homomorphism from $G$ onto $G/N$, the locally negligible subsets of $G/N$ are precisely those whose preimages by $\pi$ are locally negligible in $G$ \cite[chap.~VII, \textsection\nobreak\,2, th.~1]{BourbakiINTd}. The above result is therefore to be related to K.~de~Leeuw and S.~Saeki's pullback theorems by quotient homomorphisms \cite[\textsection\nobreak\,3]{Saeki}. More generally, the above pullback theorem combined with the Schur-Fourier transference theorem (\autoref{theorem:SFtransference}) yields the following pullback theorem for completely bounded Fourier multipliers by open homomorphisms on amenable groups:
\begin{corollary}\label{corollary:fourierpullback}
    Let $G$ and $H$ be two locally compact groups and let $\pi$ be an open continuous homomorphism from $G$ to $H$. Suppose that $G$ is amenable, let $m\in L^\infty(H)$ and let $p\in[1,\infty]$. For $m\circ\pi$ to belong to $\operatorname{cbM}^{p,p}(G)$, it is sufficient that $m$ belongs to $\operatorname{cbM}^{p,p}(H)$, in which case
    \[
        \lVert m\circ\pi\rVert_{\operatorname{cbM}^{p,p}(G)}\leq\lVert m\rVert_{\operatorname{cbM}^{p,p}(H)}.
    \]
    If additionally $m$ is supported in the image of $\pi$, then for $m\circ\pi$ to belong to $\operatorname{cbM}^{p,p}(G)$), it is necessary and sufficient that $m$ belongs to $\operatorname{cbM}^{p,p}(H)$, in which case
    \[
        \lVert m\circ\pi\rVert_{\operatorname{cbM}^{p,p}(G)}=\lVert m\rVert_{\operatorname{cbM}^{p,p}(H)}.
    \]
\end{corollary}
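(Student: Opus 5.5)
We prove \autoref{corollary:fourierpullback} by going from Fourier multipliers on $H$ to Schur multipliers on $H$, pulling back to Schur multipliers on $G$ with \autoref{theorem:schurpullback}, and returning to Fourier multipliers on $G$ with \autoref{theorem:SFtransference}, using amenability of $G$ to get the constant $C=1$. Let $N$ be the kernel of $\pi$. Since $\pi$ is open, $\pi(G)$ is an open subgroup of $H$, and $\pi$ factors as the quotient map $G\to G/N$ followed by a topological isomorphism from $G/N$ onto $\pi(G)$. The relation $\widetilde{m\circ\pi}=\tilde m\circ(\pi\times\pi)$ is immediate from $\tilde m(s,t)=m(st^{-1})$ and the fact that $\pi$ is a homomorphism.

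\textbf{Sufficiency.} Assume $m\in\operatorname{cbM}^{p,p}(H)$. The necessity part of \autoref{theorem:SFtransference}, which holds for every locally compact group with no extra hypothesis, gives $\lVert\tilde m\rVert_{\operatorname{cbMS}^p(H,H)}\leq\lVert m\rVert_{\operatorname{cbM}^{p,p}(H)}$. Restricting to $\pi(G)\times\pi(G)$ does not increase this norm, since it amounts to compressing by the projections onto $L^2(\pi(G))$. We then apply \autoref{theorem:schurpullback} with $X=X'=G$, $Y=Y'=\pi(G)$ and the continuous map $\pi$. Its hypothesis on negligible sets holds because locally negligible subsets of $G/N$ are exactly those whose preimages in $G$ are locally negligible, as cited from Bourbaki, and the isomorphism $G/N\simeq\pi(G)$ carries Haar measure to Haar measure up to a constant. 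This gives
\[
\lVert\tilde m\circ(\pi\times\pi)\rVert_{\operatorname{cbMS}^p(G,G)}\leq\lVert m\rVert_{\operatorname{cbM}^{p,p}(H)}.
\]

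To come back to Fourier multipliers on $G$, we use the sufficiency part of \autoref{theorem:SFtransference} with $X=X'=G$ and $C=1$. The hypothesis to check is that the constant function $1$ is a $\sigma(L^\infty(G),L^1)$-limit point of images $b*\bar a^\flat$ with $\lVert a\rVert_2\lVert b\rVert_2\leq 1$. This is where amenability of $G$ is used, and it is the main point to verify. Take a Følner net $(F_i)$ and set $a=b=\mu_G(F_i)^{-1/2}{\boldsymbol 1}_{F_i}$. Then $b*\bar a^\flat(t)=\mu_G(F_i\cap tF_i)/\mu_G(F_i)$, up to replacing right by left translates, which one handles with the appropriate Følner condition or the modular function. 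This tends to $1$ uniformly on compacta and is bounded by $1$, so it converges weak$^*$ against every $L^1$ function. Equivalently, one can use Leptin's theorem, which gives norm-one elements of $A(G)$ tending to $1$ uniformly on compacta, and represent them as $b*\bar a^\flat$ with $\lVert a\rVert\lVert b\rVert\leq 1+\varepsilon$. One technical point remains: \autoref{theorem:SFtransference} is stated for $m\circ\pi\in L^\infty(G)$ and weak convergence on $L^1(\supp(m\circ\pi))$. If the support is not compact, we either note that the proof only uses the pairing with the compactly supported $mf$, or we approximate.

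\textbf{Equality when $\supp m\subseteq\pi(G)$.} For the reverse inequality we run the same chain backwards. The necessity part of \autoref{theorem:SFtransference} on $G$ gives $\lVert\tilde m\circ(\pi\times\pi)\rVert_{\operatorname{cbMS}^p(G,G)}\leq\lVert m\circ\pi\rVert_{\operatorname{cbM}^{p,p}(G)}$, and the equality in \autoref{theorem:schurpullback} gives $\lVert\tilde m|_{\pi(G)\times\pi(G)}\rVert_{\operatorname{cbMS}^p}=\lVert\tilde m\circ(\pi\times\pi)\rVert_{\operatorname{cbMS}^p(G,G)}$. The subgroup $H'=\pi(G)\simeq G/N$ is amenable as a quotient of an amenable group, so the sufficiency part of \autoref{theorem:SFtransference} on $H'$ with $C=1$ gives $\lVert m|_{H'}\rVert_{\operatorname{cbM}^{p,p}(H')}\leq\lVert m\circ\pi\rVert_{\operatorname{cbM}^{p,p}(G)}$.

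Finally we pass from $H'$ to $H$ for a symbol supported in the open subgroup $H'$. Here $L^p(\widehat{H})$ decomposes along the cosets of $H'$ through the conditional expectation onto $\lambda(H')''$. Concretely, for $f\in\mathscr{D}(H)$ we have $mf=m\cdot({\boldsymbol 1}_{H'}f)$, and the map $\hat f\mapsto\widehat{{\boldsymbol 1}_{H'}f}$ is this conditional expectation. It is completely contractive on $L^p$, and it is isometric from $L^p(\widehat{H'})$ into $L^p(\widehat{H})$ with our normalisation of Haar measures. Hence $\lVert m\rVert_{\operatorname{cbM}^{p,p}(H)}=\lVert m|_{H'}\rVert_{\operatorname{cbM}^{p,p}(H')}$, which gives the equality.
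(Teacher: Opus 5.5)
Your argument is correct. For the inequality $\lVert m\circ\pi\rVert_{\operatorname{cbM}^{p,p}(G)}\leq\lVert m\rVert_{\operatorname{cbM}^{p,p}(H)}$ it is exactly the chain the paper has in mind: Fourier-to-Schur on $H$, compression to $\pi(G)\times\pi(G)$, \autoref{theorem:schurpullback}, then Schur-to-Fourier on $G$ with $X=X'=G$ and $C=1$ coming from F\o lner sets.

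Where you genuinely diverge is the equality case. You transfer back on the amenable group $H'=\pi(G)$. You then lift from $H'$ to $H$ through the Plancherel-weight-preserving conditional expectation onto $\lambda_H(H')''$. That step needs two external facts: the expectation extends to complete contractions of the spaces $L^p(\widehat{H})$, and $L^p(\widehat{H'})\to L^p(\widehat{H})$ is a complete isometry. Both are true, since $\Delta_{H'}=\Delta_H|_{H'}$ makes $\lambda_H(H')''$ invariant under the modular group, but the paper does not establish them.

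The paper sidesteps this. \autoref{theorem:SFtransference} is stated for arbitrary measurable $X,X'$ in the ambient group and for the topology $\sigma(L^\infty(H),L^1(\operatorname{supp} m))$, precisely so that one can take $X=X'=\pi(G)$ on $H$ itself. Since $\operatorname{supp} m\subseteq\pi(G)$ and $\pi(G)\simeq G/N$ is amenable, the functions $\mu_H(F)^{-1}{\boldsymbol 1}_F*({\boldsymbol 1}_F)^\flat$ with $F$ F\o lner in $\pi(G)$ give $C=1$. This yields directly $\lVert m\rVert_{\operatorname{cbM}^{p,p}(H)}\leq\lVert\tilde m\rVert_{\operatorname{cbMS}^p(\pi(G),\pi(G))}=\lVert\widetilde{m\circ\pi}\rVert_{\operatorname{cbMS}^p(G,G)}\leq\lVert m\circ\pi\rVert_{\operatorname{cbM}^{p,p}(G)}$.

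Your detour buys a standalone localisation identity $\lVert m\rVert_{\operatorname{cbM}^{p,p}(H)}=\lVert m|_{H'}\rVert_{\operatorname{cbM}^{p,p}(H')}$ for symbols supported in an open subgroup, at the cost of importing conditional-expectation machinery. The paper's route stays within its two Schur-multiplier tools.

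Finally, your ``technical point'' about non-compact support is moot. A net bounded by $1$ converging to $1$ uniformly on compacta already converges in $\sigma(L^\infty(G),L^1(G))$. Moreover, \autoref{theorem:SFtransference} is stated for arbitrary $m\in L^\infty(G)$, and the functions $(m\circ\pi)f$ with $f\in\mathscr{D}(G)$ are compactly supported regardless.
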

Without amenability, we still have the following <<~pullback and truncation~>> theorem:
\begin{corollary}\label{corollary:fourierpullbacktruncate}
    Let $G$ and $H$ be two locally compact groups and let $\pi$ be an open continuous homomorphism from $G$ to $H$. Let $m\in L^\infty(G)$, let $p\in[1,\infty]$, let $\varphi\in\operatorname{M}^{p,p}_c(G)$ (\emph{resp.}\ $\operatorname{cbM}^{p,p}_c(G)$) and denote by $S$ its support. For $\varphi\cdot(m\circ\pi)$ to belong to $\operatorname{M}^{p,p}(G)$ (\emph{resp.}\ $\operatorname{cbM}^{p,p}(G)$, is it sufficient that $m$ belongs to $\operatorname{cbM}^{p,p}(H)$, in which case for every non-negligible compact subset $L$ of $G$,
    \[
        \lVert\varphi\cdot(m\circ\pi)\rVert_{\operatorname{M}^{p,p}(G)}\leq\Bigl(\frac{\mu_G(SL)}{\mu_G(L)}\Bigr)^{1/2}\lVert m\rVert_{\operatorname{cbM}^{p,p}(H)}
    \]
    \[
        \biggl(\text{\emph{resp.}\ }\lVert\varphi\cdot(m\circ\pi)\rVert_{\operatorname{cbM}^{p,p}(G)}\leq\Bigl(\frac{\mu_G(SL)}{\mu_G(L)}\Bigr)^{1/2}\lVert m\rVert_{\operatorname{cbM}^{p,p}(H)}\biggr).
    \]
\end{corollary}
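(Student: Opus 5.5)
The plan is to deduce \autoref{corollary:fourierpullbacktruncate} from the Schur--Fourier transference theorem (\autoref{theorem:SFtransference}) applied on the compact sets $X=L$ and $X'=SL$, combined with the Schur pullback theorem (\autoref{theorem:schurpullback}). Put $k=\mu_G(L)^{-1}{\boldsymbol 1}_{SL}\otimes{\boldsymbol 1}_L\in L^2(SL)\widehat\otimes L^2(L)$. Its canonical image in $A(G)$ is ${\boldsymbol 1}_{SL}*(\mu_G(L)^{-1}{\boldsymbol 1}_L)^\flat$, which is equal to $1$ on $S$. Moreover $\lVert k\rVert\leq(\mu_G(SL)/\mu_G(L))^{1/2}$. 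So the hypothesis of \autoref{theorem:SFtransference} holds for every symbol supported in $S$, with $C=(\mu_G(SL)/\mu_G(L))^{1/2}$. It therefore suffices to bound the Schur multiplier norm of the restriction of $(\varphi\cdot(m\circ\pi))^\sim$ to $SL\times L$.

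Next I factor the Schur symbol. For $s\in SL$ and $t\in L$ we have $(\varphi\cdot(m\circ\pi))^\sim(s,t)=\tilde\varphi(s,t)\,\tilde m(\pi(s),\pi(t))$, because $\pi(st^{-1})=\pi(s)\pi(t)^{-1}$. Hence the Schur multiplier is the composition of the Schur multiplier with symbol $\tilde\varphi$ and the one with symbol $\tilde m\circ(\pi\times\pi)$. Its norm is at most the product of the two norms, with the $\operatorname{cb}$ norm used for the $m$-part, so that the estimate also survives tensoring in the $\operatorname{cbM}$ case.

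For the $m$-part, \cite[th.~4.2]{CaspersdelaSalle} (the necessity half of \autoref{theorem:SFtransference}, which needs no amenability) gives $\lVert\tilde m\rVert_{\operatorname{cbMS}^p(H,H)}\leq\lVert m\rVert_{\operatorname{cbM}^{p,p}(H)}$. Restricting to the open subgroup $\pi(G)$ does not increase this norm. Viewed as a map onto $\pi(G)\simeq G/\ker\pi$, the homomorphism $\pi$ is a quotient map. Its preimages detect locally negligible sets \cite[chap.~VII, \textsection\nobreak\,2, th.~1]{BourbakiINTd}, and it is continuous, hence Lusin-measurable. \autoref{theorem:schurpullback} therefore yields $\lVert\tilde m\circ(\pi\times\pi)\rVert_{\operatorname{cbMS}^p(G,G)}\leq\lVert m\rVert_{\operatorname{cbM}^{p,p}(H)}$, and restricting to $SL\times L$ keeps the bound.

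The main obstacle is the $\varphi$-factor. Without complete boundedness there is no general inequality $\lVert\tilde\varphi\rVert_{\operatorname{MS}^p}\leq\lVert\varphi\rVert_{\operatorname{M}^{p,p}}$. To get around this I will avoid transferring $\varphi$ itself. Set $\chi={\boldsymbol 1}_{SL}*(\mu_G(L)^{-1}{\boldsymbol 1}_L)^\flat$ and $\psi=\chi\cdot(m\circ\pi)$. By the construction above, $\lVert\psi\rVert_{\operatorname{cbM}^{p,p}(G)}\leq(\mu_G(SL)/\mu_G(L))^{1/2}\lVert m\rVert_{\operatorname{cbM}^{p,p}(H)}$. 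Since $\chi=1$ on $S$, we have $\varphi\cdot(m\circ\pi)=\varphi\psi$, and the Fourier multiplier of $\varphi\psi$ is $\varphi(D)\circ\psi(D)$. This gives the stated estimate up to the factor $\lVert\varphi\rVert_{\operatorname{M}^{p,p}(G)}$ (resp.\ $\lVert\varphi\rVert_{\operatorname{cbM}^{p,p}(G)}$). I expect this factor to be implicit in the statement, or absorbed by a normalisation of $\varphi$. I would check this point against the intended normalisation before finalising.
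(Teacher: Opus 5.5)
Your argument is correct. The doubt in your last sentences should be resolved against the statement, not against your proof. The inequality as printed cannot hold: replacing $\varphi$ by $\lambda\varphi$ leaves $S$ and the right-hand side unchanged but multiplies the left-hand side by $\lvert\lambda\rvert$. There is no implicit normalisation, and the correct estimate is exactly the one you obtain,
\[
\lVert\varphi\cdot(m\circ\pi)\rVert_{\operatorname{M}^{p,p}(G)}\leq\lVert\varphi\rVert_{\operatorname{M}^{p,p}(G)}\Bigl(\frac{\mu_G(SL)}{\mu_G(L)}\Bigr)^{1/2}\lVert m\rVert_{\operatorname{cbM}^{p,p}(H)}.
\]
In the cb case the same holds with $\operatorname{cbM}$ norms on both $\varphi\cdot(m\circ\pi)$ and $\varphi$. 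The omission does no harm in the paper: its only use of the corollary (\autoref{corollary:openpushcbMp}) needs continuity for a fixed $\varphi\in\mathscr{D}(G)$.

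Your route also differs from the one the paper indicates. The paper gives no separate proof. It presents the corollary as \autoref{theorem:SFtransference} in its local form, applied to the whole symbol $\varphi\cdot(m\circ\pi)$ (supported in $S$, hence the constant), combined with \autoref{theorem:schurpullback}. That is the factorisation $\tilde\varphi\cdot(\tilde m\circ(\pi\times\pi))$ on $SL\times L$ from your second paragraph. This route uses both theorems exactly as stated and works in the cb case, since there $\lVert\tilde\varphi\rVert_{\operatorname{cbMS}^p(G,G)}\leq\lVert\varphi\rVert_{\operatorname{cbM}^{p,p}(G)}$. In the non-cb case it needs $\tilde\varphi$ to be a bounded Schur multiplier, and the only Fourier-to-Schur estimate available is at the cb level. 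You instead transfer only the cb factor $m\circ\pi$, cut off by $\chi$, and let $\varphi$ act through the composition $\varphi(D)\circ\psi(D)$. This covers both cases under exactly the stated hypotheses.

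The price is that your bound on $\psi$ does not follow from the statement of \autoref{theorem:SFtransference}, so \emph{by the construction above} does not justify it. The reason is that $\psi$ is supported in $SLL^{-1}$, not in $S$, and $\chi$ is not identically $1$ there. The bound follows instead from the proof of that theorem, as follows.
\autoref{lemma:magicformula}, applied with the single pair $a=\mu_G(L)^{-1}{\boldsymbol 1}_L$ and $b={\boldsymbol 1}_{SL}$, writes the multiplier with symbol $(b*\bar a^\flat)\cdot(m\circ\pi)=\psi$ as ${}^{t}B\circ M\circ A$. Here $M$ is the Schur multiplier with symbol $(m\circ\pi)^\sim$ on ${S}^p(L,SL)$. \autoref{lemma:SpfromLp} then gives
\[
\lVert\psi\rVert_{\operatorname{M}^{p,p}(G)}\leq\lVert b\rVert_{L^2}\lVert a\rVert_{L^2}\lVert(m\circ\pi)^\sim\rVert_{\operatorname{MS}^p(L,SL)},
\]
with no approximation step. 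The cb version follows by passing to $G\times\operatorname{SU}(2)$, as in that proof.
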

The proof of \autoref{theorem:schurpullback} will take several steps. Below, by a homomorphism of measure spaces from a measure space $X$ to another measure space $Y$, we mean a measurable map $\pi$ from $X$ to $Y$ such that the measure of $Y$ is the pushforward by $\pi$ of the measure of $X$. If $\pi$ is a homomorphism of measure spaces from $X$ to $Y$, the pullback operator $\pi^*:f\mapsto f\circ\pi$ isometrically maps $L^p(Y)$ in $L^p(X)$ for every $p\in[1,\infty]$ and we can therefore define by transposition a pushforward operator $\pi_*$ from $L^q(X)$ to $L^q(Y)$ for every $q\in[1,\infty]$.
\begin{proposition}[\emph{cf.}\ {\cite[lemma~1.13]{LafforguedelaSalle}}]\label{proposition:schurpullback}
    Let $X$ and $Y$ (\emph{resp.}\ $X'$ and $Y'$) be probability spaces and let $\pi$ (\emph{resp.}\ $\pi'$) be a homomorphism of measure spaces from $X$ to $Y$ (\emph{resp.}\ from $X'$ to $Y'$). For $m\circ(\pi'\times\pi)$ to belong to $\operatorname{MS}^p(X,X')$, it is necessary that $m$ belongs to $\operatorname{MS}^p(Y,Y')$ and sufficient that $1\otimes m$ belongs to $\operatorname{MS}^p(X\times Y,X'\times Y')$, in which case
    \[
        \lVert m\rVert_{\operatorname{MS}^p(Y,Y')}\leq\lVert m\circ(\pi'\times\pi)\rVert_{\operatorname{MS}^p(X,X')}\leq\lVert 1\otimes m\rVert_{\operatorname{MS}^p(X\times Y,X'\times Y')}
    \]
    and for $m\circ(\pi'\times\pi)$ to belong to $\operatorname{cbMS}^p(X,X')$, it is necessary and sufficient that $m$ belongs to $\operatorname{cbMS}^p(Y,Y')$, in which case
    \[
        \lVert m\circ(\pi'\times\pi)\rVert_{\operatorname{cbMS}^p(X',X)}=\lVert m\rVert_{\operatorname{cbMS}^p(Y',Y)}.
    \]
\end{proposition}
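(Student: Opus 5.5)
The plan is to realise the Schur multiplier with symbol $m$ on $Y'\times Y$ and the one with symbol $m\circ(\pi'\times\pi)$ on $X'\times X$ as compressions of one another, using the conditional expectations attached to $\pi$ and $\pi'$. Since $\pi$ is a homomorphism of probability spaces, the pullback $\pi^*$ is an isometry from $L^2(Y)$ into $L^2(X)$, and its adjoint is $\pi_*$, the conditional expectation onto the $\sigma$-algebra $\pi^{-1}(\mathscr{B}_Y)$ followed by the identification with $L^2(Y)$; the same holds for $\pi'$. Put
\[
    J(A)=\pi'^*A\,\pi_*,\qquad P(A)=\pi'_*A\,\pi^*.
\]
Then $J$ maps $S^p(Y,Y')$ to $S^p(X,X')$ and $P$ maps $S^p(X,X')$ to $S^p(Y,Y')$. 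Both are complete contractions for every $p$, because they are two-sided multiplications by contractions. Moreover $P\circ J=\mathrm{id}$.

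\textbf{Necessity and the lower bound.} On kernels, $J$ sends $k(y',y)$ to $k(\pi'(x'),\pi(x))$. Hence $J(m\cdot k)=(m\circ(\pi'\times\pi))\cdot J(k)$ for $k\in L^2(Y')\otimes L^2(Y)$, which gives
\[
    m\cdot k=P\bigl((m\circ(\pi'\times\pi))\cdot J(k)\bigr).
\]
This proves $\lVert m\rVert_{\operatorname{MS}^p}\leq\lVert m\circ(\pi'\times\pi)\rVert_{\operatorname{MS}^p}$. Tensoring with $S^p_n$ gives the same inequality at every matrix level, hence also for the cb-norms.

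\textbf{Sufficiency.} The multiplier by $m\circ(\pi'\times\pi)$ has to be factored through a multiplier by $1\otimes m$ on the product spaces, following Lafforgue--de la Salle. Define $U\colon L^2(X)\to L^2(X\times Y)$ by $(Uf)(x,y)=f(x)$ restricted to the graph. That is not an $L^2$ map, so I would use the measure-theoretic disintegration instead. Replace $X$ by the measure space $X\times Y$ equipped with the pushforward of $\mu_X$ under $x\mapsto(x,\pi(x))$; this is a measure supported on the graph of $\pi$, and its $L^2$ space is isometric to $L^2(X)$. On that graph, $m\circ(\pi'\times\pi)$ is exactly the restriction of $1\otimes m$ to the product of the two graphs, viewed inside $(X'\times Y')\times(X\times Y)$. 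The obstacle is that this restriction is to a measure singular with respect to the product measure $\mu_X\otimes\mu_Y$, so restriction of Schur multipliers to subsets does not apply directly.

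I would handle this by approximating the graph measure. Take finite $\sigma$-subalgebras (partitions) $\mathscr{P}$ of $Y$ and the corresponding averaged measures $\mu_{\mathscr{P}}$ on $X\times Y$, with density $\sum_{E\in\mathscr{P}}\mathbf{1}_{\pi^{-1}E}(x)\mathbf{1}_E(y)/\mu_Y(E)$ with respect to $\mu_X\otimes\mu_Y$. For these absolutely continuous measures, restriction together with reweighting by densities gives the bound by $\lVert1\otimes m\rVert_{\operatorname{MS}^p(X\times Y,X'\times Y')}$. Schur multipliers are insensitive to a change of density, because the weights are diagonal unitary-like rescalings between $L^2$ spaces. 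Letting $\mathscr{P}$ increase, the symbols converge weakly-$*$, and lower semicontinuity of the $S^p$ norms on finite-rank kernels gives the middle inequality.

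\textbf{Completely bounded case.} For cb-norms, $\lVert1\otimes m\rVert_{\operatorname{cbMS}^p}=\lVert m\rVert_{\operatorname{cbMS}^p}$, since a cb Schur multiplier tensored with the identity on another Schur class keeps its cb-norm (Fubini for $S^p$, the $p$-analogue of Pisier's theorem). This yields equality in the cb case. The main difficulty is the approximation step, namely justifying that a density change preserves Schur-multiplier norms and passing to the limit. I expect this is where care with the operator-space interpolation structure on $S^p$ is needed.
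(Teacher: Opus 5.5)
Your lower bound is correct and is the paper's argument in other clothing. On kernels, $J$ and $P$ are the pullback $(\pi'\times\pi)^*$ and the pushforward $(\pi'\times\pi)_*$ of \autoref{lemma:Sppushpull}, and the ideal property of $S^p$ is a neat substitute for the interpolation used there. The restriction-plus-reweighting bound for $\mu_{\mathscr{P}}$ is also correct: it is \cite[lemma~1.9]{LafforguedelaSalle} plus compression to $\bigcup_{E\in\mathscr{P}}\pi^{-1}(E)\times E$.

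The gap is the passage to the limit, which is the real content of the middle inequality. For each pair $(\mathscr{P},\mathscr{P}')$, your bound concerns operators on $L^2(X\times Y,\mu_{\mathscr{P}})$. That Hilbert space changes with $\mathscr{P}$, while the symbol $1\otimes m$ does not change at all. So ``the symbols converge weakly-$*$'' refers to nothing living in a fixed space, and lower semicontinuity of $S^p$ norms can only be used once everything has been brought back into $S^p(X,X')$. Weak convergence of $\mu_{\mathscr{P}}$ to the graph measure does not by itself control Schur multiplier norms. The difficulty also has nothing to do with the operator space structure or with interpolation.

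The missing step is a contractive transfer back to $X$, and your own $J$/$P$ device supplies it. The projection $\sigma_{\mathscr{P}}\colon(x,y)\mapsto x$ is a homomorphism of probability spaces from $(X\times Y,\mu_{\mathscr{P}})$ onto $X$. Its fibre over $x\in\pi^{-1}(E)$ carries $\mu_Y|_E/\mu_Y(E)$. Write $J_\sigma$, $P_\sigma$ for your maps built from $\sigma_{\mathscr{P}}$ and $\sigma'_{\mathscr{P}'}$. A direct computation gives, for $k\in L^2(X')\otimes L^2(X)$,
\[
    P_\sigma\bigl((1\otimes m)\cdot J_\sigma(k)\bigr)=\bigl(m_{\mathscr{P}',\mathscr{P}}\circ(\pi'\times\pi)\bigr)\cdot k,
\]
where $m_{\mathscr{P}',\mathscr{P}}$ is the conditional expectation of $m$ onto the finite $\sigma$-algebra generated by $\mathscr{P}'\times\mathscr{P}$. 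Hence
\[
    \lVert m_{\mathscr{P}',\mathscr{P}}\circ(\pi'\times\pi)\rVert_{\operatorname{MS}^p(X,X')}\leq\lVert 1\otimes m\rVert_{\operatorname{MS}^p(X\times Y,X'\times Y')}.
\]
Indicators of rectangles are total in $L^2(Y'\times Y)$, so $m_{\mathscr{P}',\mathscr{P}}\to m$ in $L^2$ along the net of finite partitions. Pulling back by the measure-preserving map $\pi'\times\pi$ and using boundedness gives ultraweak convergence to $m\circ(\pi'\times\pi)$. Weak-$*$ lower semicontinuity of $\lVert\cdot\rVert_{\operatorname{MS}^p(X,X')}$ then gives the middle inequality. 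Your cb step follows by applying this to $\pi\times\mathrm{id}$ on products with finite sets.

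Completed this way, your route is a reorganisation of the paper's. The paper first treats atomic $Y,Y'$, where the graph is non-null and carries a measure equivalent to the restricted product measure. It then passes to atomic quotients $Y_i$ using the contractivity of pushing symbols forward (first half of \autoref{proposition:schurlimproj}) and ultraweak lower semicontinuity. Your $\mu_{\mathscr{P}}$ is the lift to $X\times Y$ of the graph measure of $\pi_{\mathscr{P}}\circ\pi$, where $\pi_{\mathscr{P}}$ maps $Y$ onto its finite quotient by $\mathscr{P}$. The conditional expectation above plays the role of the paper's pushforward.
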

For the proof of \autoref{proposition:schurpullback}, we will need to establish a few functorial properties of Schatten classes.
\begin{lemma}\label{lemma:Sppushpull}
    Let $X$ and $Y$ (\emph{resp.}\ $X'$ and $Y'$) be probability spaces and let $\pi$ (\emph{resp.}\ $\pi'$) be a homomorphism of measure spaces from $X$ to $Y$ (\emph{resp.}\ from $X'$ to $Y'$). Let $k\in L^2(X')\otimes L^2(X)$, let $l\in L^2(Y')\otimes L^2(Y)$ and let $p\in[1,\infty]$. Then,
    \[
        \lVert(\pi'\times\pi)_*k\rVert_{{S}^p(Y,Y')}\leq\lVert k\rVert_{{S}^p(X,X')}
    \]
    and
    \[
        \lVert(\pi'\times\pi)^*l\rVert_{{S}^p(X,X')}=\lVert l\rVert_{{S}^p(Y,Y')}.
    \]
\end{lemma}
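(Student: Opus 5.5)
Both estimates will be proved at the endpoints $p=1$ and $p=\infty$ and then extended by complex interpolation. This uses the fact that ${S}^p(\cdot,\cdot)$ is the complex interpolation space between ${S}^\infty$ and ${S}^1$, and that $(\pi'\times\pi)_*$ and $(\pi'\times\pi)^*$ are linear maps defined on all of $L^2\otimes L^2$ and compatible across the scale. The key structural observation is that, in operator language, $(\pi'\times\pi)^*l$ is the kernel of the operator $\pi'^*\circ l\circ(\pi^*)^{\dagger}$. Here $\pi^*:L^2(Y)\to L^2(X)$ is an isometry because $\pi$ is measure preserving, and $(\pi^*)^\dagger=\pi_*$ is the corresponding conditional-expectation-type co-isometry $L^2(X)\to L^2(Y)$. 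Similarly, $(\pi'\times\pi)_*k$ is the kernel of $\pi'_*\circ k\circ\pi^*$.

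The plan is to verify these two identities on elementary tensors first. For $l=v\otimes u$ with $u\in L^2(Y)$ and $v\in L^2(Y')$, the pullback is $(v\circ\pi')\otimes(u\circ\pi)$. As an operator this is $\varphi\mapsto\langle\varphi|\bar u\circ\pi\rangle\,(v\circ\pi')$, which equals $\pi'^*\circ(v\otimes u)\circ\pi_*$, using $\langle\varphi|\pi^*\bar u\rangle=\langle\pi_*\varphi|\bar u\rangle$. For the pushforward, take $k=b\otimes a$. Because the measures are products and $\pi'\times\pi$ is a homomorphism of measure spaces, $(\pi'\times\pi)_*(b\otimes a)=\pi'_*b\otimes\pi_*a$; this is checked by testing against $g'\otimes g$ with $g\in L^\infty(Y)$, $g'\in L^\infty(Y')$ and then using density of such tensors. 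The resulting operator is $\pi'_*\circ(b\otimes a)\circ\pi^*$. The identities then extend by linearity.

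With the identities in hand, the Schatten estimates follow from the ideal property. We have $\lVert\pi'_*k\,\pi^*\rVert_{{S}^p}\leq\lVert\pi'_*\rVert\,\lVert k\rVert_{{S}^p}\,\lVert\pi^*\rVert\leq\lVert k\rVert_{{S}^p}$, since both factors have norm at most $1$. Likewise $\lVert\pi'^*l\,\pi_*\rVert_{{S}^p}\leq\lVert l\rVert_{{S}^p}$. For the reverse inequality, note that $\pi_*\pi^*=\mathrm{id}_{L^2(Y)}$, because $\pi^*$ is an isometry. Hence $l=\pi'_*\bigl(\pi'^*l\,\pi_*\bigr)\pi^*$, which is exactly the pushforward of the pullback. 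Applying the first inequality to this expression yields equality. This argument works directly for every $p$, so interpolation is not actually needed.

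The main obstacle is justifying that $\pi_*$ on $L^2$, defined by transposition of $\pi^*$ on $L^2(Y)$, coincides with the Hilbert adjoint of $\pi^*$. This follows from the duality $\langle\pi^*u,\varphi\rangle=\langle u,\pi_*\varphi\rangle$ together with the choice of conjugations. A second point to check is the product formula for $(\pi'\times\pi)_*$ on tensors. It relies on the fact that product probability measures push forward to product measures under $\pi'\times\pi$, which holds because $\pi$ and $\pi'$ are each measure preserving.
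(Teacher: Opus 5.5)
Your proof is correct, but it takes a different route from the paper's.

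\textbf{The paper's route.} The paper works only at the endpoints and then interpolates. For $p=1$ it bounds the ${S}^1$ norm of $\pi'_*f'\otimes\pi_*f$ (respectively $\pi'^*g'\otimes\pi^*g$) on elementary tensors and invokes the universal property of the projective tensor product. For $p=\infty$ it tests the kernel against $g'\otimes g$ and uses that $\pi^*$, $\pi'^*$ are $L^2$-isometries and $\pi_*$, $\pi'_*$ are $L^2$-contractions. General $p$ then follows by complex interpolation. The reverse inequality for the pullback comes from $l=(\pi'\times\pi)_*(\pi'\times\pi)^*l$.

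\textbf{Your route.} You identify the two kernel maps on the operator side as $K\mapsto\pi'_*K\pi^*$ and $L\mapsto\pi'^*L\pi_*$. The two-sided ideal inequality $\lVert AKB\rVert_{{S}^p}\leq\lVert A\rVert\,\lVert K\rVert_{{S}^p}\lVert B\rVert$ then handles every $p\in[1,\infty]$ at once. Your identity $\pi_*\pi^*=\mathrm{id}_{L^2(Y)}$ is exactly the paper's relation $l=(\pi'\times\pi)_*(\pi'\times\pi)^*l$ in operator language.

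\textbf{Comparison.} Your approach gains transparency and generality. No interpolation is needed, as you note yourself, so the interpolation plan in your opening sentence can simply be dropped. Your inequalities also hold for every element of ${S}^p$, not only for kernels in the algebraic tensor product $L^2(X')\otimes L^2(X)$. The paper's endpoint computations are the same contractivity facts about $\pi^*$ and $\pi_*$, read through the ${S}^1$--${S}^\infty$ duality. Its route fits the interpolation framework used throughout the paper, but it depends on identifying ${S}^p$ with the complex interpolation space and on density of finite-rank kernels.

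\textbf{Your two checkpoints.} These are the only things that need checking, and both go through.
\begin{itemize}
\item The Hilbert adjoint and the transpose of $\pi^*$ agree because $\pi_*$ commutes with complex conjugation.
\item For $(\pi'\times\pi)_*$, equality against measurable rectangles determines an integrable function on $Y'\times Y$, by a Dynkin-class argument for finite complex measures.
\end{itemize}
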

\begin{proof}
  We first prove the first relation. If $f\in L^2(X)$ and $f'\in L^2(X')$, then
    \begin{align*}
        \lVert(\pi'\times\pi)_*(f'\otimes f)\rVert_{{S}^1(Y,Y')}
        &=\lVert\pi'_*(f')\otimes\pi_*(f)\rVert_{{S}^1(Y,Y')}\\
        &=\lVert\pi'_*f'\rVert_{L^2(Y')}\lVert\pi_*f\rVert_{L^2(Y)}\\
        &\leq\lVert f'\rVert_{L^2(X')}\lVert f\rVert_{L^2(X)},
    \end{align*}
    hence $\lVert(\pi'\times\pi)_*k\rVert_{{S}^1(Y,Y')}\leq\lVert k\rVert_{{S}^1(X,X')}$ thanks to the universal property of the projective tensor product. Similarly if $g\in L^2(Y)$ and $g'\in L^2(Y')$, then
    \begin{align*}
        &\left\lvert\iint_{Y'\times Y}g'(y')(\pi'\times\pi)_*k(y',y)g(y)\,dy'\,dy\right\rvert\\
        &=\left\lvert\iint_{X'\times X}g'(\pi'(x'))k(x',x)g(\pi(x))\,dx'\,dx\right\rvert\\
        &\leq\lVert k\rVert_{{S}^\infty(X,X')}\lVert\pi'^*g'\rVert_{L^2(X')}\lVert\pi^*g\rVert_{L^2(X)}\vphantom{\left\lvert\int_{X'}\right.}
        =\lVert k\rVert_{{S}^\infty(X,X')}\lVert g'\rVert_{L^2(Y')}\lVert g\rVert_{L^2(Y)},\vphantom{\left\lvert\int_{X'}\right.}
    \end{align*}
    hence $\lVert(\pi'\times\pi)_*k\rVert_{{S}^\infty(Y,Y')}\leq\lVert k\rVert_{{S}^\infty(X,X')}$. The first relation for general $p\in[1,\infty]$ follows for the case $p=1$ and $p=\infty$ by interpolation.
    
    To prove the second relation, it sufficient to only prove the inequality <<~$\leq$~>> since $l=(\pi'\times\pi)_*(\pi'\times\pi)^*l$ thanks to the first relation. If $g\in L^2(Y)$ and $g'\in L^2(Y')$, then
    \begin{align*}
        \lVert(\pi'\times\pi)^*(g'\otimes g)\rVert_{{S}^1(X,X')}
        &=\lVert\pi'^*g'\otimes\pi^*g\rVert_{{S}^1(X,X')}\\
        &=\lVert\pi'^*g'\rVert_{L^2(X')}\lVert\pi^*g\rVert_{L^2(X)}\\
        &=\lVert g'\rVert_{L^2(Y')}\lVert g\rVert_{L^2(Y)},
    \end{align*}
    hence $\lVert(\pi'\times\pi)^*l\rVert_{{S}^1(X',X)}\leq\lVert l\rVert_{{S}^1(Y,Y)}$ thanks to the universal property of the projective tensor product. Similarly if $f\in L^2(X)$ and $f'\in L^2(X')$, then
    \begin{align*}
        &\left\lvert\iint_{X'\times X}f'(x')(\pi'\times\pi)^*l(x',x)f(x)\,dx'\,dx\right\rvert\\
        &=\left\lvert\iint_{Y'\times Y}\pi'_*f'(y')l(y',y)\pi_*f(y)\,dy'\,dy\right\rvert\\
        &\leq\lVert l\rVert_{{S}^\infty(Y,Y')}\lVert\pi'_*f'\rVert_{L^2(Y')}\lVert\pi_*f\rVert_{L^2(Y)}
        \leq\lVert l\rVert_{{S}^\infty(Y,Y')}\lVert f'\rVert_{L^2(Y')}\lVert f\rVert_{L^2(Y)},\vphantom{\left\lvert\iint_{Y'}\right.}
    \end{align*}
    hence $\lVert(\pi'\times\pi)^*l\rVert_{{S}^\infty(X,X')}\leq\lVert l\rVert_{{S}^\infty(Y,Y')}$. The inequality <<~$\leq$~>> in the second relation for general $p\in[1,\infty]$ then follows from the special cases $p=1$ and $p=\infty$ by interpolation.
\end{proof}
We will also need the following projective limit theorem.
\begin{proposition}[\emph{cf.}\ {\cite[lemma~1.11]{LafforguedelaSalle}}]\label{proposition:schurlimproj}
    Let $I$ (\emph{resp.}\ $I'$) be an upward directed set, let $(X_i,\pi_{ij})$ (\emph{resp.}\ $(X'_{i\smash{'}},\pi_{i\smash{'}j\smash{'}})$) be a projective system of probability spaces relative to $I$ (\emph{resp.}\ $I'$) which admits a projective limit $X$ (\emph{resp.}\ $X'$) and for every $i\in I$ (\emph{resp.}\ $i'\in I'$), denote by $\pi_i$ (\emph{resp.}\ $\pi'_{i\smash{'}}$) the canonical homomorphism from $X$ to $X_i$ (\emph{resp.}\ from $X'$ to $X'_{i\smash{'}}$). Let $m\in L^\infty(X'\times X)$ and let $p\in[1,\infty]$. Then,
    \[
        \lVert m\rVert_{\operatorname{MS}^p(X,X')}=\sup_{(i,i')\in I\times I'}\lVert(\pi'_{i\smash{'}}\times\pi_i)_*m\rVert_{\operatorname{MS}^p(X_i,X'_{i\smash{'}})}
    \]
    and
    \[
        \lVert m\rVert_{\operatorname{cbMS}^p(X,X')}=\sup_{(i,i')\in I\times I'}\lVert(\pi'_{i\smash{'}}\times\pi_i)_*m\rVert_{\operatorname{cbMS}^p(X_i,X'_{i\smash{'}})}.
    \]
\end{proposition}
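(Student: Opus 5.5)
We prove the two equalities (for $\operatorname{MS}^p$ and $\operatorname{cbMS}^p$) by establishing the two inequalities separately. The completely bounded case follows from the bounded one by tensoring with a matrix level. One replaces $X$ and $X'$ by $X\times\{1,\dots,n\}$ and $X'\times\{1,\dots,n\}$, with $m$ replaced by $m\otimes 1$. This is still a projective system of probability spaces if the factor $\{1,\dots,n\}$ carries normalised counting measure, and normalisation does not affect Schur multiplier norms. So it suffices to treat $\operatorname{MS}^p$.

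\emph{The inequality $\geq$.} Each $\pi_i$ and $\pi'_{i'}$ is a homomorphism of probability spaces. Write $E_{i,i'}=(\pi'_{i'}\times\pi_i)^*(\pi'_{i'}\times\pi_i)_*$, the conditional expectation onto the $\sigma$-algebra generated by $\pi'_{i'}\times\pi_i$. We check directly that for $l\in L^2(Y')\otimes L^2(Y)$, where $Y=X_i$ and $Y'=X'_{i'}$, we have
\[
(\pi'_{i'}\times\pi_i)_*\bigl(m\cdot(\pi'_{i'}\times\pi_i)^*l\bigr)=\bigl((\pi'_{i'}\times\pi_i)_*m\bigr)\cdot l .
\]
This holds because $(\pi'_{i'}\times\pi_i)^*l$ is measurable for the sub-$\sigma$-algebra, so the pushforward acts as a conditional expectation and pulls out that factor. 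Combining this identity with \autoref{lemma:Sppushpull}, namely the isometry of the pullback and the contractivity of the pushforward on $S^p$, gives
\[
\lVert(\pi'_{i'}\times\pi_i)_*m\rVert_{\operatorname{MS}^p(X_i,X'_{i'})}\leq\lVert m\rVert_{\operatorname{MS}^p(X,X')} .
\]
Here we first restrict to the dense subspace $L^2\otimes L^2$ and then extend by continuity.

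\emph{The inequality $\leq$.} Let $M$ denote the supremum on the right, assumed finite. Put $m_{i,i'}=E_{i,i'}m=(\pi'_{i'}\times\pi_i)^*(\pi'_{i'}\times\pi_i)_*m$. Because $X$ and $X'$ are projective limits, the union of the pulled-back spaces $\pi_i^*L^2(X_i)$ is dense in $L^2(X)$, and likewise on the primed side. The net $(m_{i,i'})$ is then a bounded martingale in $L^\infty(X'\times X)$, and it converges to $m$ in $L^1$, hence almost everywhere along sequences and weak-$*$ in $L^\infty$. The product $I\times I'$ is directed, and the $\sigma$-algebras of the product space are generated by the product $\sigma$-algebras.

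Fix $k$ in the algebraic tensor product of pulled-back functions, say $k=(\pi'_{j'}\times\pi_j)^*l$. For $(i,i')\geq(j,j')$, the same identity as above shows that $m_{i,i'}k$ is the pullback of $\bigl((\pi'_{i'}\times\pi_i)_*m\bigr)\cdot(\pi'_{i'j'}\times\pi_{ij})^*l$. By \autoref{lemma:Sppushpull} this gives
\[
\lVert m_{i,i'}k\rVert_{S^p(X,X')}\leq M\lVert k\rVert_{S^p(X,X')} .
\]
Since $k$ is a finite-rank kernel and $m_{i,i'}\to m$ weak-$*$, we get $m_{i,i'}k\to mk$ in the sense of distributions or weakly in $L^2(X'\times X)$. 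The $S^p$ norm is lower semicontinuous for this convergence, being a supremum of pairings against finite-rank $S^{p'}$ elements for $p>1$, with the $p=1$ case handled by duality with compact operators. Hence $\lVert mk\rVert_{S^p}\leq M\lVert k\rVert_{S^p}$.

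\emph{The main obstacle.} The remaining difficulty is to pass from these $k$ to all of $L^2(X')\otimes L^2(X)$ in the $S^p$ norm. For this we use density of $\bigcup_i\pi_i^*L^2(X_i)$ in $L^2(X)$ together with the bound $\lVert f'\otimes f\rVert_{S^p}=\lVert f'\rVert_2\lVert f\rVert_2$, which makes $S^p$-approximation of elementary tensors straightforward. The estimate for $mk$ with general $k$ then follows by the same lower-semicontinuity argument.

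The delicate points are twofold: the martingale convergence for a directed (non-sequential) product index set, and the lower semicontinuity at the endpoint $p=\infty$. For the martingale convergence, first try the $L^2$ convergence of conditional expectations on an increasing net of $\sigma$-algebras, which needs only density and no sequential martingale theorem. For $p=\infty$, use the kernel-pairing description of the operator norm, pairing against $S^1$ elementary tensors.
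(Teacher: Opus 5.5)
Your argument is correct, but it is organised differently from the paper's. The paper rests on the single bilinear identity
\[
\iint_{X'_{i'}\times X_i}(\pi'_{i'}\times\pi_i)_*m\cdot k\cdot l=\iint_{X'\times X}m\cdot\bigl(k\circ(\pi'_{i'}\times\pi_i)\bigr)\cdot\bigl(l\circ(\pi'_{i'}\times\pi_i)\bigr),
\]
read in both directions against the duality description of the $\operatorname{MS}^p$ norm. Bounding the right-hand side by $\lVert m\rVert_{\operatorname{MS}^p(X,X')}$ and using only the pullback isometry of \autoref{lemma:Sppushpull} gives one inequality. Bounding the left-hand side by $\lVert(\pi'_{i'}\times\pi_i)_*m\rVert_{\operatorname{MS}^p(X_i,X'_{i'})}$ and using density of pulled-back kernels gives the other.

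You argue multiplicatively instead. For the easy inequality you use the pull-out identity plus contractivity of the pushforward. For the hard one you use the conditional expectations $m_{i,i'}$, their weak-$*$ convergence to $m$, weak lower semicontinuity of the $S^p$ norm, and a final density step. Your $L^2$ argument with an increasing net of projections is the right one for the convergence; the aside about almost everywhere convergence is not needed.

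All of this is sound. Applying \autoref{lemma:Sppushpull} to non-algebraic kernels such as $m\cdot(\pi'_{i'}\times\pi_i)^*l$ is harmless: on operators, pullback and pushforward are $G\mapsto V'GV^*$ and $K\mapsto(V')^*KV$ for the isometries $V\colon f\mapsto f\circ\pi_i$ and $V'\colon f'\mapsto f'\circ\pi'_{i'}$. You also rightly test $m_{i,i'}$ only on kernels of lower level rather than bounding $\lVert m_{i,i'}\rVert_{\operatorname{MS}^p(X,X')}$. That bound is not available here: \autoref{proposition:schurpullback} relies on the present statement, and it controls such norms only in the cb case.

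The limit along the net is nevertheless dispensable. $E_{i,i'}$ is self-adjoint and fixes $k\,l$ whenever $k$ and $l$ are pulled back from level $(i,i')$, so $\iint m\,k\,l=\iint m_{i,i'}\,k\,l$ exactly. Pairing against pulled-back test kernels then yields the bound at once. This is precisely the paper's shortcut, and it avoids martingale convergence and semicontinuity altogether. In exchange, your route exhibits $m$ as a weak-$*$ limit of symbols factoring through the finite levels.

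For the cb case, your finite amplification by $\{1,\dots,n\}$ with normalised counting measure is equivalent to the paper's product with an infinite atomic probability space.
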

\begin{proof}
    The result for the completely bounded multiplier norms can be deduced from that for the ordinary multiplier norms by replacing $X$, $X'$ and each $X_i$ and $X_i$ and $X'_{i\smash{'}}$ with their product with an atomic probability spaces $Z$ with infinitely many atoms, each homomorphism with its product with $\operatorname{id}_Z$ and $m$ with $m\otimes 1$ \cite[lemma~1.5 and lemma~1.9]{LafforguedelaSalle}. We shall only detail the proof of the first equality.
    
    Note that for every $(i,i')\in I\times I'$ and every $k,l\in L^2(X'_{i\smash{'}})\otimes L^2(X_i)$, we have
    \begin{multline*}
        \left\lvert\iint_{X'_{i\smash{'}}\times X_i}(\pi'_{i\smash{'}}\times\pi_i)_*m(z',z)k(z',z)l(z',z)\,dz'\,dz\right\rvert\\
        =\left\lvert\iint_{X'\times X}m(x',x)k(\pi'_{i\smash{'}}(x'),\pi_i(x))l(\pi'_{i\smash{'}}(x'),\pi_i(x))\,dx'\,dx\right\rvert.
    \end{multline*}
    The second member of this equality is of course majorised by
    \[
        \lVert m\rVert_{\operatorname{MS}^p(X,X')}\lVert k\circ(\pi'_{i\smash{'}}\times\pi_i)\rVert_{{S}^p(X',X)}\lVert{}^{t}l\circ(\pi_i\times\pi'_{i\smash{'}})\rVert_{{S}^{p\mathrlap{'}}(X',X)}
    \]
    and thanks to the pullback theorem for Schatten classes (\autoref{lemma:Sppushpull}), we then have
    \[
        \lVert m\rVert_{\operatorname{MS}^p(X,X')}\leq\lVert k\rVert_{{S}^p(X_i,X'_{i\smash{'}})}\lVert{}^{t}l\rVert_{{S}^{p\mathrlap{'}}(X'_{i\smash{'}},X_i)}
    \]
    and therefore
    \[
        \lVert(\pi'_{i\smash{'}}\times\pi_i)_*m\rVert_{\operatorname{MS}^p(X_i,X'_{i\smash{'}})}\leq\lVert m\rVert_{\operatorname{MS}^p(X,X')}.
    \]
    Similarly, the first member of this equality is majorised by
    \[
        \lVert(\pi'_{i\smash{'}}\times\pi_i)_*m\rVert_{\operatorname{MS}^p(X_i,X'_{i\smash{'}})}\lVert k\circ(\pi'_{i\smash{'}}\times\pi_i)\rVert_{{S}^p(X,X')}\lVert{}^{t}l\circ(\pi_i\times\pi'_{i\smash{'}})\rVert_{{S}^{p\mathrlap{'}}(X',X)}
    \]
    and since the subspace of $L^2(X)$ (\emph{resp.}\ $L^2(X')$) consisting of the square integrable function which can be written as $f\circ\pi_i$ (\emph{resp.}\ $f'\circ\pi'_{i\smash{'}}$) for some $i\in I$ and $f\in L^2(X_i)$ (\emph{resp.}\ $i'\in I'$ and $f'\in L^2(X'_{i\smash{'}})$) is dense in $L^2(X)$ (\emph{resp.}\ $L^2(X')$), this implies
    \[
        \lVert(\pi'_{i\smash{'}}\times\pi_i)_*m\rVert_{\operatorname{MS}^p(X_i,X'_{i\smash{'}})}\geq\lVert m\rVert_{\operatorname{MS}^p(X,X')}
    \]
    and concludes the proof.
\end{proof}
\begin{proof}[Proof of \autoref{proposition:schurpullback}]
    The result for the completely bounded multiplier norms follows from that for the ordinary multiplier norms by once again considering suitable products. We shall only prove the latter.
	
The pullback theorem for Schatten classes (\autoref{lemma:Sppushpull}) easily yields the first inequality since
    \begin{align*}
        \lVert m\cdot k\rVert_{{S}^p(Y,Y')}
        &=\lVert(\pi'\times\pi)^*(m\cdot k)\rVert_{{S}^p(X,X')}\\
        &=\lVert(\pi'\times\pi)^*(m)\cdot(\pi'\times\pi)^*(k)\rVert_{{S}^p(X,X')}\\
        &\leq\lVert(\pi'\times\pi)^*m\rVert_{\operatorname{MS}^p(X,X')}\lVert(\pi'\times\pi)^*k\rVert_{{S}^p(X,X')}\\
        &=\lVert(\pi'\times\pi)^*m\rVert_{\operatorname{MS}^p(X,X')}\lVert k\rVert_{{S}^p(Y,Y')}
    \end{align*}
    for every $k\in L^2(Y')\otimes L^2(Y)$. To prove the second inequality, first assume that $Y$ and $Y'$ are countable discrete spaces with atomic probability measures, denote by $\Gamma$ and $\Gamma'$ the graphs of $\pi$ and $\pi'$ and endow them with the probability measures puhforwards by $\rho:x\mapsto(x,\pi(x))$ and $\rho':x'\mapsto(x',\pi'(x'))$ of the measures of $X$ and $X'$. The maps $\rho$ and $\rho'$ are isomorphisms of measure spaces and $(\pi'\times\pi)^*m$ is the pullback by $\rho'\times\rho$ of the restriction to $\Gamma'\times\Gamma$ of $1\otimes m$, hence
    \[
        \lVert(\pi'\times\pi)^*m\rVert_{\operatorname{MS}^p(X,X')}=\lVert1\otimes m\rVert_{\operatorname{MS}^p(\Gamma,\Gamma')}.
    \]
    Now since $Y$ and $Y'$ are countable discrete spaces with atomic measures, the underlying sets of $\Gamma$ and $\Gamma'$ are measurable in $X\times Y$ and $X'\times Y'$ and their measures are equivalent to the measures induced by $X\times Y$ and $X'\times Y'$. Since the $\operatorname{MS}^p(\Gamma,\Gamma')$ norm only depends on the equivalence classes of the measures on $\Gamma$ and $\Gamma'$ \cite[lemma~1.9]{LafforguedelaSalle}, we obtain
    \[
        \lVert 1\otimes m\rVert_{\operatorname{MS}^p(\Gamma,\Gamma')}\leq\lVert 1\otimes m\rVert_{\operatorname{MS}^p(X\times Y,X'\otimes Y')}.
    \]
    In general, if $Y$ and $Y'$ are no longer discrete and atomic, there are upward directed sets $I$ and $I'$ and projective systems $(Y_i,\pi_{ij})$ and $(Y'_{i\smash{'}},\pi'_{i\smash{'}j\smash{'}})$ of atomic probability spaces relative to $I$ and $I'$ respectively whose projective limits are $Y$ and $Y'$. For every $i\in I$ (\emph{resp.}\ $i'\in I'$), denote by $\pi_i$ (\emph{resp.}\ $\pi'_{i\smash{'}}$) the canonical homomorphism from $Y$ to $Y_i$ (\emph{resp.}\ from $Y'$ to $Y'_{i\smash{'}}$) and by $\rho_i$ (\emph{resp.}\ $\rho'_{i\smash{'}}$) the map $x\mapsto(x,_pi_i(x))$ (\emph{resp.}\ $x'\mapsto(x',\pi'_{i\smash{'}}(x'))$) from $X$ to $Y_i$ (\emph{resp.}\ from $X'$ to $Y'_{i\smash{'}}$). Since each $Y_i$ and $Y'_{i\smash{'}}$ is atomic, we have
    \begin{multline*}
        \sup_{(i,i')\in I\times I'}\lVert(\pi'\times\pi)^*(\rho'_{i\smash{'}}\times\rho_i)^*(\rho'_{i\smash{'}}\times\rho_i)_*(1\otimes m)\rVert_{\operatorname{MS}^p(X,X')}\\
        \leq\sup_{(i,i')\in I\times I'}\lVert(\rho'_{i\smash{'}}\times\rho_i)_*m\rVert_{\operatorname{MS}^p(X\times Y_i,X'\times Y'_{i\smash{'}})}.
    \end{multline*}
    The second member of this inequality is none other than the $\operatorname{MS}^p(X\times Y,X'\times Y')$ norm of $1\otimes m$ (\autoref{proposition:schurlimproj}). It is then easy to see that as $i$ and $i'$ grow indefinitely in $I$ and $I'$ respectively, we have
    \[
        \lim_{i,i'}(\pi'\times\pi)^*(\rho'_{i\smash{'}}\times\rho_i)^*(\rho'_{i\smash{'}}\times\rho_i)_*m=(\pi'\times\pi)^*m
    \]
    for the ultraweak topology topology $\sigma(L^\infty(X'\times X),L^1(X'\times X))$, topology for which the function $\lVert\cdot\rVert_{\operatorname{MS}^p(X,X')}$ from $L^\infty(X'\times X)$ to $[0,+\infty]$ is lower semi-continuous. Hence,
    \[
        \lVert(\pi'\times\pi)^*m\rVert_{\operatorname{MS}^p(X,X')}
        \leq\sup_{(i,i')\in I\times I'}\lVert(\pi'\times\pi)^*(\rho'_{i\smash{'}}\times\rho_i)^*(\rho'_{i\smash{'}}\times\rho_i)_*m\rVert_{\operatorname{MS}^p(X,X')},
    \]
    which concludes the proof.
\end{proof}
The proof of \autoref{theorem:schurpullback} will now simply consist in reducing the general case to that of \autoref{proposition:schurpullback}. For that we will need two lemmas.
\begin{lemma}\label{lemma:supcompact}
    Let $X$ and $X'$ be Hausdorff topological spaces, each endowed with a Radon measure, and let $\mathfrak{A}$ and $\mathfrak{A}'$ be upward directed sets of measurable subsets of $X$ and $X'$ respectively such that
    \[
        \sup_{A\in\mathfrak{A}}{\boldsymbol 1}_A=1\text{ in }L^\infty(X)\qquad\text{and}\qquad\sup_{A'\in\mathfrak{A}'}{\boldsymbol 1}_{A'}=1\text{ in }L^\infty(X').
    \]
    Let $m\in L^\infty(X'\times X)$ and let $p\in[1,\infty]$. For $m$ to belong to $\operatorname{MS}^p(X,X')$ (\emph{resp.}\ $\operatorname{cbMS}^p(X,X')$), it is necessary and sufficient that $\left\{{\boldsymbol 1}_{A'\times A}m~\middle\vert~(A,A')\in\mathfrak{A}\times\mathfrak{A}'\right\}$ is a bounded subset of $\operatorname{MS}^p(X,X')$ (\emph{resp.}\ $\operatorname{cbMS}^p(X,X')$), in which case
    \[
        \lVert m\rVert_{\operatorname{MS}^p(X,X')}=\sup_{(A,A')\in\mathfrak{A}\times\mathfrak{A}'}\lVert{\boldsymbol 1}_{A'\times A}m\rVert_{\operatorname{MS}^p(X,X')}
    \]
    \[
        \biggl(\text{\emph{resp.}\ }\lVert m\rVert_{\operatorname{cbMS}^p(X,X')}=\sup_{(A,A')\in\mathfrak{A}\times\mathfrak{A}'}\lVert{\boldsymbol 1}_{A'\times A}m\rVert_{\operatorname{cbMS}^p(X,X')}\biggr).
    \]
\end{lemma}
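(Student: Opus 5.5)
The plan is to prove the two inequalities ``$\geq$'' and ``$\leq$'' separately, and then to deduce the completely bounded case by the product trick used throughout this section.

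For ``$\geq$'', observe that multiplication by ${\boldsymbol 1}_{A'\times A}$ is the map $k\mapsto P_{A'}kP_A$, where $P_A$ and $P_{A'}$ are the orthogonal projections of $L^2(X)$ and $L^2(X')$ onto $L^2(A)$ and $L^2(A')$. This map is a complete contraction on every ${S}^p(X,X')$: it is contractive on ${S}^\infty$ and on ${S}^1$, and on intermediate $p$ one interpolates, or simply uses the ideal property. Since it commutes with Schur multiplication by $m$, we get $\lVert{\boldsymbol 1}_{A'\times A}m\rVert_{\operatorname{MS}^p}\leq\lVert m\rVert_{\operatorname{MS}^p}$, and the same holds for the $\operatorname{cbMS}^p$ norm.

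For ``$\leq$'', suppose the family is bounded by $M$ and let $k\in L^2(X')\otimes L^2(X)$. Write $k_{A,A'}=P_{A'}kP_A$. The hypothesis $\sup_A{\boldsymbol 1}_A=1$ in $L^\infty$, with $\mathfrak{A}$ upward directed, gives $P_A\to 1$ strongly along the net; this is where directedness is used, since the net ${\boldsymbol 1}_A$ increases to $1$ almost everywhere locally. Hence $k_{A,A'}\to k$ in ${S}^p$ for $p<\infty$ (the tensor is of finite rank) and in ${S}^\infty$ as well. Next, $m\cdot k_{A,A'}=({\boldsymbol 1}_{A'\times A}m)\cdot k$ has ${S}^p$ norm at most $M\lVert k\rVert_{{S}^p}$. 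As $(A,A')$ grows, $m\cdot k_{A,A'}\to m\cdot k$ in $L^2(X'\times X)$, since $k\in L^2$ and $m$ is bounded, and hence in the distributional/weak sense. Because the ${S}^p$ norm is lower semi-continuous for this weak convergence (by duality against finite-rank tensors in ${S}^{p'}$, or by the lower semi-continuity already used in Lemma~\ref{lemma:SpfromLp}), we obtain $\lVert m\cdot k\rVert_{{S}^p}\leq M\lVert k\rVert_{{S}^p}$, and therefore $m\in\operatorname{MS}^p(X,X')$ with the claimed norm bound.

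For the completely bounded statement, replace $X$ and $X'$ by their products with an atomic space $Z$ having infinitely many atoms, replace $m$ by $1\otimes m$, and replace the directed families by $\{Z\times A\}$ and $\{Z\times A'\}$. These families still satisfy the supremum hypothesis. Then invoke \cite[lemma~1.5 and lemma~1.9]{LafforguedelaSalle}, which identifies $\operatorname{cbMS}^p$ norms with $\operatorname{MS}^p$ norms of such products.

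The main obstacle is the lower semi-continuity step when $p>2$ (and $p=\infty$). There ${S}^p$ is not contained in $L^2$, so the convergence has to be tested against finite-rank kernels. One needs
\[
\lVert T\rVert_{{S}^p}=\sup\bigl\{\lvert\operatorname{Tr}(TR)\rvert\bigm\vert R\in L^2(X)\otimes L^2(X'),\ \lVert R\rVert_{{S}^{p'}}\leq1\bigr\},
\]
and this follows from the density of finite-rank tensors in ${S}^{p'}$ when $p'<\infty$. For $p=1$, where $p'=\infty$, one uses instead that ${S}^1$ is the dual of the compact operators.
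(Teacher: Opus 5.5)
Your proof is correct and follows the paper's route: the paper derives the $\operatorname{MS}^p$ statement from the density of $\bigcup_{(A,A')}{\boldsymbol 1}_{A'\times A}{S}^p(X,X')$ in ${S}^p(X,X')$, which is exactly your approximation $k_{A,A'}\to k$, and it deduces the $\operatorname{cbMS}^p$ statement by taking products with an infinite discrete space, as you do. One small correction to your last paragraph: the lower semi-continuity step is needed only for $p<2$, where your duality argument against finite-rank kernels does the work; for $p\geq 2$ the inequality $\lVert\cdot\rVert_{{S}^p}\leq\lVert\cdot\rVert_{{S}^2}$ already turns your $L^2$-convergence of $m\cdot k_{A,A'}$ into ${S}^p$-convergence, so $p>2$ is not the obstacle.
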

\begin{proof}
	The result for completely bounded multipliers follows from that for ordinary multipliers by replacing $X$ and $X'$ with their products with an infinite discrete space and their measures with their product with the counting measure \cite[lemma~1.5]{LafforguedelaSalle}, while the result for ordinary multipliers follows from the space
	\[
	    \bigcup_{(A,A')\in\mathfrak{A}\times\mathfrak{A}'}{\boldsymbol 1}_{A'\times A}{S}^p(X,X')
	\]
	being dense in ${S}^p(X,X')$.
\end{proof}
\begin{lemma}\label{lemma:pushconc}
	Let $X$ and $Y$ be Hausdorff topological spaces, each endowed with a Radon measure, and let $\pi$ be a Lusin-measurable map from $X$ to $Y$. Suppose that locally negligible subsets of $Y$ are precisely those whose preimages by $\pi$ are locally negligible in $X$ and denote by $\mathfrak{K}$ the set of all compact subsets of $X$ to which the restriction of $\pi$ is continuous. Then,
	\[
	    \sup_{K\in\mathfrak{K}}{\boldsymbol 1}_K=1\text{ in }L^\infty(X)\qquad\text{and}\qquad\sup_{K\in\mathfrak{K}}{\boldsymbol 1}_{\pi(K)}=1\text{ in }L^\infty(Y).
	\]
\end{lemma}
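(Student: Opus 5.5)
We prove the two equalities separately. In both, the key tool is Lusin-measurability, which lets us exhaust $X$ by compact sets on which $\pi$ is continuous. Here $\sup$ in $L^\infty$ means the essential supremum of the family of indicator functions, i.e.\ the indicator of the essential union.

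\emph{First equality.} Let $K_0$ be a compact subset of $X$ and $\varepsilon>0$. By Lusin-measurability there is a compact $L\subseteq K_0$ with $\mu(K_0\setminus L)\leq\varepsilon$ and $\pi|_L$ continuous, so $L\in\mathfrak{K}$. Letting $\varepsilon$ run through $1/n$, we see that $K_0$ is, up to a negligible set, contained in a countable union of elements of $\mathfrak{K}$. Hence the essential supremum $E$ of the ${\boldsymbol 1}_K$, $K\in\mathfrak{K}$, has locally negligible complement. Since the measure is Radon, it suffices to test against compact sets, which the previous argument does. The family $\mathfrak{K}$ is upward directed, because a finite union of compacts on each of which $\pi$ is continuous is again such a set: continuity on a finite union of closed sets follows by the pasting lemma. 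Directedness is not needed for this lemma, but it is what \autoref{lemma:supcompact} requires later.

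\emph{Second equality.} Each $\pi(K)$ with $K\in\mathfrak{K}$ is compact, being the continuous image of a compact set, hence measurable. Let $F$ be the essential supremum in $L^\infty(Y)$ of the ${\boldsymbol 1}_{\pi(K)}$, represented by a measurable set, and suppose towards a contradiction that $Y\setminus F$ is not locally negligible. By hypothesis, $\pi^{-1}(Y\setminus F)$ is then not locally negligible in $X$, so by the first equality there is some $K\in\mathfrak{K}$ with $K\cap\pi^{-1}(Y\setminus F)$ non-negligible. But $K\cap\pi^{-1}(Y\setminus F)\subseteq\pi^{-1}(\pi(K)\setminus F)$. Since ${\boldsymbol 1}_{\pi(K)}\leq{\boldsymbol 1}_F$ almost everywhere, the set $\pi(K)\setminus F$ is locally negligible, and by the hypothesis again its preimage is locally negligible in $X$. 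This is a contradiction.

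The main obstacle is measure-theoretic: making sense of the essential supremum of an uncountable family and of the measurability of $\pi^{-1}(Y\setminus F)$. I would handle it as follows. Use the standard fact that for a Radon measure the lattice $L^\infty$ is complete (order-complete), so the supremum exists and is attained as the indicator of a measurable set. For the preimage, measurability follows from Lusin-measurability of $\pi$, which gives measurability of preimages of Borel, and indeed of $\mu_Y$-measurable, sets under the stated negligibility hypothesis. If needed, one restricts attention to a compact $Y$-piece and uses inner regularity to reduce to countably many elements of $\mathfrak{K}$.
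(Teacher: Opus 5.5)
Your proof is correct and follows the paper's argument. The first part is the same Lusin exhaustion of an arbitrary compact set by elements of $\mathfrak{K}$. Your second part is the paper's argument in contrapositive form: it rests on the same inclusion $K\setminus\pi^{-1}(F)\subseteq\pi^{-1}(\pi(K)\setminus F)$ and uses both directions of the negligibility hypothesis in the same places.

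The only difference is that you first posit the essential supremum via order-completeness of $L^\infty$. The paper sidesteps this by showing directly that any measurable $A$ (\emph{resp.}\ $B$) with ${\boldsymbol 1}_A\geq{\boldsymbol 1}_K$ (\emph{resp.}\ ${\boldsymbol 1}_B\geq{\boldsymbol 1}_{\pi(K)}$) locally almost everywhere for every $K\in\mathfrak{K}$ has locally negligible complement.
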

\begin{proof}
    Let $A$ be a measurable subset of $X$ such that for every $K\in\mathfrak{K}$, ${\boldsymbol 1}_A\geq{\boldsymbol 1}_K$ locally almost everywhere in $X$. We will simply show that the complement of $A$ in $X$ is locally negligible in $X$. Let $K$ be a compact subset of $X\setminus A$ and let $\varepsilon>0$. Since $\pi$ is Lusin-measurable, there exists a compact subset $L$ of $K$, belonging to $\mathfrak{K}$ and such that $K\setminus L$ has measure $\leq\varepsilon$. Since $L\subset X\setminus A$ and ${\boldsymbol 1}_A\geq{\boldsymbol 1}_L$ locally almost everywhere, we see that $L$ is negligible and then that $K$ has measure $\leq\varepsilon$. This being true for every $\varepsilon>0$, $K$ is negligible and this shows that every compact subset of $X\setminus A$ is negligible, hence the first part of the lemma.

    Similarly, if $B$ is a measurable subset of $Y$ such that for every $K\in\mathfrak{K}$, ${\boldsymbol 1}_N\geq{\boldsymbol 1}_{\pi(K)}$ locally almost everywhere in $Y$, the assumptions on the measures of $X$ and $Y$ imply that for every $K\in\mathfrak{K}$, ${\boldsymbol 1}_{\pi^{-1}(B)}\geq{\boldsymbol 1}_K$ locally almost everywhere in $X$. The previous reasoning then shows that $X\setminus\pi^{-1}(B)$ is locally negligible in $X$ and then that $Y\setminus B$ is locally negligible in $Y$, hence the second part of the lemma.
\end{proof}
\begin{proof}[Proof of \autoref{theorem:schurpullback}]
    Thanks to \autoref{lemma:supcompact} and \autoref{lemma:pushconc}, we may assume that there exists a compact subset $K$ (\emph{resp.}\ $K'$) of $X$ (\emph{resp.}\ $X'$) to which the restriction of $\pi$ (\emph{resp.}\ $\pi'$) is continuous, on which the measure of $X$ (\emph{resp.}\ $X'$) is supported and such that the measure of $Y$ (\emph{resp.}\ $Y'$) is supported in the compact set $\pi(K)$ (\emph{resp.}\ $\pi'(K')$). In this case, the measure of $Y$ (\emph{resp.}\ $Y'$) is equivalent to the pushforward by $\pi$ (\emph{resp.}\ $\pi'$) of the measure of $X$ (\emph{resp.}\ $X'$). Since the space $\operatorname{cbMS}^p(Y,Y')$ and its norm only depend on the equivalence classes of the measures of $X$ and $X'$ \cite[lemma~1.9]{LafforguedelaSalle}, we may also assume that the measure of $Y$ (\emph{resp.}\ $Y'$) is \emph{equal} to the pushforward by $\pi$ (\emph{resp.}\ $\pi'$) of the measure of $X$ (\emph{resp.}\ $X'$), in which case the result is already given by \autoref{proposition:schurpullback}.
\end{proof}
\subsection{Proof of the general pushforward theorem}
We now have all the tools needed to prove the sufficiency in \autoref{theorem:ncjodeit}. As was announced, we shall actually prove a somewhat concrete quantitative estimate.
\begin{proposition}\label{proposition:openimpliesjodeit}
	Let $G$ and $H$ be two locally compact groups, let $\pi$ be an open continuous homomorphism from $G$ to $H$ and denote by $N$ its kernel. The conclusion of Jodeit's theorem holds for $\pi$. Moreover, for every compact subset $K$ of $G$, every non-negligible compact subsets $L$ and $M$ of $G$ and every $p,q\in[1,\infty]$, it holds that
	\begin{multline*}
	    \lVert\pi_*\rVert_{\operatorname{M}_K^{p,q}(H)\to M^{p,q}(H)}\\
	    \leq\frac{\mu_G(KL)}{\mu_G(L)}\biggl(\frac{\mu_H\bigl(\pi(KLL^{-1}M)\bigr)\mu_N\bigl(MM^{-1}(KLL^{-1})^{-1}KLL^{-1}\bigr)}{\mu_H\bigl(\pi(M)\bigr)}\biggr)^{\mkern-5mu\frac 1{p}+\frac 1{q\mathrlap{'}}}.
	\end{multline*}
\end{proposition}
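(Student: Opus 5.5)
By \autoref{lemma:pq11} and \autoref{remark:estimate}, it suffices to find a $\varphi\in A_c(G)$ with $\varphi=1$ on a neighbourhood of $K$ and to bound two quantities: $\lVert\varphi\rVert_{A(G)}$, and the norm $B$ of $f\mapsto\pi_*(\varphi f)$ from $A(G)$ to $A(H)$. I take $\varphi=\mu_G(L)^{-1}{\boldsymbol 1}_{KL}*({\boldsymbol 1}_L)^\flat$, as in \autoref{lemma:M11cisAc}. If necessary I first enlarge $K$ slightly, or regularise, so that $\varphi=1$ on a neighbourhood of $K$. This choice gives $\lVert\varphi\rVert_{A(G)}\leq(\mu_G(KL)/\mu_G(L))^{1/2}$ and $\supp\varphi\subseteq S:=KLL^{-1}$. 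I will show that $\lVert\pi_*g\rVert_{A(H)}\leq Q\lVert g\rVert_{A(G)}$ for every $g\in A_c(G)$ supported in $S$, where
\[
Q=\frac{\mu_H(\pi(SM))\,\mu_N(MM^{-1}S^{-1}S)}{\mu_H(\pi(M))}.
\]
Then $B\leq Q\lVert\varphi\rVert_{A(G)}$, and the exponents $\frac1{p'}+\frac1q$ and $\frac1p+\frac1{q'}$ sum to $2$. Plugging into \autoref{remark:estimate} therefore yields exactly $\frac{\mu_G(KL)}{\mu_G(L)}Q^{\frac1p+\frac1{q'}}$, which is the stated bound.

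To bound $\pi_*g$, I first pass from $A(H)$ to $\operatorname{M}^{1,1}(H)$ using \autoref{lemma:M11cisAc} on $H$, with the compact set $\pi(S)$ containing $\supp\pi_*g$ and the non-negligible compact set $\pi(M)$. This costs a factor $(\mu_H(\pi(SM))/\mu_H(\pi(M)))^{1/2}$. I then apply the transference \autoref{theorem:SFtransference} on $H$ with $p=1$, $X=\pi(M)$ and $X'=\pi(SM)$. By the remark following that theorem, the constant is again $(\mu_H(\pi(SM))/\mu_H(\pi(M)))^{1/2}$, so
\[
\lVert\pi_*g\rVert_{A(H)}\leq\frac{\mu_H(\pi(SM))}{\mu_H(\pi(M))}\lVert\widetilde{\pi_*g}\rVert_{\operatorname{MS}^1(\pi(M),\pi(SM))}.
\]
Next I use the Schur pullback \autoref{theorem:schurpullback} with the maps $\pi|_M$ and $\pi|_{SM}$. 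These are continuous on compact sets, hence Lusin-measurable. Because $\pi$ is open, it is (up to the open subgroup $\pi(G)$) a quotient map, so by the cited Bourbaki result negligible sets correspond. This lets me replace the $\operatorname{cbMS}^1$ norm on $\pi(M)\times\pi(SM)$ by the $\operatorname{cbMS}^1$ norm on $SM\times M$ of $(s,t)\mapsto(\pi_*g)(\pi(st^{-1}))$.

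By \autoref{proposition:pushopenD}, that pulled-back function is $(g*(\mu_N)^\flat)(st^{-1})=\int_N g(st^{-1}n)\,d\mu_N(n)$. For $s\in SM$ and $t\in M$, the integrand vanishes unless $st^{-1}n\in S$. That forces $n\in R:=MM^{-1}S^{-1}S$, since $N$ is normal and $st^{-1}\in SMM^{-1}$. The kernel is therefore an average over $n\in N\cap R$ of the kernels $\widetilde{g(\cdot\,n)}$. By Bożejko–Fendler, each has $\operatorname{cbMS}^1$ norm at most $\lVert g(\cdot\,n)\rVert_{A(G)}=\lVert g\rVert_{A(G)}$, because right translation is isometric on $A(G)$. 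Integrating, with the cb Schur norm convex and lower semicontinuous, gives the factor $\mu_N(R)\lVert g\rVert_{A(G)}$ and completes the bound by $Q$.

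The main obstacle I expect is making this last averaging step rigorous. It requires checking that the restriction to $N\cap R$ is harmless, that the vector-valued integral of Schur multipliers converges in a topology for which the $\operatorname{cbMS}^1$ norm is lower semicontinuous, and that the hypotheses of \autoref{theorem:schurpullback} hold for the compactly supported restrictions of $\pi$. I also need to confirm that $\pi$ open guarantees the required correspondence of negligible sets, not merely when $\pi$ is a quotient map onto $H$.
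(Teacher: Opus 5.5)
Your proposal is correct and is essentially the paper's proof. It uses the same $\varphi$: the paper regularises ${\boldsymbol 1}_{KL}$ into some $\psi\in\mathscr{D}(G)$ equal to $1$ near $KL$ with support shrinking to $KL$, which is your ``enlarge or regularise'' fix. It follows the same chain: \autoref{lemma:M11cisAc}, \autoref{theorem:SFtransference}, \autoref{theorem:schurpullback}, \autoref{proposition:pushopenD}, truncation of $\mu_N$ to $R=MM^{-1}S^{-1}S$, and Bo\.zejko--Fendler. The final bookkeeping $\lVert\varphi\rVert_{A(G)}^2Q^{\frac1p+\frac1{q'}}$ is also the same.

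The only difference is the order of the last two operations, and the paper's order removes the obstacle you anticipate. It first forms the single function $h=g*({\boldsymbol 1}_R\mu_N)^\flat\in A(G)$, with $\lVert h\rVert_{A(G)}\leq\mu_N(R)\lVert g\rVert_{A(G)}$ by Eymard's estimate (a Bochner integral of the norm-continuous map $n\mapsto g(\cdot\,n)$ in $A(G)$). It then notes that $\tilde h$ agrees with the pulled-back kernel on $SM\times M$ and applies the Fourier-to-Schur transference only once. So no vector-valued integration of Schur multipliers is needed.
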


\begin{lemma}\label{lemma:pushAcinAc}
    Suppose that $\pi$ is open. Let $\varphi\in\mathscr{D}(G)$ and denote by $S$ its support. Then, the map $T\mapsto\pi_*(\varphi T)$ from $\mathscr{D}'(G)$ to $\mathscr{E}'(H)$ continuously maps $A(G)$ in $A(H)$. Moreover, for every non-negligible compact subset $M$ of $G$ and every $f\in A(G)$, it holds that
    \[
        \lVert\pi_*(\varphi f)\rVert_{A(H)}\leq\frac{\mu_H\bigl(\pi(SM)\bigr)\mu_N\bigl(MM^{-1}S^{-1}S\bigr)}{\mu_H\bigl(\pi(M)\bigr)}\lVert\varphi\rVert_{A(G)}\lVert f\rVert_{A(G)}.
    \]
\end{lemma}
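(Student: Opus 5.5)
The plan is to route the estimate through Schur multipliers. First, $\lVert\varphi f\rVert_{A(G)}\leq\lVert\varphi\rVert_{A(G)}\lVert f\rVert_{A(G)}$, so it suffices to show that
\[
    \lVert\pi_*g\rVert_{A(H)}\leq\frac{\mu_H\bigl(\pi(SM)\bigr)\mu_N\bigl(MM^{-1}S^{-1}S\bigr)}{\mu_H\bigl(\pi(M)\bigr)}\lVert g\rVert_{A(G)}
\]
for every $g\in A(G)$ supported in $S$. By density and lower semi-continuity it is enough to take $g\in\mathscr{D}(G)$. In that case $h=\pi_*g$ lies in $\mathscr{D}(H)$ with support in $\pi(S)$ (\autoref{proposition:pushopenD}), and $\pi^*h=g*(\mu_N)^\flat$.

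The first step is on the $H$ side. I will run the argument of \autoref{lemma:M11cisAc} together with the $p=1$ case of \autoref{theorem:SFtransference}. Put $X=\pi(M)$ and $X'=\pi(SM)$ in $H$, and $\chi=\mu_H(\pi(M))^{-1}{\boldsymbol 1}_{\pi(SM)}*({\boldsymbol 1}_{\pi(M)})^\flat$, which equals $1$ on $\pi(S)$. Then $h=h\chi$, and since ${\boldsymbol 1}_{\pi(SM)}\otimes{\boldsymbol 1}_{\pi(M)}$ has projective norm $(\mu_H(\pi(SM))\mu_H(\pi(M)))^{1/2}$, one gets
\[
    \lVert h\rVert_{A(H)}\leq\frac{\mu_H(\pi(SM))}{\mu_H(\pi(M))}\,\bigl\lVert\tilde h|_{X'\times X}\bigr\rVert_{\operatorname{cbMS}^1(X,X')}.
\]
I would justify this by writing $h=\sum b_k*\bar a_k^\flat$ via the kernel $\tilde h\cdot({\boldsymbol 1}_{X'}\otimes{\boldsymbol 1}_X)$ viewed in ${S}^1(X,X')$, exactly as in \autoref{lemma:magicformula} with $p=1$. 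Keeping the full ratio rather than its square root is what the stated constant suggests, and it costs nothing.

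Next, pull back to $G$ with \autoref{theorem:schurpullback}, applied to the restrictions of $\pi$ to $M\to\pi(M)$ and to $SM\to\pi(SM)$. These are continuous, and since $\pi$ is open, negligible sets correspond (Bourbaki, via the quotient $G/N$ followed by an open embedding). This gives
\[
    \lVert\tilde h|_{X'\times X}\rVert_{\operatorname{cbMS}^1}=\lVert(\tilde h\circ(\pi\times\pi))|_{SM\times M}\rVert_{\operatorname{cbMS}^1(M,SM)},
\]
and $\tilde h\circ(\pi\times\pi)(s,t)=(g*(\mu_N)^\flat)(st^{-1})$.

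For $s\in SM$ and $t\in M$, the integral $\int_Ng(st^{-1}n)\,d\mu_N(n)$ only involves $n\in N\cap(SMM^{-1})^{-1}S=N\cap MM^{-1}S^{-1}S=:W$. So on $SM\times M$ the symbol coincides with $\widetilde{g*\nu}$, where $\nu=({\boldsymbol 1}_W\mu_N)^\flat$ is a bounded measure of mass $\mu_N(W)$. The Schur symbol $\widetilde{g*\nu}(s,t)=\int g(st^{-1}n^{-1})\,d\nu$ is an average of right-translated symbols $(s,t)\mapsto\tilde g(s,nt)$. Each of these has the same $\operatorname{cbMS}^1(G,G)$ norm as $\tilde g$ (a measure-class-preserving change of variable, \cite[lemma~1.9]{LafforguedelaSalle}). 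By Bo\.zejko--Fendler, $\lVert\tilde g\rVert_{\operatorname{cbMS}^1}=\lVert g\rVert_{\operatorname{cbMA}}\leq\lVert g\rVert_{A(G)}$. Combining, the Schur norm is at most $\mu_N(W)\lVert g\rVert_{A(G)}$, which yields the bound. Continuity of $T\mapsto\pi_*(\varphi T)$ from $A(G)$ to $A(H)$ then follows from this estimate.

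I expect the main obstacle to be the first step, namely getting the $A(H)$ norm of $h$ from a $\operatorname{cbMS}^1$ norm on the truncated set $\pi(SM)\times\pi(M)$ with the exact constant. Concretely this means factoring $h\chi$ through the ${S}^1(X,X')$ kernel $\tilde h\cdot({\boldsymbol 1}_{X'}\otimes{\boldsymbol 1}_X)$ and bounding its projective-tensor norm without any amenability assumption. A second delicate point is checking that the change of variables $t\mapsto nt$ preserves the Schur norm even though the modular function intervenes.
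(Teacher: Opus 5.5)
Your proposal follows the paper's proof step for step: you bound $\lVert\pi_*g\rVert_{A(H)}$ by the $\operatorname{cbMS}^1(\pi(M),\pi(SM))$ norm of $(\pi_*g)^{\sim}$, pull back with \autoref{theorem:schurpullback}, use $\pi^*\pi_*g=g*(\mu_N)^\flat$, truncate $\mu_N$ to $MM^{-1}S^{-1}S$, and finish with translation invariance and Bo\.zejko--Fendler. The paper obtains the first bound by chaining \autoref{lemma:M11cisAc} with \autoref{theorem:SFtransference}, each step costing $(\mu_H(\pi(SM))/\mu_H(\pi(M)))^{1/2}$, and it averages the $N$-translates in $A(G)$ via Eymard's estimate, whereas you average them on the Schur side; this difference is cosmetic.

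Neither of your anticipated obstacles is real. Your direct factorisation $S^1(\pi(M),\pi(SM))\to A(H)$ already gives the smaller factor $(\mu_H(\pi(SM))/\mu_H(\pi(M)))^{1/2}$, with no amenability needed. For the last step, write the truncated symbol directly as $\int_W\tilde g(s,n^{-1}t)\,d\mu_N(n)$ instead of going through $\nu$; you slightly misstated both the mass of $\nu$ and its convolution formula. In this form only the $\mu_G$-preserving left translations $t\mapsto n^{-1}t$ are involved, so no modular function appears.
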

Note that this estimate is non-trivial as the image by $\pi$ of a non-negligible subset of $G$ is non-negligible in $H$ \cite[chap.~VII, \textsection\nobreak\,2, th.~1]{BourbakiINTd}.
\begin{proof}
    Since $\pi$ is open, $\pi_*(\varphi f)$ is a continuous function \cite[chap.~VII, \textsection\nobreak\,2, n\textsuperscript{o}\nobreak\,2]{BourbakiINTd} rather than just a distribution so all the computations that will follow are indeed licit. Thanks to \autoref{lemma:M11cisAc}, we already have
    \[
        \lVert\pi_*(\varphi f)\rVert_{A(H)}\leq\biggl(\frac{\mu_H(\pi(SM))}{\mu_H(\pi(M))}\biggr)^{\mkern-5mu\frac 12}\lVert\pi_*(\varphi f)\rVert_{\operatorname{cbM}^{1,1}(H)}.
    \]
    We then apply the transference theorem from Schur to Fourier multipliers (\autoref{theorem:SFtransference}) to obtain
    \[
        \lVert\pi_*(\varphi f)\rVert_{\operatorname{cbM}^{1,1}(H)}\leq\biggl(\frac{\mu_H(\pi(SM))}{\mu_H(\pi(M))}\biggr)^{\mkern-5mu\frac 12}\lVert(\pi_*(\varphi f))^{\sim}\rVert_{\operatorname{cbMS}^1(\pi(M),\pi(SM))}.
    \]  
    The pullback theorem for Schur multipliers (\autoref{theorem:schurpullback}) then yields
    \[
        \lVert(\pi_*(\varphi f))^{\sim}\rVert_{\operatorname{cbMS}^1(\pi(M),\pi(SM))}=\lVert(\pi_*(\varphi f))^{\sim}\circ(\pi\times\pi)\rVert_{\operatorname{cbMS}^1(M,SM)}.
    \]
    Thanks to \autoref{proposition:pushopenD}, we then have
    \[
        (\pi_*(\varphi f))^{\sim}\circ(\pi\times\pi)=(\pi^*\pi_*(\varphi f))^{\sim}=((\varphi f)*(\mu_N)^\flat)^{\sim}
    \]
    and since $\varphi f$ is supported in $S$, we have
    \[
        \bigl((\varphi f)*(\mu_N)^\flat\bigr)^{\sim}(s,t)=\bigl((\varphi f)*({\boldsymbol 1}_{MM^{-1}S^{-1}S}\mu_N)^\flat\bigr)^{\sim}(s,t)
    \]
    for every $(s,t)\in SM\times M$. The transference theorem from Fourier to Schur multipliers (\autoref{theorem:SFtransference}) then implies that
    \begin{multline*}
        \lVert((\varphi f)*({\boldsymbol 1}_{MM^{-1}S^{-1}S}\mu_N)^\flat)^{\sim}\rVert_{\operatorname{cbMS}^1(M,SM)}\\
        \leq\lVert(\varphi f)*({\boldsymbol 1}_{MM^{-1}S^{-1}S}\mu_N)^\flat\rVert_{\operatorname{cbMA}(G)}
        \leq\lVert(\varphi f)*({\boldsymbol 1}_{MM^{-1}S^{-1}S}\mu_N)^\flat\rVert_{A(G)}.
    \end{multline*}
    The estimate provided by \cite[prop.~2.18, 2\textsuperscript{o}]{Eymard} finally yields
    \begin{align*}
        \lVert(\varphi f)*({\boldsymbol 1}_{MM^{-1}S^{-1}S}\mu_N)^\flat\rVert_{A(G)}
        &\leq\mu_N(MM^{-1}S^{-1}S)\lVert\varphi f\rVert_{A(G)}\\
        &\leq\mu_N(MM^{-1}S^{-1}S)\lVert\varphi\rVert_{A(G)}\lVert f\rVert_{A(G)}
    \end{align*}
    as desired.
\end{proof}
\begin{proof}[Proof of \autoref{proposition:openimpliesjodeit}]
    The fact that the conclusion of Jodeit's theorem holds for $\pi$, or equivalently (\autoref{lemma:pq11}) that $\pi_*$ maps $A_c(G)$ in $A(H)$, is immediately given by \autoref{lemma:pushAcinAc}. We shall only detail the proof of the announced estimate.
    
    Let $K$ be a compact subset of $G$ and let $L$ and $M$ be two non-negligible compact subsets of $G$. Recall that the estimate provided by \autoref{remark:estimate} is
    \[
        \lVert\pi_*\rVert_{\operatorname{M}_K^{p,q}(G)\to\operatorname{M}^{p,q}(H)}\leq\lVert\varphi\rVert_{A(G)}^{\frac 1{p\mathrlap{'}}+\frac 1q}\lVert f\mapsto\pi_*(\varphi f)\rVert_{A(G)\to A(H)}^{\frac 1p+\frac 1{q\mathrlap{'}}}
    \]
    whenever $\varphi\in\mathscr{D}(G)$ is $=1$ on a neighbourhood of $K$ in $G$. Combined with the estimate given by \autoref{lemma:pushAcinAc}, this yields
    \[
        \lVert\pi_*\rVert_{\operatorname{M}_K^{p,q}(G)\to\operatorname{M}^{p,q}(H)}\leq\lVert\varphi\rVert_{A(G)}^2\biggl(\frac{\mu_H\bigl(\pi(SM)\bigr)\mu_N\bigl(MM^{-1}S^{-1}S\bigr)}{\mu_H\bigl(\pi(M)\bigr)}\biggr)^{\mkern-5mu\frac 1p+\frac 1{q\mathrlap{'}}}
    \]
    where $S$ is the support of $\varphi$. To obtain the desired estimate, it then sufficient to take $\varphi$ of the form $\psi*(\mu_G(L)^{-1}{\boldsymbol 1}_L)^\flat$ with $\psi\in\mathscr{D}(G)$ with values in $[0,1]$, equal to $1$ on a neighbourhood of $KL$ in $G$ and with support arbitrarily close to $KL$.
\end{proof}
\begin{corollary}\label{corollary:openpushcbMp}
	Let $G$ and $H$ be two locally compact groups, let $\pi$ be an open continuous homomorphism from $G$ to $H$ and let $p\in[1,\infty]$. The pushforward operator $\pi_*$ realises a strict homomorphism of topological vector spaces from $\operatorname{cbM}_c^{p,p}(G)$ onto the closed subspace of $\operatorname{cbM}_c^{p,p}(H)$ consisting of the symbols in $\operatorname{cbM}_c^{p,p}(H)$ supported in the image of $\pi$.
\end{corollary}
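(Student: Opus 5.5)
The proof splits into three parts: continuity of $\pi_*$ on each $\operatorname{cbM}^{p,p}_K(G)$, surjectivity onto the stated subspace, and strictness (openness onto the image). Write $N$ for the kernel of $\pi$.

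\emph{Continuity.} We first check that $\pi_*$ maps $\operatorname{cbM}^{p,p}_K(G)$ continuously into $\operatorname{cbM}^{p,p}(H)$ for every compact $K$. This is the completely bounded version of \autoref{proposition:openimpliesjodeit}, and the argument carries over. Replace $G$ by $G\times\operatorname{SU}(2)$, $H$ by $H\times\operatorname{SU}(2)$ and $\pi$ by $\pi\times\operatorname{id}$, which is still open. The identity $\pi_*(m)\otimes 1=(\pi\times\operatorname{id})_*(m\otimes 1)$ then reduces the $\operatorname{cbM}^{p,p}$ estimate to the $\operatorname{M}^{p,p}$ estimate with compact set $K\times\operatorname{SU}(2)$. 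The image of $\pi_*$ consists of compactly supported symbols whose support lies in $\pi(K)\subseteq\pi(G)$. Moreover $\pi(G)$ is an open, hence closed, subgroup of $H$. So the target subspace is closed: ${\boldsymbol 1}_{\pi(G)}\in B(H)$, and support conditions are preserved under limits in $\mathscr{D}'(H)$. Since $\operatorname{cbM}^{p,p}_c(G)$ carries the inductive limit topology, continuity on each $\operatorname{cbM}^{p,p}_K(G)$ gives continuity of $\pi_*$.

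\emph{Surjectivity.} Let $n\in\operatorname{cbM}^{p,p}_c(H)$ be supported in $\pi(G)$. Because $\pi$ is open, we can choose a compact $K\subseteq G$ with $\pi(\mathring K)\supseteq\supp n$. By \autoref{proposition:pushopenD}, there is $\varphi\in\mathscr{D}(G)$ with $\varphi\geq 0$ and $\pi_*\varphi=1$ on a neighbourhood of $\supp n$; for instance, push forward a bump and divide. Set $m=\varphi\cdot\pi^*n$, where $\pi^*$ is the transposed extension to distributions. Formally,
\[
    \langle\pi_*(\varphi\,\pi^*n),h\rangle=\langle n,\pi_*(\varphi\,\pi^*h)\rangle=\langle n,h\,\pi_*\varphi\rangle=\langle n,h\rangle,
\]
so $\pi_*m=n$. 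It remains to show that $m\in\operatorname{cbM}^{p,p}(G)$ with a norm bound. For this we want the pullback-and-truncation \autoref{corollary:fourierpullbacktruncate}, applied with $\varphi\in\mathscr{D}(G)\subseteq\operatorname{cbM}^{p,p}_c(G)$. That corollary only handles $n\in L^\infty(H)$. To pass to distributions, regularise $n$ by convolving with an approximate identity in $\mathscr{D}(H)$. Such convolution does not increase the $\operatorname{cbM}^{p,p}$ norm, since right convolution by a probability density corresponds to an average of translates of the multiplier. The bound
\[
    \lVert\varphi\cdot\pi^*(n_j)\rVert_{\operatorname{cbM}^{p,p}(G)}\leq C\lVert n\rVert_{\operatorname{cbM}^{p,p}(H)}
\]
is uniform in $j$, and the norm is lower semicontinuous on $\mathscr{D}'(G)$. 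Passing to the limit gives the bound for $m$.

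\emph{Strictness.} The same bound shows that the map $n\mapsto\varphi\cdot\pi^*n$ is a continuous linear right inverse of $\pi_*$ on the image subspace. The choice of $\varphi$ depends on a compact set $L\subseteq H$ containing $\supp n$. Hence on each $\operatorname{cbM}^{p,p}_L(H)\cap\{\supp\subseteq\pi(G)\}$ we get a continuous section into $\operatorname{cbM}^{p,p}_K(G)$. These local sections show that the image of a neighbourhood of $0$ in $\operatorname{cbM}^{p,p}_c(G)$ is a neighbourhood of $0$ in the inductive limit topology of the target. For that, one also has to confirm that the subspace of $\operatorname{cbM}^{p,p}_c(H)$ carries the inductive limit topology of the intersections, which follows from the direct-sum decomposition used earlier to prove completeness.

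The main obstacle I expect is the regularisation step. Two things need to be verified there: that $\pi^*n_j\to\pi^*n$ in $\mathscr{D}'(G)$, and that translations preserve the $\operatorname{cbM}^{p,p}$ norm in the non-abelian setting. For the second point I would use that right translation of a symbol corresponds to conjugating the multiplier by the unitary $\lambda(s)$ acting on one side, which is a complete isometry of $L^p(\widehat{G})$.
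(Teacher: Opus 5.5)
Your proposal is correct and follows the paper's proof in substance. Continuity comes from \autoref{proposition:openimpliesjodeit} applied to $\pi\times\operatorname{id}_{\operatorname{SU}(2)}$, and surjectivity and strictness come from the right inverse $n\mapsto\varphi\,\pi^*n$ (with $\pi_*\varphi=1$ near the supports) bounded via \autoref{corollary:fourierpullbacktruncate}. The differences are only in presentation. The paper gets strictness by noting that the target is bornological (being complemented by $m\mapsto{\boldsymbol 1}_{\pi(G)}m$) and that bounded sets lift, which is equivalent to your local-sections argument. It also applies \autoref{corollary:fourierpullbacktruncate} directly to distributional symbols, so your regularisation step just makes explicit the passage from $L^\infty$ symbols that the paper leaves implicit.
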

\begin{proof}
    Denote by $F$ the closed subspace in question. The fact that $\pi$ continuously maps $\operatorname{cbM}_c^{p,p}(G)$ in $F$ follows from \autoref{proposition:openimpliesjodeit} by replacing $G$ and $H$ with their products with $\operatorname{SU}(2)$ and $\pi$ with $\pi\times\operatorname{id}_{\operatorname{SU}(2)}$. Since $\pi(G)$ is open in $H$, ${\boldsymbol 1}_{\pi(G)}$ is a smooth positive definite function on $H$ and the projector $m\mapsto{\boldsymbol 1}_{\pi(G)}m$ on $\operatorname{cbM}_c^{p,p}(H)$ is continuous and has image $F$. Hence, $F$ is topological direct factor of $\operatorname{cbM}_c^{p,p}(F)$ and is therefore bornological. To conclude the proof, it is therefore sufficient to prove that every bounded subset of $F$ is contained in the image by $\pi_*$ of a bounded subset of $\operatorname{cbM}_c^{p,p}(G)$.
    
    Let $B$ be a bounded subset of $F$. There is a compact subset of $H$ containing the support of every element of $B$. Since every element of $B$ is supported in $\pi(G)$, there is even a compact subset $K$ of $\pi(G)$ containing the support of every element of $B$. Let $\varphi\in\mathscr{D}(G)$ be such that $\pi_*\varphi=1$ on a neighbourhood of $K$ in $\pi(G)$. By the pullback and truncation theorem (\autoref{corollary:fourierpullbacktruncate} of \autoref{theorem:schurpullback}), the map $\sigma:m\mapsto \varphi\mkern1.5mu\pi^*m$ from $\mathscr{E}'(H)$ to $\mathscr{E}'(G)$ continuously maps $\operatorname{cbM}_c^{p,p}(H)$ in $\operatorname{cbM}_c^{p,p}(G)$. It is then obvious that $\pi_*(\sigma(B))=B$.
\end{proof}
\begin{corollary}
    Let $G$ be an amenable locally compact group, let $N$ be a closed normal subgroup of $G$. Let $m\in\mathscr{E}'(G)$ and let $p\in[1,\infty]$. For $m*(\mu_N)^\flat$ to belong to $\operatorname{cbM}^{p,p}(G)$, it is sufficient that $m$ belongs to $\operatorname{cbM}_c^{p,p}(G)$.
\end{corollary}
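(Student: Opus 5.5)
The plan is to factor the map $m\mapsto m*(\mu_N)^\flat$ as a pushforward followed by a pullback along the quotient homomorphism. Let $\pi$ be the quotient homomorphism from $G$ onto $H=G/N$. This $\pi$ is continuous, open and surjective, with kernel $N$. Then \autoref{proposition:pushopenD}, extended by continuity and transposition from $\mathscr{D}(G)$ to $\mathscr{E}'(G)$, gives
\[
    m*(\mu_N)^\flat=\pi^*(\pi_*m)
\]
for every $m\in\mathscr{E}'(G)$. The Haar measures are normalised as in \autoref{proposition:pushopenD}; they coincide with the convention fixed in the section on integration. To justify the extension I will pair both sides against $f\in\mathscr{D}(G)$. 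On the left,
\[
    \langle m*(\mu_N)^\flat,f\rangle=\langle m,f*\mu_N\rangle,
\]
up to the usual adjoint conventions. Since $N$ is normal and unimodularity issues are absorbed into the choice of normalisation, the function $f*\mu_N$ equals $\pi^*\pi_*f$. On the right, $\langle\pi^*\pi_*m,f\rangle=\langle\pi_*m,\pi_*f\rangle=\langle m,\pi^*\pi_*f\rangle$, by the definition of $\pi^*$ on $\mathscr{D}'(H)$ as the transpose of $\pi_*$ on $\mathscr{D}$. So the identity reduces to the one in \autoref{proposition:pushopenD}.

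Next, suppose $m\in\operatorname{cbM}_c^{p,p}(G)$. By \autoref{corollary:openpushcbMp}, $\pi_*m$ lies in $\operatorname{cbM}_c^{p,p}(H)$, and in particular in $\operatorname{cbM}^{p,p}(H)$. The quantitative version comes from \autoref{proposition:openimpliesjodeit} applied to $G\times\operatorname{SU}(2)$. Because $\pi_*m$ is compactly supported, it is a tempered-like distribution in the sense used here.

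Finally, since $G$ is amenable, I will apply the pullback theorem \autoref{corollary:fourierpullback} to the symbol $\pi_*m$. This gives $\pi^*(\pi_*m)\in\operatorname{cbM}^{p,p}(G)$ with $\lVert\pi^*\pi_*m\rVert_{\operatorname{cbM}^{p,p}(G)}\leq\lVert\pi_*m\rVert_{\operatorname{cbM}^{p,p}(H)}$. Combined with the identity above, this proves the claim.

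The main obstacle is that \autoref{corollary:fourierpullback} is stated for $m\in L^\infty(H)$, whereas $\pi_*m$ is a priori only a compactly supported distribution. For $p=1$ or $p=\infty$ this is harmless: by \autoref{lemma:M11cisAc} and duality, $\pi_*m$ lies in $A(H)$, and the corollary applies directly. For general $p$ I would first regularise. Take an approximate identity $(\varepsilon_j)$ in $\mathscr{D}(H)$ made of positive functions of integral one. The functions $\pi_*m*\varepsilon_j$ are smooth and compactly supported, hence in $L^\infty$. I still need to check that convolution by $\varepsilon_j$ does not increase the $\operatorname{cbM}^{p,p}$ norm; this should follow from amenability of $H$ (a quotient of an amenable group), via \autoref{theorem:SFtransference}. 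Then $\pi^*(\pi_*m*\varepsilon_j)\to\pi^*\pi_*m$ in $\mathscr{D}'(G)$, because $\pi^*$ is continuous on $\mathscr{D}'(H)$. Lower semi-continuity of $\lVert\cdot\rVert_{\operatorname{cbM}^{p,p}(G)}$ on $\mathscr{D}'(G)$ then finishes the argument. If that contraction property proves awkward, the fallback is to multiply $m$ by a smooth cutoff and work with $\varphi\,\pi^*$ as in the proof of \autoref{corollary:openpushcbMp}, which avoids regularisation at the cost of only a local statement.
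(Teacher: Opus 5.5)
Your argument is the paper's own: identify $m*(\mu_N)^\flat$ with $\pi^*\pi_*m$ for the quotient homomorphism $\pi$ from $G$ onto $H=G/N$ via \autoref{proposition:pushopenD}, push forward with \autoref{corollary:openpushcbMp}, then pull back with \autoref{corollary:fourierpullback} using the amenability of $G$. The $L^\infty$ obstacle you raise, which the paper passes over silently, is not a real one. Since $\langle T,fg\rangle$ is symmetric in $f$ and $g$, one has $\operatorname{M}^{p,p}(H)=\operatorname{M}^{p',p'}(H)$, and interpolating between $p$ and $p'$ gives $\operatorname{cbM}^{p,p}(H)\subseteq\operatorname{M}^{p,p}(H)\subseteq\operatorname{M}^{2,2}(H)=L^\infty(H)$. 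So $\pi_*m$ is already a bounded function and your regularisation detour can be dropped. As it stands, its contraction step is unproved, and \autoref{theorem:SFtransference}, being stated for $L^\infty$ symbols, could not supply it without circularity.
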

\begin{proof}
    Since $m*(\mu_N)^\flat=\pi^*\pi_*m$ if $\pi$ is the canonical homomorphism from $G$ to $G/N$ (\autoref{proposition:pushopenD}), this is an immediate consequence of that facts that $\pi_*$ maps $\operatorname{cbM}^{p,p}_c(G)$ in $\operatorname{cbM}^{p}(G/N)$ (\autoref{corollary:openpushcbMp} of \autoref{proposition:openimpliesjodeit}) and that $\pi^*$ maps $\operatorname{cbM}^p(G/N)$ in $\operatorname{cbM}^{p,p}(G)$ (\autoref{corollary:fourierpullback} of \autoref{theorem:schurpullback}).
\end{proof}
\begin{corollary}
    Let $G$ and $H$ be two locally compact groups and let $\pi$ be an open continuous homomorphism from $G$ to $H$. Suppose that $G$ is abelian and let $p\in[1,\infty]$. The pushforward operator $\pi_*$ realises a strict homomorphism of topological vector spaces from $\operatorname{M}_c^{p,p}(G)$ onto the closed subspace of $\operatorname{M}_c^{p,p}(H)$ consisting of the symbols in $\operatorname{M}_c^{p,p}(H)$ supported in the image of $\pi$.
\end{corollary}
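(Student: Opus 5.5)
The plan is to mimic the proof of \autoref{corollary:openpushcbMp}, replacing the completely bounded spaces by $\operatorname{M}^{p,p}$. For abelian groups the distinction between bounded and completely bounded multipliers disappears. Indeed, when $G$ is abelian, $L^\infty(\widehat{G{}})$ is commutative, so every bounded map into (or out of) a commutative $C^*$-algebra, or between $L^p$ spaces of a commutative measure space, is completely bounded with the same norm. For $L^p$ with $1<p<\infty$ this is the classical fact that bounded Fourier multipliers on abelian groups are completely bounded with equal norm, obtained from the Marcinkiewicz–Zygmund / Fubini argument for $L^p(\widehat{G{}};S^p)$. Hence $\operatorname{M}^{p,p}_c(G)=\operatorname{cbM}^{p,p}_c(G)$ isometrically.

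The catch is that $H$ need not be abelian, only $\pi(G)$ is. Let $F$ be the subspace of $\operatorname{M}^{p,p}_c(H)$ of symbols supported in $\pi(G)$. Since $\pi$ is open, $\pi(G)$ is an open abelian subgroup of $H$. The first step is to identify $F$ with $\operatorname{M}^{p,p}_c(\pi(G))$ via restriction, and likewise for cb norms. I would argue this by restricting to open subgroups, as in the proof of \autoref{proposition:Mpqtempered}: for $m$ supported in an open subgroup $H'$, the test functions $f,g\in\mathscr{D}(H)$ may be replaced by ${\boldsymbol 1}_{H'}f$ and ${\boldsymbol 1}_{H'}g$. The restriction of $f$ to $H'$ has $L^p(\widehat{H'})$ norm at most that in $L^p(\widehat{H{}})$, because ${\boldsymbol 1}_{H'}$ gives a conditional expectation of $\mathscr{L}(H)$ onto $\mathscr{L}(H')$ which is contractive on all $L^p$. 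Conversely, $\mathscr{L}(H')\subseteq\mathscr{L}(H)$ is a trace-preserving inclusion, so the $L^p$ norms agree for functions supported in $H'$.

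With this identification, the map $\pi$ factors as an open surjection $G\to\pi(G)$ between abelian groups. By \autoref{corollary:openpushcbMp} applied to $\pi\colon G\to\pi(G)$, together with the equality of M and cbM spaces on both abelian groups, $\pi_*$ is a strict homomorphism from $\operatorname{M}^{p,p}_c(G)$ onto $\operatorname{M}^{p,p}_c(\pi(G))\cong F$.

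It remains to check that $F$ is closed in $\operatorname{M}^{p,p}_c(H)$ with the induced topology. The projector $m\mapsto{\boldsymbol 1}_{\pi(G)}m$ is continuous on $\operatorname{M}^{p,p}_c(H)$ by the same conditional expectation argument, so $F$ is a complemented, hence closed, subspace carrying the induced topology.

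I expect the main obstacle to be the identification of the $\operatorname{M}^{p,p}$ norm on an open subgroup with the norm in the big group, that is, the contractivity of the conditional expectation on non-commutative $L^p(\widehat{H{}})$ in the Izumi-type interpolation framework. I would handle it by proving the cases $p=1$ and $p=\infty$ directly and then interpolating. For $p=\infty$, contractivity follows from the conditional expectation on von Neumann algebras. For $p=1$, it follows because restriction of $A(H)$ to $H'$ is a contraction onto $A(H')$, by Eymard's results.
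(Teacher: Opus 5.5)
\textbf{There is a genuine gap.} It lies in the step you treat as routine: the claim that $\operatorname{M}^{p,p}_c(G)=\operatorname{cbM}^{p,p}_c(G)$ isometrically for abelian $G$. This is false for every $p\notin\{1,2,\infty\}$.

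The paper itself notes, right after this corollary, that $\operatorname{M}^{p,p}_c(\mathbf{T})\setminus\operatorname{cbM}^{p,p}_c(\mathbf{T})$ is non-empty (Pisier), and likewise for every infinite locally compact abelian group (Arhancet). The proposition that follows is built precisely on such a symbol. Commutativity of $L^\infty(\widehat{G{}})$ only helps in three cases:
\begin{itemize}
\item at $p=\infty$, because the target has the minimal structure;
\item at $p=1$, because the source has the maximal structure;
\item at $p=2$, where $L^2$ carries the $OH$ structure.
\end{itemize}
For other $p$, complete boundedness of $m(D)$ amounts to boundedness of $m(D)\otimes\operatorname{id}$ on $L^p(\widehat{G{}};S^p)$. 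The Marcinkiewicz--Zygmund and Fubini arguments only give $\ell^2$-valued and $L^p$-valued extensions. Since $S^p$ is neither a Hilbert space nor a commutative $L^p$ space, the $S^p$-valued extension genuinely fails. So applying \autoref{corollary:openpushcbMp} to $G\to\pi(G)$ only proves the statement for the strictly smaller space $\operatorname{cbM}^{p,p}_c$. It says nothing about lifting a symbol of $F$ that is bounded but not completely bounded, and that is exactly what surjectivity and strictness onto $F$ require.

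\textbf{The fix, which is the paper's proof,} is to rerun the argument of \autoref{corollary:openpushcbMp} directly for $\operatorname{M}^{p,p}$.
\begin{itemize}
\item Continuity of $\pi_*$ from $\operatorname{M}^{p,p}_c(G)$ into $F$ never needed complete boundedness: \autoref{proposition:openimpliesjodeit} is stated for $\operatorname{M}^{p,q}$ and arbitrary groups.
\item The projector $m\mapsto{\boldsymbol 1}_{\pi(G)}m$ is handled as you do.
\item The only place where complete boundedness really entered is the lifting of bounded sets through $\sigma\colon m\mapsto\varphi\,\pi^*m$, with $\pi_*\varphi=1$ near the supports. That step relied on \autoref{corollary:fourierpullbacktruncate}. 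For abelian groups you replace it by Lohou\'e's pullback theorem (\autoref{theorem:lohoué}), which gives $\lVert\pi^*m\rVert_{\operatorname{M}^{p,p}(G)}\leq\lVert m\rVert_{\operatorname{M}^{p,p}(\pi(G))}$ for merely bounded symbols. You then multiply by $\varphi\in\mathscr{D}(G)\subseteq A(G)$, which acts boundedly on $\operatorname{M}^{p,p}(G)$.
\end{itemize}
Your identification of $F$ with $\operatorname{M}^{p,p}_c(\pi(G))$, by restriction to the open subgroup $\pi(G)$, is sound. It is in fact what makes Lohou\'e's theorem applicable, since $H$ need not be abelian. But it is not the real obstacle you anticipated. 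The missing ingredient is a pullback theorem for bounded, non-cb multipliers, which is where commutativity of $G$ is genuinely used.
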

This can be proven in the same fashion as the \autoref{corollary:openpushcbMp}, except the pullback and truncation theorem for completely bounded Fourier multipliers (\autoref{corollary:fourierpullbacktruncate} of \autoref{theorem:schurpullback}) should be replaced with the general pullback theorem for Fourier multipliers on abelian groups (\autoref{theorem:lohoué}).
\vskip\baselineskip
Without the commutativity condition, it can happen that there is a symbol $m\in\operatorname{M}_c^{p,p}(H)$ supported in the image of $\pi$ but not belonging to $\pi_*(\operatorname{M}_c^{p,p}(G))$. This can be shown in the same way as the impossibility of a general pullback theorem (\emph{cf.}\ \cite[\textsection\nobreak\,7]{CaspersParcetPerrinRicard}). 
\begin{proposition}
    Let $H$ be a locally compact group such that there exists a symbol in $\operatorname{M}^{p,p}_c(H)$ that is not in $\operatorname{cbM}^{p,p}_c(H)$. Denote by $G$ the product group $H\times\operatorname{SU}(2)$ and by $\pi$ the canonical projection of $G$ onto $H$. Then, there exists a symbol in $\operatorname{M}^{p,p}_c(H)$ which is not the pushforward by $\pi$ of any symbol in $\operatorname{M}^{p,p}_c(G)$.
\end{proposition}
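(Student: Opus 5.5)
Let $m_0\in\operatorname{M}^{p,p}_c(H)\setminus\operatorname{cbM}^{p,p}_c(H)$ and argue by contradiction. Suppose $m_0=\pi_*m$ for some $m\in\operatorname{M}^{p,p}_c(G)$, where $G=H\times\operatorname{SU}(2)$. The aim is to produce from $m$ a symbol on $G$ of the form $m_0\otimes 1$ with finite $\operatorname{M}^{p,p}(G)$ norm. This would contradict $\lVert m_0\rVert_{\operatorname{cbM}^{p,p}(H)}=\lVert m_0\otimes 1\rVert_{\operatorname{M}^{p,p}(H\times\operatorname{SU}(2))}=+\infty$, the defining formula for the cb norm.

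The key steps, in order, are as follows.

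\textbf{Step 1: average over $\operatorname{SU}(2)$.} Since $\operatorname{SU}(2)$ is compact, translating $m$ in the $\operatorname{SU}(2)$ variable does not change its $\operatorname{M}^{p,p}(G)$ norm. Right translation of a symbol corresponds to multiplying the Fourier side by a unitary $\lambda_G(u)$, which is an isometry of $L^p(\widehat{G{}})$. Hence for the normalised Haar measure $\mu$ on $\operatorname{SU}(2)$, viewed as a measure on $G$ supported in $\{e_H\}\times\operatorname{SU}(2)$, the convolution
\[
m*(\mu)^\flat=\int_{\operatorname{SU}(2)}(\text{translate of }m\text{ by }u)\,d\mu(u)
\]
still lies in $\operatorname{M}^{p,p}_c(G)$ with norm at most $\lVert m\rVert_{\operatorname{M}^{p,p}(G)}$. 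This uses the lower semi-continuity of $\lVert\cdot\rVert_{\operatorname{M}^{p,p}(G)}$ together with convexity of its balls, exactly as in the proof of completeness.

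\textbf{Step 2: identify the average.} Here $N=\ker\pi=\{e_H\}\times\operatorname{SU}(2)$ and $\mu_N=\mu$ with the normalisations of \autoref{proposition:pushopenD}. That proposition gives
\[
m*(\mu_N)^\flat=\pi^*\pi_*m=\pi^*m_0=m_0\otimes 1 .
\]
Combined with Step 1, this yields $\lVert m_0\otimes 1\rVert_{\operatorname{M}^{p,p}(G)}\leq\lVert m\rVert_{\operatorname{M}^{p,p}(G)}<\infty$, which is the desired contradiction. Note that $\pi^*$ is defined on $\mathscr{D}'(H)$ by transposition since $\pi$ is open.

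\textbf{Main obstacle.} The delicate point is Step 1 in the distributional setting. One must check that right translation by $(e_H,u)$ preserves $\lVert\cdot\rVert_{\operatorname{M}^{p,p}(G)}$. This should follow from $\langle \rho_u T,fg\rangle=\langle T,\rho_{u^{-1}}f\,\rho_{u^{-1}}g\rangle$ together with $\lVert\mathscr{F}_G(\rho_u f)\rVert_{L^p}=\lVert\hat f\rVert_{L^p}$. The latter holds because $\mathscr{F}_G(\rho_uf)=\hat f\lambda_G(u)^{\pm1}$ up to a modular factor, which is trivial here since $\operatorname{SU}(2)$ is compact and $\Delta_G$ is trivial on it. Right multiplication by a unitary of $\mathscr{M}$ preserves the Haagerup/Izumi $L^p$ norms. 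One must also justify that the average, defined as a weak integral in $\mathscr{D}'(G)$, is a limit of convex combinations of translates; then lower semi-continuity gives the bound.

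If one prefers to avoid translation invariance, an alternative is to argue that $m\mapsto m\otimes 1$ behaves like the duality for the $\operatorname{cb}$ norm. I expect the averaging route to be the cleanest, however.
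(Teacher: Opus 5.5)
Your proof is correct and is essentially the paper's: you average the right $\operatorname{SU}(2)$-translates of $m$, bound each translate using $\lVert\widehat{f*\delta_s}\rVert_{L^r(\widehat{G{}})}\leq\lVert\hat f\rVert_{L^r(\widehat{G{}})}$, and identify the average with $(\pi_*m)\otimes 1$ via \autoref{proposition:pushopenD}, which gives $\pi_*(\operatorname{M}^{p,p}_c(G))\subseteq\operatorname{cbM}^{p,p}(H)$. The differences are only in the bookkeeping: the paper integrates $\lvert\langle(m*\delta_s^\flat)f,g\rangle\rvert$ directly rather than using lower semi-continuity, and it proves the translation estimate by checking $r=1$ (where $\Delta_G=1$ on $\{e_H\}\times\operatorname{SU}(2)$ is what matters) and $r=\infty$, then interpolating; this is the correct reading of your isometry claim, since $\hat f\mapsto\hat f\lambda_G(s)$ is not isometric on $L^p(\widehat{G{}})$ for a general $s$ with $\Delta_G(s)\neq 1$.
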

An example of a locally compact group $H$ such that $\operatorname{M}^{p,p}_c(H)\setminus\operatorname{cbM}^{p,p}_c(H)$ is non-empty for every $p\in[1,\infty]\setminus\{1,2,\infty\}$ is the circle group $\mathbf{T}$ \cite[prop.~8.1.3]{PisierNCvvLp} or more generally any infinite locally compact abelian group \cite[th.~1.5]{ArhancetUnconditionality}.
\begin{proof}
    We will simply show that $\pi_*(\operatorname{M}^{p,p}_c(G))\subseteq\operatorname{cbM}^{p,p}(H)$. Let $m\in\operatorname{M}_c^{p,p}(G)$. By \autoref{proposition:pushopenD}, we have $m*(\mu_{\operatorname{SU}(2)})^\flat=\pi^*\pi_*m=(\pi_*m)\otimes 1$. Hence, in order to conclude that $\pi_*m\in\operatorname{cbM}^{p,p}(H)$, it is sufficient to prove that $m*(\mu_{\operatorname{SU}(2)})^\flat\in\operatorname{M}^{p,p}(G)$. Note that for every $f,g\in\mathscr{D}(G)$, we have
    \begin{multline*}
        \lvert\langle (m*(\mu_{\operatorname{SU}(2)})^\flat)f,g\rangle\rvert\\
        =\left\lvert\int_{\operatorname{SU}(2)}\langle(m*\delta_s^\flat)f,g\rangle\,d\mu_{\operatorname{SU}(2)}(s)\right\rvert
        \leq\int_{\operatorname{SU}(2)}\lvert\langle(m*\delta_s^\flat)f,g\rangle\rvert\,d\mu_{\operatorname{SU}(2)}(s)
    \end{multline*}
    and that for every $f,g\in\mathscr{D}(G)$ and every $s\in\operatorname{SU}(2)$,
    \begin{align*}
        \lvert\langle(m*\delta_s^\flat)f,g\rangle\rvert
        &=\lvert\langle m,(fg)*\delta_s\rangle\rvert\\
        &=\lvert\langle m,(f*\delta_s)(g*\delta_s)\rangle\rvert\vphantom{\widehat{f*\delta_s}}
        \leq\lVert m\rVert_{\operatorname{M}^{p,p}(G)}\lVert\widehat{f*\delta_s}\rVert_{L^p(\widehat{G{}})}\lVert\widehat{g*\delta_s}\rVert_{L^{p\mathrlap{'}}(\widehat{G{}})}.
    \end{align*}
    It is then easy to check that $\rVert\widehat{f*\delta_s}\lVert_{L^r(\widehat{G{}})}\leq\lVert\hat f\rVert_{L^r(\widehat{G{}})}$ for every $f\in\mathscr{D}(G)$, every $s\in\operatorname{SU}(2)$ and every $r\in[1,\infty]$. If $r=\infty$, this is obvious. If $r=1$, this follows from $\Delta_G$ being equal to $1$ on the closed normal subgroup $\{e_H\}\times\operatorname{SU}(2)$ of $G$ \cite[chap.~VII, \textsection\nobreak\,2, prop.~10]{BourbakiINTd} and from \cite[prop.~2.18, 2\textsuperscript{o}]{Eymard}. The general case then follows from the special cases $r=\infty$ and $r=1$ by interpolation. Hence,
    \[
        \lvert\langle (m*(\mu_{\operatorname{SU}(2)})^\flat)f,g\rangle\rvert\leq\lVert m\rVert_{\operatorname{M}^{p,p}(G)}\lVert\hat f\rVert_{L^p(\widehat{G{}})}\lVert\hat g\rVert_{L^{p\mathrlap{'}}(\widehat{G{}})}
    \]
    for every $f,g\in\mathscr{D}(G)$, which shows that $m*(\mu_{\operatorname{SU}(2)})^\flat\in\operatorname{M}^{p,p}(G)$ and concludes the proof.
\end{proof}
\section{Functorial properties of positive definite distributions}\label{section:functorialprop}
In this section, we develop the \autoref{remark:abelianpushAcinAc} following \autoref{lemma:pq11} and study the functorial properties of positive definite distributions in order to characterise the continuous homomorphisms of locally compact groups which push forward every compactly supported positive definite continuous function into a continuous positive definite function.
\subsection{Positive definite distributions and amenability}
We shall say that a distribution $T$ on a locally compact group $G$ is \emph{left positive definite} (\emph{resp.}\ \emph{right positive definite}) if
\[
	\langle T*f|f\rangle\geq 0\qquad(\text{\emph{resp.}\ }\langle f*T|f\rangle\geq 0)
\]
for every $f\in\mathscr{D}(G)$. We therefore deviate from the terminology of the classical texts \cite{DixmierCetoile, Godement, Pier} where positive definite continuous functions and positive definite measures are our right positive definite continuous functions and left positive definite measures respectively. Left and right positive definiteness coincide when $G$ is unimodular: this is because
\[
	\langle T*f|f\rangle=\langle T|f*f^\flat\rangle=\langle f^\sharp*T|f^\flat\rangle
	\quad\text{and}\quad
	\langle f*T|f\rangle=\langle T|f^\sharp*f\rangle=\langle T*f^\flat|f^\sharp\rangle
\]
and the involutions $f\mapsto f^\flat$ and $f\mapsto f^\sharp$ are equal precisely when $\Delta_G=1$. More generally,
\begin{lemma}\label{lemma:leftrightposdef}
	For a distribution $T$ on a locally compact group $G$ to be left (\emph{resp.}\ right) positive definite, it is necessary and sufficient that $\Delta_{\smash{G}}^{1/2}T$ (\emph{resp.}\ $\Delta_{\smash{G}}^{-1/2}T$) is right (\emph{resp.}\ left) positive definite.
\end{lemma}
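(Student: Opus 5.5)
The plan is a direct computation. Multiplying a distribution by the character $\Delta_G^{\pm1/2}$ intertwines left and right convolution up to a conjugation of the test function by $\Delta_G^{\mp 1/2}$, and this conjugation is a bijection of $\mathscr{D}(G)$. Since $\Delta_G$ is a continuous homomorphism to $\mathopen]0,+\infty\mathclose[$, it is smooth in Bruhat's sense, so multiplication by $\Delta_G^{\pm1/2}$ is an automorphism of $\mathscr{D}(G)$ and, by transposition, of $\mathscr{D}'(G)$. The key identity to establish is
\[
	\langle(\Delta_G^{-1/2}T)*(\Delta_G^{-1/2}f)|\Delta_G^{-1/2}f\rangle=\langle f*T|f\rangle ,
\]
for every $T\in\mathscr{D}'(G)$ and $f\in\mathscr{D}(G)$, or the analogous identity with the roles exchanged.

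I would first check this identity for $T$ a continuous compactly supported function (or a measure), where everything is an explicit integral. We have $(T*g)(s)=\int_G T(t)g(t^{-1}s)\,d\mu_G(t)$ and $\langle T*g|g\rangle=\iint T(t)g(t^{-1}s)\overline{g(s)}\,d\mu_G(t)\,d\mu_G(s)$. Substituting $T\to\Delta_G^{-1/2}T$ and $g=\Delta_G^{-1/2}f$ gives the weight
\[
	\Delta_G(t)^{-1/2}\Delta_G(t^{-1}s)^{-1/2}\Delta_G(s)^{-1/2}=\Delta_G(s)^{-1},
\]
because $\Delta_G$ is multiplicative. Then the change of variables $s=t u$ in the $s$-integral, together with the modular relations for $\mu_G$ (right translation contributes $\Delta_G$, inversion contributes $\Delta_G^{-1}$), should turn the expression into $\iint f(u)T(t)\overline{f(ut)}\cdots$, which is exactly $\langle f*T|f\rangle=\langle T|f^\sharp*f\rangle$ written out. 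The only genuine work is keeping track of the modular factors, and I expect the exponents $\pm1/2$ to be precisely what makes them cancel.

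The identity then extends to all $T\in\mathscr{D}'(G)$. It is cleanest to rewrite both sides as pairings of $T$ with test functions, namely $\langle T|f*f^\flat\rangle$-type and $\langle T|f^\sharp*f\rangle$-type expressions, using the formulas displayed before the lemma. One then verifies the pointwise identity of test functions
\[
	\Delta_G^{-1/2}\bigl((\Delta_G^{-1/2}f)*(\Delta_G^{-1/2}f)^\flat\bigr)=f^\sharp*f
\]
(up to the appropriate conjugation). This is again the same modular bookkeeping, and it avoids any density argument. Since $f\mapsto\Delta_G^{-1/2}f$ is bijective on $\mathscr{D}(G)$, the identity shows that $T$ is right positive definite if and only if $\Delta_G^{-1/2}T$ is left positive definite.

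The other equivalence, that $T$ is left positive definite if and only if $\Delta_G^{1/2}T$ is right positive definite, follows by applying the first one to $S=\Delta_G^{1/2}T$. The main obstacle is purely computational: I must get the correct sign of the exponent and the correct side on which the modular function appears in $f^\sharp$ versus $f^\flat$. I would settle this by testing against the abelian, or unimodular, sanity check $\Delta_G=1$, and against the known equivalence for the classical case of continuous functions.
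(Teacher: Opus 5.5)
Your strategy is the paper's: write left positivity as $\langle S|g*g^\flat\rangle\geq 0$ and right positivity as $\langle T|f^\sharp*f\rangle\geq 0$, then match the two families of test functions through a modular identity and a bijection of $\mathscr{D}(G)$. The gap is that the key identity you propose,
\[
	\langle(\Delta_G^{-1/2}T)*(\Delta_G^{-1/2}f)|\Delta_G^{-1/2}f\rangle=\langle f*T|f\rangle,
\]
together with its test-function form $\Delta_G^{-1/2}\bigl((\Delta_G^{-1/2}f)*(\Delta_G^{-1/2}f)^\flat\bigr)=f^\sharp*f$, is false for non-abelian $G$. No choice of exponents of $\Delta_G$ repairs it.

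The reason is that multiplication by a positive character $\chi$ satisfies $\chi(a*b)=(\chi a)*(\chi b)$. It therefore keeps the involution on the right factor, as in $g*g^\flat$, whereas $f^\sharp*f$ carries it on the left factor. Concretely, after your substitution $s=tu$ the left-hand side becomes
\[
	\iint T(t)f(u)\overline{f(tu)}\Delta_G(tu)^{-1}\,d\mu_G(t)\,d\mu_G(u),
\]
with $T$ evaluated at the left factor of the product. By contrast, $\langle f*T|f\rangle=\iint f(u)T(t)\overline{f(ut)}\,d\mu_G(u)\,d\mu_G(t)$. When $\Delta_G=1$, your identity amounts to $f*f^\flat=f^\flat*f$. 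This fails in the symmetric group on three letters for $f=\delta_{(123)}+\delta_{(12)}$: one side is $2\delta_e+2\delta_{(13)}$ and the other is $2\delta_e+2\delta_{(23)}$. The abelian sanity check you planned would not reveal the error, since there $f*f^\flat=f^\flat*f$.

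The missing ingredient is a reflection of the test function. The correct statement is
\[
	\langle(\Delta_G^{-1/2}T)*(\Delta_G^{-1/2}f^\flat)|\Delta_G^{-1/2}f^\flat\rangle=\langle f*T|f\rangle,
	\qquad\text{i.e.}\qquad
	\Delta_G^{-1/2}\bigl((\Delta_G^{-1/2}f^\flat)*(\Delta_G^{-1/2}f^\flat)^\flat\bigr)=f^\sharp*f.
\]
Since $f\mapsto\Delta_G^{-1/2}f^\flat=\Delta_G^{1/2}f^\sharp$ is still a bijection of $\mathscr{D}(G)$, the rest of your argument then goes through. That includes deducing the other equivalence by applying the first to $\Delta_G^{1/2}T$.

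The corrected identity is exactly the paper's relation $\Delta_G^{1/2}(f*f^\sharp)=(\Delta_G^{1/2}f)*(\Delta_G^{1/2}f)^\flat$, applied to $f^\sharp$ in place of $f$. The paper's relation is stated with $f*f^\sharp$ rather than $f^\sharp*f$, so it tacitly incorporates the substitution $f\mapsto f^\sharp$ that your version omits.
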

\begin{proof}
	This follows easily from the relation
	\[
	    \Delta_G^{1/2}(f*f^\sharp)=(\Delta_G^{1/2}f)*(\Delta_G^{1/2}f)^\flat
	\]
	for every $f\in\mathscr{D}(G)$.
\end{proof}
Similarly,
\begin{lemma}
	Let $T$ be a distribution on a locally compact group $G$. For the three distributions $T$, $\overline{T}$ and $T^\sharp$ (\emph{resp.}\ $T^\flat$) to be all left (\emph{resp.}\ right) positive definite, it is necessary and sufficient that at least one of them is, in which case $T=T^\sharp$ (\emph{resp.}\ $T=T^\flat$).
\end{lemma}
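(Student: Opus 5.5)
The plan is to reduce everything to the sesquilinear form attached to $T$ and to two elementary transposition identities, treating the left case in detail; the right case is identical with the roles of $\sharp$ and $\flat$ exchanged. By the identity recalled before \autoref{lemma:leftrightposdef}, $T$ is left positive definite if and only if $\langle T|f*f^\flat\rangle\geq 0$ for every $f\in\mathscr{D}(G)$. Define the sesquilinear form $\Phi_T(f,g)=\langle T|f*g^\flat\rangle$ on $\mathscr{D}(G)$; left positive definiteness means exactly $\Phi_T(f,f)\geq 0$ for all $f$.

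First I will check how $\overline{\,\cdot\,}$ and $\sharp$ act on $\Phi_T$. From the definitions, $\langle T^\sharp|g\rangle=\langle g^\flat|T\rangle=\overline{\langle T|g^\flat\rangle}$. Using $(f*g^\flat)^\flat=g*f^\flat$ (since $\flat$ is an anti-multiplicative involution for convolution), this gives $\Phi_{T^\sharp}(f,g)=\overline{\Phi_T(g,f)}$. Similarly, $\langle\overline T|h\rangle=\overline{\langle T|\bar h\rangle}$ and $\overline{f*g^\flat}=\bar f*(\bar g)^\flat$, which give $\Phi_{\overline T}(f,g)=\overline{\Phi_T(\bar f,\bar g)}$. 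Taking $g=f$, if $\Phi_T(f,f)\geq 0$ for all $f$ then both $\Phi_{T^\sharp}(f,f)=\overline{\Phi_T(f,f)}$ and $\Phi_{\overline T}(f,f)=\overline{\Phi_T(\bar f,\bar f)}$ are $\geq 0$. Since $\sharp$ and complex conjugation are involutions on $\mathscr{D}'(G)$, the converse implications follow in the same way. This shows that positivity of any one of $T$, $\overline T$, $T^\sharp$ implies positivity of the other two.

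For the identity $T=T^\sharp$, I will polarise. A positive sesquilinear form is Hermitian, so $\Phi_T(g,f)=\overline{\Phi_T(f,g)}$, and by the computation above this equals $\Phi_{T^\sharp}(g,f)$. Hence $\langle T-T^\sharp|g*f^\flat\rangle=0$ for all $f,g\in\mathscr{D}(G)$.

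The step I expect to need the most care is showing that the linear span of the convolutions $g*f^\flat$ is dense in $\mathscr{D}(G)$; this is the only non-formal ingredient. I would take $f$ running through a Bruhat-smooth approximate identity supported in a fixed compact neighbourhood of $e_G$, so that $f^\flat\to\delta_{e_G}$. Then $g*f^\flat\to g$ in $\mathscr{D}(G)$, with supports staying in a fixed compact set and convergence holding for all Bruhat derivatives, through the projective-limit description of $\mathscr{D}$. Once density is established, $T=T^\sharp$ follows. For the right case I would use $\langle f*T|f\rangle=\langle T|f^\sharp*f\rangle$ together with $(f^\sharp*g)^\sharp=g^\sharp*f$, and run the same argument with $\flat$ in place of $\sharp$.
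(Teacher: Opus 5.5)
Your proposal is correct and follows the paper's argument. The paper simply invokes the relations $\overline{f*f^\flat}=\bar f*\bar f^\flat$ and $(f*f^\flat)^\flat=f*f^\flat$ (and their analogues for $f^\sharp*f$); your identities $\Phi_{T^\sharp}(f,g)=\overline{\Phi_T(g,f)}$ and $\Phi_{\overline T}(f,g)=\overline{\Phi_T(\bar f,\bar g)}$ are the polarised forms of these, and your Hermitian-symmetry and density-of-the-span-of-the-$g*f^\flat$ step only spells out how to get $T=T^\sharp$ (resp.\ $T=T^\flat$), which the paper leaves implicit.
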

\begin{proof}
	This follows easily from the relations
    \[
        \overline{f*f^\flat}=\bar f*\bar f^\flat,
        \quad\overline{f^\sharp*f}=\bar f^\sharp*\bar f,
        \quad(f*f^\flat)^\flat=f*f^\flat
        \quad\text{and}\quad(f^\sharp*f)^\sharp=f^\sharp*f
    \]
	for every $f\in\mathscr{D}(G)$.
\end{proof}
It can be checked that the left (\emph{resp.}\ right) positive definite distributions on $G$ form a \emph{closed} convex pointed salient cone in $\mathscr{D}'(G)$. Similarly, the left (\emph{resp.}\ right) positive definite \emph{compactly supported} distributions on $G$ form a closed convex pointed salient cone in $\mathscr{E}'(G)$.
\begin{lemma}[positive definite biregularisations]\label{lemma:biregularisation}
	Let $T$ be a left (\emph{resp.}\ right) positive definite distribution on a locally compact group $G$. For every $f\in\mathscr{D}(G)$, the smooth function $f^\sharp*T*f$ (\emph{resp.}\ $f*T*f^\flat$) is left (\emph{resp.}\ right) positive definite and is compactly supported whenever $T$ is.
	
	Moreover, if $\mathfrak{F}$ is a filter basis on $\mathscr{D}(G)$ which converges in $\mathscr{E}'(G)$ to $\delta_{e_G}$ and contains a set that is bounded in $\mathscr{E}'(G)$, then
	\[
		\lim_{f,\mathfrak{F}}(f^\sharp*T*f)=T\qquad\left(\text{\emph{resp.}\ }\lim_{f,\mathfrak{F}}(f*T*f^\flat)=T\right)
	\]
	in $\mathscr{D}'(G)$, and also in $\mathscr{E}'(G)$ whenever $T\in\mathscr{E}'(G)$.
\end{lemma}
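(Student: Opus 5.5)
I treat the left case only. The right case follows either by the same computation with convolutions reversed, or from \autoref{lemma:leftrightposdef} via the twist $T\mapsto\Delta_G^{\pm1/2}T$.

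\emph{Positivity of $f^\sharp*T*f$.} Since $f\in\mathscr{D}(G)$, the distribution $f^\sharp*T*f$ is a smooth function. For $g\in\mathscr{D}(G)$, associativity and the adjunction $\langle f^\sharp*h|g\rangle=\langle h|f*g\rangle$ from the preliminaries give
\[
	\langle f^\sharp*T*f*g|g\rangle=\langle T*(f*g)|f*g\rangle\geq 0,
\]
because $f*g\in\mathscr{D}(G)$ and $T$ is left positive definite. The supports are controlled by $\supp(f^\sharp*T*f)\subseteq(\supp f)^{-1}\supp(T)\supp(f)$, so $f^\sharp*T*f$ is compactly supported whenever $T$ is.

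\emph{Convergence.} Fix $A\in\mathfrak{F}$ that is bounded in $\mathscr{E}'(G)$. Its elements are then supported in a fixed compact set $V$ and bounded as distributions of some finite order; shrinking $A$ along $\mathfrak{F}$ only helps. I will show that $(f,g)\mapsto f^\sharp*T*g$ is suitably equicontinuous on $A\times A$, and then pass to the limit one factor at a time. Testing against $\varphi\in\mathscr{D}(G)$,
\[
	\langle f^\sharp*T*f,\varphi\rangle=\langle T,\check{f^\sharp}*\varphi*\check f\rangle
\]
up to the usual modular factors, where $\check{}$ denotes the appropriate reflection. It therefore suffices that $\check{f^\sharp}*\varphi*\check f\to\varphi$ in $\mathscr{D}(G)$ as $f$ runs along $\mathfrak{F}$. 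For this I first check that $f\mapsto\check f$ and $f\mapsto\check{f^\sharp}$ send $\mathfrak{F}$ to filter bases that again converge to $\delta_{e_G}$ in $\mathscr{E}'(G)$ and contain a bounded set. This holds because inversion, complex conjugation and multiplication by the smooth function $\Delta_G^{-1}$ are continuous on $\mathscr{E}'(G)$ and fix $\delta_{e_G}$. Next, convolution $\mathscr{E}'(G)\times\mathscr{D}(G)\to\mathscr{D}(G)$ is hypocontinuous, so $\mathscr{E}'(G)\times\mathscr{D}(G)\times\mathscr{E}'(G)\to\mathscr{D}(G)$ is continuous on bounded subsets of the outer factors. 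The compactly supported case is identical, with $\varphi\in\mathscr{E}(G)$ and $\mathscr{E}(G)$ in place of $\mathscr{D}(G)$, using that the supports stay in $V^{-1}\supp(T)V$.

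\emph{Main obstacle.} The delicate point is the hypocontinuity of convolution on a general Lie-projective group in Bruhat's framework. $\mathscr{D}(G)$ is an inductive limit over compact sets and quotients by compact normal subgroups, so I must ensure the elements of $A$ live on a common Lie quotient or can be handled uniformly. I would first try to use that $\mathscr{D}(G)$ is barrelled, together with Banach--Steinhaus: convergence $u\to\delta$ in $\mathscr{E}'(G)$ with $u$ bounded implies $u*\psi\to\psi$ uniformly for $\psi$ in compact subsets of $\mathscr{D}(G)$. The set $\{\varphi*\check f: f\in A\}$ is relatively compact once it is bounded, since $\mathscr{D}(G)$ is Montel on Lie quotients; applying Banach--Steinhaus to it handles the double limit.
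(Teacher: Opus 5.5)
Your positivity identity and support bound are exactly the paper's. For the limit you take a different route.

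The paper stays on the distribution side. Convolution $\mathscr{E}'(G)\times\mathscr{D}'(G)\to\mathscr{D}'(G)$ is separately continuous and $\mathscr{D}'(G)$ is barrelled, so Banach--Steinhaus makes $\{S\mapsto u*S : u\in A\}$ equicontinuous for each bounded $A\subseteq\mathscr{E}'(G)$. Then $f^\sharp*T*f-T=f^\sharp*(T*f-T)+(f^\sharp*T-T)\to 0$.

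You transpose to the test-function side, where you only need $\mathscr{D}(G)$ to be barrelled (an inductive limit of Fréchet spaces). That is the more elementary fact. The transposition is in fact exact: $\langle f^\sharp*T*f,\varphi\rangle=\langle T,\bar f*\varphi*\check f\rangle$ with $\check f(s)=f(s^{-1})$. For fixed $\varphi$, the splitting $\bar f*\varphi*\check f-\varphi=\bar f*(\varphi*\check f-\varphi)+(\bar f*\varphi-\varphi)$ together with the equicontinuity of $\{\psi\mapsto\bar f*\psi : f\in A\}$ already settles the double limit. The Montel/relative-compactness detour in your last paragraph is therefore unnecessary. It is also doubtful for non-Lie $G$, where it is not clear that bounded subsets of $\mathscr{D}(G)$ are relatively compact.

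The gap is the convergence mode. Your reduction ``it therefore suffices that $\check{f^\sharp}*\varphi*\check f\to\varphi$ in $\mathscr{D}(G)$'', with $\varphi$ fixed, only proves convergence for $\sigma(\mathscr{D}'(G),\mathscr{D}(G))$. Convergence in $\mathscr{D}'(G)$ means uniform convergence on bounded subsets of $\mathscr{D}(G)$; that is the topology for which the paper invokes the barrelledness of $\mathscr{D}'(G)$. The same applies to $\mathscr{E}'(G)$.

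Your own equicontinuity closes this. An equicontinuous family maps a bounded $B\subseteq\mathscr{D}(G)$ into the single bounded set $B_1=\bigcup_{f\in A}\bar f*B$. Hence $\sup_{\varphi\in B}\lvert\langle f^\sharp*S,\varphi\rangle\rvert\leq\sup_{\psi\in B_1}\lvert\langle S,\psi\rangle\rvert$ for every $f\in A$, so the transposed maps $S\mapsto f^\sharp*S$ are equicontinuous on $\mathscr{D}'(G)$ with its strong topology. Add $T*f\to T$ and $f^\sharp*T\to T$ in $\mathscr{D}'(G)$, which follows from separate continuity \cite[\textsection\nobreak\,6]{Bruhat}. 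The distribution-side splitting above then concludes. This is the paper's proof, with your test-side Banach--Steinhaus replacing the barrelledness of $\mathscr{D}'(G)$.

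The weak version you do prove is all that the later applications need: \autoref{theorem:godement}, \autoref{lemma:pushleftproper} and the pushforward theorems only test the limit against fixed functions. It is nonetheless weaker than the statement as written.
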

Such filter bases of course do exist: one may take as usual $\mathfrak{F}$ to consist of the sets
    \[
        \left\{f\in\mathscr{D}(G)~\middle\vert~f\geq 0,~\supp f\subseteq V\text{ and}\int_Gf(s)\,ds=1\right\}
    \]
    as $V$ ranges over a basis of the filter of neighbourhoods of $e_G$ in $G$. In this case, there are sets in $\mathfrak{F}$ that are even bounded in the space $\mathscr{C}'(G)$ of compactly supported complex Radon measures on $G$, strong dual of the space $\mathscr{C}(G)$ of all complex continuous functions on $G$ endowed with the topology of uniform convergence on compact subsets of $G$.
\begin{proof}
	The first part of the lemma follows from the elementary relations
	\[
		\langle f^\sharp*T*f*g|g\rangle=\langle T*f*g|f*g\rangle\quad(\text{\emph{resp.}\ }\langle g*f*T*f^\flat|g\rangle=\langle g*f*T|g*f\rangle)
	\]
	for every $f,g\in\mathscr{D}(G)$. To prove the second part, it is sufficient to show that convolution, as a bilinear map from $\mathscr{E}'(G)\times\mathscr{D}'(G)$ or $\mathscr{D}'(G)\times\mathscr{E}'(G)$ to $\mathscr{D}'(G)$, is hypocontinuous with respect to the bounded subsets of $\mathscr{E}'(G)$. This however is obvious since it is separately continuous \cite[\textsection\nobreak\,6]{Bruhat} and $\mathscr{D}'(G)$ is barrelled \cite[cor.~2 of th.~2]{Bruhat}.
\end{proof}
The various conditions introduced by R.~Godement involving positive definite functions and measures \cite[problems 4 and 5]{Godement}, which are classically known to characterise amenability, have natural analogues in the language of smooth functions and distributions.
\begin{theorem}\label{theorem:godement}
	Let $G$ be a locally compact group. The following conditions are equivalent:
	\begin{enumerate}
		\item $G$ is amenable.
		\item For every left positive definite $f\in\mathscr{D}(G)$ and every right positive definite $T\in\mathscr{D}'(G)$, $\langle f|T\rangle\geq 0$.
		\item For every left positive definite $f\in\mathscr{D}(G)$, $\langle f|1\rangle\geq 0$.
		\item Every right positive definite $T\in\mathscr{D}'(G)$ is a limit point in $\mathscr{D}'(G)$ of the convex cone generated by the distributions $S*{S}^\flat$ with $S\in\mathscr{E}'(G)$.
		\item The constant distribution $1$ is a limit point in $\mathscr{D}'(G)$ of the convex cone generated by the distributions $S*{S}^\flat$ with $S\in\mathscr{E}'(G)$.
		\item For every left positive definite $T\in\mathscr{E}'(G)$ and every right positive definite $f\in\mathscr{E}(G)$, $\langle T|f\rangle\geq 0$.
		\item For every left positive definite $T\in\mathscr{E}'(G)$, $\langle T|1\rangle\geq 0$.
		\item Every right positive definite $f\in\mathscr{E}(G)$ is a limit point in $\mathscr{E}(G)$ of the convex cone generated by the functions $g*g^\flat$ with $g\in\mathscr{D}(G)$.
		\item The constant function $1$ is a limit point in $\mathscr{E}(G)$ of the convex cone generated by the functions $g*g^\flat$ with $g\in\mathscr{D}(G)$.
	\end{enumerate}
\end{theorem}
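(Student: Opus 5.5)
The plan is to organise the nine conditions into two chains that meet at (i). One chain is the "distributional" side, (ii)–(v). The other is the "smooth" side, (vi)–(ix). Both are linked to amenability through Godement's classical characterisations: (a) $1$ is a uniform-on-compacta limit of functions of the form $\sum g*g^\flat$ with $g\in\mathscr{C}_c(G)$; (b) $\int\varphi\geq0$ for every positive definite (in Godement's sense) integrable function, or measure, $\varphi$ with compact support. Godement's convention is reconciled with ours using \autoref{lemma:leftrightposdef}, which swaps left and right at the cost of a factor $\Delta_G^{\pm1/2}$. The constant $1$ is fixed by this factor only up to that twist, so I will check carefully which side each condition lives on.

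The formal implications come first. (ii)$\Rightarrow$(iii) and (vi)$\Rightarrow$(vii) hold because the constant $1$ is right positive definite: $\langle g*1|g\rangle=|\int g|^2$ up to modular factors, which is checked directly. (iv)$\Rightarrow$(v) and (viii)$\Rightarrow$(ix) are specialisations. For (v)$\Rightarrow$(iii), note that if $f$ is left positive definite then $\langle f|S*S^\flat\rangle=\langle f*S|S\rangle\geq0$, after regularising $S$ by \autoref{lemma:biregularisation}; passing to the limit gives $\langle f|1\rangle\geq0$. The same duality gives (ix)$\Rightarrow$(vii), since for $T\in\mathscr{E}'(G)$ left positive definite we have $\langle T|g*g^\flat\rangle=\langle T*g|g\rangle\geq0$. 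Similarly (iv)$\Rightarrow$(ii) and (viii)$\Rightarrow$(vi) follow by pairing with the cone generators and taking limits, using that the pairings $\mathscr{D}\times\mathscr{D}'$ and $\mathscr{E}'\times\mathscr{E}$ are continuous.

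The links with amenability come next. For (i)$\Rightarrow$(viii), I take $f\in\mathscr{E}(G)$ right positive definite. For $\psi$ in a regularising filter basis, \autoref{lemma:biregularisation} shows that $\psi*f*\psi^\flat$ approximates $f$ in $\mathscr{E}(G)$, so it suffices to approximate the continuous positive definite function underlying $f$. Amenability (Godement, or Hulanicki–Reiter) gives that every positive definite function is a uniform-on-compacta limit of $\sum h*h^\flat$ with $h$ compactly supported, i.e.\ weak containment in $\lambda_G$. Convolving on both sides with $\psi$ then upgrades this to convergence in $\mathscr{E}(G)$, and $\psi*h$ lies in $\mathscr{D}(G)$. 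The same argument, with one more regularisation step, gives (i)$\Rightarrow$(iv). Here $T$ right positive definite is the $\mathscr{D}'$-limit of the smooth right positive definite functions $\psi*T*\psi^\flat$ by \autoref{lemma:biregularisation}, and each of these is approximated as before, now by $S*S^\flat$ with $S=\psi*h\in\mathscr{E}'(G)$.

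For the converse directions (iii)$\Rightarrow$(i) and (vii)$\Rightarrow$(i), I apply Godement's criterion: $G$ is amenable iff $\int\varphi\,d\mu_G\geq0$, suitably $\Delta$-twisted, for every compactly supported continuous positive definite $\varphi$. Given such a $\varphi$, I biregularise it to $f=\psi^\sharp*\varphi*\psi\in\mathscr{D}(G)$, which is left positive definite by \autoref{lemma:biregularisation}. Then $\langle f|1\rangle\to\langle\varphi|1\rangle$, so (iii) yields the criterion; since $\mathscr{D}\subset\mathscr{E}'$, (vii) implies (iii) anyway. The main obstacle is the bookkeeping of the modular function. I must make sure that Godement's condition, stated for his positive definite measures, really corresponds to our left positive definite objects paired with $1$, rather than with $\Delta_G^{\pm1/2}$. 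I would handle this by converting via \autoref{lemma:leftrightposdef} and checking on the generators $g*g^\flat$ and $g^\sharp*g$ directly.
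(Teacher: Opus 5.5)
Your argument is correct, but it is organised differently from the paper's. The paper proves two chains, (i)$\Rightarrow$(ii)$\Rightarrow$(iii)$\Rightarrow$(v)$\Rightarrow$(vii)$\Rightarrow$(ix)$\Rightarrow$(i) and (ii)$\Rightarrow$(iv)$\Rightarrow$(vi)$\Rightarrow$(viii)$\Rightarrow$(ix). It starts from Pier's positivity theorem for (i)$\Rightarrow$(ii). It then turns each positivity-of-pairing condition into the matching approximation condition by the bipolar theorem, in the dual pairs $(\mathscr{D}(G),\mathscr{D}'(G))$ and $(\mathscr{E}'(G),\mathscr{E}(G))$, and passes between these pairs by biregularisation. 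Only at the end does it use the classical fact that amenability means $1$ is a limit, uniformly on compact sets, of right positive definite functions in $A(G)$.

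You never run a bipolar argument. You get the approximation conditions (viii) and (iv) directly from amenability, via Hulanicki's stronger fact that every continuous positive definite function is such a limit of sums $\sum h*h^\flat$, together with two-sided smoothing. Every other implication is the easy direction ``approximation implies positivity of pairings''. Everything is funnelled into (iii), and you return to (i) by quoting Godement's integral criterion after reducing from $\mathscr{C}_c$ to $\mathscr{D}(G)$. In fact $\langle\psi^\sharp*\varphi*\psi|1\rangle=\lvert\int\psi\,d\mu_G\rvert^2\langle\varphi|1\rangle$, so no limit is even needed there.

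The price is that the only substantial direction back to amenability is outsourced, and its hidden content is exactly the paper's bipolar step. Suppose $1$ were not in the closure in $\mathscr{C}(G)$ of the cone generated by the $h*h^\flat$. Hahn--Banach would give a compactly supported measure $\mu$ with $\Re\langle\mu*h|h\rangle\geq 0$ for all $h$ and $\Re\langle\mu|1\rangle<0$. Then $\nu=\frac12(\mu+\mu^\sharp)$ is left positive definite with $\langle\nu|1\rangle<0$, and $\psi^\sharp*\nu*\psi\in\mathscr{D}(G)$ contradicts (iii). Keep this in mind if your reference states the criterion only for integrable positive definite functions rather than compactly supported ones. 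Those cannot be reduced to (iii), because truncation destroys positive definiteness.

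As for the modular bookkeeping you flag, there is no twist. $f$ is left positive definite exactly when $\lambda_G(f)\geq 0$, and $\langle f|1\rangle=\int f\,d\mu_G$ is the trivial representation evaluated at $f$. So Godement's positive definite measures are the paper's left positive definite ones, paired with $1$. Pairing classical (that is, right) positive definite functions with $1$ would be wrong for non-unimodular groups. Indeed $\int g*g^\flat\,d\mu_G=\int g\,d\mu_G\cdot\overline{\int g\Delta_G^{-1}\,d\mu_G}$ can be negative on the amenable $ax+b$ group.

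Finally, a minor attribution point. \autoref{lemma:biregularisation} gives convergence in $\mathscr{D}'(G)$ and $\mathscr{E}'(G)$, not in $\mathscr{E}(G)$. The convergence $\psi*f*\psi^\flat\to f$ in $\mathscr{E}(G)$ that you use in (i)$\Rightarrow$(viii) is the separate, standard continuity of translations on $\mathscr{E}(G)$.
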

\begin{proof}[Proof sketch]
	All the implications
	\[\begin{tikzcd}[column sep=small, row sep = small]
		\text{(vii)}\arrow[r, Rightarrow]&\text{(ix)}\arrow[d, Rightarrow]&\arrow[l, Rightarrow]\text{(viii)}\\
		\text{(v)}\arrow[u, Rightarrow]&\text{(i)}\arrow[d, Rightarrow]&\text{(vi)}\arrow[u, Rightarrow]\\
		\text{(iii)}\arrow[u, Rightarrow]&\text{(ii)}\arrow[l, Rightarrow]\arrow[r, Rightarrow]&\text{(iv)}\arrow[u, Rightarrow]
	\end{tikzcd}\]
	can be proven by imitating the proof of \cite[prop.~18.3.6]{DixmierCetoile} or are elementary.
	
	(i) $\Rightarrow$ (ii). Suppose that $G$ is amenable, let $f\in\mathscr{D}(G)$ be left positive definite and let $T\in\mathscr{D}'(G)$ be right positive definite. For every $g\in\mathscr{D}(G)$, the smooth function $g*T*g^\flat$ is right positive definite, hence $\langle f|g*T*g^\flat\rangle\geq 0$ since $G$ is amenable \cite[th.~8.9]{Pier} and then $\langle f|T\rangle\geq 0$ by letting $g$ converge to $\delta_{e_G}$ in $\mathscr{E}'(G)$ while remaining in a bounded subset of $\mathscr{E}'(G)$ (\autoref{lemma:biregularisation}).

	(ii) $\Rightarrow$ (iii). This implication is obvious.
	
	(iii) $\Rightarrow$ (v). Condition (iii) implies that the constant distribution $1$ belongs to the bipolar in $\mathscr{D}'(G)$ of the convex cone generated by the distributions $S*{S}^\flat$ with $S\in\mathscr{E}'(G)$ (see for example the proof of \cite[prop.~18.3.6]{DixmierCetoile} or that of \cite[th.~8.9]{Pier}) and therefore implies condition (v) by the bipolar theorem.
	
	(v) $\Rightarrow$ (vii). Similar biregularisation argument to (i) $\Rightarrow$ (ii).
	
	(vii) $\Rightarrow$ (ix). Similar bipolarity argument to (iii) $\Rightarrow$ (v).
	
	(ix) $\Rightarrow$ (i). This implication follows from the topology of $\mathscr{E}(G)$ being finer than that of uniform convergence on compact sets, since $G$ being amenable is equivalent to the constant function $1$ being a limit point in $\mathscr{C}(G)$ of the convex cone of right positive definite functions in $A(G)$ (see \cite[prop.~18.3.6]{DixmierCetoile} or \cite[prop.~8.8]{Pier}).
	
	(ii) $\Rightarrow$ (iv). Similar bipolarity argument to (iii) $\Rightarrow$ (v). 
	
	(iv) $\Rightarrow$ (vi). Similar biregularisation argument to (i) $\Rightarrow$ (ii).
	
	(vi) $\Rightarrow$ (viii). Similar bipolarity argument to (iii) $\Rightarrow$ (v).
	
	(viii) $\Rightarrow$ (ix). This implication is obvious.
\end{proof}
\begin{remark}
	The above theorem is quite interesting since at first glance, conditions (v) and (ix) seem respectively much weaker and much stronger than the usual uniform approximation on compact sets condition.
\end{remark}
\subsection{Pullbacks of positive definite distributions}
It is well known that pullbacks of right positive definite continuous functions are again right positive definite (see \cite[th.~2.20]{Eymard}). In the case of open homomorphisms, this result can be extended by biregularisation to right positive definite distributions (\autoref{lemma:biregularisation}).%
\begin{proposition}\label{proposition:pullright}
	Let $G$ and $H$ be two locally compact groups and let $\pi$ be a continuous homomorphism from $G$ to $H$. The pullback by $\pi$ of any right positive definite continuous function on $H$ is a right positive definite continuous function on $G$. If additionally $\pi$ is open, then the pullback by $\pi$ of any right positive definite distribution on $H$ is a right positive definite distribution on $G$.
\end{proposition}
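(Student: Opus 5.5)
The first assertion is classical and I will simply cite it. If $h$ is a right positive definite continuous function on $H$, then $h$ is a coefficient $s\mapsto\langle\rho(s)\xi|\xi\rangle$ of a continuous unitary representation $\rho$ of $H$; this is Godement's characterisation, rewritten in the present left/right conventions. Then $h\circ\pi$ is the corresponding coefficient of the continuous unitary representation $\rho\circ\pi$ of $G$, and is therefore right positive definite. This is exactly \cite[th.~2.20]{Eymard}, and continuity of $h\circ\pi$ is clear.

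For the second assertion, assume $\pi$ is open and let $T\in\mathscr{D}'(H)$ be right positive definite. Recall from \autoref{proposition:pushopenD} that $\pi^*T$ is defined by transposition, $\langle\pi^*T,g\rangle=\langle T,\pi_*g\rangle$ for $g\in\mathscr{D}(G)$, and that $\pi^*$ is continuous from $\mathscr{D}'(H)$ to $\mathscr{D}'(G)$. The plan is to regularise $T$ by \autoref{lemma:biregularisation}: choose a filter basis $\mathfrak{F}$ on $\mathscr{D}(H)$ converging to $\delta_{e_H}$ in $\mathscr{E}'(H)$ and containing a bounded set. Then $T_f=f*T*f^\flat$ is a smooth right positive definite function for every $f\in\mathscr{D}(H)$, and $T_f\to T$ in $\mathscr{D}'(H)$.

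Apply the first part to each $T_f$. For a smooth function, the transposed pullback coincides with the ordinary composition $T_f\circ\pi$; this is checked by pairing against $g\in\mathscr{D}(G)$ and using the integration formula for $\mu_G$, $\mu_N$, $\mu_H$ underlying \autoref{proposition:pushopenD}. So each $\pi^*T_f$ is a right positive definite continuous function, hence a right positive definite distribution on $G$. Continuity of $\pi^*$ on $\mathscr{D}'$ gives $\pi^*T_f\to\pi^*T$ in $\mathscr{D}'(G)$. Since the right positive definite distributions form a closed cone in $\mathscr{D}'(G)$, we conclude that $\pi^*T$ is right positive definite.

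The only delicate point is the compatibility just used: that the distributional pullback extends composition of continuous functions for the normalised Haar measures. If that identification proved awkward, I would instead argue directly at the level of test functions. The goal would be a formula $\pi_*(g*g^\flat)=\pi_*g*(\pi_*g)^\flat$, possibly up to a modular factor that is positive and equal to $1$ on the relevant subgroup. This would give $\langle g*\pi^*T|g\rangle=\langle(\pi_*g)*T|\pi_*g\rangle\geq0$ immediately. Establishing that formula requires tracking $\Delta_G$ versus $\Delta_H\circ\pi$ on an open homomorphism, so I would prefer the regularisation route.
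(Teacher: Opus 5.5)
Your main argument is correct and is exactly the paper's proof, written out in more detail. You cite \cite[th.~2.20]{Eymard} for continuous functions, then pass to distributions by biregularisation (\autoref{lemma:biregularisation}), using that the transposed $\pi^*$ extends composition under the normalisation of \autoref{proposition:pushopenD}, that it is continuous on $\mathscr{D}'(H)$, and that the right positive definite cone is closed.

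As an aside, your fallback needs $\sharp$ rather than $\flat$, because $\langle g*\pi^*T|g\rangle=\langle\pi^*T|g^\sharp*g\rangle$. With $\sharp$ no modular factor appears: pushforward of measures respects convolution and the adjoint $g\mu_G\mapsto g^\sharp\mu_G$, so $\pi_*(g^\sharp*g)=(\pi_*g)^\sharp*\pi_*g$ exactly. The $\flat$-identity you wrote governs left positive definiteness instead, and there the condition $\Delta_G=\pi^*(\Delta_H)$ is genuinely needed, cf.\ \autoref{proposition:pullleft}.
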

Pullbacks of left positive definite continuous functions however may not be left positive definite.
\begin{proposition}\label{proposition:pullleft}
	Let $G$ and $H$ be two locally compact groups and let $\pi$ be a continuous homomorphism from $G$ to $H$. The following conditions are equivalent:
	\begin{enumerate}
		\item The pullback by $\pi$ of any left positive definite continuous function on $H$ is a left positive definite continuous function on $G$.
		\item The pullback by $\pi$ of any left positive definite compactly supported smooth function on $H$ is a left positive definite smooth function on $G$.
		\item The modular function of $G$ is the pullback by $\pi$ of the modular function of $H$.
	\end{enumerate}
	If additionally $\pi$ is open, then the above three conditions are equivalent to the following fourth:
	\begin{enumerate}\setcounter{enumi}{3}
		\item The pullback by $\pi$ of any left positive definite distribution on $H$ is a left positive definite distribution on $G$.
	\end{enumerate}
\end{proposition}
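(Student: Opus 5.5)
The plan is to use \autoref{lemma:leftrightposdef} to turn left positive definiteness into right positive definiteness, so that \autoref{proposition:pullright} can be applied. A continuous function $h$ on $H$ is left positive definite if and only if $\Delta_H^{-1/2}h$ is right positive definite. Its pullback $\pi^*h$ is left positive definite on $G$ if and only if $\Delta_G^{-1/2}\pi^*h$ is right positive definite. Since $\pi^*(\Delta_H^{-1/2}h)=(\pi^*\Delta_H)^{-1/2}\pi^*h$, we get
\[
\Delta_G^{-1/2}\pi^*h=\rho\cdot\pi^*(\Delta_H^{-1/2}h),\qquad \rho=\bigl(\Delta_G^{-1}\pi^*\Delta_H\bigr)^{1/2}.
\]
Here $\rho$ is a continuous positive character of $G$.

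The implication (iii)$\Rightarrow$(i) is then immediate: $\rho=1$, and \autoref{proposition:pullright} applies. Likewise (iii)$\Rightarrow$(iv) when $\pi$ is open, using the distribution part of \autoref{proposition:pullright}. The implication (i)$\Rightarrow$(ii) is trivial, and so is (iv)$\Rightarrow$(ii), since smooth compactly supported functions are distributions and pullbacks of smooth functions are smooth.

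The real content is (ii)$\Rightarrow$(iii). Suppose $\rho\neq 1$. First I would show that for every right positive definite $u$ which is a pullback, $\rho u$ being right positive definite forces a contradiction. Take a right positive definite $k\in\mathscr{D}(H)$ with $k(e_H)>0$, for instance $k=g*g^\flat$ with $g\geq 0$ supported near $e_H$. Then $h=\Delta_H^{1/2}k$ is a left positive definite element of $\mathscr{D}(H)$, and by (ii) the function $\rho\cdot\pi^*k$ is right positive definite. Iterating on $k$ replaced by $k^n$ is not available, so instead I would use the fact that $\rho\,\pi^*k$ and $\pi^*\bar k$ are both right positive definite. Their product, by Godement's theorem, is $\rho\,\pi^*\lvert k\rvert^2$. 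Instead of this, the cleaner route is the boundedness of positive definite functions: a right positive definite continuous function $u$ satisfies $\lvert u(s)\rvert\leq u(e_G)$.

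To exploit this, the idea is to make $\pi^*k$ large on a region where $\rho$ is large. Pick $s\in G$ with $\rho(s)>1$; such an $s$ exists because $\rho$ is a nontrivial positive character, hence unbounded. The obstacle is that $k$ is compactly supported, so $\pi^*k(s^n)$ may vanish. To get around this, translate instead. If $u=\rho\,\pi^*k$ is right positive definite, then so are $\rho\,\pi^*(k*\delta_y\ldots)$-type modifications. More simply, I would use $k_y=\lambda(y)g*(\lambda(y)g)^\flat=g*g^\flat$, which is translation-invariant, so translation does not help directly.

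Alternatively, avoid positivity bounds and use the characterization directly. Apply (ii) to $h=\Delta_H^{1/2}(g*g^\flat)$ and test right positive definiteness of $\rho\,\pi^*(g*g^\flat)$ against $f=\delta_a+c\,\delta_b$-approximants, with $a,b$ chosen so that $\pi(a^{-1}b)$ lies in the support of $g*g^\flat$ while $\rho(a)\neq\rho(b)$. This is possible when $\rho$ is nontrivial on $\pi^{-1}(V)$ for small neighbourhoods $V$, i.e.\ on the kernel $N$ or near it. This yields a $2\times2$ matrix
\[
\begin{pmatrix}\rho(e)u & \rho(t)u'\\ \rho(t^{-1})\bar u' & u\end{pmatrix}
\]
whose Hermitian symmetry fails unless $\rho(t)=\rho(t^{-1})$, i.e.\ $\rho(t)=1$. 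Since positive definite functions satisfy $u(t^{-1})=\overline{u(t)}$, this gives $\rho=1$ on $\pi^{-1}(\text{interior of }\{k\neq 0\})$. That set generates a subgroup containing $N$ and an open subgroup; iterating with $k$ of larger support via $g$ of large support covers all of $G$. Since $\rho$ is a character, this gives $\rho=1$, which is (iii). The main obstacle is handling points of $G$ far from $N$ by choosing $g$ with large support so that $g*g^\flat(\pi(t))\neq0$.
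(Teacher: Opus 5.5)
Your proposal is correct and, once the abandoned detours are removed, it is essentially the paper's proof. The key step in (ii)$\Rightarrow$(iii) is the Hermitian symmetry $u(t^{-1})=\overline{u(t)}$ of the right positive definite function $u=\rho^{\pm 1}\pi^*k$, which forces $\rho(t)^2=1$ wherever $k(\pi(t))\neq 0$; the paper equivalently compares $\pi^*(f)^\sharp=\pi^*(f)$ with $\pi^*(\Delta_H^{1/2}f)^\flat=\pi^*(\Delta_H^{1/2}f)$. This is followed by a choice of $g$ with large support, and your other implications run through \autoref{lemma:leftrightposdef} and \autoref{proposition:pullright} essentially as in the paper.

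Two small corrections. First, by \autoref{lemma:leftrightposdef} it is $\Delta_H^{1/2}h$, not $\Delta_H^{-1/2}h$, that is right positive definite; this only swaps $\rho$ and $\rho^{-1}$ and is harmless. Second, the $2\times 2$ matrix set-up with $\rho(a)\neq\rho(b)$ and the subgroup-generation step are unnecessary. For each $t\in G$ you can directly take $g\geq 0$ positive near $e_H$ and near $\pi(t)^{-1}$, so that $k(\pi(t))>0$.
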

\begin{proof}
	The implication (i) $\Rightarrow$ (ii) is obvious. Because of the relation between left and right positive definiteness (\autoref{lemma:leftrightposdef}), the implication (iii) $\Rightarrow$ (i) follows easily from the analogue statement for right positive definite distributions (\autoref{proposition:pullright}). The equivalence (i) $\Leftrightarrow$ (iv) if $\pi$ is open can easily be obtained by biregularisation (\autoref{lemma:biregularisation}). We shall only detail the proof of (ii) $\Rightarrow$ (iii).
	
	Suppose that $\pi^*(f)$ is left positive definite for every left positive definite $f\in\mathscr{D}(G)$. If $f\in\mathscr{D}(H)$ is left positive definite, then since $\pi^*(f)^\sharp=\pi^*(f)$ and $\pi^*(\Delta_H^{1/2}f)^\flat=\pi^*(\Delta_H^{1/2}f)$, it follows that
	\begin{align*}
		\Delta_G\pi^*(f)&=\Delta_G\pi^*(f)^\sharp=\pi^*(f)^\flat\\
		&=\pi^*(\Delta_H^{1/2})\pi^*(\Delta_H^{1/2}f)^\flat=\pi^*(\Delta_H^{1/2})\pi^*(\Delta_H^{1/2}f)=\pi^*(\Delta_H)\pi^*(f),
	\end{align*}
	hence $\Delta_G=\pi^*(\Delta_H)$ on the subset of $G$ where $\pi^*(f)\neq 0$. We conclude by choosing $f$ of the form $g^\sharp*g$ with $g\in\mathscr{D}(G)$ and $g=1$ on arbitrarily large compact subsets of $H$. 
\end{proof}
\subsection{Pushforwards of positive definite distributions}
In this subsection, we characterise the continuous homomorphisms by which the pushforward of any compactly supported right positive definite contiuous function is again a right positive definite continuous function.
\begin{lemma}\label{lemma:pushleftproper}
	Let $G$ and $H$ be two locally compact groups and let $\pi$ be a \emph{proper} continuous homomorphisms from $G$ to $H$. The pushforward by $\pi$ of any left positive definite compactly supported distribution on $G$ is a left positive definite distribution on $H$.
\end{lemma}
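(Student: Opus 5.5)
The plan is to test $\pi_*T$ directly against the functions $f*f^\flat$ and pull everything back to $G$. For $f\in\mathscr{D}(H)$ one has $\langle\pi_*T*f|f\rangle=\langle\pi_*T|f*f^\flat\rangle=\langle T|\pi^*(f*f^\flat)\rangle$. It therefore suffices to show that $\pi^*(f*f^\flat)$, restricted to a neighbourhood of $\supp T$, is an integral, or a limit, of functions of the form $g*g^\flat$ with $g$ compactly supported on $G$. For each such $g$ we have $\langle T|g*g^\flat\rangle=\langle T*g|g\rangle\geq 0$, and positivity then passes to the integral.

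\emph{Step 1: reduction to regular $T$.} By \autoref{lemma:biregularisation}, $T$ is the limit in $\mathscr{E}'(G)$ of the compactly supported, smooth, left positive definite functions $h^\sharp*T*h$. Since $\pi_*$ is continuous from $\mathscr{E}'(G)$ to $\mathscr{E}'(H)$ and the left positive definite distributions on $H$ form a closed cone, we may assume that $T$ is a continuous compactly supported function. Pairings with bounded, compactly supported measurable $g$ are then legitimate, and I can work purely measure-theoretically without worrying about the smoothness of the $g$'s.

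\emph{Step 2: structure of $\pi$.} Since $\pi$ is proper, its kernel $N$ is compact and $L=\pi(G)$ is a closed subgroup of $H$. The induced map $G/N\to L$ is a continuous, proper bijective homomorphism, hence a topological isomorphism. The argument thus splits into two parts: restriction to the closed subgroup $L$, followed by pullback along the quotient map $G\to G/N$ with $N$ compact.

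The quotient part is easy. $N$ is compact, so $\Delta_G=1$ on $N$, and $\Delta_G$ is the pullback of $\Delta_{G/N}$. Using the normalisation of \autoref{proposition:pushopenD}, a direct computation shows that the pullback of $\xi*\xi^\flat$ on $G/N$ equals $c\,(\pi^*\xi)*(\pi^*\xi)^\flat$ for a positive constant $c$. Moreover $\pi^*\xi$ is compactly supported when $\xi$ is, by properness.

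\emph{Step 3: restriction to $L$, the main obstacle.} First note that $(f*f^\flat)(l)=\int_H f(y)\overline{f(l^{-1}y)}\,dy$. I would disintegrate $\mu_H$ along the right cosets $Ly$, using a continuous rho-function $\rho$ on $H$ and the associated quasi-invariant measure on $L\backslash H$. This gives
\[
(f*f^\flat)(l)=\int_{L\backslash H}\int_L \frac{f(zx)\overline{f(l^{-1}zx)}}{\rho(zx)}\,dz\,d\dot x .
\]
I then set $f_x(z)=f(zx)\rho(zx)^{-1/2}$, suitably multiplied by a power of $\Delta_H/\Delta_L$ on $L$. The aim is that the inner integral becomes exactly $(f_x*f_x^\flat)(l)$, and hence that $f*f^\flat|_L=\int(f_x*f_x^\flat)\,d\dot x$.

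The difficulty is the mismatch between $\rho(zx)$ and $\rho(l^{-1}zx)$, which differ by the factor $(\Delta_L/\Delta_H)(l)$. If this factor does not cancel, the restriction will instead have the form $\int F_x*F_x^\flat$ twisted by a character. In that case I would use \autoref{lemma:leftrightposdef} to absorb the factor $\Delta^{\pm1/2}$ into the positivity notion. The key point to check is that the pullback of this twist to $G$ is $\Delta_G/\pi^*\Delta_H$ and can be moved onto $T$ without destroying left positive definiteness; this is where properness, via compactness of $N$, must be used. If the bookkeeping fails, a fallback is to choose $f$ supported in a small neighbourhood $VW$ with $V\subset L$ and $W$ a local section. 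There the disintegration is a product, and one can conclude by a partition-of-unity and density argument, using that every $u\in A(H)$ of the form $f*f^\flat$ is approximable by such pieces.
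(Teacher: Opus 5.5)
Your Steps 1 and 2 are sound, and your overall strategy, pairing $T$ with $\pi^*(f*f^\flat)$ and proving that this pairing is non-negative, is the paper's. The gap is Step 3. It is the only non-formal step, you leave it conditional, and your treatment of the character you fear is wrong.

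\begin{itemize}
\item Moving the character onto $T$ does not work. If $\chi$ is a non-trivial positive character and $T$ is left positive definite, then $T=T^\sharp$ gives $(\chi T)^\sharp=\chi^{-1}T$. So $\chi T$ fails to be left positive definite as soon as $\chi^2T\neq T$.
\item \autoref{lemma:leftrightposdef} converts left into \emph{right} positive definiteness, which is not what you need.
\item Properness does not make $\Delta_G/\pi^*\Delta_H$ trivial. Take the inclusion of the upper triangular subgroup into $\operatorname{SL}_2(\mathbf{R})$.
\item The fallback does not close the gap either. Cutting $f$ by a partition of unity on $H$ produces cross terms $f_i*f_j^\flat$, which are not of the form $g*g^\flat$ and whose contribution you do not control.
\end{itemize}

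What is missing is the correct bookkeeping, which shows that no twist occurs for the family $f*f^\flat$. A modular character does appear for $f^\sharp*f$ (cf.\ \autoref{proposition:pullleft}), which is why right positive definiteness needs $\Delta_G=\pi^*\Delta_H$. Write the disintegration along right cosets as $\int_H\phi\,d\mu_H=\int_{L\backslash H}\omega(x)\int_L\phi(zx)\,d\mu_L(z)\,d\nu(Lx)$, with $\omega>0$ continuous and $\omega(zx)=\Delta_L(z)\omega(x)$. Put $F_x(z)=f(zx)$, with no density inside. You then get
\[
(f*f^\flat)|_L=\int_{L\backslash H}\omega(x)\,F_x*F_x^\flat\,d\nu(Lx).
\]
The integrand is well defined because $F_{zx}*F_{zx}^\flat=\Delta_L(z)^{-1}F_x*F_x^\flat$. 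The modular factor is absorbed by the representative-dependent weight $\omega$, not by a character of $l$. Each $F_x$ is compactly supported on $L$, so with this your argument goes through after routine Fubini checks.

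The paper bypasses the factorisation through $G/N\cong\pi(G)\subseteq H$ entirely. The function $\pi^*(f*f^\flat)$ is right positive definite and continuous (\autoref{proposition:pullright}). It is compactly supported because $\pi$ is proper, hence square integrable. Godement's result \cite[prop.~18]{Godement} says that a left positive definite square integrable function integrates non-negatively against a right positive definite one. This gives $\langle T|\pi^*(f*f^\flat)\rangle\geq 0$ directly. It is the global form of the decomposition into $g*g^\flat$'s that you were trying to build by hand.
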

\begin{proof}
	By biregularisation (\autoref{lemma:biregularisation}), it is sufficient to consider compactly supported smooth functions only rather than distributions. Let $f$ be a left positive definite compactly supported smooth function on $G$. For every $g\in\mathscr{D}(H)$, the function $\pi^*(g*g^\flat)$ is a right positive definite compactly supported smooth function on $G$ (\autoref{proposition:pullright}) and the quantity
	\[
		\langle (\pi_*f)*g|g\rangle=\langle\pi_*f|g*g^\flat\rangle=\langle f|\pi^*(g*g^\flat)\rangle
	\]
	is non-negative since, by a result of R.~Godement, the integral of the product of a left positive definite square integrable function with a right positive definite square integrable function is non-negative \cite[prop.~18]{Godement}.
\end{proof}
\begin{theorem}\label{theorem:pushleftstrict}
	Let $G$ and $H$ be two locally compact groups and let $\pi$ be a strict homomorphism of topological groups from $G$ to $H$. The following conditions are equivalent:
	\begin{enumerate}
		\item The pushforward by $\pi$ of any left positive definite compactly supported smooth function on $G$ is a left positive definite distribution on $H$.
		\item The pushforward by $\pi$ of any left positive definite compactly supported distribution on $G$ is a left positive definite distribution on $H$.
		\item The kernel of $\pi$ is amenable.
	\end{enumerate}
\end{theorem}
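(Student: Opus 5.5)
The plan is to prove (i)~$\Leftrightarrow$~(ii) by biregularisation, (iii)~$\Rightarrow$~(ii) by a Godement-type pairing argument that uses amenability of $N=\ker\pi$, and (i)~$\Rightarrow$~(iii) by feeding distributions supported in $N$ into (i) and reading off condition (vii) of \autoref{theorem:godement} for $N$.

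\emph{Equivalence (i)~$\Leftrightarrow$~(ii).} The implication (ii)~$\Rightarrow$~(i) is trivial. For (i)~$\Rightarrow$~(ii), take $T\in\mathscr{E}'(G)$ left positive definite. By \autoref{lemma:biregularisation}, the functions $f^\sharp*T*f$ are left positive definite, lie in $\mathscr{D}(G)$, and converge to $T$ in $\mathscr{E}'(G)$ as $f$ runs along a suitable filter basis. Since $\pi_*$ is continuous from $\mathscr{E}'(G)$ to $\mathscr{E}'(H)$ and the left positive definite distributions form a closed cone, (ii) follows.

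\emph{Implication (iii)~$\Rightarrow$~(ii).} By the previous step it suffices to treat $f\in\mathscr{D}(G)$ left positive definite. For $g\in\mathscr{D}(H)$ we have
\[
\langle(\pi_*f)*g|g\rangle=\langle f|\pi^*(g*g^\flat)\rangle,
\]
and $\varphi=\pi^*(g*g^\flat)$ is a right positive definite smooth function by \autoref{proposition:pullright}. Since $\pi$ is strict with closed image, $\varphi$ is supported in $\pi^{-1}(\text{compact})=LN$ for some compact $L$, but it is not compactly supported, so \cite[prop.~18]{Godement} does not apply directly. I will use the amenability of $N$ to truncate along $N$. By \autoref{theorem:godement}~(ix) applied to $N$, the function $1$ on $N$ is a limit, uniformly on compacta, of sums of functions $u*u^\flat$ with $u\in\mathscr{D}(N)$. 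I then extend each such $u$ to a right positive definite function $\psi_u$ on $G$ that is compactly supported, or at least square integrable on $LN$. The candidate is $\psi_u=(w\cdot\tilde u)*(w\cdot\tilde u)^\flat$, where $\tilde u\in\mathscr{D}(G)$ restricts to $u$ on $N$ and $w$ is a local cut-off. Godement's theorem on products \cite[th.~13]{Godement} makes $\varphi\psi_u$ right positive definite, and it is compactly supported, so \cite[prop.~18]{Godement} gives $\langle f|\varphi\psi_u\rangle\geq0$. Letting $\psi_u\to1$ uniformly on $\supp f$ then yields $\langle f|\varphi\rangle\geq 0$. Designing $\psi_u$ so that it is simultaneously right positive definite, compactly supported, and close to $1$ on the compact set $\supp f$ is, I expect, the main technical obstacle. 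If the explicit construction fails, I would fall back on inducing the positive definite function $u*u^\flat$ from $N$ to $G$ and multiplying by a fixed compactly supported right positive definite function equal to $1$ near $\supp f$ modulo $N$.

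\emph{Implication (i)~$\Rightarrow$~(iii).} I will verify condition (vii) of \autoref{theorem:godement} for $N$, which by that theorem is equivalent to amenability of $N$. Let $S\in\mathscr{E}'(N)$ be left positive definite. Its image $\iota_*S$ under the inclusion $\iota\colon N\to G$ is left positive definite by \autoref{lemma:pushleftproper}, since closed embeddings are proper. For $h\in\mathscr{D}(G)$ the function $F_h=h^\sharp*\iota_*S*h$ lies in $\mathscr{D}(G)$ and is left positive definite, so by (i) $\pi_*F_h$ is a left positive definite continuous function on $H$ (continuous because $\pi$ is open onto its image, cf.\ the proof of \autoref{lemma:pushAcinAc}). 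Hence
\[
0\leq(\pi_*F_h)(e_H)=\int_N F_h\,d\mu_N=\langle \iota_*S|\,h*\check\mu_N*h^\flat\rangle
\]
up to modular factors. Restricted to $N$, the test function $h*\check\mu_N*h^\flat$ converges to a positive constant on $\supp S$ when $h$ runs along a normalised approximate identity, with a rescaling that depends on $\mu_N$ of the support of $h$. This gives $\langle S|1\rangle\geq0$, hence (vii) and therefore amenability of $N$. The bookkeeping of the modular functions $\Delta_G$ and $\Delta_N$ in this last limit, possibly using \autoref{lemma:leftrightposdef} to pass between left and right positive definiteness, is the delicate computational point of this direction.
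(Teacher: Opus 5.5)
Your (i)~$\Leftrightarrow$~(ii) matches the paper's biregularisation argument. The truncation step in (iii)~$\Rightarrow$~(ii), however, is not merely a technical obstacle: it is impossible in general.

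\textbf{The gap in (iii)~$\Rightarrow$~(ii).} You need, for every left positive definite $f\in\mathscr{D}(G)$, compactly supported right positive definite functions $\psi$ that are uniformly close to $1$ on $\supp f$. The support of $f$ can contain any prescribed compact set: take $f=h^\sharp*h$ with $h\geq 0$ positive on a large neighbourhood of $e_G$. Compactly supported continuous positive definite functions lie in $A(G)$. So the existence of such $\psi$ for all $f$ forces amenability \emph{of $G$}, by the characterisation recalled in the proof of (ix)~$\Rightarrow$~(i) of \autoref{theorem:godement}. For instance, for $\mathrm{SL}_2(\mathbf{R})\times\mathbf{R}\to\mathrm{SL}_2(\mathbf{R})$, whose kernel is amenable, your $\psi_u$ do not exist.

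Truncating only along $N$, i.e.\ merely asking that $\varphi\psi$ be compactly supported as in your fallback, fails as well. Take $G=\mathrm{SL}_2(\mathbf{R})\ltimes\mathbf{R}^2$ and $N=\mathbf{R}^2$. Relative property~(T) forces every normalised right positive definite function that is close enough to $1$ on a suitable compact set to stay close to $1$ on all of $N$. On $N$, $\varphi$ is the non-zero constant $(g*g^\flat)(e_H)$, so $\varphi\psi$ is never compactly supported. Amenability of $N$ cannot be turned into cut-offs on $G$; it has to be used on $N$ itself.

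The missing idea is the paper's route:
\begin{enumerate}
\item Reduce to $\pi$ surjective: the inclusion of the closed subgroup $\pi(G)$ into $H$ is proper, so \autoref{lemma:pushleftproper} applies.
\item Use that $\pi_*$ maps $\mathscr{D}(G)$ onto $\mathscr{D}(H)$ (\autoref{proposition:pushopenD}), so it suffices to test against $g=\pi_*h$.
\item Compute
\[
\langle(\pi_*f)*\pi_*h|\pi_*h\rangle=\langle f*h|\pi^*\pi_*h\rangle=\langle f*h|h*(\mu_N)^\flat\rangle=\langle h^\sharp*f*h|(\mu_N)^\flat\rangle .
\]
\item The restriction to $N$ of the left positive definite function $h^\sharp*f*h\in\mathscr{D}(G)$ is left positive definite on $N$, because $\Delta_N=\Delta_G|_N$ for normal $N$ (\autoref{proposition:pullleft}).
\item Condition~(iii) of \autoref{theorem:godement} for the amenable group $N$ then gives non-negativity.
\end{enumerate}

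\textbf{A smaller flaw in (i)~$\Rightarrow$~(iii).} Evaluating $(\pi_*F_h)(e_H)$ requires $\pi_*F_h$ to be a continuous function on $H$, i.e.\ $\pi$ open into $H$. A strict homomorphism is only open onto its closed image (think of $\mathbf{R}\hookrightarrow\mathbf{R}^2$), and \autoref{lemma:pushAcinAc} assumes openness.

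The detour through $F_h$, approximate identities and modular factors is unnecessary anyway. Since you already have (i)~$\Rightarrow$~(ii), apply (ii) directly to $\iota_*S$ and pick $g\in\mathscr{D}(H)$ with $(g*g^\flat)(e_H)=1$. As $\pi^*(g*g^\flat)=1$ on $N\supseteq\supp S$,
\[
\langle S|1\rangle=\langle\iota_*S|\pi^*(g*g^\flat)\rangle=\langle\pi_*\iota_*S|g*g^\flat\rangle=\langle(\pi_*\iota_*S)*g|g\rangle\geq 0 .
\]
This is condition~(vii) of \autoref{theorem:godement} for $N$, and it is exactly the paper's argument.
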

\begin{proof}
	The implication (i) $\Rightarrow$ (ii) can easily be obtained by biregularisation (\autoref{lemma:biregularisation}). We shall only detail the proofs of (ii) $\Rightarrow$ (iii) and (iii) $\Rightarrow$ (i).
	
	(ii) $\Rightarrow$ (iii). Suppose that $\pi_*T$ is left positive definite for every left positive definite $T\in\mathscr{E}'(G)$ and denote by $N$ the kernel of $\pi$. Let $T$ be a compactly supported left positive definite \emph{Radon measure} on $N$. To prove that $N$ is amenable, it is sufficient to prove that $\langle T|1\rangle\geq 0$ (\autoref{theorem:godement}). Since the canonical injection from $N$ to $G$ is proper, $T$ is still left positive definite when naïvely viewed as a compactly supported measure on $G$ supported in $N$ (\autoref{lemma:pushleftproper}). Now if $g\in\mathscr{D}(H)$ is such that $(g*g^\flat)(e_H)=1$, then $\pi^*(g*g^\flat)=1$ on $N$ and since $\pi_*T$ is left positive definite by assumption, it holds that
	\[
		\langle T|1\rangle=\langle T|\pi^*(g*g^\flat)\rangle=\langle\pi_*T|g*g^\flat\rangle=\langle(\pi_*T)*g|g\rangle\geq 0
	\]
	as desired.
	
	(iii) $\Rightarrow$ (i). Suppose that the kernel $N$ of $\pi$ is amenable and let $f$ be a left positive definite compactly supported smooth function on $G$. Since the canonical injection of $\pi(G)$ into $H$ is proper, we can assume that $\pi$ is surjective to show that $\pi_*f$ is left positive definite (\autoref{lemma:pushleftproper}), in which case $\pi_*$ realises a surjection from $\mathscr{D}(G)$ to $\mathscr{D}(H)$ (\autoref{proposition:pushopenD}).
	
	For every $h\in\mathscr{D}(G)$, it holds that
	\begin{align*}
		\langle(\pi_*f)*(\pi_*h)|\pi_*h\rangle
		&=\langle\pi_*(f*h)|\pi_*h\rangle\\
		&=\langle f*h|\pi^*(\pi_*h)\rangle
		=\langle f*h|h*(\mu_N)^\flat\rangle
		=\langle h^\sharp*f*h|(\mu_N)^\flat\rangle.	
	\end{align*}
	Since $N$ is a closed \emph{normal} subgroup of $G$, its modular function is none other than the restriction of that of $G$ \cite[chap.~VII, \textsection\nobreak\,2, prop.~10 b)]{BourbakiINTd} and since $h^\sharp*f*h$ is left positive definite, its restriction to $N$ is also left positive definite (\autoref{proposition:pullleft}). Since $N$ is amenable, the integral $\langle h^\sharp*f*h|(\mu_N)^\flat\rangle$ is $\geq 0$ (\autoref{theorem:godement}).
\end{proof}
\begin{remark}
    This theorem shows that contrary to the abelian setting, a continuous homomorphism from $G$ to $H$ does not necessarily induce a C\textsuperscript{\rlap*}-algebra homomorphism from $\mathscr{C}_0(\widehat{G{}})$ to $L^\infty(\widehat{H{}})$.
\end{remark}
\begin{theorem}\label{theorem:pushrightstrict}
	Let $G$ and $H$ be two locally compact groups and let $\pi$ be a strict homomorphism of topological groups from $G$ to $H$. The following conditions are equivalent:
	\begin{enumerate}
		\item The pushforward by $\pi$ of any right positive definite compactly supported smooth function on $G$ is a right positive definite distribution on $H$.
		\item The pushforward by $\pi$ of any right positive definite compactly supported distribution on $G$ is a right positive definite distribution on $H$.
		\item The kernel of $\pi$ is amenable and the modular function of $G$ is the pullback by $\pi$ of the modular function of $H$.
	\end{enumerate}
\end{theorem}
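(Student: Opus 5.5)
The plan is to reduce to \autoref{theorem:pushleftstrict} using \autoref{lemma:leftrightposdef}: right positive definiteness of $T$ is equivalent to left positive definiteness of $\Delta^{1/2}T$. The key identity is
\[
    \Delta_H^{1/2}\,\pi_*T=\pi_*\bigl(\pi^*(\Delta_H^{1/2})\,T\bigr),
\]
valid for $T\in\mathscr{E}'(G)$ because $\pi^*$ is multiplicative. The implication (i)~$\Rightarrow$~(ii) is again obtained by biregularisation (\autoref{lemma:biregularisation}), since right positive definite biregularisations $f*T*f^\flat$ of a compactly supported $T$ are compactly supported smooth and converge to $T$ in $\mathscr{E}'(G)$, and $\pi_*$ is continuous there.

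For (iii)~$\Rightarrow$~(i), I assume $N=\ker\pi$ is amenable and $\Delta_G=\pi^*\Delta_H$. Let $f\in\mathscr{D}(G)$ be right positive definite. Then $g=\Delta_G^{-1/2}f$ is left positive definite and compactly supported smooth by \autoref{lemma:leftrightposdef}, so $\pi_*g$ is left positive definite by \autoref{theorem:pushleftstrict}. The identity above gives
\[
    \Delta_H^{-1/2}\pi_*g=\pi_*(\pi^*(\Delta_H^{-1/2})g)=\pi_*(\Delta_G^{-1/2}g),
\]
which is not what we want. I instead use $f=\Delta_G^{1/2}g$:
\[
    \pi_*f=\pi_*(\pi^*(\Delta_H^{1/2})g)=\Delta_H^{1/2}\pi_*g,
\]
and this is right positive definite by \autoref{lemma:leftrightposdef} applied on $H$.

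For (ii)~$\Rightarrow$~(iii), I first extract the modular condition and then amenability. If $\Delta_G=\pi^*\Delta_H$ is already known, the computation of the previous paragraph run backwards shows that $\pi_*$ preserves left positive definiteness of compactly supported distributions, and \autoref{theorem:pushleftstrict} then gives amenability of $N$. So the modular equality is the main obstacle. To obtain it, I take $f$ right positive definite in $\mathscr{D}(G)$, so $f=f^\flat$, and use that $\pi_*f$ is right positive definite, so $\pi_*f=(\pi_*f)^\flat$. I then compute $(\pi_*f)^\flat$ via the transpose of $\flat$: for $h\in\mathscr{D}(H)$,
\[
    \langle(\pi_*f)^\flat|h\rangle=\langle h^\sharp|\pi_*f\rangle=\langle\pi^*(h^\sharp)|f\rangle ,
\]
and $\pi^*(h^\sharp)=\pi^*(\Delta_H^{-1})(\pi^*h)^\flat$, whereas $(\pi^*h)^\sharp=\Delta_G^{-1}(\pi^*h)^\flat$. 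Writing $\rho=\pi^*(\Delta_H)\Delta_G^{-1}$, which is a continuous homomorphism $G\to\mathbf{R}_+^*$, this yields
\[
    \pi_*f=(\pi_*f)^\flat=\pi_*(\rho f).
\]
Hence $\pi_*((\rho-1)f)=0$ for every right positive definite $f\in\mathscr{D}(G)$, and by linearity and \cite[lemma~3.2]{Eymard}-type spanning also for $f=g*g^\flat\cdot k$ with suitable $k$. Pushing forward and pulling back (\autoref{proposition:pushopenD}, after reducing to $\pi$ surjective as before) gives $((\rho-1)f)*(\mu_N)^\flat=0$. Choosing $f=g*g^\flat$ with $g\ge0$ concentrated near a point $s$, and noting $\rho$ is constant on cosets of $N$ only if $\rho|_N=1$, I would first show $\rho|_N=1$. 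For that I take $f$ supported near $e_G$ and use $\rho(s)\approx\rho(e)=1$ there, which may not suffice; the alternative is to test with $f\cdot\chi$ for $\chi$ a right positive definite function, using that products of right positive definite functions are right positive definite, to localise. Once $\rho$ is constant on $N$-cosets, the vanishing of the $N$-integrals of $(\rho-1)f$ for all such positive $f$ forces $\rho=1$. This localisation argument is the step I expect to require the most care.
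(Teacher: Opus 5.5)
Your arguments for (i)~$\Rightarrow$~(ii) and (iii)~$\Rightarrow$~(i) are correct and match the paper. So is the amenability half of (ii)~$\Rightarrow$~(iii): once $\Delta_G=\pi^*(\Delta_H)$ is known, $\Delta_H^{1/2}\pi_*T=\pi_*(\Delta_G^{1/2}T)$ transfers the hypothesis to left positive definiteness and \autoref{theorem:pushleftstrict} applies. Your identity $\pi_*f=\pi_*(\rho f)$ with $\rho=\pi^*(\Delta_H)\Delta_G^{-1}$ is also correct. Paired with $\Delta_H^{-1}$, it is exactly the paper's relation $\langle T|\Delta_G^{-1}\rangle=\langle T|\pi^*(\Delta_H^{-1})\rangle$.

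The gap is the step you flag yourself: extracting $\rho=1$ from this identity. Localisation with nonnegative test functions cannot work. A right positive definite $f$ satisfies $\lvert f(x)\rvert\le f(e_G)$, so it cannot be concentrated near a single $x\neq e_G$. For $f=g*g^\flat$ with $g\ge0$, the bumps near $x$ and $x^{-1}$ are weighted by $\rho(x)-1$ and $\rho(x)^{-1}-1$, which have opposite signs, so they can cancel. In fact real-valued tests detect nothing in general. Take $H$ trivial and $G$ non-unimodular, so that $N=G$, $\rho=\Delta_G^{-1}$, and your identity reads $\int_G(\rho-1)f\,d\mu_G=0$. Every real-valued right positive definite $u\in\mathscr{D}(G)$ satisfies $u(s)=u(s^{-1})$, hence $\int_G\Delta_G^{-1}u\,d\mu_G=\int_Gu\,d\mu_G$ automatically. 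So no nonnegative $f$, and no product $f\chi$ of real right positive definite functions, can reveal that $\rho\neq1$.

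The missing ingredient is polarisation, i.e.\ complex coefficients. Your computation of $(\pi_*f)^\flat$ uses only $f=f^\flat$, so it holds verbatim for every right positive definite $T\in\mathscr{E}'(G)$, which (ii) allows. By linearity it then holds on the span of such $T$. As in the paper,
\[
    \delta_x=\delta_x*\delta_{e_G}^\flat=\frac14\sum_{k=0}^{3}i^k\,(\delta_x+i^k\delta_{e_G})*(\delta_x+i^k\delta_{e_G})^\flat
\]
lies in that span. Hence $\rho(x)\delta_{\pi(x)}=\pi_*(\rho\,\delta_x)=\pi_*\delta_x=\delta_{\pi(x)}$, that is, $\rho(x)=1$.

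If you want to stay with smooth functions, which would even give the modular half of (i)~$\Rightarrow$~(iii) directly, the steps are:
\begin{enumerate}
    \item Pair $\pi_*((\rho-1)f)=0$ with the constant function $1$ on $H$ to get $\int_G(\rho-1)f\,d\mu_G=0$.
    \item Polarise to extend this to all $g*h^\flat$ with $g,h\in\mathscr{D}(G)$.
    \item Let $h$ run through an approximate identity.
\end{enumerate}
Either way, the detour through $\mu_N$ and the reduction to surjective $\pi$ are unnecessary. That reduction is also not harmless for right positive definiteness, since $\Delta_{\pi(G)}$ need not be the restriction of $\Delta_H$.
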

\begin{proof}
	The implication (i) $\Rightarrow$ (ii) can easily be obtained by biregularisation (\autoref{lemma:biregularisation}). We shall only detail the proofs of (ii) $\Rightarrow$ (iii) and (iii) $\Rightarrow$ (i).
	
	(ii) $\Rightarrow$ (iii) Suppose that $\pi_*T$ is right positive definite for every right positive definite $T\in\mathscr{E}'(G)$. Since
	\[
	    \Delta_G^{-1}=\bigl({\boldsymbol 1}_G\bigr)^\sharp=\bigl(\pi^*{\boldsymbol 1}_H\bigr)^\sharp=\bigr(\pi^*\bigl((\Delta_H^{-1})^\sharp\bigr)\bigl)^\sharp,
	\]
	it holds that
	\begin{align*}
		\langle T|\Delta_G^{-1}\rangle
		&=\langle T|\pi^*((\Delta_H^{-1})^\sharp)^\sharp\rangle\\
		&=\langle\pi_*(T^\flat)^\flat|\Delta_H^{-1}\rangle=\langle\pi_*T|\Delta_H^{-1}\rangle=\langle T|\pi^*(\Delta_H^{-1})\rangle
	\end{align*}
	for every right positive definite $T\in\mathscr{E}'(G)$. Note that for every $x\in G$, the distribution $\delta_x$ on $G$ is equal to $\delta_x*\delta_{e_G}^\flat$ and therefore may be written via the polarisation identity as a linear combination of right positive definite finitely supported distributions. It then follows that $\Delta_G=\pi^*(\Delta_H)$.
	
	Now condition (ii) combined with the relation $\Delta_G=\pi^*(\Delta_H)$ imply that the pushforward by $\pi$ of any left positive definite compactly supported distribution is left positive definite since for every left positive definite $T\in\mathscr{E}'(G)$
	\[
		\Delta_H^{1/2}\pi_*T=\pi_*(\pi^*(\Delta_H^{1/2})T)=\pi_*(\Delta_G^{1/2}T)
	\]
	must be right positive definite since $\Delta_G^{1/2}T$ is. This implies that the kernel of $\pi$ is amenable (\autoref{theorem:pushleftstrict}).
	
	(iii) $\Rightarrow$ (ii). Suppose that the kernel of $\pi$ is amenable and that $\Delta_G=\pi^*(\Delta_H)$. The amenability of the kernel of $\pi$ implies that the pushforward by $\pi$ of any left positive definite compactly supported distribution is left positive definite, and the relation $\Delta_G=\pi^*(\Delta_H)$ then implies that the pushforward by $\pi$ of any right positive definite compactly supported distribution is right positive definite since for every right positive definite $T\in\mathscr{E}'(G)$,
	\[
		\Delta_H^{1/2}\pi_*T=\pi_*(\pi^*(\Delta_H^{1/2})T)=\pi_*(\Delta_G^{1/2}T)
	\]
	must be left positive definite since $\Delta_G^{1/2}T$ is.
\end{proof}
We thus obtain a precise description of when the pushforward by $\pi$ of a compactly supported right positive definite continuous function is a right positive definite continuous function.
\begin{corollary}\label{corollary:pushcspdc}
    Let $G$ and $H$ be two locally compact groups and let $\pi$ be a continuous homomorphism from $G$ to $H$. The following conditions are equivalent:
    \begin{enumerate}
        \item The pushforward by $\pi$ of any right positive definite compactly supported continuous function on $G$ is a right positive definite continuous function on $H$.
        \item $\pi$ is open, its kernel is amenable and the modular function of $G$ is the pullback by $\pi$ of the modular function of $H$.
    \end{enumerate}
\end{corollary}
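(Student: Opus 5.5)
We prove the two implications separately, reducing as much as possible to \autoref{theorem:pushrightstrict}, \autoref{proposition:isopen} and \autoref{lemma:pq11}.

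(ii) $\Rightarrow$ (i). If $\pi$ is open, it is a strict homomorphism of topological groups. Since the kernel is amenable and $\Delta_G=\pi^*(\Delta_H)$, \autoref{theorem:pushrightstrict} shows that $\pi_*f$ is a right positive definite distribution whenever $f$ is a compactly supported right positive definite smooth function. For a general compactly supported right positive definite continuous function $f$, we use openness: by \cite[chap.~VII, \textsection\nobreak\,2, n\textsuperscript{o}\nobreak\,2]{BourbakiINTd}, $\pi_*f$ is a compactly supported continuous function on $H$. To see that it is right positive definite, we regularise. Take $g$ ranging in the filter basis of \autoref{lemma:biregularisation}. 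Then $g*f*g^\flat$ is smooth, compactly supported and right positive definite, so $\pi_*(g*f*g^\flat)$ is right positive definite. Moreover $g*f*g^\flat\to f$ uniformly with supports in a fixed compact set. Hence $\pi_*(g*f*g^\flat)\to\pi_*f$ in $\mathscr{D}'(H)$, and closedness of the cone gives the conclusion. A continuous function which is right positive definite as a distribution is right positive definite in the classical sense, by the usual approximation of finite sums of Dirac masses.

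(i) $\Rightarrow$ (ii). First we show that $\pi$ is open, through \autoref{proposition:isopen} and \autoref{lemma:pq11}. As in \autoref{remark:abelianpushAcinAc}, every element of $A_c(G)$ is a linear combination of compactly supported right positive definite continuous functions. This uses \cite[lemma~3.2]{Eymard}, the fact that products of positive definite functions are positive definite, and the fact that $u\in A_c(G)$ equals $u\cdot v$ with $v$ such a combination equal to $1$ on $\supp u$. By (i), $\pi_*$ maps each such function to a right positive definite continuous function, which lies in $B(H)$. Its support is compact, since it is contained in the image of a compact set. By \autoref{lemma:M11cisAc}-type reasoning (compactly supported elements of $B(H)$ lie in $A(H)$), it therefore lies in $A(H)$. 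Hence $\pi_*(A_c(G))\subseteq A(H)$, so the conclusion of Jodeit's theorem holds by \autoref{lemma:pq11}, and $\pi$ is open by \autoref{proposition:isopen}.

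Once $\pi$ is open, it is strict, and we want to invoke \autoref{theorem:pushrightstrict}. Its condition (i) concerns smooth compactly supported functions, whose pushforwards are continuous by openness and right positive definite by hypothesis. That theorem then yields amenability of the kernel and $\Delta_G=\pi^*(\Delta_H)$.

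The main obstacle is the openness step. We must make sure that the decomposition of $A_c(G)$ into compactly supported right positive definite \emph{continuous} functions is available for non-abelian $G$. We must also justify $B(H)\cap\mathscr{C}_c(H)\subseteq A(H)$; this is Eymard's result that $A(H)$ is an ideal of $B(H)$ containing every compactly supported element.
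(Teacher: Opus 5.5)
Your proof is correct and is essentially the derivation the paper leaves implicit. \autoref{theorem:pushrightstrict} handles the amenability and modular conditions in both directions once $\pi$ is known to be open, hence strict. Your regularisation in (ii)~$\Rightarrow$~(i) is harmless but unnecessary, since condition (ii) of that theorem already applies to compactly supported continuous functions viewed as distributions. Openness in (i)~$\Rightarrow$~(ii) follows, as you say, from the sixteen-term decomposition of \autoref{remark:abelianpushAcinAc} together with \autoref{lemma:pq11} and \autoref{proposition:isopen}. The ingredients of that decomposition hold verbatim for non-abelian groups, as does the inclusion of compactly supported elements of $B(H)$ in $A(H)$.

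A more direct route to openness avoids relying on \autoref{proposition:isopen}, whose written proof invokes \autoref{proposition:pushopenD} before openness is known. Take a non-zero $g\geq 0$ in $\mathscr{D}(G)$ whose support $S$ satisfies $SS^{-1}\subseteq V$ for a given neighbourhood $V$ of $e_G$. By (i), $\phi=\pi_*(g*g^\flat)$ is a right positive definite continuous function with $\int_H\phi\,d\mu_H=\int_G g*g^\flat\,d\mu_G>0$. Since $\lvert\phi\rvert\leq\phi(e_H)$, this gives $\phi(e_H)>0$. Hence $\phi$ does not vanish on some neighbourhood of $e_H$, and that neighbourhood is contained in $\pi(SS^{-1})\subseteq\pi(V)$.
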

The condition on the modular functions may seem a little esoteric. In the case of open homomorphisms, the following result shows how restrictive it is by providing a few equivalent conditions.
\begin{proposition}
    Let $G$ and $H$ be two locally compact groups and let $\pi$ be an open continuous homomorphism from $G$ to $H$. The following conditions are equivalent:
    \begin{enumerate}
        \item The modular function of $G$ is the pullback by $\pi$ of the modular function of $H$.
        \item The left Haar measure of the kernel of $\pi$ is invariant under the inner automorphisms of $G$.
        \item For the operation of convolution, the left Haar measure of the kernel of $\pi$ commutes with every compactly supported distribution on $G$.
    \end{enumerate}
\end{proposition}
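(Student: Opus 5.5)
The plan is to reduce everything to the behaviour of $\mu_N$ under the inner automorphisms $\operatorname{int}(x)\colon n\mapsto xnx^{-1}$ of $G$, where $N$ denotes the kernel of $\pi$. Since $N$ is a closed normal subgroup, each $\operatorname{int}(x)$ restricts to a topological automorphism of $N$. By uniqueness of Haar measure, the image of $\mu_N$ under $\operatorname{int}(x)$ is $\delta(x)\mu_N$ for some continuous homomorphism $\delta\colon G\to\mathbf{R}_+^*$. Condition (ii) says exactly that $\delta=1$. The proof then consists of two comparisons, one linking $\delta$ to the modular functions, the other linking $\delta$ to convolutions.

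\emph{(i) $\Leftrightarrow$ (ii).} Since $\pi$ is open, it induces an isomorphism of topological groups from $G/N$ onto the open subgroup $\pi(G)$ of $H$. The modular function of an open subgroup is the restriction of that of the ambient group, because Haar measure on $\pi(G)$ is the restriction of $\mu_H$. Hence $\pi^*(\Delta_H)(x)=\Delta_{G/N}(xN)$ for every $x\in G$. I then invoke the classical formula relating the modular functions of $G$, $N$ and $G/N$, which comes from the quotient integral formula normalising $\mu_G,\mu_N,\mu_{G/N}$ \cite[chap.~VII, \textsection\nobreak\,2, n\textsuperscript{o}\nobreak\,7--10]{BourbakiINTd}. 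It states that $\Delta_G(x)=\Delta_{G/N}(xN)\,\delta(x)^{\pm1}$, the exponent depending on conventions. To check it, I right-translate $f$ by $x$ inside $\int_{G/N}\int_N f(yn)\,d\mu_N(n)\,d\mu_{G/N}(yN)$ and write $yxn=y(xnx^{-1})x$. Consequently $\Delta_G=\pi^*(\Delta_H)$ holds if and only if $\delta\equiv1$. The main obstacle is keeping track of the convention (left versus right translation, $\delta$ versus $\delta^{-1}$), but the equivalence is insensitive to it.

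\emph{(ii) $\Leftrightarrow$ (iii).} First I compute $\delta_x*\mu_N*\delta_{x^{-1}}$. For $f\in\mathscr{D}(G)$ it is the distribution $f\mapsto\int_N f(xnx^{-1})\,d\mu_N(n)$, i.e.\ the image of $\mu_N$, viewed on $G$, under $\operatorname{int}(x)$. Thus (ii) is equivalent to $\delta_x*\mu_N=\mu_N*\delta_x$ for every $x\in G$. This already gives (iii) $\Rightarrow$ (ii), since Dirac masses are compactly supported distributions. For (ii) $\Rightarrow$ (iii), let $S\in\mathscr{E}'(G)$ and $f\in\mathscr{D}(G)$. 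By definition of convolution,
\[
\langle S*\mu_N,f\rangle=\Bigl\langle S_x,\int_N f(xn)\,d\mu_N(n)\Bigr\rangle
\quad\text{and}\quad
\langle \mu_N*S,f\rangle=\Bigl\langle S_y,\int_N f(ny)\,d\mu_N(n)\Bigr\rangle .
\]
Both inner functions are smooth functions on $G$ (pullbacks of $\pi_*$-type averages, cf.\ \autoref{proposition:pushopenD}). The invariance in (ii) gives
\[
\int_N f(xn)\,d\mu_N(n)=\int_N f\bigl((xnx^{-1})x\bigr)\,d\mu_N(n)=\int_N f(nx)\,d\mu_N(n)
\]
for every $x$, so the two functions coincide and therefore $S*\mu_N=\mu_N*S$. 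Alternatively, one can approximate $S$ in $\mathscr{E}'(G)$ by linear combinations of Dirac masses and use separate continuity of convolution \cite[\textsection\nobreak\,6]{Bruhat}. I expect this second equivalence to be routine, and the only real care needed is in fixing the Bourbaki modular-function formula in the first step.
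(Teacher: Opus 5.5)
Your proposal is correct and follows essentially the same route as the paper: (i)~$\Leftrightarrow$~(ii) comes from Bourbaki's formula for the modular function of a quotient group, once $G/N$ is identified with the open subgroup $\pi(G)$ of $H$, and (iii)~$\Rightarrow$~(ii) comes from specialising to Dirac masses. For (ii)~$\Rightarrow$~(iii), your pairing identity $\int_N f(xn)\,d\mu_N(n)=\int_N f(nx)\,d\mu_N(n)$ is just an explicit version of the paper's linearity, continuity and density argument, which you also give as an alternative.
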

\begin{proof}
    The equivalence (i)~$\Leftrightarrow$~(ii) follows immediately from a classical formula for the modular function of a quotient group \cite[chap.~VII, \textsection\nobreak\,2, cor. of prop.~11]{BourbakiINTd}. Condition (ii) is none other than condition (iii) in the special case of Dirac distributions. The implication (iii)~$\Rightarrow$~(ii) is therefore obvious. The implication (ii)~$\Rightarrow$~(iii) can then be obtained by a simple linearity, continuity and density argument.
\end{proof}
We end this section with a norm estimate for the conclusion of Jodeit's theorem simpler and more precise than the one provided by \autoref{proposition:openimpliesjodeit} in the special case of when the homomorphism pushes forward the compactly support right positive definite continuous functions into right positive definite continuous functions.
\begin{proposition}
    Let $G$ and $H$ be two locally compact groups, let $\pi$ be a continuous homomorphism from $G$ to $H$ satisfying the equivalent conditions of the \autoref{corollary:pushcspdc} of \autoref{theorem:pushrightstrict} and denote by $N$ its kernel. Then, the conclusion of Jodeit's theorem holds for $\pi$. Moreover, for every compact subset $K$ of $G$ containing $e_G$, every non-negligible compact subset $L$ of $G$ and every $p,q\in[1,\infty]$, it holds that
    \begin{multline*}
        \lVert\pi_*\rVert_{\operatorname{M}_K^{p,q}(H)\to M^{p,q}(H)}\\
        \leq\biggl(\frac{\mu_G(KL)}{\mu_G(L)}\biggr)^{\mkern-5mu\frac 1{2p'}+\frac 1{2q}}\biggl(2\mkern1.5mu\mu_N\bigl(KL(KL)^{-1}\bigr)\biggl(\frac{\mu_G(KL)}{\mu_G(L)}+1\biggr)\biggr)^{\mkern-5mu    \frac 1{p}+\frac 1{q\mathrlap{'}}}.
    \end{multline*}
\end{proposition}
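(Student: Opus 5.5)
The plan is to go through \autoref{remark:estimate}, which bounds $\lVert\pi_*\rVert_{\operatorname{M}^{p,q}_K(G)\to\operatorname{M}^{p,q}(H)}$ by
\[
    \lVert\varphi\rVert_{A(G)}^{\frac1{p'}+\frac1q}\,\lVert f\mapsto\pi_*(\varphi f)\rVert_{A(G)\to A(H)}^{\frac1p+\frac1{q'}}
\]
whenever $\varphi\in\mathscr{D}(G)$ equals $1$ near $K$. As in the proof of \autoref{proposition:openimpliesjodeit}, I would take $\varphi$ close to $\psi*(\mu_G(L)^{-1}{\boldsymbol 1}_L)^\flat$, where $\psi\approx{\boldsymbol 1}_{KL}$. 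Then $\varphi=1$ near $K$, its support $S$ is close to $KL L^{-1}$, and $\lVert\varphi\rVert_{A(G)}^2\leq\mu_G(KL)/\mu_G(L)$. This accounts for the factor $(\mu_G(KL)/\mu_G(L))^{\frac1{2p'}+\frac1{2q}}$, so everything reduces to bounding the operator $f\mapsto\pi_*(\varphi f)$ from $A(G)$ to $A(H)$ by $2\mu_N(KL(KL)^{-1})\bigl(\mu_G(KL)/\mu_G(L)+1\bigr)$.

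For that bound I would replace the Schur-multiplier route of \autoref{lemma:pushAcinAc} by positivity. Since $\pi$ satisfies the conditions of \autoref{corollary:pushcspdc}, $\pi_*$ maps compactly supported right positive definite continuous functions to right positive definite continuous functions on $H$. For such a function $u$ one has $\lVert u\rVert_{A(H)}=u(e_H)$, and also $(\pi_*u)(e_H)=\int_N u\,d\mu_N$ by \autoref{proposition:pushopenD}. Next I would write $\varphi f$ as a combination of positive definite functions with controlled values at $e$. Write $\varphi=\mu_G(L)^{-1}\psi*{\boldsymbol 1}_L^\flat$ and $f=\sum_j a_j*b_j^\flat$ with $\sum\lVert a_j\rVert_2\lVert b_j\rVert_2\leq\lVert f\rVert_{A(G)}+\varepsilon$. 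Then $\varphi f$ is a sum of coefficients $\langle\lambda(\cdot)\xi_j|\eta_j\rangle$ of $\lambda\otimes\lambda\simeq$ multiples of $\lambda$, with $\xi_j=\mu_G(L)^{-1/2}{\boldsymbol 1}_{KL}\otimes a_j$ and $\eta_j=\mu_G(L)^{-1/2}{\boldsymbol 1}_L\otimes b_j$. Polarization, in the form $\langle\lambda\xi|\eta\rangle=\tfrac14\sum_k i^k\langle\lambda(\xi+i^k\eta)|\xi+i^k\eta\rangle$, or better the two-term bound that gives $\lVert\xi\rVert^2+\lVert\eta\rVert^2$, yields positive definite pieces. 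After rescaling $\xi_j\to t\xi_j$ and $\eta_j\to t^{-1}\eta_j$ and optimizing, their values at $e$ sum to at most about $\bigl(\mu_G(KL)/\mu_G(L)+1\bigr)\lVert f\rVert_{A(G)}$, up to the factor $2$. This is where the term $\mu_G(KL)/\mu_G(L)+1$ and the $2$ would come from.

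The main obstacle is that the positive definite pieces are not compactly supported: only the product $\varphi f$ is supported in $S$. To fix this I would truncate the pieces to functions $\xi,\eta$ supported in $KL$ resp.\ $L$ (shrinking $f$'s vectors via restriction, allowed because $\varphi f$ only sees $f$ on $S$). The coefficient $\langle\lambda(\cdot)\xi|\xi\rangle$ is then supported in $KL(KL)^{-1}$. Each pushed-forward positive definite piece $u$ has
\[
    \lVert\pi_*u\rVert_{A(H)}=\int_N u\,d\mu_N\leq u(e_G)\,\mu_N\bigl(KL(KL)^{-1}\bigr),
\]
using $\lvert u\rvert\leq u(e_G)$. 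Summing these gives the claimed $A(G)\to A(H)$ bound, and feeding it into \autoref{remark:estimate} concludes. The delicate point I expect to verify carefully is the truncation of the vectors attached to $f$ so that both vectors in each coefficient live in $KL$ (or $L$) without losing the norm control, which is why the support set $KL(KL)^{-1}$ rather than $S$ appears.
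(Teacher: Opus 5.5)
Your argument is correct and proves the stated estimate, but it is organised differently from the paper's. One paragraph of your proposal should be discarded.

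\textbf{Comparison with the paper.} The paper does not polarise the product $\psi f$. It polarises only the cut-off:
\[
\psi=\psi_0+i\psi_1-\psi_2-i\psi_3,\qquad \psi_k=\frac{1}{4\mu_G(L)}({\boldsymbol 1}_{KL}+i^k{\boldsymbol 1}_L)*({\boldsymbol 1}_{KL}+i^k{\boldsymbol 1}_L)^\flat .
\]
Each $\psi_k$ is right positive definite and supported in $KL(KL)^{-1}$, because $e_G\in K$ gives $L\subseteq KL$. Separately, it uses the order structure of $A(G)$ to write $f=f_0-f_1+i(f_2-f_3)$ with right positive definite $f_l\in A(G)$ and $\sum_l f_l(e_G)\leq 2\lVert f\rVert_{A(G)}$. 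The sixteen products $\psi_kf_l$ are then compactly supported and positive definite. Summing $\mu_N(KL(KL)^{-1})\psi_k(e_G)f_l(e_G)$ gives exactly $2\mu_N(KL(KL)^{-1})(\mu_G(KL)/\mu_G(L)+1)\lVert f\rVert_{A(G)}$.

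You instead realise $\varphi f$ as a coefficient (or a near-optimal series of coefficients) of $\lambda\otimes\lambda$ and polarise that coefficient directly. This bypasses the decomposition of $f$ and bounds the pushforward by $\mu_N(KL(KL)^{-1})(\lVert\xi\rVert^2+\lVert\eta\rVert^2)$. After rescaling $\xi\mapsto t\xi$, $\eta\mapsto t^{-1}\eta$, this becomes $2(\mu_G(KL)/\mu_G(L))^{1/2}\mu_N(KL(KL)^{-1})\lVert f\rVert_{A(G)}$. By the arithmetic--geometric mean inequality this is at most half the paper's constant. So your route gives a slightly sharper bound and needs less structure on the $A(G)$ side.

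\textbf{The step to remove.} Your \emph{main obstacle} does not exist, and the fix you propose for it would be wrong.

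\begin{itemize}
\item \emph{Why there is no obstacle.} The vectors $\zeta_k=\xi+i^k\eta\in L^2(G\times G)$ vanish outside $KL\times G$, since $L\subseteq KL$. Hence $u_k(x)=\iint\zeta_k(x^{-1}s,x^{-1}t)\overline{\zeta_k(s,t)}\,ds\,dt$ vanishes unless $x\in KL(KL)^{-1}$. So every positive definite piece is already compactly supported there. The same holds after Fell's unitary $\zeta\mapsto\zeta(s,st)$, which does not move the first variable. The support control comes entirely from the first tensor leg, i.e.\ from $\varphi$, and never from the vectors attached to $f$.
\item \emph{Why the fix fails.} Truncating $a_j$ and $b_j$ is not legitimate. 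Even for $x\in S$, the value $\langle\lambda(x)a_j|b_j\rangle$ depends on $a_j$ and $b_j$ on all of $G$. Restricting them therefore changes $f$ on $S$, and hence changes $\varphi f$.
\end{itemize}

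Simply delete that step. The hypothesis $e_G\in K$ is exactly what makes it unnecessary.
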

\begin{proof}
    By choosing $\varphi\in\mathscr{D}(G)$ in \autoref{remark:estimate} arbitrarily close to
    \[
        \psi={\boldsymbol 1}_{KL}*(\mu_G^{-1}{\boldsymbol 1}_L)^\flat
    \]
    in $A(G)$, it suffices to show that
    \[
        \lVert f\mapsto\pi_*(\psi f)\rVert_{A(G)\to A(H)}\leq 2\mkern1.5mu\mu_N\bigl(KL(KL)^{-1}\bigr)\biggl(\frac{\mu_G(KL)}{\mu_G(L)}+1\biggr)\lVert f\rVert_{A(G)}.
    \]
    Let $f\in A(G)$ and for every integer $k\in[0,3]$, let
    \[
        \psi_k=\frac 1{4\mkern1.5mu\mu_G(L)}({\boldsymbol 1}_{KL}+i^k{\boldsymbol 1}_L)*({\boldsymbol 1}_{KL}+i^k{\boldsymbol 1}_L)^\flat,
    \]
    so that each $\psi_k$ is right positive definite and $\psi=\psi_0+i\psi_1-\psi_2-i\psi_3$. Similarly, by decomposing $f$ into the positive and negative parts of its real and imaginary parts in the involutive algebra $A(G)$ \cite[n\textsuperscript{o}\nobreak\,1.2 and prop.~3.15]{Eymard}, $f$ can be written as $f_0-f_1+i(f_2-f_3)$ where each $f_k$ is in $A(G)$ and right positive definite and $\sum_{0\leq k\leq 3}f_k(e_G)\leq 2\lVert f\rVert_{A(G)}$. Then, since each $\pi_*(\psi_kf_l)$ is right positive definite, we have
    \[
        \lVert\pi_*(\psi f)\rVert_{A(H)}\leq\sum_{0\leq k,l\leq 3}\lVert\pi_*(\psi_kf_l)\rVert_{A(H)}=\sum_{0\leq k,l\leq 3}\pi_*(\psi_kf_l)(e_H).
    \]
    But then,
    \begin{align*}
        \pi_*(\psi_kf_l)(e_H)&=\pi_*(\psi_kf_l)(\pi(e_G))\\
        &=\pi^*\pi_*(\psi_kf_l)(e_G)=((\psi_kf_l)*(\mu_N)^\flat)(e_G)=\int_N\psi_kf_l\,d\mu_N.
    \end{align*}
    A naïve estimates then yields
    \begin{align*}
        \int_N\psi_kf_l\,d\mu_N&\leq\mu_N(\supp\psi_k)\sup_N\lvert\psi_kf_l\rvert\\
        &\leq\mu_N\bigl(KL(KL)^{-1}\bigr)\psi_k(e_G)f_l(e_G)\\
        &=\mu_N\bigl(KL(KL)^{-1}\bigr)\frac{\lVert{\boldsymbol 1}_{KL}+i^k{\boldsymbol 1}_L\rVert_{L^2(G)}^2}{4\mkern1.5mu\mu_G(L)}f_l(e_G)
    \end{align*}
    and therefore
    \begin{align*}
        \lVert\pi_*(\psi f)\rVert_{A(H)}
        &\leq\sum_{0\leq k,l\leq 3}\mu_N\bigl(KL(KL)^{-1}\bigr)\frac{\lVert{\boldsymbol 1}_{KL}+i^k{\boldsymbol 1}_L\rVert_{L^2(G)}^2}{4\mkern1.5mu\mu_G(L)}f_l(e_G)\\
        &=\mu_N\bigl(KL(KL)^{-1}\bigr)\frac{\lVert{\boldsymbol 1}_{KL}\rVert_{L^2(G)}^2+\lVert{\boldsymbol 1}_{L}\rVert_{L^2(G)}^2}{\mu_G(L)}2\lVert f\rVert_{A(G)}\\
        &=2\mkern1.5mu\mu_N\bigl(KL(KL)^{-1}\bigr)\biggl(\frac{\mu_G(KL)}{\mu_G(L)}+1\biggr)\lVert f\rVert_{A(G)}
    \end{align*}
    as desired.
\end{proof}\backmatter
\section{Acknowledgements}
The author would like to thank Quanhua~Xu for his useful comments and advice and Mikael~de~la~Salle without whom the proof of \autoref{proposition:schurpullback} could have never been completed. The present work was produced as part of the author's doctoral contract funded by the Ministère de l'Enseignement supérieur et de la Recherche.
\biblabelsep1.5\labelsep
\sloppy
\printbibliography[heading=bibintoc]

@article{ArhancetUnconditionality,
 author = {Arhancet, C{\'e}dric},
 title = {Unconditionality, {Fourier} multipliers and {Schur} multipliers},
 fjournal = {Colloquium Mathematicum},
 journal = {Colloq. Math.},
 issn = {0010-1354},
 volume = {127},
 number = {1},
 pages = {17--37},
 year = {2012},
 language = {english},
 doi = {10.4064/cm127-1-2},
 zbMATH = {6049674},
 Zbl = {1253.43001}
}

@article{AuscherCarro,
 author = {Auscher, Pascal and Carro, Mar{\'i}a J.},
 title = {On relations between operators on {{\(\mathbf{R}^ {N}\)}}, {{\(\mathbf{T}^ {N}\)}} and {{\(\mathbf{Z}^ {N}\)}}},
 fjournal = {Studia Mathematica},
 journal = {Stud. Math.},
 issn = {0039-3223},
 volume = {101},
 number = {2},
 pages = {165--182},
 year = {1992},
 language = {English},
 doi = {10.4064/sm-101-2-165-182},
 zbMATH = {711680},
 Zbl = {0810.42004}
}

@book{BerghLofstrom,
 author = {Bergh, J{\"o}ran and L{\"o}fstr{\"o}m, J{\"o}rgen},
 title = {Interpolation spaces. {An} introduction},
 fseries = {Grundlehren der Mathematischen Wissenschaften},
 series = {Grundlehren Math. Wiss.},
 issn = {0072-7830},
 volume = {223},
 year = {1976},
 language = {english},
 publisher = {Springer, Cham},
 isbn = {978-3-642-66453-3},
 doi = {10.1007/978-3-642-66451-9},
 zbMATH = {3536702},
 Zbl = {0344.46071}
}

@Book{BourbakiINTd,
 Author = {Bourbaki, Nicolas},
 Title = {{\'E}l{\'e}ments de math{\'e}matique. {Int{\'e}gration}. {Chapitres} 7 et 8},
 Edition = {R{\'e}impression inchang{\'e}e de l'{\'e}dition originale de 1963},
 ISBN = {978-3-540-35324-9},
 language = {french},
 langid = {french},
 Year = {2007},
 Publisher = {Berlin: Springer},
 DOI = {10.1007/978-3-540-35325-6},
 zbMATH = {5080977},
 Zbl = {1106.46005}
}

@article{BozejkoFendler,
 author = {Bo{\.z}ejko, Marek and Fendler, Gero},
 title = {Herz-Schur multipliers and completely bounded multipliers of the {Fourier} algebra of a locally compact group},
 fjournal = {Bollettino della Unione Matem{\`a}tica Italiana. Serie VI. A},
 journal = {Boll. Unione Mat. Ital., VI. Ser., A},
 issn = {0392-4033},
 volume = {3},
 pages = {297--302},
 language = {english},
 year = {1984},
 zbMATH = {3899517},
 Zbl = {0564.43004}
}

@Article{Bruhat,
 Author = {Bruhat, Fran{\c{c}}ois},
 Title = {Distributions sur un groupe localement compact et applications {\`a} l'{\'e}tude des repr{\'e}sentations des groupes {{\(p\)}}-adiques},
 FJournal = {Bulletin de la Soci{\'e}t{\'e} Math{\'e}matique de France},
 Journal = {Bull. Soc. Math. Fr.},
 ISSN = {0037-9484},
 Volume = {89},
 Pages = {43--75},
 language = {french},
 Year = {1961},
 DOI = {10.24033/bsmf.1559},
 zbMATH = {3209560},
 Zbl = {0128.35701}
}

@article{Caspers,
 author = {Caspers, Martijn},
 title = {The {{\(L^p\)}}-{Fourier} transform on locally compact quantum groups},
 fjournal = {Journal of Operator Theory},
 journal = {J. Oper. Theory},
 issn = {0379-4024},
 volume = {69},
 number = {1},
 pages = {161--193},
 language = {english},
 year = {2013},
 doi = {10.7900/jot.2010aug22.1949},
 zbMATH = {6160738},
 Zbl = {1313.43003}
}

@article{CaspersJanssensKrishnaswayUshaMiaskiwskyi,
 author = {Caspers, Martijn and Janssens, Bas and Krishnaswamy-Usha, Amudhan and Miaskiwskyi, Lukas},
 title = {Local and multilinear noncommutative de {Leeuw} theorems},
 fjournal = {Mathematische Annalen},
 journal = {Math. Ann.},
 issn = {0025-5831},
 volume = {388},
 number = {4},
 pages = {4251--4305},
 year = {2024},
 language = {English},
 doi = {10.1007/s00208-023-02611-z},
 zbMATH = {7826797},
 Zbl = {1540.22017}
}

@Article{CaspersParcetPerrinRicard,
 Author = {Caspers, Martijn and Parcet, Javier and Perrin, Mathilde and Ricard, {\'E}ric},
 Title = {Noncommutative de {Leeuw} theorems},
 FJournal = {Forum of Mathematics, Sigma},
 Journal = {Forum Math. Sigma},
 ISSN = {2050-5094},
 Volume = {3},
 Note = {No. e21},
 language = {english},
 Year = {2015},
 DOI = {10.1017/fms.2015.23},
 zbMATH = {6502815},
 Zbl = {1373.42010}
}

@article{CaspersdelaSalle,
 author = {Caspers, Martijn and de la Salle, Mikael},
 title = {Schur and {Fourier} multipliers of an amenable group acting on non-commutative {{\(L^{p}\)}}-spaces},
 fjournal = {Transactions of the American Mathematical Society},
 journal = {Trans. Am. Math. Soc.},
 issn = {0002-9947},
 volume = {367},
 number = {10},
 pages = {6997--7013},
 language = {english},
 year = {2015},
 doi = {10.1090/S0002-9947-2015-06281-3},
 zbMATH = {6479348},
 Zbl = {1325.43002}
}

@article{CowlingExtensionperiodicity,
 author = {Cowling, Michael G.},
 title = {Extension of multipliers by periodicity},
 fjournal = {Bulletin of the Australian Mathematical Society},
 journal = {Bull. Aust. Math. Soc.},
 issn = {0004-9727},
 volume = {6},
 pages = {263--285},
 language = {english},
 year = {1972},
 doi = {10.1017/S000497270004449X},
 zbMATH = {3362402},
 Zbl = {0228.43008}
}

@Article{CowlingExtensionLpLq,
 Author = {Cowling, Michael G.},
 Title = {Extension of {Fourier} {{\(L^p\)-\(L^q\)}} multipliers},
 FJournal = {Transactions of the American Mathematical Society},
 Journal = {Trans. Am. Math. Soc.},
 ISSN = {0002-9947},
 Volume = {213},
 Pages = {1--33},
 language = {english},
 Year = {1975},
 DOI = {10.2307/1998031},
 zbMATH = {3486424},
 Zbl = {0311.43007}
}

@Article{CwikelJanson,
 Author = {Cwikel, Michael and Janson, Svante},
 Title = {Interpolation of analytic families of operators},
 FJournal = {Studia Mathematica},
 Journal = {Stud. Math.},
 ISSN = {0039-3223},
 Volume = {79},
 Pages = {61--71},
 language = {english},
 Year = {1984},
 DOI = {10.4064/sm-79-1-61-71}
}

@book{DixmierCetoile,
 author = {Dixmier, Jacques},
 title = {Les {{\({C}^ *\)}}-alg{\`e}bres et leurs repr{\'e}sentations},
 edition = {2\textsuperscript{e} éd.},
 language = {french},
 year = {1969},
 publisher = {{Paris}: {Gauthier}-{Villars}},
 series = {Cahiers scientifiques. Fascicule XXIX},
 zbMATH = {3277956},
 Zbl = {0174.18601}
}

@Article{Eymard,
 Author = {Eymard, Pierre},
 Title = {L'alg{\`e}bre de {Fourier} d'un groupe localement compact},
 FJournal = {Bulletin de la Soci{\'e}t{\'e} Math{\'e}matique de France},
 Journal = {Bull. Soc. Math. Fr.},
 ISSN = {0037-9484},
 Volume = {92},
 Pages = {181--236},
 language = {french},
 Year = {1964},
 DOI = {10.24033/bsmf.1607},
 zbMATH = {3271940},
 Zbl = {0169.46403}
}

@article{GaudryQuasimeasures,
 author = {Gaudry, Garth I.},
 title = {Quasimeasures and operators commuting with convolution},
 fjournal = {Pacific Journal of Mathematics},
 journal = {Pac. J. Math.},
 issn = {1945-5844},
 volume = {18},
 pages = {461--476},
 language = {english},
 year = {1966},
 doi = {10.2140/pjm.1966.18.461},
 zbMATH = {3250987},
 Zbl = {0156.14601}
}

@article{Godement,
 author = {Godement, Roger},
 title = {Les fonctions de type positif et la th{\'e}orie des groupes},
 fjournal = {Transactions of the American Mathematical Society},
 journal = {Trans. Am. Math. Soc.},
 issn = {0002-9947},
 volume = {63},
 pages = {1--84},
 language = {french},
 year = {1948},
 doi = {10.2307/1990636},
 zbMATH = {3049017},
 Zbl = {0031.35903}
}

@article{IzumiConstruction,
 author = {Izumi, Hideaki},
 title = {Constructions of non-commutative {{\(L^p\)}}-spaces with a complex parameter arising from modular actions},
 fjournal = {International Journal of Mathematics},
 journal = {Int. J. Math.},
 issn = {0129-167X},
 volume = {8},
 number = {8},
 pages = {1029--1066},
 language = {english},
 year = {1997},
 doi = {10.1142/S0129167X97000494},
 zbMATH = {1127548},
 Zbl = {0904.46046}
}

@article{IzumiBilin,
 author = {Izumi, Hideaki},
 title = {Natural bilinear forms, natural sesquilinear forms and the associated duality on non-commutative {{\(L^p\)}}-spaces},
 fjournal = {International Journal of Mathematics},
 journal = {Int. J. Math.},
 issn = {0129-167X},
 volume = {9},
 number = {8},
 pages = {975--1039},
 language = {english},
 year = {1998},
 doi = {10.1142/S0129167X98000439},
 zbMATH = {1267830},
 Zbl = {0934.46061}
}

@article{JanssensOudejans,
 author = {Janssens, Bas and Oudejans, Benjamin},
 title = {Local noncommutative {De} {Leeuw} theorems beyond reductive {Lie} groups},
 fjournal = {Journal of Lie Theory},
 journal = {J. Lie Theory},
 issn = {0949-5932},
 volume = {35},
 number = {4},
 pages = {845--860},
 year = {2025},
 language = {English},
 url = {www.heldermann.de/JLT/JLT35/JLT354/jlt35041.htm#jlt354},
 zbMATH = {8124773}
}

@Article{Jodeit,
 Author = {family=Jodeit, suffix=Jr., given=Max},
 Title = {Restrictions and extensions of {Fourier} multipliers},
 FJournal = {Studia Mathematica},
 Journal = {Stud. Math.},
 ISSN = {0039-3223},
 Volume = {34},
 Pages = {215--226},
 language = {english},
 Year = {1970},
 DOI = {10.4064/sm-34-2-215-226},
 zbMATH = {3317449},
 Zbl = {0199.20302}
}

@Article{LafforguedelaSalle,
 Author = {Lafforgue, Vincent and prefix={de la}, family=Salle, given=Mikael},
 Title = {Noncommutative {{\(L^{p}\)}}-spaces without the completely bounded approximation property},
 FJournal = {Duke Mathematical Journal},
 Journal = {Duke Math. J.},
 ISSN = {0012-7094},
 Volume = {160},
 Number = {1},
 Pages = {71--116},
 language = {english},
 Year = {2011},
 DOI = {10.1215/00127094-1443478},
 zbMATH = {5986340},
 Zbl = {1267.46072}
}

@Article{deLeeuw,
 Author = {prefix=de, family=Leeuw, given=Karel},
 Title = {On {{\(L_ p\)}} multipliers},
 FJournal = {Annals of Mathematics. Second Series},
 Journal = {Ann. Math. (2)},
 ISSN = {0003-486X},
 Volume = {81},
 Pages = {364--379},
 language = {english},
 Year = {1965},
 DOI = {10.2307/1970621},
 zbMATH = {3272605},
 Zbl = {0171.11803}
}

@article{LohoueApprox,
 author = {Lohou{\'e}, No{\"e}l},
 title = {Approximation et transfert d'op{\'e}rateurs de convolution},
 fjournal = {Annales de l'Institut Fourier},
 journal = {Ann. Inst. Fourier},
 issn = {0373-0956},
 volume = {26},
 number = {4},
 pages = {133--150},
 language = {french},
 year = {1976},
 doi = {10.5802/aif.635},
 zbMATH = {3517691},
 Zbl = {0331.43008}
}

@article{LohoueEnssynthese,
 author = {Lohoué, Noël},
 title = {Sur certains ensembles de synthèse dans les alg{\`e}bres {{\(A_{p}(G)\)}}},
 fjournal = {Comptes Rendus Hebdomadaires des S{\'e}ances de l'Acad{\'e}mie des Sciences, S{\'e}rie A},
 journal = {C. R. Acad. Sci., Paris, S{\'e}r. A},
 issn = {0366-6034},
 volume = {270},
 pages = {589--591},
 language = {french},
 year = {1970},
 url = {https://gallica.bnf.fr/ark:/12148/bpt6k480298g/f596.item},
 zbMATH = {3300214},
 Zbl = {0188.20302}
}

@article{LohoueLasynthese,
 author = {Lohou{\'e}, Noël},
 title = {La synth{\`e}se des convoluteurs sur un groupe ab{\'e}lien localement compact},
 fjournal = {Comptes Rendus Hebdomadaires des S{\'e}ances de l'Acad{\'e}mie des Sciences, S{\'e}rie A},
 journal = {C. R. Acad. Sci., Paris, S{\'e}r. A},
 issn = {0366-6034},
 volume = {272},
 pages = {27--29},
 year = {1971},
 language = {French},
 url = {https://gallica.bnf.fr/ark:/12148/bpt6k480300n/f30.item},
 zbMATH = {3348732},
 Zbl = {0219.43011}
}

@article{NeuwirthRicard,
 author = {Neuwirth, Stefan and Ricard, {\'E}ric},
 title = {Transfer of {Fourier} multipliers into {Schur} multipliers and sumsets in a discrete group},
 fjournal = {Canadian Journal of Mathematics},
 journal = {Can. J. Math.},
 issn = {0008-414X},
 volume = {63},
 number = {5},
 pages = {1161--1187},
 language = {english},
 year = {2011},
 doi = {10.4153/CJM-2011-053-9},
 zbMATH = {5955512},
 Zbl = {1251.47036}
}

@article{ParcetdelaSalleTablate,
 author = {Parcet, Javier and de la Salle, Mikael and Tablate, Eduardo},
 title = {The local geometry of idempotent {Schur} multipliers},
 fjournal = {Forum of Mathematics, Pi},
 journal = {Forum Math. Pi},
 issn = {2050-5086},
 volume = {13},
 pages = {21},
 note = {Id/No e14},
 language = {english},
 year = {2025},
 doi = {10.1017/fmp.2025.6},
 zbMATH = {8019218}
}

@Book{Pier,
 author = {Pier, Jean-Paul},
 title = {Amenable locally compact groups},
 language = {english},
 year = {1984},
 ISBN = {0-471-89390-0},
 Series = {Pure and Applied Mathematics. A Wiley-Interscience Publication},
 Publisher = {John Wiley \& Sons},
 zbMATH = {4006928},
 Zbl = {0621.43001}
}

@Book{PisierIntrotoOS,
 Author = {Pisier, Gilles},
 Title = {Introduction to operator space theory},
 FSeries = {London Mathematical Society Lecture Note Series},
 Series = {Lond. Math. Soc. Lect. Note Ser.},
 ISSN = {0076-0552},
 Volume = {294},
 ISBN = {0-521-81165-1},
 language = {english},
 Year = {2003},
 Publisher = {Cambridge: Cambridge University Press},
 zbMATH = {1993745},
 Zbl = {1093.46001}
}

@book{PisierNCvvLp,
 author = {Pisier, Gilles},
 title = {Non-commutative vector valued {{\(L_p\)}}-spaces and completely {{\(p\)}}-summing maps},
 fseries = {Ast{\'e}risque},
 series = {Ast{\'e}risque},
 issn = {0303-1179},
 volume = {247},
 language = {english},
 year = {1998},
 publisher = {Paris: Soci{\'e}t{\'e} Math{\'e}matique de France},
 zbMATH = {1216173},
 Zbl = {0937.46056}
}

@Article{Saeki,
 Author = {Saeki, Sadahiro},
 Title = {Translation invariant operators on groups},
 FJournal = {T{\^o}hoku Mathematical Journal. Second Series},
 Journal = {T{\^o}hoku Math. J. (2)},
 ISSN = {0040-8735},
 Volume = {22},
 Pages = {409--419},
 language = {english},
 Year = {1970},
 DOI = {10.2748/tmj/1178242767},
 zbMATH = {3327654},
 Zbl = {0206.12601}
}

@book{SchwartzRadon,
 author = {Schwartz, Laurent},
 title = {Radon measures on arbitrary topological spaces and cylindrical measures},
 fseries = {Tata Institute of Fundamental Research. Studies in Mathematics},
 series = {Tata Inst. Fundam. Res., Stud. Math.},
 volume = {6},
 year = {1973},
 publisher = {London: Oxford University Press, published for the Tata Institute of Fundamental Research},
 language = {English},
 zbMATH = {3467467},
 Zbl = {0298.28001}
}

@Book{SchwartzTD,
 Author = {Schwartz, Laurent},
 Title = {Th{\'e}orie des distributions},
 Edition = {Nouvelle {\'e}dition, enti{\`e}rement corrig{\'e}e, refondue et augment{\'e}e},
 fseries = {Publications de l'Institut de Math{\'e}matique de l'Universit{\'e} de Strasbourg},
 series = {Publ. Inst. Math. Strasbourg},
 volume = {9-10},
 language = {french},
 Year = {1966},
 Publisher = {Hermann, Paris},
 zbMATH = {3240665},
 Zbl = {0149.09501}
}

@article{Wawrzynczyk,
 author = {Wawrzy{\'n}czyk, Antoni},
 title = {On tempered distributions and {Bochner}{-}{Schwartz} theorem on arbitrary locally compact abelian groups},
 fjournal = {Colloquium Mathematicum},
 journal = {Colloq. Math.},
 issn = {0010-1354},
 volume = {19},
 pages = {305--318},
 year = {1968},
 language = {English},
 doi = {10.4064/cm-19-2-305-318},
 zbMATH = {3293320},
 Zbl = {0184.17501}
}
\vfill
\end{document}